\documentclass[12pt]{amsart}

\usepackage{hyperref}
\usepackage{amsmath,amsthm,amsfonts,amssymb}
\usepackage{mathbbol}
\usepackage{geometry}
\usepackage[backend=bibtex, maxnames=99]{biblatex}
\usepackage[all]{xy}
\usepackage{ytableau}

\usepackage{graphicx}
\usepackage{tikz}
\definecolor{mediumseagreen}{rgb}{0.24, 0.7, 0.44}
\definecolor{green(ryb)}{rgb}{0.4, 0.69, 0.2}
\definecolor{darktangerine}{rgb}{1.0, 0.66, 0.07}
\definecolor{denim}{rgb}{0.08, 0.38, 0.74}
\definecolor{lavender(floral)}{rgb}{0.71, 0.49, 0.86}

\tikzstyle{vertex}=[circle, draw, inner sep=0pt, minimum size=4pt]
\newcommand{\vertex}{\node[vertex]}

\tikzstyle{netnode}=[circle, draw, inner sep=0pt, minimum size=3pt]
\newcommand{\nnode}{\node[netnode]}

\def\cross[#1](#2,#3){
    \draw[fill, color=gray!10] (#2-1,#3-1) rectangle (#2,#3);
    \draw [black, #1] (#2-1,#3-0.5) -- (#2, #3-0.5);
    \draw [black, #1] (#2-0.5, #3-1) -- (#2-0.5, #3);
}

\def\elbow[#1](#2,#3){
    \draw[fill, color=gray!10] (#2-1,#3-1) rectangle (#2,#3);
    \draw [black,#1,domain=180:270] plot ({#2+.5*cos(\x)}, {#3+.5*sin(\x)});
	\draw [black,#1,domain=0:90] 	plot ({#2-1+.5*cos(\x)}, {#3-1+.5*sin(\x)});
}

\def\pivotelbow[#1](#2,#3){
	\draw [black,#1,domain=180:270] plot ({#2+.5*cos(\x)}, {#3+.5*sin(\x)});
}

\usepackage[textwidth=2.5cm, textsize=small, disable]{todonotes}
\definecolor{titaniumyellow}{rgb}{0.93, 0.9, 0.0}

\definecolor{bobcatorange}{rgb}{1, 0.55, 0.0}

\newtheorem{theorem}{Theorem}[section]
\newtheorem{lemma}[theorem]{Lemma}
\newtheorem{proposition}[theorem]{Proposition}
\newtheorem{corollary}[theorem]{Corollary}
\theoremstyle{definition}
\newtheorem{definition}[theorem]{Definition}
\newtheorem{example}[theorem]{Example}
\newtheorem{remark}[theorem]{Remark}

\definecolor{frenchblue}{rgb}{0.0, 0.45, 0.73}
\newcommand{\defn}[1]{{\color{frenchblue} #1}}

\newcommand\bbk{\mathbb{k}}
\newcommand\bbR{\mathbb{R}}

\newcommand\bbP{\mathbb{P}}

\newcommand\bc{\mathbf{c}}
\newcommand\br{\mathbf{r}}

\newcommand\calB{\mathcal{B}}

\newcommand\calF{\mathcal{F}}

\newcommand\calN{\mathcal{N}}
\newcommand\calP{\mathcal{P}}

\newcommand\calR{\mathcal{R}}
\newcommand\calS{\mathcal{S}}

\newcommand\fD{\mathfrak{D}}
\newcommand\fS{\mathfrak{S}}

\newcommand{\GL}{\mathrm{GL}}
\newcommand{\Gr}{\mathrm{Gr}}
\newcommand\Coinv{\mathrm{Coinv}}
\newcommand\col{\mathrm{col}}
\newcommand\Cov{\mathrm{Cov}}
\newcommand\Fl{\mathrm{Fl}}
\newcommand\FPP{\mathrm{FPP}}

\newcommand\Le{\reflectbox{\textup{L}}}
\newcommand\sLe{\scalebox{.5}{\reflectbox{\textup{L}}}}
\newcommand\Mat{\mathrm{Mat}}
\newcommand\nep{\mathrm{nep}}

\newcommand\rank{\mathrm{rank}}
\newcommand\Rothe{\mathrm{Rothe}}
\newcommand\RowS{\mathrm{Row}}
\newcommand\setFPP{\mathcal{FPP}}
\newcommand\sgn{\mathrm{sgn}}

\newcommand\st{\mathrm{st}}

\title{Flag positroid pipe dreams}

\author[Rizer]{Williem Rizer}
\address[Rizer]{Department of Mathematics\\ 
    University of Kentucky
    Lexington, KY 40506-0027
}
\email{williem.rizer@uky.edu}

\author[Yip]{Martha Yip}
\address[Yip]{Department of Mathematics\\
         University of Kentucky\\
        Lexington, KY 40506--0027
}
\email{mpyi222@uky.edu}
\urladdr{https://www.ms.uky.edu/~myip/}

\thanks{MY is partially supported by Simons Collaboration grant 964456.}

\date{\today}

\begin{document}
\begin{abstract}
We introduce flag positroid pipe dreams (FPPs), whose role in the study of complete flag positroids is analogous to the role of $\Le$-diagrams in the study of positroids.
We develop the combinatorics of these diagrams and highlight some of their properties. 
FPPs are in bijection with intervals in the Bruhat order of the symmetric group, and the number of elbows in an FPP is the dimension of the corresponding Richardson cell in the decomposition of the nonnegative flag variety.
We show how complete flag positroids can be built rank by rank via FPPs, and how the $\Le$-diagrams of the positroid constituents of the flag can be obtained from the FPP via a simple standardization operation.
Using partial FPPs, we give an alternative proof of a conjecture of Benedetti, Chavez, and Tamayo on the problem of characterizing elementary positroid quotients via cyclic shifts of decorated permutations, in the nonnegatively representable case.  
Our proof partially addresses a problem of Chen et al. regarding an explicit characterization of the cyclic shift operators purely in terms of decorated permutations.
We show that the poset of nonnegatively representable elementary positroid quotients is self-dual.
The maximal chains of this poset are in bijection with FPPs.
\end{abstract}
\maketitle
\tableofcontents
\parskip=1pt

\section{Introduction}
The grassmannian $\Gr_{k,n}$ of $k$-dimensional subspaces of $\bbR^n$ has a stratification indexed by matroids and it is known by Mn\"ev's Universality Theorem that its geometric structure is complicated.
In contrast, Postnikov~\cite{Postnikov06} showed that the intersection of the matroid stratification of $\Gr_{k,n}$ with the nonnegative part $\Gr_{k,n}^{\geq0}$ (Section~\ref{subsec.Grassmannians}) yields a cell decomposition of $\Gr_{k,n}^{\geq0}$. 
These cells are indexed by {\em positroids}, which are matroids that are representable over $\bbR$ whose maximal minors are nonnegative (Section~\ref{subsec.positroids_flagpositroids}). 
We note that positroids were previously studied by da Silva~\cite{daSilva} in the guise of {\em oriented matroids}, and it was shown by Ardila, Rincon and Williams~\cite{ARW17} that every positively oriented matroid is a positroid.

Ordering the positroid cells by containment gives rise to the {\em circular Bruhat order}, so called due to its similarities to the usual Bruhat order of the symmetric group.
This hints at the fact that positroids are rich in combinatorics.
Postnikov presented many families of combinatorial objects which are in bijection with positroids, for example {\em Grassmann necklaces}, {\em $\Le$-diagrams} (Definition~\ref{defn.Le_diagram}), and {\em decorated permutations} (Definition~\ref{defn.decperm}), to name a few.
In the order we have listed these objects, the data (by which we mean bases) of the corresponding positroid is progressively condensed.

One can similarly study the restriction of the complete flag variety $\Fl_n$ to the nonnegative part $\Fl_n^{\geq0}$ (Section~\ref{subsec.Grassmannians}).
Rietsch~\cite{Rietsch98} showed that $\Fl_n^{\geq0}$ has a cell decomposition indexed by intervals $u\leq v$ in the {\em Bruhat order}, and the cells appearing in this decomposition are the {\em positive Richardson cells} $\calR_{u,v}^{>0}$.
Each point in the nonnegative flag variety is a flag $V_1\subset \cdots\subset V_{n-1}$ of subspaces of $\bbR^n$ where $\dim V_k = k$, and is representable by a real matrix with nonnegative minors whose row span of the first $k$ rows is $V_k$.
Therefore, each point in $\Fl_n^{\geq0}$ corresponds to a {\em complete flag positroid}, which is a sequence of positroids $(P_1, \ldots, P_{n-1})$ such that $\mathrm{rank}\,P_k=k$, $P_k$ is a quotient of $P_{k+1}$, and the flag is representable by an $n\times n$ matrix whose {\em flag minors} are nonnegative (Section~\ref{subsec.positroids_flagpositroids}).
Boretsky, Eur, and Williams~\cite{BEW24} showed that positive Richardson cells are in bijection with complete flag positroids.

It has been pointed out for example in~\cite{BCT22, BK24, BlochKarp23, BEW24, CFGSZ26, JLLO23} that characterizing positroid quotients can be a tricky endeavour.
For starters, not all sequences of positroid quotients correspond to a point in the nonnegative flag variety (Example~\ref{eg.3poset}), and unlike the case of matroids, there exists partial flag positroids that cannot be extended to a complete flag positroid (Remark~\ref{rem.completable_flag_counterexample}).
Hence it is reasonable to focus one's attention on characterizing {\em nonnegatively representable elementary quotients}, which are two-step flag positroids of consecutive ranks. 
We denote this by $P \unlhd_q Q$.

A primary motivation of this article is to extend some of the combinatorial theory of positroids to the flag positroid setting.
We introduce the family of {\em flag positroid pipe dreams} (FPPs), first as a certain reduced configuration of elbow and cross tiles (Definition~\ref{defn.FPP}), then equivalently as a $\Gamma$ pattern-avoiding filling (Definition~\ref{defn.gammapatt}) which allows us to draw a direct connection to $\Le$-diagrams for positroids.
We remark that it is convenient for us to think of $\Le$-diagrams as $\Le$-pipe dreams~\cite[Section 19]{Postnikov06}; $\Le$-pipe dreams are a minor variation of $\Le$-diagrams in which zeros are replaced by cross tiles and ones are replaced by elbow tiles. 
Its virtue lies in the fact that it is easier to visualize the connection between $\Le$-pipe dreams and certain pairs of permutations (Section~\ref{subsec.le_parallel}), which is a key ingredient in the combinatorics of flag positroids.

Pipe dreams were introduced by Bergeron and Billey~\cite{BB93} in their study of Schubert polynomials.
More recently, Lam, Lee, and Shimozono~\cite{LLS21} introduced bumpless pipe dreams (BPDs) as a model for computing double Schubert polynomials, and Knutson and Zinn-Justin~\cite{KZ24} introduced generic pipe dreams (GPDs) for the computation of equivariant cohomology classes of lower-upper varieties. 
Our FPPs share similarities with BPDs and GPDs.

Our first main result (Theorem~\ref{thm.main_FPP}) shows that the set $\setFPP(n)=\{\FPP(u,v) \mid u\leq v \}$ of $n\times n$ flag positroid pipe dreams are in bijection with intervals $u\leq v$ in Bruhat order, and the number of elbow tiles in $\FPP(u,v)$ is the length of the interval $u\leq v$.
This is also the dimension of the positive Richardson cell $\calR_{u,v}^{>0}$, which is in complete analogy with the role of $\Le$-pipe dreams in the theory of positroids~\cite[Theorem 19.1]{Postnikov06}.
We show in Theorem~\ref{thm.rpd_is_le} that the subset of FPPs such that $u$ has at most one ascent can be identified with $\Le$-pipe dreams $\Le(w_0v, w_0u)$, which diagrammatically amounts to a $180^\circ$ rotation (see Figure~\ref{fig.le2}, and note that $\Gamma$ is a $180^\circ$ rotation of $\Le$).

Recently, there has been interest in obtaining combinatorial characterizations of positroid quotients~\cite{BCT22, BK24, BEW24, CFGSZ26, OX22}.
For instance, Benedetti, Chavez, and Tamayo~\cite[Theorem 28]{BCT22} studied elementary quotients of uniform matroids (which are positroids) in terms of their decorated permutations, Benedetti and Knauer~\cite[Theorem 19]{BK24} studied the class of {\em lattice path matroids} (LPMs, which are positroids) and characterized all quotients of a given LPM in terms of good pairs, and Boretsky, Eur, and Williams studied the geometry of flag positroid polytopes and gave a characterization of flag positroids of consecutive ranks via Grassmann necklaces~\cite[Theorem 7.14]{BEW24}.

Our contribution to this body of work is our second main result (Theorem~\ref{thm.char_Q} and its corollaries), which is a combinatorial characterization of two-step flag positroids of consecutive ranks in terms of partial FPPs.
Given a rank $k$ positroid $P$ with associated partial FPP $D^{\sLe}(P)$ (Definition~\ref{defn.DLP}), we can compute all rank $k+1$ positroids $Q$ such that $P\unlhd_q Q$ by appending below $D^{\sLe}(P)$ a row of tiles such that  elbow tiles occur only in some nonempty subset of the {\em unblocked columns} of $D^{\sLe}(P)$ (Definition~\ref{defn.Leblocked}).
This yields another partial FPP from which we can compute the bases of $Q$.
A curious consequence is that there are $2^{|U|}-1$ nonnegatively representable elementary quotients $P\unlhd_q Q$ for each fixed $P$, where $|U|$ is the number of unblocked columns of $D^{\sLe}(P)$.
These results are obtained by studying a map $\Phi$ from two-step nonnegative flags $\Fl_{(k,k+1);n}^{\geq0}$ onto its image inside $\Gr_{k+1,n+1}^{\geq0}$, and the induced bijection $\phi$ from the set of two-step flag positroids $\calP_{(k,k+1)}$ on $[n]$ to a certain subset $\calR_{k+1}$ of rank $k+1$ positroids on the ground set $[0,n]$ (see Theorems~\ref{thm:projection-bijection} and~\ref{thm.positroid-quotient-bijection}; we remark that these are similar to the constructions of Boretsky, Eur, and Williams~\cite[Section 7]{BEW24}).

The third main result in this article reveals the true beauty of FPPs and explains the novelty of our combinatorial approach.
In Theorem~\ref{thm.fpp_richardson} we prove that the flag positroid pipe dream $\FPP(u,v)$ corresponding to the interval $u\leq v$ indeed encodes the flag positroid $P_\bullet(u,v) = (P_1(u,v),\ldots, P_{n-1}(u,v))$ associated to any point of the positive Richardson cell $\calR_{u,v}^{>0}$.
Specifically, for each $k=1,\ldots, n$, the restriction of $D=\FPP(u,v)$ to its first $k$ rows yields a partial FPP we denote by $D|_k$.
Generalizing a construction of Postnikov~\cite[Section 4]{Postnikov06}, the partial FPP defines an acyclic directed graph $G(D|_k)$ (Definition~\ref{defn.GofD}) and the bases of $P_k(u,v)$ are computed by the sinks of $k$-tuples of non-intersecting paths in $G(D|_k)$.
Additionally, the $\Le$-diagram of $P_k(u,v)$ can be easily obtained from $D$ via a standardization operation (Theorem~\ref{thm.sti_preserves_bases}); that is, $D^{\sLe}(P_k) = \st(D|_k)$.
We remark that unlike the theory of acyclic directed graphs for positroids, the graph $G(D|_k)$ may be nonplanar (see Example~\ref{eg.crossingGD}).
This subtle difference is reflected in FPPs in the sense that a $\Gamma$ pattern in an FPP is a generalization of a $\Le$ pattern in a $\Le$-pipe dream.

From our perspective, flag positroid pipe dreams are a novel combinatorial construction in the theory of positroids because a single FPP encodes an entire flag positroid from which we can easily recover the data of the $k$-th constituent positroid by restricting to the first $k$ rows.
Put another way, we can construct a flag positroid simply by building an FPP row by row such that for each $k$, the partial FPP avoids the $\Gamma$ pattern.

Our fourth main result addresses an open problem of Chen et al.~\cite[Remark 3.6]{CFGSZ26}.
We begin by recounting the history of the problem.
Based on extensive computational evidence, Benedetti, Chavez, and Tamayo put forth the following conjecture~\cite[Conjecture 37]{BCT22} regarding a combinatorial characterization of elementary positroid quotients: If $P$ is an elementary quotient of $Q$ and $\pi$ and $\sigma$ are their respective decorated permutations, then there is a subset $A \subset [n]$ such that $\sigma= \overrightarrow{\rho_A}(\pi)$, 
where $\overrightarrow{\rho_A}$ is a {\em right cyclic shift} operator (Definition~\ref{defn.rcshift}).

The set $A$ denotes the frozen positions of $\pi$, and $\sigma$ is obtained by cyclically shifting all non-frozen values of $\pi$ one place to the right and decorating any new fixed point with an overline. 
This conjecture was proved by Chen et al.~\cite[Theorem 3.4]{CFGSZ26} using the concept of CW-arrows of Oh and Xiang~\cite{OX22}.
They left open the problem of finding ``a concise characterization - based solely on $\pi$ - of the sets $A$''.

Having developed the combinatorial theory of FPPs, we leverage the close relationship between $\Le$-pipe dreams and decorated permutations to give in Theorem~\ref{thm.covering_decperms} a characterization of nonnegatively representable elementary  positroid quotients in terms of the {\em unblocked positions} (Definition~\ref{defn.unblockedposn}) of a decorated permutation.
A strength of our approach is the explicit characterization (Definition~\ref{defn.TC}) of the frozen sets $A$ in the cyclic shift operator on decorated permutations purely in terms of $\pi$.
This partially addresses the open problem of Chen et al. in the case of nonnegatively representable elementary quotients.

The final object of study in this article is the poset $(\Pi_n, \unlhd_q)$ of nonnegatively representable positroid quotients (Figure~\ref{fig.3NNposet}), which is a subposet of the poset $(\Pi_n, \leq_q)$ of positroid quotients (Figure~\ref{fig.3poset}) that was introduced by Benedetti, Chavez, and Tamayo~\cite{BCT22}. 
As already noted, each positroid $P$ in $(\Pi_n, \unlhd_q)$ is covered by $2^{|U|}-1$ elements, where $|U|$ is the number of unblocked columns of the partial FPP $D^{\sLe}(P)$.
Furthermore, the maximal chains in $(\Pi_n, \unlhd_q)$ correspond with the flag positroid pipe dreams $\setFPP(n)$ and hence with the complete flag positroids on $[n]$.
Using the combinatorics we have developed, we prove that $(\Pi_n, \unlhd_q)$ is self-dual via matroid duals (equivalently, taking inverses of decorated permutations).
Since $(\Pi_n, \unlhd_q)$ contains the quotient poset of lattice path matroids~\cite{BK24} as an induced subposet, it follows that the quotient poset of LPMs is also self-dual.

We summarize the organization of this article. 
In Section~\ref{sec.background} we provide the relevant background on positroids, flag positroids, and the nonnegative Grassmannian and flag variety. 
In Section~\ref{sec.fpp} we develop the combinatorics of FPPs, and explain how to identify $\Le$-pipe dreams as special cases of FPPs.
Section~\ref{sec.positroid_quotients} is the heart of this article, where we develop the connection between FPPs, positroid quotients, and complete flag positroids. 
As an application, in Section~\ref{sec.decperm} we give a characterization of nonnegatively representable elementary positroid quotients in terms of decorated permutations and give a partial answer to the problem of Chen et al..
We conclude with a quick study of some properties of the poset $(\Pi_n, \unlhd_q)$.

\noindent\subsubsection*{Acknowledgements}
We thank Carolina Benedetti, Sergio Fernandez de soto Guerrero, Rafael Gonz\'alez D'Le\'on, Daoji Huang, and Melissa Sherman-Bennett for enlightening conversations on positroids and combinatorics.

\section{Background}\label{sec.background}
In this section, we provide some background on positroids, flag positroids, and their combinatorics.
See~\cite{Postnikov06}, \cite{BEW24}, \cite{BK24} for more details.

\subsection{Positroids and flag positroids}\label{subsec.positroids_flagpositroids}
Let $[n]=\{1,\ldots, n\}$.
A \defn{matroid} $M=(E,\calB)$ consists of a ground set $E=[n]$ and a nonempty collection $\calB$ of subsets of $E$ satisfying the basis exchange property: for any $B, B'\in \calB$ and $b\in B\backslash B'$, there exists $b'\in B'\backslash B$ such that $B \backslash \{b\} \cup \{b'\} \in \calB$.

An element in $\calB$ is a \defn{basis} of $M$ and every basis of $M$ has the same size; this is the \defn{rank} of $M$.
The \defn{dual} of the matroid $M$ is the matroid $M^*$ whose bases are the complements of the bases of $M$ in $E$. 
Given two matroids $M$ and $M'$ on the ground set $E$ with bases $\calB$ and $\calB'$ respectively, $M$ is a \defn{quotient} of $M'$ and we write \defn{$M \leq_q M'$} if for each $B'\in \calB'$ and $j\notin B'$, there exists $B\in \calB$ such that $B\subseteq B'$ and if $B \cup \{j\} \backslash \{i\} \in \calB$ then $B' \cup \{j\}\backslash \{i\} \in \calB'$ for all $i\in B$.
A matroid quotient is \defn{elementary} if the matroids are of consecutive ranks.

Two matroids $(E,\calB)$ and $(E',\calB')$ are \defn{isomorphic} if there exists a bijection $\varphi: E\rightarrow E'$ that preserves independent sets.
A \defn{linear matroid} of rank $k$ is a matroid arising from a $k\times n$ matrix $A$ over a field $\bbk$, where $E$ is the index set of the columns of $A$ and $\calB$ consists of the $k$-subsets of $E$ whose corresponding columns are linearly independent.
A matroid $M$ is \defn{realizable} or \defn{representable} over a field $\bbk$ if it is isomorphic to a linear matroid over $\bbk$.

\begin{definition}
A \defn{positroid} is a matroid that is representable by a matrix whose maximal minors are nonnegative.
\end{definition}
Benedetti and Knauer note in~\cite[Example 1]{BK24} that being a positroid is not preserved under matroid isomorphism, and that the ordering of the ground set of a positroid is a part of the definition.

Let $\br = (r_1,\ldots, r_t)$ be an increasing sequence of integers $1\leq r_1 < \cdots < r_t \leq n-1$.
A \defn{flag matroid} of ranks $\br$ is a sequence of matroids $M_\bullet=(M_1,\dots, M_t)$ such that $\rank\,M_i = r_i$ and $M_i\leq_q M_{i+1}$ for all $i=1, \ldots, t-1$.
A flag matroid $M_\bullet=(M_1,\dots, M_t)$ of ranks $\br$ is \defn{representable} over $\bbk$ if there exists an $r_t \times n$ matrix $A$ such that the $r_i \times n$ submatrix formed by the first $r_i$ rows realizes $M_i$, for each $i=1,\ldots, t$.
The \defn{flag minors} of $A$ are the minors formed by the first $r_i$ rows of $A$, for $i=1,\ldots, t$.

\begin{definition}
A \defn{flag positroid} is a flag matroid that is representable over $\bbR$ by a $r_t\times n$ matrix $A$ whose flag minors are nonnegative.
\end{definition}
In~\cite{BK24}, the authors call such a flag matroid \defn{nonnegatively representable}.
Equivalently, this means that the flag positroid is a point in the nonnegative flag variety $\Fl_n^{\geq0}$, see Section~\ref{subsec.Grassmannians}.
For positroids $P, Q$ on $[n]$, we write \defn{$P\unlhd_q Q$} if $P$ is a nonnegatively representable quotient of $Q$.
That is, $(P,Q)$ is a two-step flag positroid.
If $P$ and $Q$ are of consecutive rank, then $(P,Q)$ is an \defn{elementary two-step flag positroid}.

Contrast the definition of a flag positroid with that of a \defn{positroid flag matroid}, which is a flag matroid whose constituents are all positroids.  
By definition, every flag positroid is a positroid flag matroid, but not every positroid flag matroid is a flag positroid, as we discuss in the next example.

\begin{example}\label{eg.3poset}
The \defn{partial order of positroid quotients} $(\Pi_n,\leq_q)$ was introduced by Benedetti, Chavez, and Tamayo~\cite{BCT22}.  
For a fixed positive integer $n$, the elements of the poset are the positroids on $[n]$, and $P$ is covered by $Q$ in the poset if and only if $P$ is an elementary quotient of $Q$.
Maximal chains in $(\Pi_n, \leq_q)$ are complete positroid flag matroids.

Figure~\ref{fig.3poset} shows the Hasse diagram of $\Pi_3$ where the positroids are indexed by their associated decorated permutations (see Section~\ref{sec.decperm}). 
From the figure, we see that there are $22$ positroid flag matroids, but three of them are not flag positroids/nonnegatively representable.  
These are indicated by dotted lines in the figure between the positroids of ranks one and two.
The remaining $19$ maximal chains of flag positroids correspond to the $19$ intervals in the Bruhat order of $\fS_3$.

For example, the positroid flag matroid $(P_1, P_2)$ on $[3]$ whose constituents have bases $\calB(P_1) = \{1,3\}$ and $\calB(P_2)=\{12,23\}$ is not nonnegatively representable.  
This flag is depicted in Figure~\ref{fig.3poset} in terms of its decorated permutations as $(3\underline{2}1, 3\overline{2}1)$.
\end{example}

\begin{remark}\label{rem.completable_flag_counterexample}
We note that not all flag positroids can be extended to a complete flag positroid.
See Boretsky, Eur, and Williams~\cite[Example 4.6]{BEW24} for details, where they discuss the example of the flag positroid $(P_1,P_3)$ on $[4]$ whose constituents have bases $\calB(P_1)=\{1,2,3,4\}$ and $\calB(P_3)=\{123,234\}$.
\end{remark}

\begin{figure}[ht!]
\begin{tikzpicture}[xscale=1.4, yscale=1.7]
\node[](p1bar2bar3bar) at (0,3){$\overline{1}\overline{2}\overline{3}$};

\node[](p231) at (3,2){$231$};
\node[](p1bar32) at (2,2){$\overline{1}32$};
\node[](p32bar1) at (1,2){$3\overline{2}1$};
\node[](p1bar2bar3) at (0,2){$\overline{1}\overline{2}\underline{3}$};
\node[](p213bar) at (-1,2){$21\overline{3}$};
\node[](p1bar23bar) at (-2,2){$\overline{1}\underline{2}\overline{3}$};
\node[](p12bar3bar) at (-3,2){$\underline{1}\overline{2}\overline{3}$};

\node[](p312) at (3,1){$312$};
\node[](p213) at (2,1){$21\underline{3}$};
\node[](p321) at (1,1){$3\underline{2}1$};
\node[](p1bar23) at (0,1){$\overline{1}\underline{2}\underline{3}$};
\node[](p132) at (-1,1){$\underline{1}32$};
\node[](p12bar3) at (-2,1){$\underline{1}\overline{2}\underline{3}$};
\node[](p123bar) at (-3,1){$\underline{1}\underline{2}\overline{3}$};

\node[](p123) at (0,0){$\underline{1}\underline{2}\underline{3}$};

\draw[thick] (p231.north)--(p1bar2bar3bar.south);
\draw[thick] (p32bar1.north)--(p1bar2bar3bar.south);
\draw[thick] (p1bar32.north)--(p1bar2bar3bar.south);
\draw[thick] (p1bar2bar3.north)--(p1bar2bar3bar.south);
\draw[thick] (p213bar.north)--(p1bar2bar3bar.south);
\draw[thick] (p1bar23bar.north)--(p1bar2bar3bar.south);
\draw[thick] (p12bar3bar.north)--(p1bar2bar3bar.south);

\draw[thick] (p312.north)--(p231.south);
\draw[thick, dotted] (p312.north)--(p32bar1.south);
\draw[thick] (p312.north)--(p1bar32.south);
\draw[thick] (p312.north)--(p213bar.south);

\draw[thick] (p213.north)--(p231.south);
\draw[thick] (p213.north)--(p1bar2bar3.south);
\draw[thick] (p213.north)--(p213bar.south);

\draw[thick, dotted] (p321.north)--(p231.south);
\draw[thick, dotted] (p321.north)--(p32bar1.south);
\draw[thick] (p321.north)--(p1bar23bar.south);

\draw[thick] (p1bar23.north)--(p1bar32.south);
\draw[thick] (p1bar23.north)--(p1bar2bar3.south);
\draw[thick] (p1bar23.north)--(p1bar23bar.south);

\draw[thick] (p132.north)--(p231.south);
\draw[thick] (p132.north)--(p1bar32.south);
\draw[thick] (p132.north)--(p12bar3bar.south);

\draw[thick] (p12bar3.north)--(p32bar1.south);
\draw[thick] (p12bar3.north)--(p1bar2bar3.south);
\draw[thick] (p12bar3.north)--(p12bar3bar.south);

\draw[thick] (p123bar.north)--(p213bar.south);
\draw[thick] (p123bar.north)--(p1bar23bar.south);
\draw[thick] (p123bar.north)--(p12bar3bar.south);

\draw[thick] (p123.north)--(p312.south);
\draw[thick] (p123.north)--(p213.south);
\draw[thick] (p123.north)--(p321.south);
\draw[thick] (p123.north)--(p1bar23.south);
\draw[thick] (p123.north)--(p132.south);
\draw[thick] (p123.north)--(p12bar3.south);
\draw[thick] (p123.north)--(p123bar.south);

\end{tikzpicture}
\caption{The poset of positroid quotients $(\Pi_3,\leq_q)$.
A dotted covering relation indicates a quotient that is not nonnegatively representable.
See Example~\ref{eg.3poset}.  
Also see Figure~\ref{fig.3NNposet}.}
\label{fig.3poset}
\end{figure}

\subsection{Grassmannians, flag varieties}\label{subsec.Grassmannians}
A motivation for studying positroids comes from studying the matroid stratification of Grassmannians.
For $0\leq k \leq n$, the (real) \defn{Grassmannian} $\Gr_{k,n}$ is the set of $k$-dimensional vector subspaces of $\bbR^n$.
A \defn{representation} of a vector space $V\in \Gr_{k,n}$ is a $k\times n$ matrix $A(V)\in \Mat_{k,n}$ whose rows are a basis for $V$.

Given a $k\times n$ matrix $A$ and $S\in \binom{[n]}{k}$, let $\Delta_S(A)$ denote the $k\times k$ maximal minor of $A$ whose columns are indexed by $S$.
The matrix $A(V)$ defines a matroid $M(V)=([n],\calB)$ where $S\in\calB$ if and only if $\Delta_S(A(V))\neq0$.

The \defn{Pl\"ucker embedding} of the Grassmannian $\Gr_{k,n}$ into projective space is given by
\[\Gr_{k,n} \rightarrow \bbP^{\binom{n}{k}-1}: V \mapsto (\Delta_S( A(V)) )_{S\in \binom{[n]}{k}},
\]
consisting of all maximal minors of $A(V)$.

The \defn{nonnegative Grassmannian} $\Gr_{k,n}^{\geq0}$ is the subset of vector spaces $V\in \Gr_{k,n}$ for which there exists a matrix $A(V)\in\Mat_{k,n}$ whose rows are a basis for $V$ and whose Pl\"ucker coordinates are nonnegative.
Postnikov~\cite[Theorem 3.5]{Postnikov06} showed that the nonnegative Grassmannian has a cell decomposition 
\[\Gr_{k,n}^{\geq0} = \bigsqcup_P \calS_P
\]
into \defn{nonnegative Grassmann cells} or \defn{positroid cells} $\calS_P = \{V \in \Gr_{k,n}^{\geq0} \mid M(V)= P \}$ indexed by rank $k$ positroids $P$ on $[n]$.
Furthermore, he showed that the decomposition of the nonnegative Grassmannian into positroid cells has the structure of a CW-complex.

Given a sequence of integers $\br = (r_1,\ldots, r_t)$ where $1\leq r_1 < \cdots < r_t \leq n-1$, let $P_{\br;n}$ denote the parabolic subgroup of $\GL_n(\bbR)$ consisting of block upper-triangular matrices with block sizes $r_1, r_2-r_1,\ldots, n-r_t$.
The \defn{flag variety} is
\[\Fl_{\br;n}(\bbR) = \GL_n(\bbR)/P_{\br;n},\]
and the \defn{complete flag variety} is $\Fl_n = \Fl_{(1,\ldots, n-1);n}(\bbR) = \GL_n(\bbR)/B_n$, where $B_n$ is the Borel subgroup of $\GL_n(\bbR)$ of upper-triangular matrices.

The flag variety may be identified with flags of subspaces in $\bbR^n$:
\[\Fl_{\br;n}(\bbR) = \left\{V_\bullet =  V_1 \subset \cdots \subset V_t \mid \dim V_i = r_i \hbox{ for each } i \right\},
\]
and $\Fl_n$ is a subvariety of $\Gr_{1,n}\times \Gr_{2,n}\times \cdots \times \Gr_{n-1,n}$.
A \defn{representation} of a flag $V_\bullet\in \Fl_{\br;n}$ is a $r_t\times n$ matrix $A(V_\bullet)$ whose first $r_i$ rows is a basis for $V_i$.
The \defn{Pl\"ucker embedding} of the flag variety $\Fl_{\br;n}$ is
\[\Fl_{\br;n}(\bbR) \rightarrow \bbP^{\binom{n}{r_1}-1} \times \cdots \times \bbP^{\binom{n}{r_t}-1}: V_\bullet \mapsto \left(\Delta_{S_1}(A(V_\bullet))_{S_1\in \binom{[n]}{r_1}}, \ldots, \Delta_{S_t}(A(V_\bullet))_{S_t\in \binom{[n]}{r_t}} \right).
\]

The \defn{nonnegative flag variety} $\Fl_{\br;n}^{\geq 0}$ is the subset of $\Fl_{\br;n}$ with nonnegative Pl\"ucker coordinates, so it has a \defn{nonnegative representation} $A\in \Mat_{r_t\times n}$ with flag minors $\Delta_S (A)\geq0$ for $S\in \binom{[n]}{r_i}$, $i=1,\ldots, t$.
Rietsch~\cite{Rietsch98} showed that the nonnegative flag variety has a cell decomposition
\[\Fl_n^{\geq0} = \bigsqcup_{u\leq v} \calR_{u,v}^{>0} \]
into \defn{positive Richardson cells} $\calR_{u,v}^{>0} = BvB/B \cap B^-uB/B \cap \Fl_n^{\geq0}$ where $B=B_n$ and $B^-$ is the subgroup of lower-triangular matrices in $\GL_n(\bbR)$.
This decomposition is indexed by pairs of permutations $u\leq v$ in Bruhat order (see Section~\ref{subsec.Bruhat}).
Riestch also showed that $\Fl_{\br;n}^{\geq0}$ has a cell decomposition, obtained by projecting that of $\Fl_n^{\geq0}$.

If we consider the Grassmannian $\Gr_{k,n}$ as the flag variety $\Fl_{(k);n}$, then the decompositions of Postnikov and Rietsch agree. 
It turns out that positroids are in bijection with pairs of permutations $t\leq w$ in Bruhat order where $w$ is a permutation with at most one descent, see for example~\cite[Theorem 19.1]{Postnikov06}.

\section{Flag positroid pipe dreams}\label{sec.fpp}
In this section we introduce the family $\setFPP(n)$ of flag positroid pipe dreams (FPPs, Definition~\ref{defn.FPP}) which serve a similar purpose for flag positroids as the $\Le$-diagrams/$\Le$-pipe dreams do for positroids.

We show in Theorem~\ref{thm.main_FPP} that FPPs are in bijection with intervals $[u,v]$ in the Bruhat order on the symmetric group $\fS_n$, and moreover the number of elbows in an FPP is the length of its corresponding Bruhat interval.
Hence, FPPs index positive Richardson cells in the decomposition of the nonnegative flag variety $\Fl_n^{\geq0}$, and the number of elbows in an FPP gives the dimension of its corresponding cell.
 
In analogy with $\Le$-diagrams, we show in Proposition~\ref{prop:staircase-equivalence} that FPPs have a characterization as a filling that avoids the $\Gamma$ pattern (Definition~\ref{defn.gammapatt}).
Lastly, we make a direct connection with $\Le$-pipe dreams in Theorem~\ref{thm.rpd_is_le} by showing how $\Le$-pipe dreams can be obtained as special cases of flag positroid pipe dreams.

\subsection{Flag positroid pipe dreams}
\begin{definition}
Let $u=u_1 \cdots u_n \in \fS_n$. 
The \defn{(dual) Rothe diagram} of $u$ is 
\[
\Rothe(u) = \{ (i,j) \mid 1\leq i,j\leq n, u_i < j, u_j^{-1} > i\}.
\]
\end{definition}
We may visualize a permutation $u$ by an $n\times n$ array of boxes (indexed using the conventions of a matrix), putting a dot in the $i$-th row and $u_i$-th column for $i=1,\ldots, n$.
These boxes are the \defn{pivots} of $u$.
The Rothe diagram is the set of boxes which remain after crossing out any box that lies below or to the left of a dot in the array.
See Figure~\ref{fig.rothe} for an illustration.

\begin{remark}
We use the convention where the boxes of the Rothe diagram $\Rothe(u)$ correspond to the \defn{coinversions} of $u$, where $\Coinv(u) = \{ (u_i, u_j) \mid i < j \hbox{ and } u_i < u_j\}$.
\end{remark}

\begin{figure}[ht!]
\begin{tikzpicture}
\begin{scope}[scale=.5, xshift=0]
	\vertex(w1) at (4.5, 6.5) {};
	\vertex(w2) at (2.5, 5.5) {};
	\vertex(w3) at (0.5, 4.5) {};
	\vertex(w4) at (5.5, 3.5) {};
	\vertex(w5) at (1.5, 2.5) {};
	\vertex(w6) at (6.5, 1.5) {};
	\vertex(w7) at (3.5, 0.5) {};

	\fill[gray!10] (1,4) rectangle (2,5);
	\fill[gray!10] (3,2) rectangle (4,3);
	\fill[gray!10] (3,4) rectangle (4,6);
	\fill[gray!10] (5,4) rectangle (6,7);
	\fill[gray!10] (6,2) rectangle (7,7);

	\draw[very thin, color=blue!50] (0,0) grid (7,7);

	\draw[thick, color=gray] (0,6.5)--(w1)--(4.5,0);
	\draw[thick, color=gray] (0,5.5)--(w2)--(2.5,0);
	\draw[thick, color=gray] (0,4.5)--(w3)--(0.5,0);
	\draw[thick, color=gray] (0,3.5)--(w4)--(5.5,0);
	\draw[thick, color=gray] (0,2.5)--(w5)--(1.5,0);
	\draw[thick, color=gray] (0,1.5)--(w6)--(6.5,0);
	\draw[thick, color=gray] (0,0.5)--(w7)--(3.5,0);

	\node[] at (5.5,6.5) {\color{gray}{\tiny$56$}};
	\node[] at (6.5,6.5) {\color{gray}{\tiny$57$}};
	\node[] at (3.5,5.5) {\color{gray}{\tiny$34$}};
	\node[] at (5.5,5.5) {\color{gray}{\tiny$36$}};
	\node[] at (6.5,5.5) {\color{gray}{\tiny$37$}};
	\node[] at (1.5,4.5) {\color{gray}{\tiny$12$}};
	\node[] at (3.5,4.5) {\color{gray}{\tiny$14$}};
	\node[] at (5.5,4.5) {\color{gray}{\tiny$16$}};
	\node[] at (6.5,4.5) {\color{gray}{\tiny$17$}};
	\node[] at (6.5,3.5) {\color{gray}{\tiny$67$}};
    \node[] at (3.5,2.5) {\color{gray}{\tiny$24$}};
    \node[] at (6.5,2.5) {\color{gray}{\tiny$27$}};
	
	\node at (-.5,6.5) {\color{blue}{\tiny$1$}};
	\node at (-.5,5.5) {\color{blue}{\tiny$2$}};
	\node at (-.5,4.5) {\color{blue}{\tiny$3$}};
	\node at (-.5,3.5) {\color{blue}{\tiny$4$}};
	\node at (-.5,2.5) {\color{blue}{\tiny$5$}};
	\node at (-.5,1.5) {\color{blue}{\tiny$6$}};
	\node at (-.5,.5) {\color{blue}{\tiny$7$}};
	\node at (.5,7.5) {\color{blue}{\tiny$1$}};
	\node at (1.5,7.5) {\color{blue}{\tiny$2$}};
	\node at (2.5,7.5) {\color{blue}{\tiny$3$}};
	\node at (3.5,7.5) {\color{blue}{\tiny$4$}};
	\node at (4.5,7.5) {\color{blue}{\tiny$5$}};
	\node at (5.5,7.5) {\color{blue}{\tiny$6$}};
	\node at (6.5,7.5) {\color{blue}{\tiny$7$}};
\end{scope}
\end{tikzpicture}
\caption{The Rothe diagram of $u=5316274$, where the shaded boxes of $\Rothe(u)$ show the correspondence with the coinversion pairs of $u$. 
}
\label{fig.rothe}
\end{figure}

\begin{definition}\label{defn.rrpd}
Let $u = u_1\cdots u_n\in \fS_n$.  
A \defn{Rothe pipe dream of $u$} is a tiling of an $n\times n$ array of boxes by the six tiles
\begin{center}
\begin{tikzpicture}
\begin{scope}[scale=.8, xshift=0]
    \draw[color=blue!50] (0,0) grid (1,1);
\end{scope}

\begin{scope}[scale=.8, xshift=60]
    \draw[color=blue!50] (0,0) grid (1,1);
    \draw [black, thick] (0,0.5) -- (1,0.5);
\end{scope}

\begin{scope}[scale=.8, xshift=120]
    \draw[color=blue!50] (0,0) grid (1,1);
    \draw [black, thick] (0.5,0) -- (0.5,1);  
\end{scope}

\begin{scope}[scale=.8, xshift=180]
    \draw[color=blue!50] (0,0) grid (1,1);
    \draw [black, thick, domain=180:270] plot ({1+.5*cos(\x)}, {1+.5*sin(\x)});  
\end{scope}

\begin{scope}[scale=.8, xshift=240]
    \draw[color=blue!50] (0,0) grid (1,1);
    \cross[thick](1,1);  
\end{scope}

\begin{scope}[scale=.8, xshift=300]
    \draw[color=blue!50] (0,0) grid (1,1);
    \elbow[thick](1,1); 
\end{scope}
\end{tikzpicture}
\end{center}
(respectively empty, horizontal pipe, vertical pipe, pivot elbow, cross, and elbow)
such that:
\begin{enumerate}
    \item[(i)] there are $n$ pipes that begin at the top of the array and end at the right boundary of the array,
    \item[(ii)] a pivot $(i,u_i)$ is tiled by a pivot elbow,
    \item[(iii)] a box that is below some pivot of $u$ and left of some pivot of $u$ is an empty tile,
    \item[(iv)] a box $(i,j)$ with $j>u_i$ that is below some pivot of $u$ is a horizontal pipe, and
    \item[(v)] a box $(i,j)$ with $j<u_i$ that is above some pivot of $u$ is a vertical pipe.
\end{enumerate}
As a consequence of these conditions, $u$ completely determines the placements of pivot elbows, horizontal pipes, vertical pipes, and empty tiles, and each box in the Rothe diagram of $u$ can be tiled by a cross or an elbow.

A Rothe pipe dream is \defn{reduced} if in addition to the above five conditions it also satisfies:
\begin{enumerate}
    \item[(vi)] two pipes may not cross more than once.
\end{enumerate}

Given a pipe in a reduced Rothe pipe dream, we consider it directed from the northwest to the  southeast.
The \defn{exit permutation} of a reduced Rothe pipe dream $R$ is the order in which the pipes exit the array on the right boundary.
That is, if the pipe beginning in column $j$ exits in row $i$, then $i$ is the \defn{exit row} of pipe $j$.
\end{definition}

If every box in the Rothe diagram of $u$ is tiled by a cross, then the reduced Rothe pipe dream has exit permutation $u$.  
At the other extreme, if every box in the Rothe diagram of $u$ is tiled by an elbow, then the exit permutation is the \defn{longest permutation} $w_0 = n(n-1)\cdots 321$.

Note that if a reduced Rothe pipe dream of $u$ with exit permutation $v$ exists, it is not necessarily unique.
For the class of reduced Rothe pipe dreams of $u$ with exit permutation $v$, we will choose a preferred representative as follows. 

\begin{definition}\label{defn.FPP}
Two pipes in a reduced Rothe pipe dream are said to \defn{touch} if they traverse through the same box via an elbow tile.
Let \defn{$\FPP(u,v)$} denote the reduced Rothe pipe dream of $u$ with exit permutation $v$ such that if a pair of pipes cross, then they do not touch again (southeast of the crossing pipes).
We call this a \defn{flag positroid pipe dream}.
Let $\setFPP(n)$ denote the set of $n\times n$ flag positroid pipe dreams.
\end{definition}
In other words, every cross tile in a flag positroid pipe dream is justified to the southeast direction.

\begin{figure}[ht!]
\begin{tikzpicture}[scale=0.5]
\begin{scope}[xshift=0]
	\vertex[color=gray!40](v1) at (0.5, 2.5) {};
	\vertex[color=gray!40](v2) at (1.5, 1.5) {};
	\vertex[color=gray!40](v3) at (2.5, 0.5) {};
	
    \draw [black,thick,domain=180:270] plot ({1+.5*cos(\x)}, {3+.5*sin(\x)});
    \draw [black,thick,domain=180:270] plot ({2+.5*cos(\x)}, {2+.5*sin(\x)});
    \draw [black,thick,domain=180:270] plot ({3+.5*cos(\x)}, {1+.5*sin(\x)});
    
    \cross[thick](2,3);
    \elbow[thick](3,3);
    \elbow[thick](3,2); 

	\draw[very thin, color=blue!50] (0,0) grid (3,3);
	
	\node at (0.5,3.5) {\color{black}{\tiny$1$}};
	\node at (1.5,3.5) {\color{black}{\tiny$2$}};
	\node at (2.5,3.5) {\color{black}{\tiny$3$}};		
    \node at (3.5,2.5) {\color{black}{\tiny$3$}};
    \node at (3.5,1.5) {\color{black}{\tiny$1$}};    
    \node at (3.5,0.5) {\color{black}{\tiny$2$}};
\end{scope}

\begin{scope}[xshift=180]
	\vertex[color=gray!40](v1) at (0.5, 2.5) {};
	\vertex[color=gray!40](v2) at (1.5, 1.5) {};
	\vertex[color=gray!40](v3) at (2.5, 0.5) {};

    \draw [black,thick,domain=180:270] plot ({1+.5*cos(\x)}, {3+.5*sin(\x)});
    \draw [black,thick,domain=180:270] plot ({2+.5*cos(\x)}, {2+.5*sin(\x)});
    \draw [black,thick,domain=180:270] plot ({3+.5*cos(\x)}, {1+.5*sin(\x)});
    
    \elbow[thick](2,3);
    \elbow[thick](3,3);
    \cross[thick](3,2); 

	\draw[very thin, color=blue!50] (0,0) grid (3,3);
	
	\node at (0.5,3.5) {\color{black}{\tiny$1$}};
	\node at (1.5,3.5) {\color{black}{\tiny$2$}};
	\node at (2.5,3.5) {\color{black}{\tiny$3$}};		
    \node at (3.5,2.5) {\color{black}{\tiny$3$}};
    \node at (3.5,1.5) {\color{black}{\tiny$1$}};    
    \node at (3.5,0.5) {\color{black}{\tiny$2$}};
    
    \node at (1.5, -1) {\footnotesize$\FPP(123,312)$};
\end{scope}   

\end{tikzpicture}
\caption{These two reduced Rothe pipe dreams have $u=123$ and $v=312$.  Only the one on the right is a flag positroid pipe dream.
}
\label{fig.notFPP}
\end{figure}

In analogy with $\Le$-diagrams (Definition~\ref{defn.Le_diagram}), there is an alternative way to visualize a flag positroid pipe dream $\FPP(u,v)$ as a $01$-filling of $\Rothe(u)$ in which each cross tile corresponds with $0$ and each elbow tile corresponds with $1$.
Pivot elbows are marked by an open circle.
(In Proposition~\ref{prop:staircase-equivalence}, we will show that a flag positroid pipe dream has a characterization as a certain pattern-avoiding filling.)

\begin{example}\label{eg.uv1}
The left side of Figure~\ref{fig.FPP} shows the flag positroid pipe dream $\FPP(u,v)$ of $u=5316274$ with exit permutation $v=6735142$.
The pipes are labeled $1$ through $7$ in increasing order across the top of the array, and the exit permutation is the order of the pipes on the right boundary from the top to the bottom.
The right side of the figure shows the equivalent $01$-filling of $\FPP(u,v)$.
\end{example}

\begin{figure}[ht!]
\begin{tikzpicture}
\begin{scope}[scale=.5, xshift=0]
	\vertex[color=gray!40](w1) at (4.5, 6.5) {};
	\vertex[color=gray!40](w2) at (2.5, 5.5) {};
	\vertex[color=gray!40](w3) at (0.5, 4.5) {};
	\vertex[color=gray!40](w4) at (5.5, 3.5) {};
	\vertex[color=gray!40](w5) at (1.5, 2.5) {};
	\vertex[color=gray!40](w6) at (6.5, 1.5) {};
	\vertex[color=gray!40](w7) at (3.5, 0.5) {};

        \draw [black,thick,domain=180:270] plot ({5+.5*cos(\x)}, {7+.5*sin(\x)});
        \draw [black,thick,domain=180:270] plot ({3+.5*cos(\x)}, {6+.5*sin(\x)});
        \draw [black,thick,domain=180:270] plot ({1+.5*cos(\x)}, {5+.5*sin(\x)});
        \draw [black,thick,domain=180:270] plot ({6+.5*cos(\x)}, {4+.5*sin(\x)});
        \draw [black,thick,domain=180:270] plot ({2+.5*cos(\x)}, {3+.5*sin(\x)});
        \draw [black,thick,domain=180:270] plot ({7+.5*cos(\x)}, {2+.5*sin(\x)});
        \draw [black,thick,domain=180:270] plot ({4+.5*cos(\x)}, {1+.5*sin(\x)});
        \draw [black,thick] (0.5,5) -- (0.5, 7);
        \draw [black,thick] (1.5,3) -- (1.5, 4); 
            \draw [black,thick] (1.5,5) -- (1.5, 7);
        \draw [black,thick] (2.5,6) -- (2.5, 7);
        \draw [black,thick] (3.5,1) -- (3.5, 2);
            \draw [black,thick] (3.5,3) -- (3.5, 4);
            \draw [black,thick] (3.5,6) -- (3.5, 7);
        \draw [black,thick] (4,0.5) -- (7,0.5);
        \draw [black,thick] (2,2.5) -- (3,2.5);
            \draw [black,thick] (4,2.5) -- (6,2.5);
        \draw [black,thick] (2,4.5) -- (3,4.5);
            \draw [black,thick] (4,4.5) -- (5,4.5);
        \draw [black,thick] (4,5.5) -- (5,5.5);

        \cross[thick](4,3); 
        \cross[thick](7,3);
        \cross[thick](6,5);
        \cross[thick](7,5);
        \cross[thick](7,7);
        
        \elbow[thick](7,4);
        \elbow[thick](2,5); 
        \elbow[thick](4,5); 
        \elbow[thick](4,6);
        \elbow[thick](6,6);
        \elbow[thick](7,6);
        \elbow[thick](6,7);
        
	\draw[very thin, color=blue!50] (0,0) grid (7,7);

	\node at (.5,7.5) {\color{black}{\tiny$1$}};
	\node at (1.5,7.5) {\color{black}{\tiny$2$}};
	\node at (2.5,7.5) {\color{black}{\tiny$3$}};
	\node at (3.5,7.5) {\color{black}{\tiny$4$}};
	\node at (4.5,7.5) {\color{black}{\tiny$5$}};
	\node at (5.5,7.5) {\color{black}{\tiny$6$}};
	\node at (6.5,7.5) {\color{black}{\tiny$7$}};

        \node at (7.5, 6.5) {\color{black}{\tiny$6$}};
        \node at (7.5, 5.5) {\color{black}{\tiny$7$}};
        \node at (7.5, 4.5) {\color{black}{\tiny$3$}};
        \node at (7.5, 3.5) {\color{black}{\tiny$5$}};
        \node at (7.5, 2.5) {\color{black}{\tiny$1$}};
        \node at (7.5, 1.5) {\color{black}{\tiny$4$}};
        \node at (7.5, 0.5) {\color{black}{\tiny$2$}};
\end{scope}
\begin{scope}[scale=.5, xshift=280]
	\vertex[color=gray!40](w1) at (4.5, 6.5) {};
	\vertex[color=gray!40](w2) at (2.5, 5.5) {};
	\vertex[color=gray!40](w3) at (0.5, 4.5) {};
	\vertex[color=gray!40](w4) at (5.5, 3.5) {};
	\vertex[color=gray!40](w5) at (1.5, 2.5) {};
	\vertex[color=gray!40](w6) at (6.5, 1.5) {};
	\vertex[color=gray!40](w7) at (3.5, 0.5) {};

        \draw[fill, color=gray!10] (3,2) rectangle (4,3);
        \draw[fill, color=gray!10] (6,2) rectangle (7,7);
        \draw[fill, color=gray!10] (1,4) rectangle (2,5);
        \draw[fill, color=gray!10] (3,4) rectangle (4,6);
        \draw[fill, color=gray!10] (5,4) rectangle (6,7);

        \node[] at (4.5,6.5){\footnotesize$\circ$};
        \node[] at (2.5,5.5){\footnotesize$\circ$};
        \node[] at (0.5,4.5){\footnotesize$\circ$};
        \node[] at (5.5,3.5){\footnotesize$\circ$};
        \node[] at (1.5,2.5){\footnotesize$\circ$};
        \node[] at (6.5,1.5){\footnotesize$\circ$};
        \node[] at (3.5,0.5){\footnotesize$\circ$};

        \node[] at (3.5,2.5){$0$};
        \node[] at (6.5,2.5){$0$};
        \node[] at (5.5,4.5){$0$};
        \node[] at (6.5,4.5){$0$};
        \node[] at (6.5,6.5){$0$};

        \node[] at (6.5,3.5){$1$};
        \node[] at (1.5,4.5){$1$};
        \node[] at (3.5,4.5){$1$};
        \node[] at (3.5,5.5){$1$};
        \node[] at (5.5,5.5){$1$};
        \node[] at (6.5,5.5){$1$};
        \node[] at (5.5,6.5){$1$};
                
	\draw[very thin, color=blue!50] (0,0) grid (7,7);
\end{scope}
\end{tikzpicture}
\caption{Two representations of the flag positroid pipe dream $\FPP(u,v)$ with $u=5316274$ and exit permutation $v=6735142$, one as a filling with pipes, the other as a $01$-filling.
}
\label{fig.FPP}
\end{figure}

\begin{remark}
Up to a $90^\circ$ rotation, an FPP can be thought of as a generalization of the pipe dream diagrams defined by Bergeron and Billey~\cite{BB93} in their study of Schubert polynomials.  
The original pipe dreams only allowed tiles in a triangular array (i.e. $u$ is the identity permutation).
An FPP can also be interpreted as a variation of the bumpless pipe dreams defined by Lam, Lee, and  Shimozono~\cite{LLS21} in their study of double Schubert polynomials.
In comparison, the bumpless pipe dreams forbid the occurrence of the (double) elbow tile, but allow the use of the southwest single elbow tile.
Lastly, Knutson and Zinn-Justin~\cite{KZ24} defined generic pipe dreams which, up to rotation, allow the use of the single southwest single elbow tile as well.
\end{remark}

\subsection{Intervals in Bruhat order}\label{subsec.Bruhat}
The \defn{Bruhat order} on $\fS_n$ is the partial order where $u\leq v$ if there is a reduced word for $v$ that contains a reduced word for $u$ as a subword.
While this definition will be useful for us later on, at the moment we will instead use the following characterization of the Bruhat order (see Manivel~\cite[Proposition 2.1.11]{Man01} or Bj\"orner--Brenti~\cite[Theorem 2.6.3]{BB05}).

For $u\in \fS_n$, the \defn{key} of $u$, denoted $K(u)$, is a semistandard filling of the Young diagram of shape $\delta_n=(n-1, \ldots, 2,1)$ such that its $j$-th column consists of $u_1,\ldots, u_{n-j}$ in increasing order.
Then $u\leq v$ in Bruhat order if and only if $K(u) \leq K(v)$ entrywise.
\begin{example}
The keys for $u=5316274$ and $v=6735142$ are
\ytableausetup{centertableaux, smalltableaux}
\[K(u)=
    \begin{ytableau}
    1&1&1&1&3&5\\
    2&2&3&3&5\\
    3&3&5&5\\
    5&5&6\\
    6&6\\
    7
    \end{ytableau}
\qquad
K(v)=
    \begin{ytableau}
    1&1&3&3&6&6\\
    3&3&5&6&7\\
    4&5&6&7\\
    5&6&7\\
    6&7\\
    7
    \end{ytableau}.
\]
Each entry in $K(u)$ is less than or equal to each entry in $K(v)$, so $u\leq v$ in Bruhat order.
\end{example}

\begin{proposition} \label{prop.key}
If there exists a Rothe pipe dream of $u$ with exit permutation $v$, then $u\leq v$ in Bruhat order.
\end{proposition}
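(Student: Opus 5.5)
We will prove the statement through the key criterion: $u\leq v$ if and only if $K(u)\leq K(v)$ entrywise. Equivalently, for every $k$ and every $j$, the $j$-th smallest element of $\{u_1,\ldots,u_k\}$ is at most the $j$-th smallest element of $\{v_1,\ldots,v_k\}$. So we fix $k\in[n]$ and compare the sets $U_k=\{u_1,\ldots,u_k\}$ and $V_k=\{v_1,\ldots,v_k\}$. Here $V_k$ is the set of pipe labels (columns of origin) of the pipes exiting the right boundary in rows $1,\ldots,k$.

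The first step is to record where each pipe enters row $k+1$. By conditions (ii)–(v), pipe $u_i$ in the all-cross configuration travels straight down column $u_i$ to the pivot $(i,u_i)$ and then straight right. Now cut the array by the horizontal line between rows $k$ and $k+1$. Below the cut, every box in a column $u_i$ with $i\le k$ and row $>k$ is either empty or a horizontal pipe, so no pipe crosses the cut in those columns. In each column $c\notin U_k$, the box $(k+1,c)$ carries a vertical segment or is a Rothe/pivot box, and exactly one pipe crosses the cut there. Hence exactly $n-k$ pipes cross the cut, in the columns $[n]\setminus U_k$, and the remaining $k$ pipes exit in rows $1,\ldots,k$. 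This is consistent with $|V_k|=k$.

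The second step is a monotonicity statement. Pipes move only down or right, so the pipe starting in column $c$ crosses the cut at some column $\ge c$. Let $C_k=[n]\setminus V_k$ be the set of labels of pipes crossing the cut. We then have an injection $C_k\to[n]\setminus U_k$ that sends a label to its crossing column and satisfies $\text{label}\le\text{column}$. Since the crossing columns are exactly $[n]\setminus U_k$, this gives a bijection $C_k\to [n]\setminus U_k$ with $c\le f(c)$. By Hall's marriage theorem, or simply by sorting, this implies that the $j$-th smallest element of $C_k$ is at most the $j$-th smallest element of $[n]\setminus U_k$, for every $j$. Taking complements in $[n]$ reverses the inequality: the $j$-th smallest element of $U_k$ is at most the $j$-th smallest element of $V_k$. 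This is the elementary fact that $A\le B$ in Gale order if and only if $[n]\setminus B\le [n]\setminus A$.

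The main obstacle is the first step. We must check carefully from conditions (iii)–(v) which boxes in row $k+1$ admit a vertical crossing of the cut. The key observation is that column $u_i$ with $i\le k$ lies below the pivot $(i,u_i)$, so its boxes there are empty or horizontal. In every other column $c$, the pipe entering column $c$ from the top has not yet turned at a pivot. Its pivot lies in row $u^{-1}_c>k$, so the box $(k+1,c)$ is a vertical pipe, a Rothe box, or the pivot itself, and each of these has a pipe entering from above. This holds whether each Rothe box is a cross or an elbow, so the argument does not require reducedness. Since $k$ is arbitrary, $K(u)\le K(v)$, and therefore $u\le v$.
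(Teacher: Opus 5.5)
Your proof is correct, and it takes a genuinely different route from the paper.

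The paper argues by induction on the number of elbows. It starts from the all-cross tiling, whose exit permutation is $u$, and converts crosses to elbows one at a time. Converting a crossing of pipes $a<b$ changes the exit permutation $u'$ to $(a\,b)u'$, and the paper checks on keys that this moves up in Bruhat order. That step uses reducedness, to know that $a$ is the horizontal pipe at the crossing and exits above $b$. Reducedness is not part of the proposition as stated, and the inductive step tacitly needs every intermediate tiling to stay reduced.

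Your cut argument avoids both issues. For each $k$ you identify:
\begin{itemize}
\item the labels crossing the line between rows $k$ and $k+1$ as $[n]\setminus V_k$;
\item the crossing columns as $[n]\setminus U_k$.
\end{itemize}
Monotonicity of pipes then gives a label-to-column bijection $f$ with $c\le f(c)$. Passing to complements in Gale order yields exactly the key criterion.

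This argument is non-inductive. It uses only the forced tiles (ii)--(v) and the fact that both cross and elbow tiles carry a segment through their top and bottom edges. It therefore proves the statement as literally stated, including for non-reduced Rothe pipe dreams. It is essentially the converse of the observation the paper makes in Lemma~\ref{lem.validRPD}. There, the columns $\{u_i,\ldots,u_n\}$ and labels $\{v_i,\ldots,v_n\}$ of the pipes entering row $i-1$ from below are compared with complements of key columns, to deduce from $u\leq v$ that each such pipe's column is at least its label. You run that comparison in the other direction.

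What the paper's approach gives in addition is an explicit chain of transpositions $v=r_m\cdots r_1u$ indexed by the elbows, which it uses right after the proposition. Your argument yields only the inequality $u\leq v$.

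One small wording point. In your last paragraph, the claim that the pipe entering column $c$ from the top has not yet turned at a pivot is not literally accurate once Rothe boxes above row $k+1$ carry elbows, since pipes are rerouted there. What you actually need is that every box $(i,c)$ with $i\le u^{-1}_c$ is a vertical pipe, a Rothe box, or the pivot, so that it has a segment through its top edge. That statement is correct and suffices.
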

\begin{proof}
Let $R$ be a Rothe pipe dream of $u$.
We proceed by induction on the number of elbows in $D$.
If every box in $\Rothe(u)$ is tiled by a cross, then its exit permutation is $u$.
Otherwise, suppose $R$ has exit permutation $u'$ with $u\leq u'$ in Bruhat order, and the pipes $a$ and $b$ cross in $R$.  
Assume without loss of generality that $a<b$.
Since $R$ is reduced, then necessarily the horizontal pipe is $a$ and the vertical pipe is $b$.
Replacing this cross tile with an elbow yields a flag positroid pipe dream of $u$ with exit permutation $v=(a\,b)u'$. 
If $u_j'=a$ and $u_k'=b$, then the effect of the exchanging the tile in $R$ on the key of $u'$ is that from columns $n-j+1$ to $n-i$, the $a$'s are replaced by $b$'s, giving the key of $v$.
Since $a<b$, and keys are semistandard fillings, it follows that $K(u')\leq K(v)$ entrywise, so $u'\leq v$.
By induction, we have $u \leq u' \leq v$ in Bruhat order.
\end{proof}

A consequence of Proposition~\ref{prop.key} is that if there is a Rothe pipe dream of $u$ with exit permutation $v$ containing $m$ elbows, then $v = r_m \cdots r_1 u$, where the $r_i$ are the transpositions corresponding to the coinversions of $u$ that are tiled by elbows in the pipe dream, traversed along rows from right to left and bottom to top.
Continuing Example~\ref{eg.uv1}, we see that
\[v= (5\,6)(3\,4)(3\,6)(3\,7)(1\,2)(1\,4)(6\,7)\cdot u.\]

Conversely, we now show that if $u\leq v$ in Bruhat order then we can construct a flag positroid pipe dream of $u$ with exit permutation $v$ via the following algorithm.

\begin{definition}[Algorithm for producing a flag positroid pipe dream for $u\leq v$]\label{defn.RPD_algo}
Let $u,v\in\fS_n$ with $u\leq v$ in Bruhat order. 
Begin with the Rothe diagram of $u$ as a subset of boxes in an $n\times n$ array.
The tiling of the boxes in the array which are not in $\Rothe(u)$ is completely specified by Definition~\ref{defn.rrpd}.
\begin{enumerate}
    \item[(i)] Set $i=n$ and $\col = (0,\ldots, 0) \in \mathbb{Z}_{\geq0}^n$. 
    \item[(ii)] If $i>0$, set $x=v_i$. Otherwise, stop.
    \item[(iii)] If all boxes in the $i$-th row of $\Rothe(u)$ are tiled, then set $\col(u_i)=x$, $i=i-1$, and go to (iii).
    Otherwise, let $j$ be the column index of the rightmost untiled box in the $i$-th row of $\Rothe(u)$. 
    If $(x, \col(j)) \in \Coinv(v)$, then tile box $(i,j)$ with a cross.
    If $(x, \col(j)) \notin \Coinv(v)$, then tile box $(i,j)$ with an elbow and switch the values of $x$ and $\col(j)$.
    \item[(iv)] Repeat (iii).
\end{enumerate}
\end{definition}

Simply put, the boxes of $\Rothe(u)$ are tiled row by row from bottom to top and from right to left.  
At each step of the algorithm, the untiled box $(i,j)$ has incoming pipes from the right boundary (say, labeled $a$) and from the bottom boundary (say, labeled $b$).
If $(a,b) \in \Coinv(v)$ (with $a<b$ implicitly), then tile the box with a cross.
Otherwise, tile the box with an elbow.

After all boxes in the $i$-th row of the pipe dream have been tiled, the $j$-th coordinate of the vector $\col$ records the label of the pipe that is entering the box $(i-1,j)$ from below (and if $\col(j)=0$ then there is not yet a pipe entering the box $(i-1,j)$ from below).
Thus in order to verify that the algorithm at minimum returns a Rothe pipe dream of $u$ with exit permutation $v$, we need to verify the following.

\begin{lemma} \label{lem.validRPD}
Let $u,v\in\fS_n$ with $u\leq v$.   When the algorithm in Definition~\ref{defn.RPD_algo} terminates, the vector $\col = (1,2,\ldots, n)$.
\end{lemma}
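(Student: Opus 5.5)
The plan is to track the vector $\col$ row by row and prove an invariant strong enough to force $\col=(1,2,\ldots,n)$ when $i$ reaches $0$. Call a column $j$ \emph{active after row $i$} if $j\in\{u_i,u_{i+1},\ldots,u_n\}$. First I will check from Definition~\ref{defn.rrpd} that, while row $i$ is processed, the boxes of $\Rothe(u)$ in row $i$ are exactly the boxes $(i,j)$ with $j>u_i$ and $j$ active after row $i+1$. Indeed $(i,j)\in\Rothe(u)$ means $u_i<j$ and $u^{-1}_j>i$. Every other column $j>u_i$ contains a horizontal pipe in row $i$, and $\col(j)=0$ there. So pipe $x=v_i$ enters from the right, meets the active columns to the right of $u_i$ in decreasing order of column index, and is finally carried into column $u_i$ by the pivot elbow.

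The invariant I will prove by downward induction on $i$ has two parts:
\begin{enumerate}
\item[(a)] the nonzero entries of $\col$ sit exactly in the columns active after row $i$, and they form the set $\{v_i,\ldots,v_n\}$;
\item[(b)] these entries increase from left to right.
\end{enumerate}
Once this holds, the case $i=1$ finishes the proof. Every column is then active, and the entries are $\{v_1,\ldots,v_n\}=[n]$ listed in increasing order, so $\col=(1,\ldots,n)$. Part (a) is immediate from the algorithm, because labels are only swapped and the new label $v_i$ is placed in column $u_i$.

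For (b), suppose the current box is $(i,j)$. The value $\col(j)$ is some $v_k$ with $k>i$. Hence $(x,\col(j))\in\Coinv(v)$ exactly when $x<\col(j)$, so the algorithm crosses when $x<\col(j)$ and places an elbow and swaps when $x>\col(j)$. Thus a row acts like a row insertion read from right to left: $x$ is replaced by the minimum of itself and the labels it has passed. I will check that after each swap the labels to the right of $u_i$ remain increasing. The new $\col(j)$ is the old $x$, which is larger than the old $\col(j)$ and smaller than every label to its right that $x$ had already passed. After the row is finished, the labels in active columns to the right of $u_i$ are still increasing and all exceed the final $x$, and the final $x$ is written in column $u_i$.

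The remaining, and I expect hardest, point is that the final $x$ exceeds every label in an active column to the left of $u_i$. This is the only place where $u\le v$ can enter, since without it the invariant clearly fails. I would derive it from the key criterion, applied in its complemented form to suffixes: $u\le v$ iff for every $i$ the sorted $\{v_i,\ldots,v_n\}$ is entrywise at most the sorted $\{u_i,\ldots,u_n\}$. Let $r=\#\{k>i : u_k<u_i\}$, the number of active columns to the left of $u_i$. Compare the sorted label sets at stages $i$ and $i+1$, which by (a) and (b) are read off from $\col$. I would then show that the label landing in column $u_i$ has rank exactly $r+1$ among $\{v_i,\ldots,v_n\}$, and that this is forced by the suffix dominance inequalities at stages $i$ and $i+1$. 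If a direct argument resists, the fallback is to strengthen the invariant to the form
\[
\col(c)\le c \quad\text{for every active column } c,
\]
and prove this alongside (b) using the same dominance inequalities.
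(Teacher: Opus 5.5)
There is a genuine gap: invariant (b) is false even when $u\le v$. So the step you flag as hardest is not just hard but untrue: you need the label placed in column $u_i$ to have rank $r+1$ in $\{v_i,\dots,v_n\}$, equivalently to exceed every label in an active column left of $u_i$, and no use of the suffix-dominance inequalities can establish this.

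Take $n=3$, $u=132$, $v=312$. Then $u=s_2\le s_2s_1=v$ and $\Rothe(u)=\{(1,2),(1,3)\}$. Rows $3$ and $2$ contain no boxes, so the algorithm sets $\col(2)=v_3=2$ and then $\col(3)=v_2=1$. After row $2$ we have $\col=(0,2,1)$; here $r=1$, but $v_2=1$ has rank $1$. In row $1$, box $(1,3)$ gets an elbow (swap, $x=1$), box $(1,2)$ gets a cross since $(1,2)\in\Coinv(v)$, and $\col=(1,2,3)$ at the end. So the lemma holds while (b) fails. The reason is that $\col$ records the pipes crossing the line between two rows, and these need not be sorted because they may still cross higher up (here pipes $1$ and $2$ cross at $(1,2)$).

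Your fallback keeps (b), so it inherits the problem. The paper's written proof asserts exactly the same monotonicity ("the nonzero entries of $\col$ are always in increasing order"), so it does not supply this step either. A smaller point: your equivalence ``$(x,\col(j))\in\Coinv(v)$ iff $x<\col(j)$'' is only justified while $x=v_i$; after a swap, $x$ is some $v_k$ with $k>i$.

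A route that works is to track the labels along the moving cut as a word $\omega$. Let $\omega$ consist of $v_1,\dots,v_{i-1}$, then the entries of $\col$ in the columns of row $i$ already processed, then $x$, then the entries of $\col$ in the remaining active columns, each block read right to left.
\begin{itemize}
\item $\omega$ starts as $v$ and ends as $\col(n)\cdots\col(1)$.
\item The $h$-th box from the right in row $i$ acts on positions $i+h-1,i+h$; this is the letter $s_{i+h-1}$ of Lemma~\ref{lem.ux}. A cross transposes these two letters, while an elbow or a pivot step leaves $\omega$ unchanged.
\item A swap only occurs at an adjacent pair $a<b$ with $a$ before $b$ in $v$. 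By induction, whenever $a<b$ and $a$ precedes $b$ in $\omega$, the same holds in $v$. Hence the rule is exactly ``swap iff ascent''.
\item So the final word is the Demazure product $v\star s_{i_1}\star\cdots\star s_{i_m}$, where $s_{i_1}\cdots s_{i_m}$ is the reduced word for $u^{-1}w_0$ from Lemma~\ref{lem.ux}.
\item By the lifting property, $w\le w'$ implies $w\le w'\star s$ and $ws\le w'\star s$. Inductively, $v$ times the product of any subword is at most the final word.
\item Since $u\le v$ gives $v^{-1}w_0\le u^{-1}w_0$, the subword property provides a subword for $v^{-1}w_0$. Hence $w_0=v\cdot v^{-1}w_0\le\omega$, which forces $\omega=w_0$, i.e.\ $\col=(1,\dots,n)$.
\end{itemize}
Your pointwise bound $\col(c)\le c$ is in fact true at every stage and by itself suffices at $i=1$, but it needs a proof that does not pass through (b).
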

\begin{proof}
We will show that at the end of each iteration of the algorithm, the nonzero entries of $\col$ are always in increasing order, and if $\col(j) = a \neq0$ then $j\geq a$ for each $j$.

At the end of the $i$-th iteration of the algorithm, the $n-i+1$ nonzero entries of $\col$ are $\{v_i, \ldots, v_n\}$, and this is the set of labels of the pipes incoming to the $(i-1)$-st row of the array from below.
Likewise, the indices of the nonzero entries of $\col$ are $\{u_i, \ldots, u_n\}$, and this indexes the boxes of $\Rothe(u)$ which must be filled in the $(i-1)$-st row.
Observe that $\{v_i, \ldots, v_n\}$ is the complement of the $(n-i+1)$-st column of the key $K(v)$, and $\{u_i,\ldots, u_n\}$ is the complement of the $(n-i+1)$-st column of $K(u)$.
Since $u\leq v$ in Bruhat order, then $K(u) \leq K(v)$ entrywise, which implies that if we order the set $\{v_i, \ldots, v_n\}$ in increasing order to obtain $a_1< \cdots < a_{n-i+1}$ and order the set $\{u_i, \ldots, u_n\}$ in increasing order to obtain $p_1< \cdots < p_{n-i+1}$, then $p_k\geq a_k$ for each $k$.
Thus, the index of a nonzero entry of $\col$ is greater than or equal to its value.

We now show by induction that the nonzero entries of $\col$ are increasing.
Beginning with $i=n$, the only nonzero entry of $\col$ is $\col(u_n) = v_n$, so the claim holds.
By the induction hypothesis, the $n-i+1$ nonzero entries of $\col$ are in increasing order at the end of the $i$-th iteration of the algorithm.
In the next iteration, $x=v_{i-1}$ is compared against each nonzero entry of $\col$ from right to left: for $j> u_{i-1}$, if $0\neq \col(j)> v_{i-1}$ then $\col(j)$ remains unchanged, otherwise if $0\neq \col(j) < v_{i-1}$ then $\col(j)$ is updated to $v_{i-1}$ and the remaining nonzero entries of $\col$ are successively shifted by one position to the left until $\col(u_{i-1})$ is assigned a value.
Effectively, $v_{i-1}$ is inserted into $\col$ so that its nonzero entries remain in increasing order.

Altogether, the pipes labeled $\{v_{i-1},\ldots, v_n\}$ enter the next row from below in increasing order such that the pipe labeled $v_k$ has not moved past the $k$-th column, so the result follows when the algorithm terminates.
\end{proof}

The following result shows that the Rothe pipe dream produced by the algorithm from Definition~\ref{defn.RPD_algo} is reduced and has all cross tiles justified to the southeast direction and so is in fact a flag positroid pipe dream.
\begin{proposition} \label{prop.algo}
Given $u\leq v$ in Bruhat order, the algorithm in Definition~\ref{defn.RPD_algo} constructs the flag positroid pipe dream $\FPP(u,v)$.   
\end{proposition}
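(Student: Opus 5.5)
By Lemma~\ref{lem.validRPD}, the algorithm already outputs a Rothe pipe dream of $u$ with exit permutation $v$: the pipes exit in the order $v_1,\dots,v_n$ by construction, and they enter at the top in order $1,\dots,n$. Two properties remain to be checked. The output must be \emph{reduced} (no two pipes cross twice), and once two pipes cross they must never touch again. By definition of $\FPP(u,v)$, these are exactly the conditions that single out the preferred representative. The plan is to read both properties off the rule in step (iii). Throughout, I track the pipes backwards from their exits, that is, from the southeast toward the northwest, since that is the order in which the algorithm tiles.

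The key observation concerns a box $(i,j)$ where pipe $a$ enters from the right and pipe $b$ enters from below. The rule is: cross exactly when $(a,b)$ or $(b,a)$ is a coinversion of $v$, i.e.\ the smaller label exits above the larger. I record the invariant from the proof of Lemma~\ref{lem.validRPD}: after each row is processed, the labels in $\col$ increase from left to right. Within a row, the label $x$ carried leftward interacts with the column labels from right to left. Each step either keeps $x$ (cross) or swaps $x$ with a smaller column label (elbow). Hence $x$ only decreases as it moves left, which is the insertion behaviour described in the lemma.

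\emph{Reducedness.} Suppose pipes $a<b$ cross at some box. Reading the diagram from the exits backward, the crossing occurs at the box where the two pipes first meet (the southeastmost common box). Beyond that box, in the northwest direction, the relative order of $a$ and $b$ is the sorted order that holds at the top boundary, so they cannot cross again there. The precise form of this claim is the following: in the partially tiled diagram, the labels crossing any ``antidiagonal cut'' (the boundary of the already-tiled region) appear in an order in which $a$ precedes $b$ unless $(a,b)$ is an inversion that has not yet been resolved by an elbow. A cross leaves $a,b$ in the $v$-order, and this order is already sorted whenever $(a,b)\in\Coinv(v)$. So a second meeting of $a$ and $b$ would present the same pair with the same coinversion status. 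The rule would then again produce a cross, but that contradicts the sorted relative position the two pipes have after their first meeting.

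\emph{No touching after a crossing.} I expect this to be the main obstacle. The cleanest route is to use the rule directly. Suppose $a$ and $b$ cross at a box $B$ and touch at a box $B'$ southeast of $B$. Because the algorithm processes $B'$ before $B$, the pair $(a,b)$ received the same coinversion test at both boxes. An elbow at $B'$ means $(a,b)\notin\Coinv(v)$, while a cross at $B$ means $(a,b)\in\Coinv(v)$, which is a contradiction. This shows both that crosses are southeast-justified and, together with the ordering invariant, that two pipes never cross twice. The careful part is verifying that the pair entering $B$ really is the same unordered pair $\{a,b\}$ as at $B'$, so the test is on the same labels. This works because labels are preserved along pipes; if needed, I would phrase everything via the label vector $\col$ and the carried label $x$ to make it rigorous. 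Finally, uniqueness of the FPP representative (implicit in the notation $\FPP(u,v)$) follows because, for a reduced diagram with southeast-justified crosses, each box's tile is forced by whether its two incoming pipes (determined by the tiling to the southeast) have already been placed in $v$-order. So any such diagram coincides with the algorithm's output.
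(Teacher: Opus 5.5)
There is a genuine gap, and it comes from your restatement of the rule. Step (iii) tests the \emph{ordered} pair $(x,\col(j))$, with the label entering from the right listed first. Elements of $\Coinv(v)$ are pairs $(v_p,v_q)$ with $p<q$ and $v_p<v_q$, so a cross requires that the label entering from the right be the \emph{smaller} one.

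Your symmetric rule (``$(a,b)$ or $(b,a)$ is a coinversion'') defines a different algorithm, and that algorithm is not reduced. Take $u=123$, $v=312$. The algorithm crosses pipes $1$ and $2$ at box $(2,3)$. It then brings them together again at box $(1,2)$ with $x=2$, $\col(2)=1$. Since $\{1,2\}$ is a coinversion pair of $312$, your rule would place a second cross there, and the top labels would come out as $2,1,3$. The actual rule gives the elbow of Figure~\ref{fig.notFPP}, because $(2,1)\notin\Coinv(v)$.

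So second meetings after a crossing really do occur. Your reducedness paragraph derives a contradiction from ``the rule would again produce a cross'', which is ruling out the wrong thing. The missing idea is orientation:
\begin{itemize}
\item Suppose $a<b$ first cross at $(i,j)$. Then $a$ entered from the right, so afterwards (tracing backwards) $a$ continues left and $b$ continues up.
\item Backward paths move only up and left, so by planarity, at any later common box $(k,l)$ (necessarily $k<i$, $l<j$), $a$ enters from below and $b$ from the right.
\item The tested pair is then $(b,a)$ with $b>a$, which is never a coinversion, so the tile is an elbow.
\item An elbow preserves this orientation, so the same holds at every subsequent meeting.
\end{itemize}
This is the paper's argument. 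Note also that your reducedness paragraph assumes the crossing happens at the pipes' first meeting. That is exactly the second property, which you only prove afterwards.

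Your no-touching argument is essentially the paper's, and it does go through. With the correct rule, however, ``same unordered pair'' is not enough; you need the same orientation. Take the crossing box $B$ and the last meeting $B'$ before it in processing order; $B'$ is an elbow, by reducedness. The pipe entering $B'$ from below leaves it to the left, so it stays on the southwest side and again enters $B$ from below. Hence the ordered pair tested at $B$ is the one tested at $B'$, and an elbow at $B'$ with a cross at $B$ is a contradiction.

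Finally, do not rely on the invariant that the nonzero entries of $\col$ increase after each row, because it fails in general. For $u=2413$, $v=4231$ (Figure~\ref{fig.OKgamma}), after processing rows $4,3,2$ one has $\col=(1,0,3,2)$. Nothing in the corrected argument needs it. Your uniqueness paragraph is not required for this proposition.
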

\begin{proof}
The algorithm constructs a pipe dream by extending pipes labeled $v_1,\ldots, v_n$ from the right boundary of the array to the top of the array. 
When the algorithm terminates, the vector $\col$ records the labels of the pipes at the top of the array in order from left to right, so it follows from Lemma~\ref{lem.validRPD} that the algorithm returns a Rothe pipe dream $D$ of $u$ with exit permutation $v$.
It remains to show that no pair of pipes in $D$ cross more than once, and that all cross tiles are justified to the southeast direction.

Let $a<b$. 
Suppose the pair of pipes labeled $a$ and $b$ first cross in box $(i,j)$ in $D$, and as the pipe dream algorithm iterates, the same pair of pipes are about to traverse through the box $(k,l)$, with $k<i$ and $l<j$ necessarily.
Since the pipes already crossed in box $(i,j)$, then the pipe labeled $a$ must enter the box $(k,l)$ from below and the pipe labeled $b$ must enter the box $(k,l)$ from the right, meaning $x=b > a = \col(l)$ is not a coinversion of $v$, and therefore $(k,l)$ is tiled with an elbow.
Therefore, any pair of pipes in $D$ cross at most once.

Lastly, suppose for contradiction that the pair of pipes labeled $a$ and $b$ cross in box $(i,j)$, but there is a box $(k,l)$ that is southeast of $(i,j)$ in which the same pair of pipes touch (in the sense of Definition~\ref{defn.FPP}).  
This is impossible since the algorithm tiles $\Rothe(u)$ row by row from bottom to top and right to left, and the configuration of the pipes in the box $(k,l)$ would have dictated that a cross tile be placed there.
\end{proof}

Given a flag positroid pipe dream $\FPP(u,v)$, we saw as a consequence of Proposition~\ref{prop.key} how the positions of the elbow tiles give an expression for $v$ in terms of multiplying $u$ on the left by transpositions. 
It turns out that the cross tiles also play an important role.

For $u,v\in \fS_n$, define $x=u^{-1}w_0$ and $y=v^{-1}w_0$.
Then $u\leq v$ if and only if $u^{-1} \leq v^{-1}$ if and only if $x=u^{-1}w_0 \geq v^{-1}w_0 = y$, and this means there exists a reduced word for $y$ that is a subword of a reduced word for $x$.
We next show how to obtain these reduced expressions.

For $i=1,\ldots, n-1$, let $s_i$ denote the simple transposition $(i\  i+1)$ in $\fS_n$. 
\begin{lemma} \label{lem.ux}
Let $u\in\fS_n$.
To each box in $\Rothe(u)$ we associate a simple transposition in the following way.  
If a box is in the $i$-th row and the $h$-th column from the right (with respect to the Rothe diagram), we associate $s_{i+h-1}$ to it.
Let $s_{i_1}, s_{i_2}, \ldots, s_{i_m}$ be the sequence of the associated transpositions in $\Rothe(u)$, ordered row by row from bottom to top and from right to left.
Define $x=s_{i_1} s_{i_2} \cdots s_{i_m}$.
Then $ux=w_0$, and the expression for $x$ is reduced.
\end{lemma}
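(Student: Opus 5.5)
The plan is to compute the product $ux$ directly, one row of $\Rothe(u)$ at a time, using the fact that right multiplication by $s_k$ swaps the entries in positions $k$ and $k+1$ of a permutation in one-line notation. Reducedness will then follow from a length count.

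Fix a row $i$ and let $c_i$ be the number of boxes of $\Rothe(u)$ in row $i$. By definition, $(i,j)\in\Rothe(u)$ means $j>u_i$ and $u^{-1}_j>i$. So $c_i=\#\{k>i \mid u_k>u_i\}$, the number of coinversions with first entry $u_i$. I read ``the $h$-th column from the right (with respect to the Rothe diagram)'' as the $h$-th box of $\Rothe(u)$ in row $i$, counting from the right. Reading row $i$ from right to left then gives the word $w^{(i)}=s_i s_{i+1}\cdots s_{i+c_i-1}$, and $x=w^{(n)}w^{(n-1)}\cdots w^{(1)}$, with $w^{(n)}$ empty. Right multiplication by $w^{(i)}$ takes the entry in position $i$ and slides it to position $i+c_i$. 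The entries in positions $i+1,\dots,i+c_i$ each move one step left, and nothing else changes.

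The main step is the following claim, proved by downward induction on $i$. After multiplying $u$ on the right by $w^{(n)}\cdots w^{(i+1)}$, the first $i$ entries are still $u_1,\dots,u_i$, and positions $i+1,\dots,n$ hold $\{u_{i+1},\dots,u_n\}$ in decreasing order. The base case $i=n-1$ is trivial.

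For the inductive step, apply $w^{(i)}$. The entries after position $i$ are decreasing, so exactly $c_i$ of them exceed $u_i$, namely the larger ones among $u_{i+1},\dots,u_n$, and these occupy positions $i+1,\dots,i+c_i$. Sliding $u_i$ past precisely these entries inserts it into the decreasing sequence in its correct place. Positions $i,\dots,n$ therefore now hold $\{u_i,\dots,u_n\}$ decreasingly. At $i=1$ the whole one-line word is decreasing, so $ux=w_0$. I expect this bookkeeping to be the only delicate point. One must check carefully that the indexing convention (box in row $i$ and $h$-th from the right gives $s_{i+h-1}$) matches the order in which the generators act, so that the word really realizes this insertion rather than some other cycle.

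For reducedness, the word for $x$ has $m=|\Rothe(u)|=|\Coinv(u)|=\binom n2-\ell(u)$ letters. Since $x=u^{-1}w_0$, we have $\ell(x)=\ell(w_0)-\ell(u^{-1})=\binom n2-\ell(u)=m$. So the expression has length equal to $\ell(x)$ and is reduced. Alternatively, each letter in the insertion above swaps an adjacent pair in increasing order into decreasing order, so it raises the inversion number by exactly one; this also shows the word is reduced.
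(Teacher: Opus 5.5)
Your proposal is correct and follows essentially the same approach as the paper: row by row from the bottom up, right multiplication by the word of row $i$ inserts $u_i$ into the already-decreasing tail, and reducedness comes from the letter count matching the number of coinversions. Your explicit computation $\ell(x)=\ell(w_0)-\ell(u)=\binom{n}{2}-\ell(u)=m$ spells out the reducedness step that the paper only states in one line.
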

\begin{proof}
Recall that there is a one-to-one correspondence between the boxes in $\Rothe(u)$ and the coinversions of $u$. 
Starting with the lowest nonempty row of $\Rothe(u)$ (say, the $r$-th row of the array), applying the associated sequence of transpositions in that row to $u$ from right to left has the effect of sorting the entries $u_r, u_{r+1}, \ldots, u_{n}$ into decreasing order.
Repeating this process row by row from the bottom to the top then sorts $u_1,\ldots, u_n$ into decreasing order, so $ux=w_0$.
Lastly, there is one simple transposition for each coinversion of $u$, so the given factorization for $x$ is reduced.
\end{proof}

\begin{lemma} \label{lem.vy}
Consider the flag positroid pipe dream $\FPP(u,v)$.
Order the $m$ boxes of $\Rothe(u)$ row by row from bottom to top and from right to left, and suppose the cross tiles in $\FPP(u,v)$ occur in the positions $1\leq p_1< p_2< \cdots <p _l \leq m$ with respect to the linear ordering of the boxes.
Define $y= s_{i_{p_1}} \cdots s_{i_{p_l}}$.  Then $vy=w_0$ and the expression for $y$ is reduced.
\end{lemma}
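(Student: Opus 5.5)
The approach is to mimic the proof of Lemma~\ref{lem.ux}, but to read the simple transpositions as acting on \emph{positions} rather than on values, and to track the pipe labels row by row as the algorithm of Definition~\ref{defn.RPD_algo} does.

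The guiding observation is a wiring-diagram interpretation of Lemma~\ref{lem.ux}. Process the boxes of $\Rothe(u)$ in the linear order (bottom to top, right to left), and maintain a word recording the labels of the pipes currently at the "frontier". Concretely, after finishing rows $n,\dots,i$, the frontier consists of the pipes exiting rows $1,\dots,i-1$ on the right boundary, followed by the pipes entering row $i-1$ from below, read right to left. Initially, before any box is processed, this word is $v_1v_2\cdots v_n$ read along the right boundary. Each box $(i,j)$ in the $h$-th column from the right of row $i$ sits at frontier positions $i+h-1$ and $i+h$: the incoming pipe from the right and the incoming pipe from below are adjacent there, which is exactly why Lemma~\ref{lem.ux} attaches $s_{i+h-1}$ to it.

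With this picture, the effect of the tiles is as follows.
\begin{enumerate}
\item A cross exchanges the two labels in those adjacent positions, i.e.\ it right-multiplies the current word by $s_{i+h-1}$.
\item An elbow leaves the word unchanged, since each pipe simply turns.
\item The pivot elbow and the straight pipes shift which positions count as "row $i$", and this is consistent with the indexing $i+h-1$.
\end{enumerate}

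Once every box is processed, the frontier word is the sequence of labels along the top boundary read right to left. By Lemma~\ref{lem.validRPD} this word is $n(n-1)\cdots1=w_0$. Hence $v\,s_{i_{p_1}}\cdots s_{i_{p_l}}=w_0$, that is, $vy=w_0$.

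I would validate this frontier bookkeeping first in the case where every box is a cross. There $v=u$ and $y=x$, so the claim recovers Lemma~\ref{lem.ux}. This check fixes the left/right conventions, in particular that multiplying on the right permutes positions.

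Reducedness then follows by length counting. The word for $y$ has $l$ letters. Each cross is a crossing of two pipes $a<b$, and by reducedness (condition (vi)) the pair meets in at most one cross. A crossing pair $a<b$ exits in the order $b$ above $a$, since the pipes end with the larger label higher up; in the proof of Proposition~\ref{prop.key}, $a$ is the horizontal pipe, which ends up lower. So each cross records a pair that is a coinversion of $w_0$ but in the "wrong" order for $v$, i.e.\ an inversion of $v$ relative to the descending order. Pairs that never cross keep their relative order from top to bottom, and so form the coinversions of $v$. Therefore
\[
l=\binom{n}{2}-|\Coinv(v)| = \ell(w_0)-\ell(v)=\ell(v^{-1}w_0),
\]
and any expression of $y$ with exactly $\ell(y)$ letters is reduced. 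Alternatively, one can count elbows: $m-l$ elbows gives $\ell(v)=\ell(u)+(m-l)$ from the transposition product after Proposition~\ref{prop.key}, together with $m=\ell(w_0)-\ell(u)$.

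The main obstacle is making the frontier bookkeeping precise. One must check carefully that the box in the $h$-th column from the right of row $i$ really corresponds to frontier positions $i+h-1,i+h$, and that pivot elbows and the empty, vertical and horizontal tiles do not disturb the positions between successive boxes. I would handle this by induction on rows, using the inductive description of $\col$ from the proof of Lemma~\ref{lem.validRPD}.
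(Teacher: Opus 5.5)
Your frontier bookkeeping for $vy=w_0$ is correct, and it is a precise version of the paper's ``sort $v_1,\ldots,v_n$ into decreasing order'' argument. The position count works for the following reason. For a Rothe box $(i,j)$, the top edges of row $i$ to the right of column $j$ that carry a pipe are exactly the top edges of the Rothe boxes to the right of $(i,j)$. So the box sits at frontier positions $i+h-1$ and $i+h$. A cross then swaps the two labels there, every other tile acts trivially, and the final word is $n\cdots 21$.

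The reducedness paragraph has the crossing direction backwards. When pipes $a<b$ cross, $a$ is the horizontal pipe, so it continues east and exits \emph{above} $b$. For instance, in $\FPP(123,312)$ pipes $1$ and $2$ cross at $(2,3)$; pipe $1$ exits in row $2$ and pipe $2$ exits in row $3$. Since each pair crosses at most once, the crossing pairs are exactly the coinversions of $v$, and the non-crossing pairs are the inversions.

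As a result, your displayed chain is wrong in two places that happen to cancel. First, $l=\binom{n}{2}-|\Coinv(v)|$ is false: in the example $l=1$, while the right side is $2$. Second, $\binom{n}{2}-|\Coinv(v)|$ equals $\ell(v)$, not $\ell(w_0)-\ell(v)$. The correct count, which is what the paper uses, is $l=|\Coinv(v)|=\binom{n}{2}-\ell(v)=\ell(v^{-1}w_0)=\ell(y)$.

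You can get this cleanly inside your own framework. Each cross swaps the two pipes meeting there in the frontier word, and reducedness says each pair is swapped at most once. Since the word goes from $v$ to $w_0$, the swapped pairs are precisely those whose relative order differs between $v$ and $w_0$, i.e.\ $\Coinv(v)$.

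Drop the ``count elbows'' alternative. Writing $v=r_{m-l}\cdots r_1u$ with transpositions $r_i$ does not give $\ell(v)=\ell(u)+(m-l)$, because a transposition can change length by any odd amount. Moreover, the paper derives that length identity (Theorem~\ref{thm.main_FPP}) from this very lemma, so that route is circular.
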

\begin{proof}
Recall that there is a one-to-one correspondence between the cross tiles in $\Rothe(u)$ and the coinversions of $v$.
With an argument that is completely analogous to the one given in Lemma~\ref{lem.ux}, applying the sequence of simple transpositions associated to the cross tiles sorts $v_1,\ldots, v_n$ into decreasing order, giving $vy =w_0$.  
Since $\FPP(u,v)$ is reduced (no two pipes cross more than once), then there is one simple transposition for each coinversion of $v$, so the given factorization for $y$ is reduced.
\end{proof}

\begin{remark}\label{rem.1213}
Combining Lemmas~\ref{lem.ux} and~\ref{lem.vy}, it follows that the reduced expression for $y$ is a subword of the reduced expression for $x$.
Moreover in the case when $D=\FPP(u,v)$, then the reduced expression for $y$ is lexicographically minimal as a subword of $x$ because the flag positroid pipe dream $\FPP(u,v)$ has all cross tiles justified to the southeast.
\end{remark}

\begin{example}
Figure~\ref{fig.rightaction} shows the Rothe diagram of 
$u=5316274$.
Each box is marked with its corresponding simple transposition.
Then $x=s_5s_6s_4s_3s_4s_5s_6s_2s_3s_4s_1s_2$ is a reduced expression such that $ux =w_0$.

When $v=6735142$, the flag positroid pipe dream $\FPP(u,v)$ has cross tiles in the first, second, fourth, fifth, and eleventh box in $\Rothe(u)$ when traversed along rows from bottom to top and right to left (see Figure~\ref{fig.FPP}).  
Thus $y=s_5s_6s_3s_4s_1$ is a reduced expression such that $vy=w_0$.
\end{example}

\begin{figure}[ht!]
\begin{tikzpicture}
\begin{scope}[scale=.6, xshift=0]
	\vertex(w1) at (4.5, 6.5) {};
	\vertex(w2) at (2.5, 5.5) {};
	\vertex(w3) at (0.5, 4.5) {};
	\vertex(w4) at (5.5, 3.5) {};
	\vertex(w5) at (1.5, 2.5) {};
	\vertex(w6) at (6.5, 1.5) {};
	\vertex(w7) at (3.5, 0.5) {};

        \draw[fill, color=gray!10] (3,2) rectangle (4,3);
        \draw[fill, color=gray!10] (6,2) rectangle (7,7);
        \draw[fill, color=gray!10] (1,4) rectangle (2,5);
        \draw[fill, color=gray!10] (3,4) rectangle (4,6);
        \draw[fill, color=gray!10] (5,4) rectangle (6,7);

	\draw[very thin, color=blue!50] (0,0) grid (7,7);
	
	\draw[thick, color=gray] (0,6.5)--(w1)--(4.5,0);
	\draw[thick, color=gray] (0,5.5)--(w2)--(2.5,0);
	\draw[thick, color=gray] (0,4.5)--(w3)--(0.5,0);
	\draw[thick, color=gray] (0,3.5)--(w4)--(5.5,0);
	\draw[thick, color=gray] (0,2.5)--(w5)--(1.5,0);
	\draw[thick, color=gray] (0,1.5)--(w6)--(6.5,0);
	\draw[thick, color=gray] (0,0.5)--(w7)--(3.5,0);

	\node[] at (5.5,6.5) {\color{black}{\scriptsize$s_2$}};
	\node[] at (6.5,6.5) {\color{black}{\scriptsize$s_1$}};
    
	\node[] at (3.5,5.5) {\color{black}{\scriptsize$s_4$}};
	\node[] at (5.5,5.5) {\color{black}{\scriptsize$s_3$}};
	\node[] at (6.5,5.5) {\color{black}{\scriptsize$s_2$}};
    
	\node[] at (1.5,4.5) {\color{black}{\scriptsize$s_6$}};
	\node[] at (3.5,4.5) {\color{black}{\scriptsize$s_5$}};
	\node[] at (5.5,4.5) {\color{black}{\scriptsize$s_4$}};
	\node[] at (6.5,4.5) {\color{black}{\scriptsize$s_3$}};
    
	\node[] at (6.5,3.5) {\color{black}{\scriptsize$s_4$}};
    
    \node[] at (3.5,2.5) {\color{black}{\scriptsize$s_6$}};
    \node[] at (6.5,2.5) {\color{black}{\scriptsize$s_5$}};
	
	\node at (-.5,6.5) {\color{blue}{\tiny$1$}};
	\node at (-.5,5.5) {\color{blue}{\tiny$2$}};
	\node at (-.5,4.5) {\color{blue}{\tiny$3$}};
	\node at (-.5,3.5) {\color{blue}{\tiny$4$}};
	\node at (-.5,2.5) {\color{blue}{\tiny$5$}};
	\node at (-.5,1.5) {\color{blue}{\tiny$6$}};
	\node at (-.5,.5) {\color{blue}{\tiny$7$}};
	\node at (.5,7.5) {\color{blue}{\tiny$1$}};
	\node at (1.5,7.5) {\color{blue}{\tiny$2$}};
	\node at (2.5,7.5) {\color{blue}{\tiny$3$}};
	\node at (3.5,7.5) {\color{blue}{\tiny$4$}};
	\node at (4.5,7.5) {\color{blue}{\tiny$5$}};
	\node at (5.5,7.5) {\color{blue}{\tiny$6$}};
	\node at (6.5,7.5) {\color{blue}{\tiny$7$}};
\end{scope}
\end{tikzpicture}
\caption{The correspondence between the boxes of the Rothe diagram of $u=5316274$ and the set of simple transpositions which encode a reduced expression for $u^{-1}w_0$.}
\label{fig.rightaction}
\end{figure}

\begin{theorem}\label{thm.main_FPP}
Let $u,v\in \fS_n$.
The flag positroid pipe dream $\FPP(u,v)$ exists if and only if $u\leq v$ in Bruhat order.
Moreover, the length of the interval $[u,v]$ is the number of elbows in $\FPP(u,v)$.
\end{theorem}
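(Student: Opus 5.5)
The existence statement splits into the two directions already prepared above, and the length statement will come from counting tiles in $\Rothe(u)$.

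For the forward direction, suppose $\FPP(u,v)$ exists. By definition it is a reduced Rothe pipe dream of $u$ with exit permutation $v$, so Proposition~\ref{prop.key} gives $u\leq v$ in Bruhat order.

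For the converse, suppose $u\leq v$. Proposition~\ref{prop.algo} says the algorithm of Definition~\ref{defn.RPD_algo} produces a reduced Rothe pipe dream of $u$ with exit permutation $v$ in which crossing pipes never touch again to the southeast. This is exactly $\FPP(u,v)$. I would also check that the object in Definition~\ref{defn.FPP} is well defined, i.e.\ unique. Suppose two such pipe dreams $D,D'$ first differ in the bottom-to-top, right-to-left order at a box $(i,j)$. Then the same two pipes $a<b$ enter that box in both. If $(a,b)\in\Coinv(v)$, the pipes must cross somewhere, and they can cross only once. If the box were an elbow, they would cross later, to the northwest, and then they would have touched at $(i,j)$ southeast of the crossing, which is forbidden. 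If $(a,b)\notin\Coinv(v)$ and the box were a cross, the two pipes would exit in coinversion order with only one crossing, which is a contradiction. So every tile is forced, and the tiling agrees with the algorithm's choice.

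For the length statement, let $m=|\Rothe(u)|=|\Coinv(u)|$. Let $x=u^{-1}w_0$ and $y=v^{-1}w_0$. Lemma~\ref{lem.ux} gives a reduced word for $x$ with one letter per box of $\Rothe(u)$, so $\ell(x)=m$. Lemma~\ref{lem.vy} gives a reduced word for $y$ with one letter per cross tile, so $\ell(y)$ equals the number of crosses. Since $\ell(w\,w_0)=\ell(w_0)-\ell(w)$ and $\ell(w^{-1})=\ell(w)$, we get
\[
\ell(v)-\ell(u)=\ell(x)-\ell(y)=m-\#\{\text{crosses}\}=\#\{\text{elbows}\}.
\]
Since the length of an interval $[u,v]$ in the graded Bruhat order is $\ell(v)-\ell(u)$, this gives the claim.

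The main subtlety is the uniqueness/well-definedness of $\FPP(u,v)$. This step would need the careful local argument above. Everything else is assembled directly from Propositions~\ref{prop.key} and~\ref{prop.algo} and Lemmas~\ref{lem.ux} and~\ref{lem.vy}.
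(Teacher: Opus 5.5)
Your proof of the two claims in the theorem is the paper's own argument. The forward direction is Proposition~\ref{prop.key}, the converse is Proposition~\ref{prop.algo}, and the elbow count comes from $\ell(v)-\ell(u)=\ell(u^{-1}w_0)-\ell(v^{-1}w_0)=|\Rothe(u)|-\#\text{crosses}$ via Lemmas~\ref{lem.ux} and~\ref{lem.vy}. The paper does not prove that $\FPP(u,v)$ is unique, and the statement does not need it. Existence only needs one such pipe dream, and every reduced Rothe pipe dream of $u$ with exit permutation $v$ has the same number of crosses, namely $|\Coinv(v)|$.

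The well-definedness argument you add, which you call the main subtlety, is wrong as written. Your rule ``$(a,b)\in\Coinv(v)$ forces a cross at $(i,j)$'' ignores the case where $a$ and $b$ have already crossed at a box processed earlier, i.e.\ southeast of $(i,j)$. In that case $(a,b)\in\Coinv(v)$, yet reducedness forces $(i,j)$ to be an elbow, and your sentence ``they would cross later, to the northwest'' is false. For instance, in $\FPP(123,312)$ (Figure~\ref{fig.notFPP}), pipes $1$ and $2$ touch at the elbow $(1,2)$ and cross at $(2,3)$, although $(1,2)\in\Coinv(312)$; your rule would force a cross at $(1,2)$.

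The correct case split, which is what the algorithm's comparison of $x$ with $\col(j)$ encodes, is on which pipe leaves $(i,j)$ to the right.
\begin{enumerate}
\item[(1)] If it is the larger pipe, $(i,j)$ must be an elbow. Either the two pipes have already crossed, and reducedness forbids a second crossing, or they have not, and then they exit in non-coinversion order and never cross.
\item[(2)] If it is the smaller pipe, the two have not yet crossed. At any crossing to the southeast, the pipe that left $(i,j)$ downward would enter from the left, so it would be the horizontal, smaller pipe, a contradiction. Hence they exit in coinversion order and must cross exactly once. The FPP condition then forces that crossing to be at $(i,j)$, since an elbow there would be a touching southeast of a later crossing.
\end{enumerate}
With this correction your uniqueness argument goes through.
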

\begin{proof}
The first claim follows from Propositions~\ref{prop.key} and~\ref{prop.algo}.

To verify the second claim, the length of the interval $[u,v]$ is $\ell(v)-\ell(u)$ and it follows from Lemmas~\ref{lem.ux} and~\ref{lem.vy} that
\begin{align*}
\ell(v) - \ell(u) &= \ell(u^{-1}w_0) - \ell(v^{-1}w_0) \\
&= |\Rothe(u)| - \#\,\textrm{cross tiles in $\Rothe(u)$}\\
&= \#\,\textrm{elbow tiles in $\Rothe(u)$}.    
\end{align*}
\end{proof}

\begin{corollary}
Let $u \leq v\in\fS_n$.
The map $[u,v] \mapsto \FPP(u,v)$ is a bijection between intervals in Bruhat order and the set $\setFPP(n)$ of flag positroid pipe dreams.
\qed
\end{corollary}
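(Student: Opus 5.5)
We need to show that the map $[u,v]\mapsto \FPP(u,v)$ is well defined, injective, and surjective onto $\setFPP(n)$. Well-definedness is the existence half of Theorem~\ref{thm.main_FPP}: when $u\leq v$, Proposition~\ref{prop.algo} says the algorithm of Definition~\ref{defn.RPD_algo} outputs a flag positroid pipe dream of $u$ with exit permutation $v$. We then need to know that this object is the only one. Definition~\ref{defn.FPP} speaks of \emph{the} FPP, so uniqueness is implicitly assumed there, and I would make it explicit by a rigidity argument. Any reduced Rothe pipe dream of $u$ with exit permutation $v$ whose crosses are all southeast-justified must agree with the algorithm's output box by box. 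To see this, process the boxes of $\Rothe(u)$ in the algorithm's order: bottom to top, right to left. At each box, the labels of the pipe entering from the right and the pipe entering from below are already determined by the tiles processed earlier, since the pipes are traced back from the right boundary, where they carry the labels $v_1,\ldots,v_n$.

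Suppose these two pipes are $a<b$. If they have already crossed (southeast of the current box), then reducedness forces an elbow. If they have not yet crossed, then they must cross somewhere further northwest or here exactly when $(a,b)\in\Coinv(v)$. In that case southeast-justification forces the crossing to happen at the first opportunity, which is the current box: otherwise they would touch here and cross later, to the northwest. If $(a,b)\notin\Coinv(v)$, they never cross, so the tile is an elbow. This reproduces rule (iii) of the algorithm exactly, so the two pipe dreams coincide. The step I expect to be the most delicate is the claim that two pipes which must eventually cross and currently meet should cross here. I would phrase it using the remark after Definition~\ref{defn.FPP}: a later crossing following a touch would violate the definition, because the touch lies southeast of that crossing.

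For injectivity, $u$ is recovered from $\FPP(u,v)$ as the positions of the pivot elbows, and $v$ is recovered as its exit permutation. So distinct intervals give distinct diagrams. For surjectivity, any $D\in\setFPP(n)$ is by definition a reduced Rothe pipe dream of some $u$ with some exit permutation $v$. By Proposition~\ref{prop.key}, $u\leq v$, and by the uniqueness argument above, $D=\FPP(u,v)$. Together these give the bijection.
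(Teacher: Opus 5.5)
Your proof is correct and follows the paper's route, where the corollary is read off directly from Theorem~\ref{thm.main_FPP}. Well-definedness comes from Propositions~\ref{prop.algo} and~\ref{prop.key}. Injectivity comes from recovering $u$ from the pivot elbows and $v$ from the exit permutation, and surjectivity from Proposition~\ref{prop.key}.

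Your box-by-box rigidity argument is a useful addition, since it makes explicit the uniqueness of $\FPP(u,v)$ that the paper takes for granted in Definition~\ref{defn.FPP}. That argument only needs each tile to be forced by the tiles processed earlier, so you do not need to match rule (iii) of the algorithm literally. A literal match would require one more observation: before the two pipes have crossed, the pipe on the right edge of the box precedes the pipe on the bottom edge in $v$.
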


\begin{example}
Continuing Example~\ref{eg.uv1}, we conclude that the interval $[u,v]$ in Bruhat order has length $7$ because $\FPP(u,v)$ has seven elbow tiles.
\end{example}

\subsection{As a pattern avoiding filling}
In this section, we give an equivalent definition of FPPs as a tiling that avoids a certain configuration of tiles.
This is analogous to the definition for $\Le$-diagrams/$\Le$-pipe dreams, so we first review its definition.

\begin{definition}[{\cite[Definition 6.1]{Postnikov06}}]\label{defn.Le_diagram}
Let $\lambda= (\lambda_1,\ldots, \lambda_k)$ be a partition.
A \defn{$\Le$-diagram} of shape $\lambda$ is a $01$-filling of a Young diagram $Y$ of shape $\lambda$ such that if there are three boxes in $Y$ in positions $(i,j')$, $(i',j)$, $(i',j')$ with $i<i'$ and $j<j'$, and $(i,j')$ and $(i',j)$ contain $1$, then $(i',j')$ must contain $1$.
The configuration of these boxes forms a `$\Le$' shape.
\end{definition}
Instead of a $01$-filling of a Young diagram, we may alternatively visualize a $\Le$-diagram in terms of pipe dreams (see~\cite[Section 19]{Postnikov06}) in which each $0$ corresponds with a cross tile and each $1$ corresponds with an elbow tile.
Thus an alternative definition of a $\Le$-diagram is as a pipe dream that forbids a certain configuration of tiles.
A \defn{$\Le$ pattern} is a set of three tiles such that $(i,j')$ and $(i',j)$ are elbows and $(i',j')$ is a cross, with $i<i'$ and $j<j'$.
Then a $\Le$-diagram is a tiling of a partition shape that is $\Le$-free.
We will refer to this as a \defn{$\Le$-pipe dream}.
See Figure~\ref{fig.le1} for an example.

\begin{figure}[ht!]
\begin{center}
\begin{tikzpicture}
\begin{scope}[scale=0.5]
    \elbow[thick](0,0);
    \cross[thick](2,0);
    \elbow[thick](2,2);	
    \draw[color=blue!30] (-1,-1) grid (2,2);
\node at (-2,-.5) {\textcolor{blue}{$i'$}};
\node at (-2,1.5) {\textcolor{blue}{$i$}};
\node at (-0.5,2.75) {\textcolor{blue}{$j$}};
\node at (1.5,2.75) {\textcolor{blue}{$j'$}};
\end{scope}
\end{tikzpicture}
\end{center}
\caption{The forbidden $\Le$ pattern in a $\Le$-pipe dream.}
\label{fig.Lepattern}
\end{figure}

\begin{definition}\label{defn.gammapatt}
A \defn{$\Gamma$ pattern} is a set of three tiles $(i,j)$, $(i',j)$, and $(i,j')$ with $i<i'$ and $j<j'$ such that $(i',j)$ and $(i,j')$ are elbows (possibly pivot elbows) and $(i,j)$ is a cross, and the pipe that passes horizontally through $(i,j)$ exits in a row $i'' \geq i'$. 
A Rothe pipe dream is \defn{$\Gamma$-free} if it is does not contain any $\Gamma$ patterns.
\end{definition}
See Figure~\ref{fig.gammapatt} for an illustration. 
The dashed line in the figure denotes the pipe that runs horizontally through the cross tile in box $(i,j)$ in the $\Gamma$ pattern and which exits the reduced flag positroid pipe dream in a row $i'' \geq i'$.
We emphasize that the case where such a pattern occurs with $i'' < i'$ results in a pattern of tiles that is allowed for our purposes.
See Figure~\ref{fig.OKgamma} for example.

\begin{figure}[ht!]
\centering
\begin{tikzpicture}
\begin{scope}[scale=0.5]
    \elbow[thick](0,1);
    \cross[thick](0,3);
    \elbow[thick](4,3);	

	\draw[dashed, thick] (0,2.5) -- (1.75,2.5);
	
	\draw[dashed, thick, domain=180:270] plot ({5+.5*cos(\x)}, {.5*sin(\x)});
	\draw[dashed, thick, domain=0:90] plot ({5+.5*cos(\x)}, {-1+.5*sin(\x)});
	\draw[dashed, thick, domain=180:270] plot ({6+.5*cos(\x)}, {-1+.5*sin(\x)});
	\draw[dashed, thick] (6,-1.5) -- (7, -1.5);
    \draw[color=blue!30] (-1,-2) grid (7,3);
    
\node at (-2,-1.5) {\textcolor{blue}{$i''$}};
\node at (-2,.5) {\textcolor{blue}{$i'$}};
\node at (-2,2.5) {\textcolor{blue}{$i$}};
\node at (-0.5,3.75) {\textcolor{blue}{$j$}};
\node at (3.5,3.75) {\textcolor{blue}{$j'$}};
\end{scope}
\end{tikzpicture}
	\caption{The forbidden $\Gamma$ pattern in a $\FPP$.
    }
    \label{fig.gammapatt}
\end{figure}
\begin{figure}[ht!]
\begin{tikzpicture}
\begin{scope}[scale=.5, xshift=0]
        \draw [black,thick,domain=180:270] plot ({2+.5*cos(\x)}, {4+.5*sin(\x)});
        \draw [black,thick,domain=180:270] plot ({4+.5*cos(\x)}, {3+.5*sin(\x)});
        \draw [black,thick,domain=180:270] plot ({1+.5*cos(\x)}, {2+.5*sin(\x)});
        \draw [black,thick,domain=180:270] plot ({3+.5*cos(\x)}, {1+.5*sin(\x)});
        \draw [black,thick] (0.5,2) -- (0.5, 4);
        \draw [black,thick] (2.5,2) -- (2.5, 3);
        
        \draw [black,thick] (3,0.5) -- (4,0.5);
        \draw [black,thick] (1,1.5) -- (2,1.5);
        \draw [black,thick] (3,1.5) -- (4,1.5);

        \cross[thick](3,4);
        \elbow[thick](4,4);
        \elbow[thick](3,2); 
        
		\draw[very thin, color=blue!50] (0,0) grid (4,4);

		\node at (0.5,4.5) {\color{black}{\tiny$1$}};
		\node at (1.5,4.5) {\color{black}{\tiny$2$}};
		\node at (2.5,4.5) {\color{black}{\tiny$3$}};
		\node at (3.5,4.5) {\color{black}{\tiny$4$}};

        \node at (4.5, 3.5) {\color{black}{\tiny$4$}};
        \node at (4.5, 2.5) {\color{black}{\tiny$2$}};
        \node at (4.5, 1.5) {\color{black}{\tiny$3$}};
        \node at (4.5, 0.5) {\color{black}{\tiny$1$}};
\end{scope}
\end{tikzpicture}
    \caption{The pipe dream $\FPP(2413,4231)$ is valid; there are no $\Gamma$ patterns here because pipe $2$ exits the diagram in a row above the elbow tile in box $(3,3)$.
    }
    \label{fig.OKgamma}
\end{figure}

Heuristically, a $\Gamma$ pattern is a $\Le$ pattern rotated $180^\circ$, unless the horizontal pipe through the corner of the $\Gamma$ pattern exits before it reaches the row containing the bottom box of the shape. 
While this definition connects FPPs to $\Le$-pipe dreams directly, the following characterization will be more convenient for our purposes.

\begin{definition}
Given a pipe in a Rothe pipe dream, the \defn{subarea} of the pipe is the set of boxes that lie below and in the same column as any horizontal section of the pipe but do not lie below the exit row of the pipe.
\end{definition}

\begin{lemma}\label{lem:gamma-free-is-subarea-condition}
Let $R$ be a reduced Rothe pipe dream. 
Then $R$ is $\Gamma$-free if and only if there are no elbow tiles or pivot elbow tiles in the subarea of any pipe.
\end{lemma}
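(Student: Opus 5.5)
We prove both directions by showing that a $\Gamma$ pattern is exactly an elbow lying in the subarea of the pipe that runs horizontally through the corner cross. The basic observation is geometric. Consider a pipe $p$ in a reduced Rothe pipe dream. Pipes move only rightward and downward, since they enter from the top or left and leave to the bottom or right. Hence the horizontal sections of $p$ occupy a set of boxes whose rows weakly increase with the column, and $p$ exits at the right boundary in some row $i''$.

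The subarea of $p$ is, column by column, the boxes strictly below a horizontal section of $p$ in that column, down to row $i''$. A box $(i',j)$ in the subarea therefore comes from some box $(i,j)$, with $i<i'\le i''$, through which $p$ passes horizontally. Such a box is one of three kinds: a cross where $p$ is horizontal, a horizontal-pipe tile, or an elbow or pivot-elbow tile whose horizontal half belongs to $p$.

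For the direction ($\Gamma$ pattern $\Rightarrow$ elbow in a subarea), suppose $(i,j),(i',j),(i,j')$ form a $\Gamma$ pattern. Let $p$ be the pipe passing horizontally through the cross at $(i,j)$. By definition $p$ exits in row $i''\ge i'$. Then $(i',j)$ lies below $(i,j)$ in the same column and not below the exit row, so it is in the subarea of $p$. But $(i',j)$ is an elbow or pivot elbow, which gives the forbidden configuration. Note that this direction does not use the tile at $(i,j')$.

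For the converse, suppose an elbow (or pivot elbow) $(i',j)$ lies in the subarea of a pipe $p$. Let $(i,j)$ be a box in column $j$ with $i<i'\le i''$ where $p$ runs horizontally. We must produce a cross at some $(i_0,j)$ together with an elbow $(i_0,j')$ to its right in the same row, carried by a pipe exiting at or below $i'$.

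1. First I would choose $(i,j)$ to be the lowest box in column $j$, above $i'$, where $p$ is horizontal. I would also replace the target elbow by the topmost elbow in column $j$ below it; this keeps the elbow in the subarea.
2. I would then use conditions (ii)--(v) of Definition~\ref{defn.rrpd} to see that in column $j$ a horizontal passage of $p$ across a box occupied by some vertical pipe must be a cross. A horizontal-pipe tile can occur only below a pivot in that column, so the pivot elbow in column $j$ plays this role.
3. Next, follow $p$ from $(i,j)$ to the right. It continues horizontally through crosses and horizontal tiles until it first turns downward. Since it must eventually descend to row $i''>i$, it turns at some elbow or pivot elbow $(i,j')$ with $j'>j$. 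This gives the arm of the $\Gamma$.
4. Finally, if $(i,j)$ is a horizontal-pipe or pivot tile rather than a cross, I would move up or down to a row where $p$ does cross column $j$'s vertical pipe, using the choice made in step 1.

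The main obstacle is this last degenerate case. It arises when the box above the subarea elbow carries $p$ via a pivot elbow or an elbow rather than a cross, or when $p$ enters column $j$ from the left already turned. When $p$ turns down inside column $j$ itself, I expect that reducedness (condition (vi)) together with the monotone shape of $p$ forces the pipe that vertically occupies column $j$ below to cross $p$ twice, or forces $(i',j)$ to lie below the exit row, and either conclusion is a contradiction. I would handle this by a short case analysis on the tile at $(i,j)$, comparing the vertical pipe entering $(i',j)$ from above with $p$.
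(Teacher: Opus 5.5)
Your first direction is fine and matches the paper. The converse has a genuine gap, located in the case you flag as the main obstacle. You count as a ``horizontal section'' of $p$ any elbow or pivot elbow in which $p$ occupies a horizontal half-arc. You then hope that reducedness rules out a subarea elbow beneath such a tile. It does not; under that reading the lemma is false.

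Take $\FPP(123,321)$, in which all three boxes $(1,2),(1,3),(2,3)$ of $\Rothe(123)$ are elbows. It has no cross tiles, so it contains no $\Gamma$ pattern. Yet two pipes have the pivot elbow $(2,2)$ in their subarea under your reading:
\begin{enumerate}
\item[(a)] Pipe $2$ turns right in the elbow at $(1,2)$ and exits in row $2$, and $(2,2)$ sits directly below $(1,2)$, not below row $2$.
\item[(b)] Pipe $1$ enters $(1,2)$ from the left, turns down, and exits in row $3$, and again $(2,2)$ sits directly below $(1,2)$, not below row $3$.
\end{enumerate}
No two pipes cross anywhere, so neither contradiction you anticipate (a double crossing, or $(i',j)$ lying below the exit row) can occur. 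No choice of ``lowest'' horizontal box or ``topmost'' elbow in step 1 repairs this, and step 4 has nothing to move to.

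The missing point is that a horizontal section must mean a box that $p$ traverses straight across, namely a cross or a horizontal-pipe tile. That is how the paper reads it: ``there is a pipe passing horizontally through a cross tile above this elbow tile.'' With this reading the horizontal-pipe case is empty. By condition (iv), such a tile in column $j$ lies below the pivot of column $j$. Every box below that pivot is an empty tile or a horizontal pipe by (iii)--(iv), never an elbow or pivot elbow. Your step 2 gestures at this, but the sentence about the pivot elbow ``playing this role'' does not actually state it.

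Hence the box $(i,j)$ above the subarea elbow $(i',j)$ is a cross carrying $p$ horizontally, and your step 3 finishes immediately. Since $p$ must get from row $i$ to its exit row $i''\geq i'>i$, it turns down at an elbow $(i,j')$ with $j'>j$. Then $(i,j),(i',j),(i,j')$ is a $\Gamma$ pattern, with the exit condition $i''\geq i'$ coming from the definition of the subarea. That is the paper's entire argument. Steps 1 and 4 and the case analysis on turning tiles should be dropped.
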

\begin{proof}
By the definition of a $\Gamma$ pattern, the elbow or pivot elbow tile at the bottom of the $\Gamma$ shape is in the subarea of the pipe passing horizontally through the box at the corner of the shape.
    
Conversely, suppose there is an elbow or pivot elbow tile in the subarea of some pipe in $R$, say in box $(i',j)$. 
By definition, there is a pipe passing horizontally through a cross tile above this elbow tile, say at $(i,j)$ with $i<i'$, and the exit row of this pipe is $i'' \geq i'$. 
Moreover, since $i<i'\leq i''$, there must be an elbow tile to the right of the cross tile in $(i,j)$ to direct the pipe in question downward, say at box $(i,j')$ with $j<j'$. 
Hence, the tiles in $(i,j)$, $(i',j)$ and $(i,j')$ form a $\Gamma$ pattern.
\end{proof}

\begin{proposition}\label{prop:staircase-equivalence}
Let $R$ be a reduced Rothe pipe dream. 
Then $R=\FPP(u,v)$ for some $u\leq v$ if and only if $R$ is $\Gamma$-free.
\end{proposition}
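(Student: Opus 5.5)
Via Lemma~\ref{lem:gamma-free-is-subarea-condition}, I will show that $R=\FPP(u,v)$ if and only if no elbow or pivot elbow lies in the subarea of any pipe. Since $R$ is a reduced Rothe pipe dream of some $u$ with some exit permutation $v$, Proposition~\ref{prop.key} gives $u\leq v$. By Definition~\ref{defn.FPP}, $R=\FPP(u,v)$ exactly when no two pipes touch southeast of a box where they cross. So the proposition reduces to the following claim: in a reduced Rothe pipe dream, some pair of pipes touches southeast of their crossing if and only if some pipe has an elbow (or pivot elbow) in its subarea.

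\emph{Touching after a crossing produces an elbow in a subarea.} Suppose pipes $a<b$ cross at $(i,j)$. By reducedness, $a$ is horizontal there and $b$ is vertical (as in the proof of Proposition~\ref{prop.key}). Suppose they later touch at an elbow $(k,l)$ southeast of $(i,j)$. After the crossing, $a$ travels east and south while $b$ travels south from $(i,j)$ and then east. At the touching elbow, $a$ enters from above and $b$ enters from the left, and then they swap. I will trace the path of $a$ from $(i,j)$ to $(k,l)$ and consider the columns in which $a$ has horizontal segments before descending into column $l$. Pipe $b$ goes down column $j$ and then turns east at some elbow $(i',j)$ with $i'\le k$. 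Now $(i',j)$ lies below the horizontal segment of $a$ in row $i$, and $a$ exits in a row at least $k\ge i'$, because $a$ is still heading down at $(k,l)$. So $(i',j)$ is an elbow in the subarea of $a$. It should be checked that $(i',j)$ really is an elbow or pivot elbow, which holds because $b$ must turn east at some point. This gives the forward implication (a $\Gamma$ pattern in non-FPPs) with little difficulty.

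\emph{An elbow in a subarea forces a touching after a crossing.} Suppose pipe $a$ runs horizontally through a cross at $(i,j)$, vertically crossing some pipe $b$, and there is an elbow at $(i',j)$ with $i<i'\le i''$, where $i''$ is the exit row of $a$. I will choose such a configuration with $i'$ minimal, so that column $j$ between rows $i$ and $i'$ consists only of vertical or cross tiles. Then the pipe entering $(i',j)$ from above is $b$ itself, so $b$ turns east at row $i'$. Pipe $a$ turns south at some column $j'>j$ in row $i$ and must eventually get to row $i''\ge i'$. In the region east of column $j$ and weakly below row $i$, $b$ moves east and south starting from row $i'$ at column $j$, while $a$ starts in row $i$ east of $j$ and must reach a row at least $i'$. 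A planarity argument then shows their paths meet in some box southeast of $(i,j)$. Since they have already crossed and $R$ is reduced, they cannot cross again, so that box is an elbow where they touch. The edge case $i''=i'$, where $a$ exits in the row in which $b$ turns, needs a check: $b$ lies in row $i'$ and $a$ must enter row $i'$ from above in order to exit there, so they still meet.

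The main obstacle is this second direction: rigorously justifying the planar meeting argument, handling the minimal choice of $i'$, and covering the case where the elbow is a pivot elbow, where $b$ cannot continue as a pipe along column $j$ in the usual way. I would first try to make the meeting argument precise by comparing, column by column, the row of $b$ and the row of $a$ east of column $j$. Both are weakly increasing functions, $b$ starts below $a$, and $a$ ends at or below $b$'s row, so by discreteness they occupy a common box.
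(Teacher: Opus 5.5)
Your first direction (a touch after a crossing produces an elbow in a subarea) is correct, and it is essentially the paper's first argument run contrapositively. The second direction has a genuine gap. You claim that the two pipes $a,b$ of a $\Gamma$ pattern must meet southeast of $(i,j)$, but this is false, even when $i'$ is chosen minimal.

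Here is a counterexample. Take $n=4$, $u=1234$, and tile $\Rothe(u)$ with crosses at $(1,2),(1,3)$ and elbows at $(1,4),(2,3),(2,4),(3,4)$. This is a reduced Rothe pipe dream with exit permutation $4132$:
\begin{itemize}
\item pipe $1$ crosses pipes $2$ and $3$ in row $1$, turns down at $(1,4)$, and exits in row $2$;
\item pipe $2$ turns east at the pivot $(2,2)$ and then follows $(2,3),(3,3),(3,4),(4,4)$.
\end{itemize}
So $(1,2),(2,2),(1,4)$ is a $\Gamma$ pattern with $i'=i''=2$, yet pipes $1$ and $2$ share no box after $(1,2)$. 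The offending pair is pipes $1$ and $3$, which cross at $(1,3)$ and touch at $(2,4)$.

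Your edge-case argument also fails here: $b$ occupies row $i'$ only on an initial segment, since it turns down at $(2,3)$. Your column-by-column comparison breaks at the step ``$a$ ends at or below $b$'s row.'' That compares $a$'s exit row with $b$'s row in column $j$, not with $b$'s row further east. By reducedness, $a$ stays weakly northeast of $b$ after the crossing, so $a$ exits strictly above $b$, and nothing forces a common box.

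What is missing is an extremal choice, together with a willingness to switch to a different pair of pipes. The paper takes $j$ to be the rightmost column containing an elbow in some pipe's subarea, and takes the topmost such elbow in that column. Maximality of $j$ then rules out elbows in row $i'$ between column $j$ and the column where $a$ first reaches row $i'$, except under a purely vertical section of $a$.
\begin{itemize}
\item If there are no such elbows, $b$ runs east along row $i'$ until it hits $a$, giving a touch or a second crossing.
\item If there is one, take the leftmost such elbow, in column $j''$. Then one exhibits a third pipe that crosses $b$ in column $j$ (between rows $i$ and $i'$), turns down in column $j''$, and meets $b$ again at $(i',j'')$.
\end{itemize}
In the example above, the rightmost choice $j=3$ yields exactly the touching pair $1,3$. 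Incidentally, the pivot-elbow case you flagged is harmless: a pivot elbow turns a pipe from north to east exactly as an elbow does.
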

\begin{proof}
First suppose that $R$ is $\Gamma$-free.
Let $p$ and $q$ be two pipes in $R$ which first cross in position $(i,j)$, say with $p$ being the pipe that continues downward and $q$ being the pipe that continues to the right and exits in the row $i''$. 
We will show that the pipes $p$ and $q$ do not cross or touch again.
In the case $i''=i$ or $j=n$, the claim follows readily, so we may assume $i''> i$ and $j<n$.
After the crossing at $(i,j)$, the rest of the pipe $q$ exists in the rectangle $[i,i'']\times [j+1, n]$.
Since the pipe $q$ exits in row $i''>i$, then there must be an elbow tile at some box $(i,j')$ for some $j'\in [j+1,n]$.
$R$ is $\Gamma$-free so there are no elbow or pivot elbow tiles in the boxes $(i', j)$ for any $i < i' \leq i''$.
This implies that $p$ cannot enter the given rectangle, so $p$ and $q$ do not touch or cross again.
Therefore, $R$ is reduced and has all cross tiles justified to the southeast direction, so it is an $\FPP(u,v)$ for some permutations $u\leq v$.

Now suppose $R$ is not $\Gamma$-free.
By Lemma~\ref{lem:gamma-free-is-subarea-condition}, $R$ has an elbow or a pivot elbow tile in the subarea of some pipe.
Choose the rightmost column $j$ containing an elbow or pivot elbow tile in the subarea of some pipe, and consider the topmost of these elbows in this column, say in row $i'$.
Let $(i,j)$ be the position of the lowest cross tile such that our selected elbow is in its subarea, and let $p$ and $q$ be the pipes passing through this cross vertically and horizontally, respectively.
Since we have chosen the topmost violating elbow, there are no elbows in this column between $i'$ and $i$, so $p$ is turned to the right by our elbow in $(i',j)$.
Now let $j'$ be the column in which $q$ first enters row $i'$.

Since $j$ is the rightmost column containing a violating elbow, there can be no elbows between $(i',j)$ and $(i', j')$, except possibly below a vertical section of $q$.
If there are no such elbows, then $p$ continues to the right in row $i'$ until it touches or crosses $q$, and $R$ would not be an $\FPP$.
Otherwise, suppose there is an elbow or pivot elbow tile under a vertical section of $q$ . 
Consider the leftmost of these elbows, say in column $j''$, and let $i''$ be the row of the lowest elbow above $(i', j'')$.
Then by all of our assumptions, we have identified a pipe $r$ that crosses $p$ at $(i'', j)$, turns downward at $(i'', j'')$, then crosses or touches $p$ again at $(i', j'')$, and $R$ is not an $\FPP$.
\end{proof}

\subsection{A parallel with \texorpdfstring{$\Le$}{Le}-diagrams}\label{subsec.le_parallel}
$\Le$-diagrams were introduced by Postnikov as a set of combinatorial diagrams that index positroidal cells in the totally nonnegative Grassmannian (see~\cite[Theorem 6.5]{Postnikov06}).
In this section we explain in Theorem~\ref{thm.rpd_is_le} how the flag positroid pipe dreams $\FPP(u,v)$ may be seen as a generalization of $\Le$-diagrams (up to a $180^\circ$ rotation, or equivalently, left multiplication by $w_0$), and so Theorem~\ref{thm.main_FPP} may be viewed as a generalization of Postnikov's result for $\Le$-diagrams (see~\cite[Theorem 19.1]{Postnikov06}).

Throughout the rest of this article, we will use the terms `$\Le$-diagram' and `$\Le$-pipe dream' interchangeably (recall Definition~\ref{defn.Le_diagram}).

A permutation $w\in \fS_n$ is \defn{Grassmannian} if it has at most one descent (i.e. there is at most one index $k$ such that $w_k > w_{k+1}$).
Grassmannian permutations are in bijection with partitions contained in the rectangle $(n-k)^k$, which can be seen from the following combinatorial construction.
For the Grassmannian permutation $w=w_1\cdots w_n$ with unique descent at position $k$, its associated partition $\lambda$ is obtained by writing $w_1, \ldots , w_k$ along the left side from the bottom to the top, and $w_{k+1}, \ldots,  w_n$ along the top from left to right.  
Then $\lambda_i$ is the number of inversion pairs $w_{k-i+1} > w_{k+j}$ for $k-i+1 < k+j$, in $w$.
See Postnikov~\cite[Section 19]{Postnikov06}, and also the left side of Figure~\ref{fig.le1} below.

\begin{theorem}[{Postnikov~\cite[Theorem 19.1]{Postnikov06}}]
\label{thm:postnikov19}
Let $w$ be a Grassmannian permutation with corresponding partition $\lambda$.
There is a bijection between $\textrm{\Le}$-pipe dreams of shape $\lambda$ and permutations $t\in \fS_n$ such that $t\leq w$ in Bruhat order.
Moreover, the length of the interval $[t,w]$ is the number of elbows in the $\Le$-pipe dream. 
\end{theorem}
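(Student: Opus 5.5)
The plan is to prove this in the same way as Theorem~\ref{thm.main_FPP}, by reading a $\Le$-pipe dream as a subword of a fixed reduced word for $w$. Whether the $\Le$-condition or reducedness comes for free is settled separately, by an extremality argument that mirrors Proposition~\ref{prop:staircase-equivalence}.

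\emph{Step 1 (a reduced word indexed by boxes).} Label the boxes of $\lambda$ by simple transpositions, analogously to Lemma~\ref{lem.ux}. The box in row $i$ from the top and column $j$ from the left is matched with the inversion $w_{k-i+1}>w_{k+j}$ and receives the label $s_{k-i+j}$. Reading the boxes in a fixed linear order (row by row from the bottom, and from right to left within a row) should give a reduced expression for $w$ of length $|\lambda|=\ell(w)$. This is checked by the same sorting argument as in Lemma~\ref{lem.ux}: one row of boxes moves one of $w_1,\ldots,w_k$ past exactly the larger-indexed entries it inverts with.

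\emph{Step 2 (crosses give $t$).} Given a $\Le$-pipe dream of shape $\lambda$, trace the pipes. Let $t$ be the product, in the linear order above, of the labels of the cross tiles. First I will show that the $\Le$-condition forces the word on the cross tiles to be reduced. The pipe reformulation is that two pipes never cross twice. Then, by the subword characterization of Bruhat order, $t\le w$ and $\ell(t)$ equals the number of crosses. Hence $\ell(w)-\ell(t)$ equals the number of elbows, which is the length statement. This is the analogue of Lemma~\ref{lem.vy} together with the length computation in Theorem~\ref{thm.main_FPP}.

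\emph{Step 3 (bijectivity via extremal subwords).} Every $t\le w$ has reduced subwords of the fixed reduced word for $w$, and among them there is a unique lexicographically extremal one; this is the positive distinguished subexpression of Marsh--Rietsch/Deodhar, with the greedy choice playing the role of the algorithm in Definition~\ref{defn.RPD_algo}. I would prove the following equivalence for a filling of $\lambda$: it is $\Le$-free if and only if its cross tiles form this extremal reduced subword. In pipe terms, this says that every cross is justified (northwest rather than southeast, because of the $180^\circ$ rotation relative to FPPs). The direction ``not $\Le$-free implies not extremal'' should follow by locating a $\Le$ pattern and exhibiting two pipes that cross and then touch or cross again. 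For the converse, I will choose an extremal violating tile (for instance the leftmost column, then the lowest offending elbow) and run the case analysis of Proposition~\ref{prop:staircase-equivalence}, rotated. Injectivity and surjectivity of the map from $\Le$-pipe dreams to $\{t\le w\}$ then follow from the existence and uniqueness of the extremal subword.

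The main obstacle is Step~3, specifically the converse direction: a reduced, justified filling must avoid the $\Le$ pattern. For FPPs this needed the exit-row caveat in the $\Gamma$ pattern. Here I expect the Young-diagram shape to make the caveat vacuous, since every pipe exits on the boundary beyond any box it could enclose. The main thing to verify carefully is that this shape argument really removes the need for the extra condition. If the direct argument becomes unwieldy, the fallback is simply to cite \cite[Theorem 19.1]{Postnikov06}, which is how this statement is attributed.
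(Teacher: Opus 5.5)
The paper gives no proof of this statement; it is cited from Postnikov. Your framework (the shape word for $w$, crosses as a subword, bijectivity from a unique extremal reduced subword) is the one the paper alludes to in Remark~\ref{rem.le_s}. However, the only nontrivial step is not supplied: that a filling of $\lambda$ is $\Le$-free if and only if its pipe dream is reduced with all crosses justified. You have also attached the two arguments to the wrong directions; your final paragraph names the right obstacle, but that is the direction you said would follow by simply locating a $\Le$ pattern.

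``$\Le$-free $\Rightarrow$ reduced and justified'' is the easy direction. In $\Rothe(w_0w)$ every $\Gamma$ pattern consists of boxes of the diagram: a pivot elbow below row $k$ can never satisfy the exit-row condition, because those pivots decrease. So every $\Gamma$ pattern rotates to a $\Le$ pattern, and $\Le$-free implies $\Gamma$-free. The first half of the proof of Proposition~\ref{prop:staircase-equivalence}, which never uses reducedness, then gives reduced and justified. The extremal-choice case analysis belongs to the converse, and that is also where the exit-row caveat enters. Simply locating a $\Le$ pattern does not give two pipes that cross and meet again: in the example below, pipes $3$ and $4$ cross at $(1,4)$ and never meet again.

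For that converse, your reason for expecting the caveat to be vacuous is wrong as stated, because the caveat can fail for a given pattern. Take $w=456123$, so $u=w_0w=321654$ and $\Rothe(u)=\{1,2,3\}\times\{4,5,6\}$. Put crosses at $(1,4),(2,4),(2,6)$ and elbows at $(1,5),(2,5),(3,4)$. Then $(1,4),(1,5),(3,4)$ is a rotated $\Le$ pattern, yet the horizontal pipe through $(1,4)$ exits in row $2<3$.

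What is true is a statement about fillings. Given a rotated $\Le$ pattern with corner $(i,j)$, let $(i_1,j)$ be the first elbow below it. Since $u_r<u_i<j$ for $i<r\le k$, every tile $(r,j)$ with $i\le r<i_1$ is a cross of the diagram. Let $r_0\in[i,i_1)$ be maximal such that the horizontal pipe through $(r_0,j)$ turns down in row $r_0$; such an $r_0$ exists because $r_0=i$ qualifies. If that pipe exited in a row $e<i_1$, it would enter row $e$ from above at an elbow to the right of column $j$. This forces the horizontal pipe through the cross $(e,j)$ to turn down in row $e$, contradicting maximality. So $(r_0,j)$, its elbow to the right, and $(i_1,j)$ form a genuine $\Gamma$ pattern; in the example this is $(2,4),(2,5),(3,4)$.

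Hence $\Le$-free and $\Gamma$-free coincide on $\Rothe(w_0w)$. Proposition~\ref{prop:staircase-equivalence} together with Theorem~\ref{thm.main_FPP} then gives the bijection directly, via $t=w_0v$ with $v$ the exit permutation. It also gives the count $\ell(w_0t)-\ell(w_0w)=\ell(w)-\ell(t)$, with no need for Marsh--Rietsch. Do not route this through Theorem~\ref{thm.rpd_is_le}, whose proof uses the statement you are proving.
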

Let \defn{$\Le(t,w)$} denote the $\Le$-pipe dream that corresponds with the pair of permutations $t\leq w$ in the bijection of Theorem~\ref{thm:postnikov19}.

\begin{remark} \label{rem.le_s}
We emphasize a particular feature of the above theorem.
If $w$ is a Grassmannian permutation with corresponding partition $\lambda$, then $\lambda$ specifies a choice of a reduced word for $w$ of length $|\lambda|$ in the following way.
If a box in Young's diagram of $\lambda$ is in the $i$-th row and $j$-th column, we associate the simple transposition $s_{k+j-i}$ to it.

Furthermore, the $\Le$-pipe dream $\Le(t,w)$ of shape $\lambda$ specifies a reduced word for $t$ that is lexicographically maximal as a subword of the chosen reduced word of $w$ as specified by the cross tiles in the $\Le$-pipe dream, and the length of $t$ is the number of crosses in the $\Le$-pipe dream.
\end{remark}

\begin{example}\label{eg.uw}
For the Grassmannian permutation $w=461235$ with unique descent at position $k=2$, its associated partition is $\lambda = (4,3)$, as shown on the left of Figure~\ref{fig.le1}.  

The associated $\Le$-pipe dream is shown in the center.
Following the pipes from the northwest boundary of the pipe dream to the southeast boundary, we see that the $\Le$-diagram corresponds with the permutation $t=143256$.

By reading the diagram on the right of Figure~\ref{fig.le1} row by row from the bottom to top and right to left, we see that the shape $\lambda$ specifies the reduced word $s_3s_2s_1s_5s_4s_3s_2$ for $w$, and the positions of the cross tiles in the $\Le$-diagram specify the reduced word $s_2s_3s_2$ for $t$ that is lexicographically maximal amongst all reduced words for $t$ that are subwords of $w$. 
There are four elbows in the pipe dream, and indeed $\ell(w)-\ell(t) = 7-3= 4$.
\end{example}

\begin{figure}[ht!]
\begin{tikzpicture}
\begin{scope}[scale=.8, xshift=0]
 	\node at (0.5, 0.5) {$1$};
 	\node at (2.5, 0.5) {$1$};
        \node at (2.5, 1.5) {$1$};
        \node at (3.5, 1.5) {$1$};
        \node at (1.5, 0.5) {$0$};
        \node at (0.5, 1.5) {$0$};
        \node at (1.5, 1.5) {$0$};

	\draw[very thin, color=blue!50] (0,0) -- (3,0);
        \draw[very thin, color=blue!50] (0,1) -- (4,1);
        \draw[very thin, color=blue!50] (0,2) -- (4,2);
        \draw[very thin, color=blue!50] (0,0) -- (0,2);
        \draw[very thin, color=blue!50] (1,0) -- (1,2);
        \draw[very thin, color=blue!50] (2,0) -- (2,2);
        \draw[very thin, color=blue!50] (3,0) -- (3,2);
        \draw[very thin, color=blue!50] (4,1) -- (4,2);
    	
	\node at (-.4,1.5) {\color{purple}{$6$}};
	\node at (-.4,0.5) {\color{purple}{$4$}};

	\node at (.5,2.4) {\color{purple}{$1$}};
	\node at (1.5,2.4) {\color{purple}{$2$}};
	\node at (2.5,2.4) {\color{purple}{$3$}};
	\node at (3.5,2.4) {\color{purple}{$5$}};
\end{scope}
\begin{scope}[scale=.8, xshift=160]
  	
	\node at (-.3,0.5) {\color{black}{\footnotesize$1$}};
	\node at (-.3,1.5) {\color{black}{\footnotesize$2$}};
	\node at (.5,2.3) {\color{black}{\footnotesize$3$}};
	\node at (1.5,2.3) {\color{black}{\footnotesize$4$}};
	\node at (2.5,2.3) {\color{black}{\footnotesize$5$}};
	\node at (3.5,2.3) {\color{black}{\footnotesize$6$}};

        \node at (.5,-.3) {\color{black}{\footnotesize$1$}};
	\node at (1.5,-.3) {\color{black}{\footnotesize$2$}};
	\node at (2.5,-.3) {\color{black}{\footnotesize$3$}};
	\node at (3.25,.3) {\color{black}{\footnotesize$4$}};
	\node at (3.75,0.7) {\color{black}{\footnotesize$5$}};
	\node at (4.3,1.5) {\color{black}{\footnotesize$6$}};

        \elbow[thick](1,1);
        \elbow[thick](3,1);
        \elbow[thick](3,2);
        \elbow[thick](4,2);
        \cross[thick](2,1);
        \cross[thick](1,2);
        \cross[thick](2,2);

	\draw[very thin, color=blue!50] (0,0) -- (3,0);
        \draw[very thin, color=blue!50] (0,1) -- (4,1);
        \draw[very thin, color=blue!50] (0,2) -- (4,2);
        \draw[very thin, color=blue!50] (0,0) -- (0,2);
        \draw[very thin, color=blue!50] (1,0) -- (1,2);
        \draw[very thin, color=blue!50] (2,0) -- (2,2);
        \draw[very thin, color=blue!50] (3,0) -- (3,2);
        \draw[very thin, color=blue!50] (4,1) -- (4,2);
\end{scope}

\begin{scope}[scale=.8, xshift=320]
 	\node at (0.5, 0.5) {$s_1$};
        \node at (0.5, 1.5) {$s_2$};
        \node at (1.5, 0.5) {$s_2$};
        \node at (1.5, 1.5) {$s_3$};
 	\node at (2.5, 0.5) {$s_3$};
        \node at (2.5, 1.5) {$s_4$};
        \node at (3.5, 1.5) {$s_5$};

	\draw[very thin, color=blue!50] (0,0) -- (3,0);
        \draw[very thin, color=blue!50] (0,1) -- (4,1);
        \draw[very thin, color=blue!50] (0,2) -- (4,2);
        \draw[very thin, color=blue!50] (0,0) -- (0,2);
        \draw[very thin, color=blue!50] (1,0) -- (1,2);
        \draw[very thin, color=blue!50] (2,0) -- (2,2);
        \draw[very thin, color=blue!50] (3,0) -- (3,2);
        \draw[very thin, color=blue!50] (4,1) -- (4,2);
    	\end{scope}
\end{tikzpicture}
    \caption{From left to right: a $\Le$-diagram of shape $\lambda =(4,3)$, its equivalent $\Le$-pipe dream $\Le(t,w)$ corresponding to the pair of permutations $t=143256$ and $w=461235$, and the simple transpositions associated to $\lambda$.  
See Example~\ref{eg.uw}.}
    \label{fig.le1}
\end{figure}

We now explain how to identify a $\Le$-pipe dream with a flag positroid pipe dream, under a rotation.

\begin{theorem}\label{thm.rpd_is_le}
Let $w \in\fS_n$ be a Grassmannian permutation. 
For any $t\leq w$ (equivalently $w_0w \leq w_0t$) in the Bruhat order, the flag positroid pipe dream $\FPP(w_0w, w_0t)$ is the $\Le$-pipe dream associated to the interval $[t,w]$, after pushing together the columns in $\Rothe(w_0w)$ and rotating $180^\circ$.
\end{theorem}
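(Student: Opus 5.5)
We match the two objects at three levels: the shapes, the reduced words attached to the boxes, and the rule that selects the cross tiles. We then invoke the uniqueness statements in Theorem~\ref{thm:postnikov19} and Theorem~\ref{thm.main_FPP}.

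\emph{Shapes.} Set $u=w_0w$. Its one-line notation is $u_i=n+1-w_i$. Because $w$ has its unique descent at $k$, $u$ decreases on positions $1,\dots,k$ except at $k$, where it ascends, and decreases on positions $k+1,\dots,n$. So $u$ has at most one ascent. Compute $\Rothe(u)$ directly from the definition. Rows $i>k$ contain no boxes, since everything after position $k$ is decreasing. In row $i\le k$, the boxes $(i,j)$ with $j>u_i$ and $u^{-1}_j>i$ are exactly the columns $u_{k+m}>u_i$ with $m\geq 1$, plus no columns $u_{i'}$ with $i<i'\le k$ (those values are smaller than $u_i$). Hence the row-$i$ boxes correspond to pairs $w_i>w_{k+m}$. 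These are exactly the boxes in row $k-i+1$ of $\lambda$ under Postnikov's labelling. After deleting empty columns and pushing the remaining columns together, row $i$ of $\Rothe(u)$ is right-justified and has $\lambda_{k-i+1}$ boxes. A $180^\circ$ rotation then gives the Young diagram of $\lambda$. I will check that the pushed-together columns are exactly the columns $u_{k+1},\dots,u_n$, in order, and that the horizontal and vertical pipe tiles outside $\Rothe(u)$ merely transport pipes without creating extra interactions. This is what makes the deletion harmless.

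\emph{Words.} Next, compare Lemma~\ref{lem.ux} with Remark~\ref{rem.le_s}. A box in row $i$ and $h$-th column from the right of $\Rothe(u)$ carries $s_{i+h-1}$. After rotation it sits in row $k-i+1$ and column $h$ of $\lambda$, where Postnikov assigns $s_{k+h-(k-i+1)}=s_{i+h-1}$. So the labels agree. Reading $\Rothe(u)$ bottom-to-top and right-to-left corresponds, after rotation, to reading $\lambda$ top-to-bottom and left-to-right, which is the reverse of Postnikov's reading order. This reversal matches the fact that Lemma~\ref{lem.ux} gives $x=u^{-1}w_0=w^{-1}$, while Postnikov's word is a reduced word for $w$. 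Similarly, Lemma~\ref{lem.vy} with $v=w_0t$ gives $y=t^{-1}$, and its reversed word is a reduced word for $t$ read off the same set of boxes. Thus the cross tiles of $\FPP(w_0w,w_0t)$, rotated, give a reduced subword for $t$ inside Postnikov's word for $w$.

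\emph{Selection rule.} This step is the main obstacle. By Remark~\ref{rem.1213}, the crosses of $\FPP$ form the lexicographically minimal subword for $y$ in the word for $x$, with crosses justified to the southeast. By Remark~\ref{rem.le_s}, the $\Le$-pipe dream uses the lexicographically maximal subword for $t$ in the word for $w$. I must verify that reversing the word order converts ``minimal'' into ``maximal'' under the precise lexicographic convention being used. If the lexicographic bookkeeping is murky, I will instead argue through pattern avoidance. By Proposition~\ref{prop:staircase-equivalence}, the FPP is $\Gamma$-free. In this Grassmannian shape I claim the exit-row caveat of Definition~\ref{defn.gammapatt} is vacuous: a pipe passing horizontally through a cross in row $i\le k$ exits at or below any later elbow row in that column. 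This is because all pivots in rows $>k$ lie to the lower left and every pipe must travel down to its exit. Granting this, $\Gamma$-free is exactly $\Le$-free after rotation. The rotated FPP is then a $\Le$-pipe dream of shape $\lambda$. Its pipes realise $t$, since relabelling by $w_0$ on both ends undoes $v=w_0t$. By the uniqueness in Theorem~\ref{thm:postnikov19}, it equals $\Le(t,w)$. As a consistency check, both diagrams have $\ell(w)-\ell(t)$ elbows by Theorem~\ref{thm.main_FPP}.
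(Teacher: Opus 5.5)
Your shape and label computations agree with the paper's proof, and your identification $x=u^{-1}w_0=w^{-1}$, $y=v^{-1}w_0=t^{-1}$ is cleaner than the paper's notation. Your first route for the selection rule is also exactly the paper's: take the lexicographically minimal cross subword of Remark~\ref{rem.1213} and reverse it to get the lexicographically maximal one of Remark~\ref{rem.le_s}.

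\textbf{Gap in the first route.} You leave the reversal step unverified, and it is the only step with real content. Your worry is justified. Reversing positions $p\mapsto m+1-p$ turns ``smaller at the first difference'' into ``larger at the last difference''. So it sends lex-minimal to colex-maximal, which is not lex-maximal for arbitrary families of subsets. The missing fact is that the reduced subwords for a fixed element inside a fixed reduced word have a \emph{componentwise} minimum and maximum: the left-greedy and the right-greedy subword. A componentwise extremum is automatically both the lexicographic and the colexicographic extremum, so the lex-min and lex-max subwords are these two. Position reversal interchanges componentwise minimum and maximum, which closes the argument. The paper's one-line ``taking inverses reverses the reduced expressions'' silently relies on this fact; without it your first route is incomplete.

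\textbf{Problem with the fallback.} As written it does not fill the gap, because its key claim is false. Definition~\ref{defn.gammapatt} allows the lower elbow to be a pivot elbow. Take $\FPP(132,312)$, so $w=312$, $t=132$, $k=1$. The cross at $(1,2)$, the elbow at $(1,3)$ and the pivot elbow at $(3,2)$ form a $\Gamma$ shape, yet its horizontal pipe $1$ exits in row $2<3$. Your stated reason (pivots below row $k$ lie to the lower left, pipes travel down) does not control exit rows.

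\textbf{How to repair it.} What is true, and is all you need since pivot elbows are not boxes of $\lambda$, is the claim for $\Gamma$ shapes with all three tiles in $\Rothe(u)$. Its proof must use $\Gamma$-freeness:
\begin{enumerate}
\item If $(i,j)\in\Rothe(u)$, then $(r,j)\in\Rothe(u)$ for all $i\le r\le k$.
\item Let $(i',j)$ be the topmost elbow below the cross. Then $(r,j)$ is a cross for $i\le r<i'$.
\item Let $r$ be the largest row in $[i,i')$ with an elbow to the right of column $j$; row $i$ qualifies, so $r$ exists.
\item The horizontal pipe through $(r,j)$ turns down and exits in some row $i''>r$. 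If $i''<i'$, it enters row $i''$ through an elbow to the right of column $j$, contradicting the choice of $r$.
\item Hence $i''\ge i'$, which is a forbidden $\Gamma$ pattern.
\end{enumerate}
So a $\Gamma$-free diagram has no $\Gamma$ shape inside $\Rothe(u)$, i.e.\ the rotated diagram is $\Le$-free.

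With this lemma your second route becomes a genuinely different, valid proof that avoids the lexicographic bookkeeping entirely. The rotated diagram is a $\Le$-diagram of shape $\lambda$ whose crosses spell a reduced word for $t$, by your Words step. By Remark~\ref{rem.le_s}, the bijection of Theorem~\ref{thm:postnikov19} is read off the cross subword, so injectivity forces the diagram to be $\Le(t,w)$.

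Two smaller corrections. Drop the ``relabelling by $w_0$ on both ends'' justification: relabelling both ends gives $w_0vw_0$, not $t$, and your Words step already supplies what is needed. Also, only the columns $u_{k+m}$ with $u_{k+m}>u_k$ survive the pushing; the others are empty.
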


\begin{proof}
We have $t\leq w$ if and only if $w_0w \leq w_0t$ in Bruhat order.
By Theorem~\ref{thm.main_FPP}, $\FPP(w_0w,w_0t)$ exists.
We will show that $\FPP(w_0w,w_0t)$ is $\Le(t,w)$, up to pushing together the columns in $\Rothe(w_0w)$ and rotating by $180^\circ$.

If the Grassmannian permutation $w$ has no descents, then $w$ is the identity permutation and $w_0w = w_0$ has no ascents. 
Then $t=w$ and both $\Rothe(w_0w)$ and $\Le(t,w)$ are empty.
Otherwise, $w$ has a unique descent in position $k$, and $w_0w$ has a unique ascent in position $k$, since $(w_0w)_i = n+1-w_i$.
This implies that the Rothe diagram of $w_0w$ has partition shape (after pushing together the columns in $\Rothe(w_0w)$ and rotating $180^\circ$) and has at most $k$ nonempty rows.

Let $\lambda = (\lambda_1,\ldots, \lambda_k)$ be the partition associated to the Grassmannian permutation $w$, where $\lambda_i$ is the number of inversion pairs $(k-i+1, k+j)$ such that $k-i+1 < k+j$ and $w_{k-i+1}> w_{k+j}$.
These inversion conditions are true if and only if 
\[
(w_0w)_{k-i+1} = n+1-w_{k-i+1} < n+1-w_{k+j} = (w_0w)_{k+j}
\]
for $k-i+1 < k+j$.  
These are the coinversions of $w_0w$ that correspond to the boxes in the $(k-i+1)$-th row of the boxes of the Rothe diagram of $w_0w$, for $i=1,\ldots, k$, therefore the Rothe diagram of $w_0w$ has shape $\lambda$.

It remains to show that the tiling of $\Le(t,w)$ is the same as the tiling of $\FPP(w_0w,w_0t)$.
Recall from Remark~\ref{rem.le_s} that a box in the $i$-th row and $j$-th column of $\Le(t,w)$ corresponds to the simple transposition $s_{k+j-i}$, and from Lemma~\ref{lem.ux} that a box in the $i$-th row and $h$-th column (from the right with respect to the Rothe diagram) of $\FPP(w_0w,w_0t)$ is $s_{i+h-1}$.
Under the $180^\circ$ rotation, a box $B$ in position $(i,j)$ in $\Le(t,w)$ is mapped to the box $\rho(B)$ in $\FPP(w_0w,w_0t)$ in the $(k-i+1)$-th row and $h$-th column, and the simple transpositions associated to the boxes $B$ and $\rho(B)$ are identical, since $s_{k-i+j}=s_{(k-i+1)+h-1}$ under the action of the rotation $\rho$.
Hence, the reduced word for $w$ arising from $\Le(t,w)$ is the inverse of the reduced word for $w_0w$ arising from $\FPP(w_0w,w_0t)$.

By Remark~\ref{rem.le_s}, the tiling of $\Le(t,w)$ by cross and elbow tiles specifies a reduced word for $t$ that is lexicographically maximal as a subword of $w$.
By Remark~\ref{rem.1213}, the tiling of $\FPP(w_0w,w_0t)$ by cross and elbow tiles in the Rothe diagram of $w_0w$ specifies a reduced word for $x=t^{-1}w_0$ that is lexicographically minimal as a subword of $y=w^{-1}w_0$.
Taking inverses reverses the reduced expressions, and thus $y^{-1} = w_0w$ is a lexicographically maximal subword of $x^{-1} = w_0t$.
This shows that the tilings of $\Le(t,w)$ and $\FPP(w_0w,w_0t)$ are the same.
\end{proof}

\begin{example}
Continuing Example~\ref{eg.uw} for the pair of permutations $t=143256$ and  $w=461235$, Figure~\ref{fig.le2} shows how the flag positroid pipe dream $\FPP(w_0w, w_0t)$ recovers the $\Le$-diagram $\Le(t,w)$ for $t<w$.
We have $w_0w = 316542 < 634521 = w_0t$.
The boxes in the shaded region of $\FPP(w_0w, w_0t)$ correspond  exactly to the boxes in the $\Le$-diagram for $t$ and $w$.
Furthermore, the reduced expression for $x=(w_0w)^{-1}w_0 = w^{-1}$ specified by Lemma~\ref{lem.ux} is $w^{-1} = s_2s_3s_4s_5s_1s_2s_3$ and the reduced expression for $y=(w_0t)^{-1}w_0 = t^{-1}$ specified by Lemma~\ref{lem.vy} is $y=s_2s_3s_2$.
Taking inverses, these expressions coincide with those obtained for $t$ and $w$ from the combinatorics of $\Le$-diagrams.
\end{example}
\begin{figure}[ht!]
\begin{tikzpicture}
\begin{scope}[scale=.5, xshift=0]
        \draw [black,thick,domain=180:270] plot ({3+.5*cos(\x)}, {6+.5*sin(\x)});
        \draw [black,thick,domain=180:270] plot ({1+.5*cos(\x)}, {5+.5*sin(\x)});
        \draw [black,thick,domain=180:270] plot ({6+.5*cos(\x)}, {4+.5*sin(\x)});
        \draw [black,thick,domain=180:270] plot ({5+.5*cos(\x)}, {3+.5*sin(\x)});
        \draw [black,thick,domain=180:270] plot ({4+.5*cos(\x)}, {2+.5*sin(\x)});
        \draw [black,thick,domain=180:270] plot ({2+.5*cos(\x)}, {1+.5*sin(\x)});
        \draw [black,thick] (0.5,5) -- (0.5, 6);
        \draw [black,thick] (1.5,5) -- (1.5, 6);
        \draw [black,thick] (1.5,1) -- (1.5, 4);
        \draw [black,thick] (3.5,2) -- (3.5, 4);
        \draw [black,thick] (4.5,3) -- (4.5, 4);
        
        \draw [black,thick] (2,0.5) -- (6,0.5);
        \draw [black,thick] (4,1.5) -- (6,1.5);
        \draw [black,thick] (5,2.5) -- (6,2.5);
        \draw [black,thick] (2,4.5) -- (3,4.5);

        \cross[thick](5,5); 
        \cross[thick](5,6);
        \cross[thick](6,5);    
        
        \elbow[thick](2,5); 
        \elbow[thick](4,5); 
        \elbow[thick](4,6);
        \elbow[thick](6,6);
        
	\draw[very thin, color=blue!50] (0,0) grid (6,6);

	\node at (.5,6.5) {\color{black}{\tiny$1$}};
	\node at (1.5,6.5) {\color{black}{\tiny$2$}};
	\node at (2.5,6.5) {\color{black}{\tiny$3$}};
	\node at (3.5,6.5) {\color{black}{\tiny$4$}};
	\node at (4.5,6.5) {\color{black}{\tiny$5$}};
	\node at (5.5,6.5) {\color{black}{\tiny$6$}};

        \node at (6.5, 5.5) {\color{black}{\tiny$6$}};
        \node at (6.5, 4.5) {\color{black}{\tiny$3$}};
        \node at (6.5, 3.5) {\color{black}{\tiny$4$}};
        \node at (6.5, 2.5) {\color{black}{\tiny$5$}};
        \node at (6.5, 1.5) {\color{black}{\tiny$2$}};
        \node at (6.5, 0.5) {\color{black}{\tiny$1$}};
\end{scope}
\end{tikzpicture}
    \caption{The flag positroid pipe dream $\FPP(w_0w,w_0t)$ where $t=143256$ and $w=461235$.
Compare to $\Le(t,w)$ in the center of Figure~\ref{fig.le1}.
    }
    \label{fig.le2}
\end{figure}

\begin{remark}
It follows from Theorem~\ref{thm.rpd_is_le} that the fact that $\Le$-pipe dreams are $\Le$-free is a special case that flag positroid pipe dreams are $\Gamma$-free, because when $u\in \fS_n$ has at most one ascent, then a configuration of a $\Gamma$ pattern in a flag positroid pipe dream $\FPP(u,v)$ is simplified and coincides with the configuration of a $\Le$ pattern.
\end{remark}

\section{Nonnegatively representable elementary positroid quotients}
\label{sec.positroid_quotients}

In this section we expand on the correspondence between flag positroid pipe dreams and flag positroids.
We begin by defining a map from a two-step nonnegative flag variety of consecutive ranks to a nonnegative Grassmanian, which in turn induces a map from two-step flag positroids of consecutive ranks $(k,k+1)$ on $[n]$ to rank $k+1$ positroids on $[0,n]$.
These maps are bijections onto their images (Theorems~\ref{thm:projection-bijection} and~\ref{thm.positroid-quotient-bijection}), and through this we can use the theory of directed graphs and bases of positroids to obtain the main combinatorial results in this section.
We show in Theorem~\ref{thm.char_Q} that for a given rank $k$ positroid $P$, we can combinatorially characterize all rank $k+1$ positroids $Q$ such that $P \unlhd_q Q$ is a nonnegatively representable quotient via `unblocked columns' of the $\Le$-diagram/$\Le$-pipe dream of $P$.
This yields a partial FPP that encodes the bases of $Q$.
We conclude this section by showing in Theorem~\ref{thm.fpp_richardson} that $\FPP(u,v)$ encodes the constituents of the flag positroid $P_\bullet(u,v)=(P_1(u,v), \ldots, P_{n-1}(u,v))$ that corresponds to any point in the nonnegative Richardson cell $\calR_{u,v}^{>0}$, and explain in Theorem~\ref{thm.sti_preserves_bases} how to compute the $\Le$-diagram of each constituent of $P_\bullet(u,v)$ via a standardization operation.

\subsection{The complete nonnegative representation of a nonnegative flag}\label{subsec.rnr}

In this section we define our preferred choice of a nonnegative matrix representation of a nonnegative flag. 

Let $\Mat_{k,n}$ denote the space of real $k \times n$ matrices.
Given $A\in \Mat_{k,n}$, let $A_{I,J}$ denote the submatrix of $A$ consisting of the rows of $A$ indexed by $I\subseteq [k]$ and columns indexed by $J\subseteq [n]$.
A matrix $A\in \Mat_{k,n}$ is \defn{lower reduced} if all entries below each leading entry of a row are zero.
The leading entry in the $i$-th row of the matrix is the \defn{pivot} of that row, and the column containing it is the \defn{pivot column of the $i$-th row}, which we denote by $u_i$.
For an increasing sequence $\br=(r_1,\ldots, r_t)$ of positive integers with $r_0=0$ and $r_t=k<n$, the matrix $A\in \Mat_{k,n}$ is in \defn{reverse $\br$-echelon form} if for each submatrix $A_{[r_{l-1}+1, r_l], [n]}$, each pivot column is to the left of the previous pivot column, for $l=1,\ldots,t$.

\begin{definition}
Let $n$ be a positive integer and let $\br=(r_1,\ldots, r_t)$ be an increasing sequence of nonnegative integers with $r_0=0$ and $r_t=k<n$.
Suppose $A\in \Mat_{k,n}$ is a representation of the nonnegative flag variety $V_\bullet \in \Fl_{\br;n}^{\geq0}$.
Then $A$ is a \defn{reduced representation} of $V_\bullet$ if
\begin{enumerate}
\item[(i)] $A$ is in reverse $\br$-echelon form,
\item[(ii)] $A$ is lower reduced, and
\item[(iii)] each pivot entry $A_{i,u_i}$ is $\pm1$.
\end{enumerate}
\end{definition}

Given $A\in\Mat_{k,n}$ with pivots at $(i,u_i)$ for $i=1,\ldots, k$, let \defn{$\sgn_A(i)$} denote the sign of $A_{i,u_i}$ and let 
\[\nep_A(i)= |\{1\leq j <i\mid u_j > u_i\}|
\] be the number of pivots of $A$ which are northeast of the pivot in the $i$-th row.

A \defn{generalized permutation matrix} is a matrix with exactly one nonzero entry in each row and in each column.

\begin{lemma}\label{lem.rnr}
Let $A \in \Mat_{k,k}$ be a generalized permutation matrix with pivot columns $u_1,\dots,u_k$. 
Then the minor $\Delta_{\{u_1,\dots,u_i\}}(A)\geq 0$ for all $i=1,\dots, k$ if and only if $\sgn_A(i) = (-1)^{\nep_A(i)}$ for $i=1,\ldots, k$.
\end{lemma}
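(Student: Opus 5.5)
The plan is to compute each minor explicitly and then argue by induction on $i$. Since $A$ is a generalized permutation matrix whose $j$-th row has its unique nonzero entry in column $u_j$, the submatrix $A_{[i],\{u_1,\dots,u_i\}}$ (first $i$ rows, as in a flag minor) is itself a generalized permutation matrix. The columns are taken in increasing order, so row $j$ has its nonzero entry in the position equal to the rank of $u_j$ among $\{u_1,\dots,u_i\}$.

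Expanding the determinant gives
\[
\Delta_{\{u_1,\dots,u_i\}}(A) = \sgn(\sigma_i)\prod_{j=1}^{i} A_{j,u_j},
\]
where $\sigma_i$ is the standardization of the word $u_1\cdots u_i$. Its sign is $(-1)^{\mathrm{inv}(\sigma_i)}$, and $\mathrm{inv}(\sigma_i)=|\{(j',j): j'<j\leq i,\ u_{j'}>u_j\}| = \sum_{j=1}^{i}\nep_A(j)$. Since the entries are nonzero, every minor is nonzero, so the condition $\Delta\geq 0$ is the same as $\Delta>0$. Writing $\epsilon_j=\sgn_A(j)$, we get
\[
\sgn \Delta_{\{u_1,\dots,u_i\}}(A) = \prod_{j=1}^{i} (-1)^{\nep_A(j)}\epsilon_j .
\]

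Now both directions are a telescoping argument. If $\epsilon_j=(-1)^{\nep_A(j)}$ for all $j$, every factor in the product is $+1$, so all minors are positive. Conversely, suppose all $i$ minors are positive. Comparing consecutive minors, the ratio of the signs for $i$ and $i-1$ is $(-1)^{\nep_A(i)}\epsilon_i$, which must be $+1$; for $i=1$, use $\nep_A(1)=0$ and compare with the empty product. This gives $\epsilon_i=(-1)^{\nep_A(i)}$ for each $i$.

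The only step needing care is the sign of the permutation: correctly identifying the inversions of $\sigma_i$ with the pairs counted by $\nep_A$, given the ordering convention on the columns of the minor. The rest is routine.
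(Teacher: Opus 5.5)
Your proof is correct and is essentially the paper's argument. The paper obtains the recursion $\Delta_{\{u_1,\dots,u_i\}}(A)=(-1)^{\nep_A(i)}A_{i,u_i}\,\Delta_{\{u_1,\dots,u_{i-1}\}}(A)$ by cofactor expansion along the last row and inducts, while you derive the same factorization in closed form via the Leibniz formula with $\mathrm{inv}(\sigma_i)=\sum_{j\le i}\nep_A(j)$ and then telescope; your explicit remark that these minors are all nonzero (so $\geq 0$ means $>0$) makes precise a point the paper uses only implicitly.
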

\begin{proof}
Define $U_i=\{u_j \mid 1\leq j\leq i\}$ for $i=1,\ldots,k$. We induct on $k$. For $k=1$, $\Delta_{U_1}(A)\geq 0$ if and only if $A_{1,u_1} \geq 0$. For $k>1$, note that the only nonzero entry in the last row is $A_{k,u_k}$ in the $(k -\nep_A(k))$-th column, so by cofactor expansion along the last row we have $\Delta_{U_k}(A) = (-1)^{k-(k-\nep_A(k))} A_{k,u_k} \Delta_{U_{k-1}}(A)$. 
Hence, if we assume $\Delta_{U_i}(A)\geq0$ for all $i$, then by the induction hypothesis 
we must have $\sgn_A(i)=(-1)^{\nep_A(i)}$ for all $i$. Conversely, given that $\sgn_A(i)=(-1)^{\nep_A(i)}$ for $i=1,\ldots,k$, the induction hypothesis and the cofactor expansion show that $\Delta_{U_i}(A)\geq0$ for $i=1,\ldots, k$.
The result follows.
\end{proof}

\begin{proposition}\label{prop.rnr}
Let $A\in\Mat_{k,n}$ be a matrix representation of a nonnegative flag $V_\bullet \in \Fl_{\br;n}^{\geq0}$. 
Then
\begin{enumerate}    
\item[(i)] $A$ is a nonnegative representation of $V_\bullet$ if $\sgn_A(i) = (-1)^{\nep_A(i)}$ for $i=1,\ldots, k$. 
\item[(ii)] In the case $\br=(1,2,\dots,k)$, $A$ is a nonnegative representation of $V_\bullet$ if and only if $\sgn_A(i) = (-1)^{\nep_A(i)}$ for $i=1,\ldots, k$.
\end{enumerate}
\end{proposition}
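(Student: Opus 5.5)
The plan is to reduce every relevant flag minor of $A$ to a minor of the generalized permutation matrix formed by the pivot entries alone, and then apply Lemma~\ref{lem.rnr}. I read the hypothesis as saying that $A$ is a reduced representation: it is lower reduced, each row $i$ has pivot $A_{i,u_i}=\pm1$, and the pivots lie in distinct columns. Write $U_i=\{u_1,\dots,u_i\}$.

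\emph{Step 1: a determinant reduction.} Consider the $i\times i$ submatrix $A_{[i],U_i}$ for any $i\le k$.
\begin{itemize}
\item Row $a$ vanishes in every column $u_b$ with $u_b<u_a$, because its pivot is its leftmost nonzero entry.
\item Row $a$ vanishes in every column $u_b$ with $b<a$, because lower reducedness kills entries below a pivot.
\end{itemize}
So in the Leibniz expansion of $\Delta_{U_i}(A)$, a term indexed by a bijection $\sigma$ (row $a$ paired with column $u_{\sigma(a)}$) can be nonzero only if $\sigma(a)\ge a$ for all $a$. This forces $\sigma=\mathrm{id}$. Hence $\Delta_{U_i}(A)=\Delta_{U_i}(\tilde A)$, where $\tilde A$ is the generalized permutation matrix that keeps only the pivot entries of $A$. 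In particular $\Delta_{U_i}(A)=\pm1\neq0$. The sign is exactly the quantity analysed in Lemma~\ref{lem.rnr}, applied to $\tilde A$ restricted to its pivot columns, and $\sgn_{\tilde A}=\sgn_A$, $\nep_{\tilde A}=\nep_A$.

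\emph{Step 2: part (i).} Since $V_\bullet\in\Fl_{\br;n}^{\geq0}$, for each $l$ the Plücker vector of the first $r_l$ rows of $A$ is a nonzero scalar multiple $c_l$ of a nonnegative vector. This holds because any two representations of $V_l$ differ by left multiplication by an invertible matrix, which rescales all maximal minors by its determinant. Now assume the sign condition. Lemma~\ref{lem.rnr} gives $\Delta_{U_{r_l}}(\tilde A)\ge0$, and by Step~1 this minor equals $\Delta_{U_{r_l}}(A)\ne0$. Hence $\Delta_{U_{r_l}}(A)>0$, which forces $c_l>0$, so every flag minor of $A$ built from the first $r_l$ rows is nonnegative.

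\emph{Step 3: part (ii).} When $\br=(1,\dots,k)$, one direction is part (i). For the converse, nonnegativity of $A$ gives $\Delta_{U_i}(A)\ge0$ for every $i$, since each $U_i$ is a flag minor index of rank $i$. By Step~1 this says $\Delta_{U_i}(\tilde A)\ge0$ for all $i$, and Lemma~\ref{lem.rnr} then yields $\sgn_A(i)=(-1)^{\nep_A(i)}$.

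The only real content is Step~1, specifically the triangularity argument showing that only the identity bijection survives. I would check carefully that it uses only lower reducedness and the leading-entry property, and not the reverse $\br$-echelon condition, which is irrelevant here.
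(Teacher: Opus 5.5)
Your proposal is correct and follows the paper's proof. Like the paper, you reduce $\Delta_{U_r}(A)$ to the determinant of the generalized permutation matrix of pivots, apply Lemma~\ref{lem.rnr}, and use that each rank's Pl\"ucker vector of a representation of a nonnegative flag is a common scalar multiple of a nonnegative vector. Your explicit observation that $\Delta_{U_r}(A)\neq 0$, so that it pins down the sign of that scalar, makes precise the paper's looser phrase about ``a single nonnegative minor.'' Also note that your first bullet (the leading-entry property with distinct pivot columns, which is all the paper assumes) already forces the identity term, so lower reducedness is not actually needed.
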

\begin{proof}
Let $r\in\br$, and consider the submatrix $A_{[r],U_r}$. Since each row has a distinct pivot column, this submatrix has the same determinant as the generalized permutation matrix consisting of just the pivot entries with zeros elsewhere. 
Thus by Lemma \ref{lem.rnr}, we can guarantee $\Delta_{U_r}(A)\geq 0$ for all $r\in\br$ by ensuring $\sgn_A(i)=(-1)^{\nep_A(i)}$ for $i=1,\ldots, k$, and if $\br = (1,2,\dots,k)$, then this is the only way to achieve it.
Since $V_\bullet$ is a nonnegative flag, the nonnegativity of all the minors of $A$ for a given rank $r\in\br$ is equivalent to the presence of a single nonnegative minor of that rank, hence statements (i) and (ii) follow.
\end{proof}

\begin{corollary}\label{cor.rnr}
Every nonnegative flag has a reduced nonnegative representation.
\end{corollary}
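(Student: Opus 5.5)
We take an arbitrary representation $A\in\Mat_{k,n}$ of the nonnegative flag $V_\bullet\in\Fl_{\br;n}^{\geq0}$ and transform it, by operations that do not change $V_\bullet$, into a reduced representation. Then we adjust signs and invoke Proposition~\ref{prop.rnr}(i). The only operations we use, within each block of rows $[r_{l-1}+1,r_l]$, are: adding a multiple of a row to a \emph{later} row (in the same block or a subsequent block), permuting rows inside a block, and rescaling a row by a nonzero scalar. Adding earlier rows to later rows preserves every span $V_l=\mathrm{rowspan}(A_{[r_l],[n]})$. Permuting or rescaling rows within a block preserves these spans as well, since the first $r_l$ rows as a set are unchanged. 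So the result is still a representation of $V_\bullet$.

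\textbf{Step 1: echelon form.} Process the blocks from first to last. Let the current block be $[r_{l-1}+1,r_l]$, with earlier rows already fixed. First subtract suitable multiples of the earlier rows from the rows of this block, so that every column that is a pivot column of an earlier row becomes zero in this block. This amounts to clearing entries below earlier pivots, which is exactly what lower reducedness requires. Next perform Gaussian elimination inside the block, pivoting from the right so that leading entries move leftward.

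The convention here is that the leading entry is the leftmost nonzero entry. Among the rows of the block, choose one with the rightmost leading column and put it first. Then eliminate that column from the remaining rows of the block, and iterate. Since the $k$ rows are linearly independent, all pivots are distinct. Within the block the pivot columns strictly decrease, which gives reverse $\br$-echelon form. Clearing below each new pivot within the block, together with the earlier step for later blocks, gives a lower reduced matrix.

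There is one subtlety: leading entries must remain leading after clearing. Subtracting a multiple of row $i$ (pivot $u_i$) from a later row only alters entries in columns $\geq u_i$. Killing the column-$u_i$ entry could therefore expose a new leading entry further right, but never create a new nonzero entry to the left. Iterating the procedure handles this, since the row space dimension is preserved.

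\textbf{Step 2: normalization of signs.} Scale each row $i$ by $\pm 1/|A_{i,u_i}|$, choosing the sign so that $\sgn_A(i)=(-1)^{\nep_A(i)}$. This keeps the matrix a representation of $V_\bullet$, preserves reduced form, and makes every pivot equal to $\pm1$. By Proposition~\ref{prop.rnr}(i), the resulting matrix is a nonnegative representation of $V_\bullet$. Since $V_\bullet$ is nonnegative, all minors of each fixed rank $r_l$ of any representation share a common sign up to a global scalar, so fixing the single minor $\Delta_{U_{r_l}}$ to be positive forces nonnegativity of all of them. This is the argument already contained in Proposition~\ref{prop.rnr}.

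The main obstacle is Step 1. We must check carefully that the elimination across blocks respects both the flag and the pivot conditions simultaneously: reordering rows is allowed only within a block, and clearing is allowed only by earlier rows. We must also confirm that distinct pivot columns survive, which follows from the independence of the rows.
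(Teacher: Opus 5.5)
Your proof is the paper's: reach a reduced representation using row operations that fix every $V_l$, then rescale rows so that $\sgn_A(i)=(-1)^{\nep_A(i)}$ and invoke Proposition~\ref{prop.rnr}(i).

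Two small remarks. First, your within-block row swaps are genuinely needed when a block has size at least two. The paper lists only downward additions and scalings, and under those the representation of a point of $\Gr_{2,3}$ with rows $e_1,e_2$ can never reach reverse echelon form, since its first row stays a multiple of $e_1$. Second, the tie issue you flag is settled by first making the block's leading columns distinct via ordinary forward elimination. After that, your rightmost-first clearing never moves a leading entry, and only then does your claim that linear independence gives distinct pivots hold.
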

\begin{proof}
Starting with any matrix representation $A$ of the nonnegative flag $V_\bullet: V_1\subset \cdots \subset V_t$, we can obtain a reduced representation of $V_\bullet$ by adding a scalar multiple of a row to a row beneath it or by scaling a row, since both of these row operations preserve the $l$-th subspace of the flag.  
Lastly, we can then scale the necessary rows to obtain a representation $R$ for $V_\bullet$ with $\sgn_R(i) = (-1)^{\nep_R(i)}$ for $i=1,\ldots, k$.  By Proposition~\ref{prop.rnr}, it follows that $R$ is a reduced nonnegative representation of $V_\bullet$.
\end{proof}

Proposition~\ref{prop.rnr} and its corollary showed that if $V_\bullet$ is a nonnegative flag whose ranks $\br$ are consecutive and begin with $r_1=1$, then there is a {\em unique} reduced nonnegative representation of $V_\bullet$. 
In general, a reduced nonnegative representation of a partial nonnegative flag is not unique, but there exists one with leading entries $A_{i,u_i}=(-1)^{\nep_A(i)}$ for all $i$. 
In this way, if $V_\bullet$ is a projection of a complete flag, then this particular reduced nonnegative representative also represents that complete flag nonnegatively.  
This will be our canonical choice of a representation for a nonnegative flag.

\begin{definition}\label{defn.completerepn}
Given a nonnegative flag $V_\bullet \in \Fl_{\br;n}^{\geq0}$ with $\br=(r_1,\ldots, r_t)$ such that $r_t=k$, the \defn{complete nonnegative representation of $V_\bullet$} denoted by $R(V_\bullet)$, is the reduced matrix representation whose leading entries are $(-1)^{\nep_{R(V_\bullet)}(i)}$ for $i=1,\ldots, k$.

Likewise, for $W\in \Gr_{k,n}^{\geq0}$, the \defn{complete nonnegative representation of $W$} is denoted by $R(W)$ and is the reduced representation of $W$ whose leading entries are $(-1)^{\nep_{R(W)}(i)}$ for $i=1,\ldots, k$.
\end{definition}

For example, the matrix
\[R(V_\bullet)=\begin{bmatrix}
0&0&0&0&1&1&0\\
0&0&-1&-1&0&1&1\\
1&1&0&-1&0&0&0\\ \hline
0&0&0&0&0&1&2\\
\end{bmatrix}
\]
is the complete nonnegative representation of a nonnegative flag $V_\bullet \in\Fl_{(3,4);7}^{\geq0}$.

\begin{remark}
As we have seen in Remark \ref{rem.completable_flag_counterexample}, there are partial nonnegative flags that cannot be extended to a complete nonnegative flag. 
That is, for such a flag, its complete nonnegative representative is {\em not} a representation for a complete nonnegative flag. The term ``complete" here is only meant to refer to the fact that we have {\em at least one} nonnegative minor in particular for each rank, and thus if $V_\bullet$ is a projection of a nonnegative flag, this representation is also a complete nonnegative representation of that complete flag as well.
\end{remark}

\subsection{Two key maps}
Using the complete nonnegative representation of a nonnegative flag, we define a map $\Phi$ from the two-step nonnegative flag variety $\Fl_{(k,k+1);n}^{\geq0}$ to the nonnegative Grassmannian $\Gr_{k+1,n+1}^{\geq0}$.
This induces a map $\phi$ from the two-step flag positroids of consecutive ranks $(k,k+1)$ on $[n]$ to rank $k+1$ positroids on $[0,n]$, and we show in Theorem~\ref{thm.positroid-quotient-bijection} that $\phi$ is a bijection onto its image $\calR_{k+1}([0,n])$ (see Definition~\ref{defn.calR}).

In the following, given a matrix $B\in \Mat_{k+1,n+1}$, we will use the convention that the rows of $B$ are indexed by $[k+1]$ and the columns of $B$ are indexed by $[0,n]=\{0,1,\ldots, n\}$.
Define the column vector
\[\bc_{k+1} = [\, 0\, \cdots\, 0 \, (-1)^{k}\,]^T \in \Mat_{k+1,1}.\]

\begin{lemma}\label{lem:matrix-embedding}
Given $A\in \Mat_{k+1, n}$, let $B = [\bc_{k+1} \ A] \in \Mat_{k+1,n+1}$ be the matrix obtained by appending the column $\bc_{k+1}$ to the left of $A$, and let $A'\in \Mat_{k,n}$ be the restriction of $A$ to its first $k$ rows. 
Then for $S\in\binom{[0,n]}{k+1}$, the minor
\[
\Delta_S(B) = 
\begin{cases}
\Delta_S(A), &\hbox{if } 0 \notin S,\\
\Delta_{S\backslash \{0\}}(A'), &\hbox{if } 0 \in S.
\end{cases}
\]
\end{lemma}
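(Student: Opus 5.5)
The plan is a direct computation of the $(k+1)\times(k+1)$ minors of $B=[\bc_{k+1}\ A]$, splitting into cases according to whether the index $0$ belongs to $S$. The column of $B$ indexed by $0$ is $\bc_{k+1}$, and columns $1,\ldots,n$ of $B$ are exactly the columns of $A$. We list the columns of a minor in increasing order, so column $0$, if present, is always first.

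\emph{Case $0\notin S$.} Here $S\subseteq[n]$, and the submatrix $B_{[k+1],S}$ is literally $A_{[k+1],S}$. Hence $\Delta_S(B)=\Delta_S(A)$ with no further work.

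\emph{Case $0\in S$.} Write $S=\{0\}\cup T$ with $T\in\binom{[n]}{k}$. The submatrix $B_{[k+1],S}$ has first column $\bc_{k+1}=[0\,\cdots\,0\,(-1)^k]^T$, and its remaining columns form $A_{[k+1],T}$. Expanding the determinant along this first column, the only nonzero entry is $(-1)^k$, sitting in position $(k+1,1)$. The cofactor sign is $(-1)^{(k+1)+1}=(-1)^{k}$. The complementary minor is obtained by deleting row $k+1$ and column $0$, which leaves $A_{[k],T}=A'_{[k],T}$. Therefore
\[
\Delta_S(B)=(-1)^k\cdot(-1)^k\cdot\Delta_T(A')=\Delta_{S\setminus\{0\}}(A'),
\]
as claimed.

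The only point needing care is the sign bookkeeping in the second case. The entry $(-1)^k$ in $\bc_{k+1}$ was chosen precisely to cancel the Laplace sign $(-1)^{k+2}$, and this cancellation depends on the convention that column $0$ comes first in the ordering of $S$, which matches how $B$ is built by appending $\bc_{k+1}$ on the left. Once that is fixed, no other step is difficult.
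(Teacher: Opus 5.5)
Your proposal is correct and follows the paper's own argument: the case $0\notin S$ is immediate, and for $0\in S$ you expand along the zeroth column, where the cofactor sign $(-1)^{(k+1)+1}$ cancels the entry $(-1)^k$ and leaves $\Delta_{S\setminus\{0\}}(A')$. Your bookkeeping of the sign and of the complementary minor is accurate.
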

\begin{proof}
If $0\notin S$, then the result is trivial.
Otherwise if $0\in S$, then using cofactor expansion along the zeroth column of $B$ gives $\Delta_S(B) = (-1)^{(k+1)+1}(-1)^k \Delta_{S\backslash\{0\}} (A') = \Delta_{S\backslash\{0\}}(A')$.
\end{proof}

It follows from the lemma that a matrix $A$ has nonnegative maximal minors in its first $k$ rows and first $k+1$ rows if and only if $B=[\bc_{k+1} \ A]$ has nonnegative $(k+1)\times (k+1)$ maximal minors.

Given the two-step flag $V_\bullet: V_k \subset V_{k+1} \in \Fl_{(k,k+1);n}^{\geq0}$ with $A=R(V_\bullet)$, then $B=[\bc_{k+1}\, A]$ is a complete nonnegative representation of a rank $k+1$ positroid on $[0,n]$.
Conversely, given any rank $k+1$ positroid on $[0,n]$ that contains $0$ in at least one of its bases and some vector space $W$ in that positroid cell, then $R(W)$ necessarily has $\bc_{k+1}$ as its first column and removing $\bc_{k+1}$ yields a (not necessarily reduced) nonnegative representation of a flag positroid.
This construction allows us to define a map that will be useful for combinatorially characterizing nonnegatively representable positroid quotients in terms of FPPs.

\begin{definition}
Define a map $\Phi:\Fl_{(k,k+1);n}^{\geq0} \rightarrow \Gr_{k+1,n+1}^{\geq0}$ as follows.
Given a two-step flag $V_\bullet\in \Fl_{(k,k+1);n}^{\geq0}$ with the complete nonnegative representation $A=R(V_\bullet)\in \Mat_{k+1,n}$, let $\Phi(V_\bullet) = W \in \Gr_{k+1,n+1}^{\geq0}$ where $W$ is represented by the matrix $B=[\bc_{k+1}\ A]$.
\end{definition}

By Lemma~\ref{lem:matrix-embedding}, the map $\Phi$ is injective, and so we examine the image of $\Phi$ to establish a bijection. 
Since the first column of $B$ is $\bc_{k+1}$, we know that if $S\in \binom{[0,n]}{k+1}$ is the lexicographically minimal index among the nonzero Pl\"{u}cker coordinates of $W$, then $0\in S$. 
That is, $0$ is in the lexicographically minimal basis of the positroid $M(B)$.
Also note that if $A$ is a complete nonnegative representation, then so is $B$. 
Furthermore, not only does $B$ have $(-1)^k$ as its leading nonzero entry in the last row, its second nonzero entry in that row is $(-1)^{\nep_A(k)}$ because it is a pivot of $A$. 
This is equivalent to forcing the equality of the two Pl\"{u}cker coordinates indexed by the column sets $S$ and $T$, where $T$ is the lexicographically minimal basis of $M(B)$ not containing $0$. 
These restrictions are enough to define the image of $\Phi$.

Recall that the nonnegative Grassmannian 
$\Gr_{k+1,n+1}^{\geq0} = \bigsqcup_P \calS_P$ has a decomposition into positroid cells $\calS_P$.

\begin{definition}\label{defn.calR}
For a positroid $R$ on the set $[0,n]$, let $S(R)$ denote the lexicographically minimal basis of $R$ and let $T(R)$ denote the lexicographically minimal basis of $R$ that does not contain $0$ (if every basis of $R$ contains $0$, define $T(R)=\emptyset$). 
Let 
\[\calR_{k+1}([0,n]) = \left\{ R \in \calP_{k+1}[0,n] \mid 0\in S(R) \hbox{ and } T(R)\neq \emptyset\right\}
\]
denote the set of positroids $R$ on $[0,n]$ of rank $k+1$ such that $0\in S(R)$ and $T(R)\neq \emptyset$.  
For such positroids, define $\widetilde\calS_R = \{V\in \calS_R \mid \Delta_{S(R)}(V) = \Delta_{T(R)}(V)\}$
and 
\[\Cov^{\geq0}_{k+1, n+1} 
= \bigsqcup_{R\in \calR_{k+1}([0,n])} \widetilde\calS_R,
\]
which is a subset of $\Gr^{\geq0}_{k+1,n+1}$.
\end{definition}

Let $\chi_0:\Mat_{k+1,n+1} \rightarrow \Mat_{k+1,n}$ denote the map that removes the zeroth column of the argument.

\begin{definition}
Define a map $\Psi: \Cov_{k+1,n+1}^{\geq0} \rightarrow \Fl_{(k,k+1);n}^{\geq0}$ as follows.  
Given $W\in \Cov_{k+1,n+1}^{\geq0}$ with the complete nonnegative representation $B=R(W)$, let $\Psi(W) = V_\bullet\in \Fl_{(k,k+1);n}^{\geq0}$ with $R(V_\bullet) = \chi_0(R(W))$.
\end{definition}

\begin{theorem}\label{thm:projection-bijection}
	The maps $\Phi$ and $\Psi$ are inverse homeomorphisms between $\Fl^{\geq0}_{(k,k+1);n}$ and $\Cov^{\geq0}_{k+1, n+1}$.
\end{theorem}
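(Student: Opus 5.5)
The plan is to check, in order, that $\Phi$ lands in $\Cov^{\geq0}_{k+1,n+1}$, that $\Psi$ lands in $\Fl^{\geq0}_{(k,k+1);n}$, that the two maps are mutually inverse, and finally that both are continuous.

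First, $\Phi$ is well defined with image in $\Cov^{\geq0}_{k+1,n+1}$. Let $A=R(V_\bullet)$ and $B=[\bc_{k+1}\ A]$. By Lemma~\ref{lem:matrix-embedding}, all maximal minors of $B$ are nonnegative, so $W=\Phi(V_\bullet)\in\Gr^{\geq0}_{k+1,n+1}$. Let $R=M(B)$.

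\emph{Membership of $0$ in $S(R)$ and nonemptiness of $T(R)$.} Since column $0$ is nonzero, $0$ lies in the lex-minimal basis $S(R)$. Since $A$ has rank $k+1$, some basis avoids $0$, so $T(R)\neq\emptyset$.

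\emph{Equality $\Delta_{S(R)}=\Delta_{T(R)}$.} In reduced (lower-reduced, reverse echelon) form, the lex-minimal bases can be read from the pivots.
\begin{itemize}
\item $S(R)=\{0\}\cup\{u_1,\dots,u_k\}$, because the first $k$ rows of $A$ give the lex-minimal basis of $V_k$.
\item $T(R)=\{u_1,\dots,u_{k+1}\}$.
\end{itemize}
By Lemma~\ref{lem:matrix-embedding}, $\Delta_{S(R)}(B)=\Delta_{U_k}(A')$ and $\Delta_{T(R)}(B)=\Delta_{U_{k+1}}(A)$. Lemma~\ref{lem.rnr}, together with the sign normalization in Definition~\ref{defn.completerepn}, gives
\[\Delta_{U_{k+1}}(A)=(-1)^{\nep}\sgn_A(k+1)\,\Delta_{U_k}(A')=\Delta_{U_k}(A').\]
Both minors are products of pivot entries of absolute value $1$, so they equal $1$. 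Hence $W\in\widetilde\calS_R$.

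I expect the identification of $S(R)$ and $T(R)$ with pivot sets to be the main obstacle. The point to justify is that lower-reducedness forces every $j$-subset lexicographically smaller than the pivot set to have vanishing minor. I would prove this by a standard Gaussian-elimination argument: column $u_i$ is the first column in which row $i$ is nonzero, so the minor of any lexicographically smaller set has a zero row after triangulation.

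Second, $\Psi$ is well defined. Take $W\in\widetilde\calS_R$ with $R\in\calR_{k+1}([0,n])$, and let $B=R(W)$.
\begin{itemize}
\item Since $0\in S(R)$, column $0$ is the first pivot column. Reverse echelon with the lower-reduced condition forces that pivot into the last row, so column $0$ equals $\pm e_{k+1}$; the sign rule (there are $k$ pivots northeast) gives exactly $\bc_{k+1}$.
\item Because $T(R)\neq\emptyset$, $\chi_0(B)$ has rank $k+1$, so it represents a flag of dimensions $k\subset k+1$.
\item Lemma~\ref{lem:matrix-embedding} shows that $\chi_0(B)$ has nonnegative flag minors.
\item The hypothesis $\Delta_{S(R)}=\Delta_{T(R)}$, read through the same pivot computation, forces the new pivot entry of row $k+1$ in $\chi_0(B)$ to be $(-1)^{\nep(k+1)}$ rather than an arbitrary positive multiple of it.
\item The reduced form survives deleting column $0$ (which only affects row $k+1$), so $\chi_0(B)=R(\Psi(W))$ is a complete nonnegative representation.
\end{itemize}

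Third, the maps are mutually inverse. Uniqueness of complete nonnegative representations (Proposition~\ref{prop.rnr}(ii) for consecutive ranks, and its Grassmannian analogue) gives $R(\Phi(V_\bullet))=[\bc_{k+1}\ R(V_\bullet)]$. Therefore $\Psi\Phi=\mathrm{id}$ and $\Phi\Psi=\mathrm{id}$ follow immediately from $\chi_0([\bc_{k+1}\,A])=A$ and the preceding paragraph.

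Finally, continuity. On Plücker coordinates, Lemma~\ref{lem:matrix-embedding} shows that $\Phi$ is the map
\[\bigl((\Delta_I)_{|I|=k},(\Delta_J)_{|J|=k+1}\bigr)\mapsto(\Delta_S)_{S\subseteq[0,n]},\]
assembled after fixing the relative scale by the normalization $\Delta_{S(R)}=\Delta_{T(R)}$. I would handle this scale by noting that the flag Plücker coordinates can be normalized so that these two minors agree, and that this normalization is continuous in local charts. $\Psi$ is the inverse coordinate projection, restricted to $S\ni 0$ and $S\not\ni 0$, hence also continuous. I would make the argument fully precise on the affine charts where a fixed basis minor is nonzero, since those charts cover everything.
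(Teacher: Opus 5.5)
Your Steps 1--3 are sound, and more careful than the paper's ``routine to check.'' The identification of $S(R)$ and $T(R)$ with pivot sets is correct. So is the computation $\Delta_{S(R)}(B)=\Delta_{T(R)}(B)=1$. So is the observation that the equality $\Delta_{S(R)}=\Delta_{T(R)}$ pins the new pivot of row $k+1$ of $\chi_0(R(W))$ to $(-1)^{\nep(k+1)}$. (Uniqueness of $R(\cdot)$ for ranks $(k,k+1)$ is not what Proposition~\ref{prop.rnr}(ii) says. It follows instead from uniqueness of the reduced echelon form once the pivot signs are prescribed.) Your continuity argument for $\Psi$ is also correct and cleaner than the paper's. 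On $\Cov^{\geq0}_{k+1,n+1}$ both blocks $(\Delta_S)_{S\ni 0}$ and $(\Delta_S)_{S\not\ni 0}$ are nonzero, so $\Psi$ is a coordinate projection into $\bbP\times\bbP$.

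The gap is the continuity of $\Phi$, and it cannot be closed. Fixing the scale by $\Delta_{S(R)}=\Delta_{T(R)}$ amounts to dividing each Pl\"ucker vector by its lexicographically first nonzero coordinate. Since $S(R)$ and $T(R)$ jump from cell to cell, no choice of chart makes this continuous.

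Concretely, take $k=1$, $n=3$, $V_1^\epsilon=\mathrm{span}(\epsilon,1,0)$ and $V_2=\mathrm{span}(e_1,e_2)$.
\begin{itemize}
\item For $\epsilon>0$, $R(V^\epsilon_\bullet)$ has rows $(1,1/\epsilon,0)$ and $(0,1,0)$. Then $\Phi(V^\epsilon_\bullet)$ has Pl\"ucker vector $(\Delta_{01},\Delta_{02},\Delta_{03},\Delta_{12},\Delta_{13},\Delta_{23})=(1,1/\epsilon,0,1,0,0)$. As $\epsilon\to 0$ this tends to $(0,1,0,0,0,0)$, i.e. to the point $\mathrm{span}(e_0,e_2)$.
\item At $\epsilon=0$, $R(V^0_\bullet)$ has rows $(0,1,0)$ and $(-1,0,0)$, and $\Phi(V^0_\bullet)$ has Pl\"ucker vector $(0,1,0,1,0,0)$.
\end{itemize}
So $\Phi$ is discontinuous at $V^0_\bullet$.

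Worse, $0$ lies in every basis of $\mathrm{span}(e_0,e_2)$, so this point is outside $\Cov^{\geq0}_{2,4}$. Hence $\Cov^{\geq0}_{2,4}$ is not closed in the compact space $\Gr^{\geq0}_{2,4}$, while $\Fl^{\geq0}_{(1,2);3}$ is compact. No homeomorphism between them exists at all.

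The paper's one-line claim that the argument for $\Phi$ ``is similar'' has the same hole. Your argument does establish the following:
\begin{itemize}
\item $\Phi$ and $\Psi$ are inverse bijections;
\item $\Psi$ is continuous;
\item $\Phi$ is continuous on each cell $\calS_{(P,Q)}$, because there the normalizing minors indexed by the lex-minimal bases of $P$ and $Q$ are fixed and nonvanishing.
\end{itemize}
That is all the subsequent cellwise corollary and Theorem~\ref{thm.positroid-quotient-bijection} actually need.
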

\begin{proof}
It is routine to check that $\Psi\circ \Phi$ is the identity on  $\Fl^{\geq0}_{(k,k+1);n}$.
Conversely, note that if $B=R(W)$ is the complete nonnegative representation of $W\in \Cov^{\geq0}_{k+1,n+1}$, then the pivot entry $B_{k+1,u_{k+1}} = (-1)^{\nep_B(k+1)} = (-1)^k$, so the zeroth column of $B$ is $\bc_{k+1}$, and $\Phi\circ \Psi$ is the identity on $\Cov^{\geq0}_{k+1,n+1}$.

To see that $\Psi$ is continuous, let $R:\Cov^{\geq0}_{k+1,n+1} \rightarrow \Mat_{k+1,n+1}$ denote the map that sends $W$ to its complete nonnegative representation, and let 
$\RowS_{(k,k+1)}: \Mat_{k+1,n} \rightarrow \Fl_{(k,k+1);n}^{\geq0}$ denote the map that sends a matrix to the two-step flag consisting of the span of its first $k$ rows and its first $k+1$ rows.
We have $\Psi = \RowS_{(k,k+1)} \circ \chi_0 \circ R$. 
Since $\RowS$ and $\chi_0$ are continuous, our only concern is the continuity of the map $R$. 
The only discontinuities in $R$ occur when a pivot entry of the representative matrix approaches 0 and two rows become exchanged to keep the pivots in descending order. However, by the definition of $\Cov^{\geq0}_{k+1,n+1}$, the pivot entry in the bottom row is always in the zeroth column, and hence the sequence of $k$-th and $(k+1)$-th constituent row spaces is not affected by this exchange. Therefore, once we apply $\RowS$, the discontinuities disappear.
	
The argument to see the continuity of $\Phi = R \circ \alpha_0 \circ \RowS_{k+1}$ is similar, where $\alpha_0$ denotes the operation of appending $\bc_{k+1}$ as the zeroth column of a matrix.
\end{proof}

We next show that the maps $\Phi$ and $\Psi$ induce bijections between their corresponding flag positroids and positroids. 
Let $\calP_{(k,k+1)}([n])$ denote the set of two-step flag positroids $P_\bullet=(P, Q)$ on $[n]$ such that $\rank\,P=k$ and $\rank\,Q=k+1$.

\begin{definition}
Let $\calB(P)$ denote the set of bases of a positroid $P$.
Define the map  $\phi: \calP_{(k,k+1)}([n]) \rightarrow \calR_{k+1}([0,n]): (P,Q) \mapsto R$ where 
\[\calB(R) = \{B\cup\{0\} \mid B\in \calB(P)\}\ \cup\  \calB(Q).\]
Conversely, define $\psi: \calR_{k+1}([0,n]) \rightarrow \calP_{(k,k+1)}([n]): R \mapsto (P,Q)$ where
\[\calB(P)=\{B\backslash \{0\} \mid  B\in \calB(R) \hbox{ and } 0\in B\}\ \hbox{ and }\ \calB(Q)= \{B \mid B\in \calB(R) \hbox{ and } 0\notin B\}. 
\]
\end{definition}
Given $P_\bullet=(P,Q)\in \calP_{(k,k+1)}([n])$ with a nonnegative representation $A$, then by Lemma~\ref{lem:matrix-embedding}, the matrix $B=[\bc_{k+1}\ A]$ represents the positroid $R\in \calR_{k+1}([0,n])$, so $\phi$ is well-defined.
Conversely, given a positroid $R\in \calR_{k+1}([0,n])$ with a nonnegative representation $M$ whose row space is some $W\in \widetilde{S}_R \subseteq \Cov_{k+1,n+1}^{\geq0}$, let $V_\bullet = \Psi(W) \in \calF_{(k,k+1);n}^{\geq0}$. 
Let $B=R(V_\bullet)$ be the complete nonnegative representation of the flag $V_\bullet$ so that by definition, its zeroth column is $\bc_{k+1}$.
Again by Lemma~\ref{lem:matrix-embedding}, the submatrices $A'=B_{[k+1],[n]}$ and $A=B_{[k],[n]}$ respectively represent positroids $Q$ and $P$ of ranks $k+1$ and $k$ such that $P\unlhd_q Q$.
Therefore, $\psi$ is also well-defined.

We can now refine Theorem~\ref{thm:projection-bijection}.
Let $\calS_{(P,Q)}\subseteq \Fl_{(k,k+1);n}^{\geq 0}$ denote the subset of flags whose flag positroid is $(P,Q)$. 

\begin{corollary}
Let $V_\bullet\in \Fl_{(k,k+1);n}^{\geq0}$ be the flag with associated flag positroid $(P,Q)$ and let $W\in\Gr_{k+1,n+1}^{\geq0}$ be the subspace with associated positroid $R$. 
Then $\phi(P,Q)$ is the positroid associated to $\Phi(V_\bullet)$ and $\psi(R)$ is the flag positroid associated to $\Psi(W)$.
Moreover, if $\phi(P,Q) = R$, then the restrictions $\Phi|_{\calS_{(P,Q)}}$ and $\Psi|_{\widetilde\calS_R}$ are inverse homeomorphisms.
\qed
\end{corollary}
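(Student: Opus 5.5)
The first claim is that $\phi(P,Q)$ is the positroid of $\Phi(V_\bullet)$. I will take the complete nonnegative representation $A=R(V_\bullet)$ and form $B=[\bc_{k+1}\ A]$. Lemma~\ref{lem:matrix-embedding} says that for $S\in\binom{[0,n]}{k+1}$ the minor $\Delta_S(B)$ equals $\Delta_{S\setminus\{0\}}(A')$ when $0\in S$, and equals $\Delta_S(A)$ otherwise. Here $A'$ (the first $k$ rows of $A$) realizes $P$, and $A$ realizes $Q$. So $S$ is a basis of $M(B)$ exactly when either $0\in S$ and $S\setminus\{0\}\in\calB(P)$, or $0\notin S$ and $S\in\calB(Q)$. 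This is precisely $\calB(\phi(P,Q))$.

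The second claim, that $\psi(R)$ is the flag positroid of $\Psi(W)$, follows by the same reading in reverse. For $W\in\widetilde\calS_R$, the matrix $R(W)$ has zeroth column $\bc_{k+1}$, as in the proof of Theorem~\ref{thm:projection-bijection}. Removing that column gives $R(\Psi(W))$, and Lemma~\ref{lem:matrix-embedding} splits $\calB(R)$ according to whether $0$ is present. Together with the first claim, this gives $\psi\circ\phi=\mathrm{id}$ and $\phi\circ\psi=\mathrm{id}$ on the level of positroids.

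For the restrictions, I need $\Phi(\calS_{(P,Q)})\subseteq\widetilde\calS_R$ and $\Psi(\widetilde\calS_R)\subseteq\calS_{(P,Q)}$. The second containment is immediate from the second claim. For the first, the first claim gives $\Phi(V_\bullet)\in\calS_R$, so what remains is the equality $\Delta_{S(R)}=\Delta_{T(R)}$ for $B=[\bc_{k+1}\ A]$.

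This equality is the step I expect to be the main obstacle, and I plan to handle it with the pivot structure of $A$.
\begin{itemize}
\item Since $A$ is lower reduced in reverse $(k,k+1)$-echelon form, $S(R)=\{0\}\cup\{u_1,\dots,u_k\}$.
\item The set $T(R)$ should be $\{u_1,\dots,u_{k+1}\}$.
\item Both minors are then computed from generalized permutation submatrices with the prescribed signs $(-1)^{\nep}$, so by Lemma~\ref{lem.rnr} each has absolute value $1$ and is nonnegative. Hence both equal $1$.
\end{itemize}
The delicate point is justifying the lexicographic minimality of these two sets. I would argue this via the reducedness: the entries below each pivot vanish, so each pivot column is the earliest possible choice given the rows above it.

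Once both containments hold, the restrictions of the mutually inverse homeomorphisms from Theorem~\ref{thm:projection-bijection} to $\calS_{(P,Q)}$ and $\widetilde\calS_R$ remain mutually inverse and continuous. This proves the corollary.
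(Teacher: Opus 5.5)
Your proposal is correct and matches the paper's proof. The first two claims are read off from Lemma~\ref{lem:matrix-embedding}, and the last claim follows from the containments $\Phi(\calS_{(P,Q)})\subseteq\widetilde{\calS}_R$ and $\Psi(\widetilde{\calS}_R)\subseteq\calS_{(P,Q)}$ together with Theorem~\ref{thm:projection-bijection}.

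The one difference is the first containment. The paper gets it simply from $\Phi$ landing in $\Cov^{\geq0}_{k+1,n+1}$, whereas you check $\Delta_{S(R)}(B)=\Delta_{T(R)}(B)=1$ directly. That check is valid, with two small clarifications:
\begin{itemize}
\item Lex-minimality holds because the first $j$ columns have rank equal to the number of leading entries among them, and these leading entries lie in distinct columns.
\item The submatrix on $T(R)$ need not be a generalized permutation matrix, since column $u_{k+1}$ can have nonzero entries above row $k+1$. Its determinant still equals that of the pivot matrix, as in Proposition~\ref{prop.rnr}.
\end{itemize}
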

\begin{proof}
The first claim follows directly from Lemma~\ref{lem:matrix-embedding}.
As for the second claim, $\Phi(\calS_{(P,Q)}) \subseteq \widetilde\calS_R$ and $\Psi(\widetilde\calS_R) \subseteq \calS_{(P,Q)}$, where we can be sure that $\Phi(\calS_{(P,Q)})$ is in $\widetilde\calS_R$ because it is in $\Cov_{k+1,n+1}^{\geq 0}$. 
By Theorem~\ref{thm:projection-bijection}, these restrictions are also inverse homeomorphisms.
\end{proof}

The key result in this section now follows.
\begin{theorem}\label{thm.positroid-quotient-bijection}
The maps $\phi$ and $\psi$ are inverse bijections between the set of flag positroids $\calP_{(k,k+1)}([n])$ and the set of positroids $\calR_{k+1}([0,n])$. \qed
\end{theorem}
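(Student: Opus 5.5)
We deduce the statement from the preceding corollary together with Theorem~\ref{thm:projection-bijection}, by checking set-theoretically that $\psi\circ\phi$ and $\phi\circ\psi$ are identities and that both maps land in the claimed sets. Well-definedness of both maps was already verified right after their definition via Lemma~\ref{lem:matrix-embedding}. Still, I would recheck that $\phi(P,Q)$ lies in $\calR_{k+1}([0,n])$. Take a complete nonnegative representation $A=R(V_\bullet)$ of a flag $V_\bullet\in\calS_{(P,Q)}$ and set $B=[\bc_{k+1}\ A]$. Then $\Phi(V_\bullet)$ has matroid $R=\phi(P,Q)$, whose bases are exactly the sets described in the definition of $\phi$, by the minor formula in Lemma~\ref{lem:matrix-embedding}. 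Moreover $\Phi(V_\bullet)\in\Cov^{\geq0}_{k+1,n+1}$, so $0\in S(R)$ and $T(R)\neq\emptyset$.

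\textbf{Step 1: $\psi\circ\phi=\mathrm{id}$.} This is purely combinatorial. The bases of $R=\phi(P,Q)$ containing $0$ are exactly $\{B\cup\{0\}: B\in\calB(P)\}$. This uses that $0\notin[n]$ and that no basis of $Q$ contains $0$. The bases not containing $0$ are exactly $\calB(Q)$. Stripping $0$ therefore recovers $(P,Q)$.

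\textbf{Step 2: $\phi\circ\psi=\mathrm{id}$.} Given $R\in\calR_{k+1}([0,n])$, we need $\psi(R)$ to be a genuine flag positroid, that is, a nonnegatively representable pair. Choose $W\in\widetilde\calS_R$. The key point is that $\widetilde\calS_R$ is nonempty: take any $W'\in\calS_R$ and rescale the zeroth column by the positive factor $\Delta_{T(R)}(W')/\Delta_{S(R)}(W')$. Torus rescaling of columns by positive scalars preserves $\calS_R$. Since $0\in S(R)$ and $0\notin T(R)$, this rescaling equalizes the two Plücker coordinates. Then $\Psi(W)$ is a nonnegative two-step flag. By the preceding corollary its flag positroid is $\psi(R)$, and $\Phi(\Psi(W))=W$ by Theorem~\ref{thm:projection-bijection}. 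Applying the first part of the corollary again gives $\phi(\psi(R))$ as the positroid of $W$, which is $R$. Since both compositions are identities, $\phi$ and $\psi$ are inverse bijections.

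\textbf{Main obstacle.} The delicate point is the nonemptiness of $\widetilde\calS_R$, together with the claim that $\Psi$ applied to such a $W$ really produces a flag in $\Fl^{\geq0}_{(k,k+1);n}$. Concretely, we need the complete nonnegative representation of $W$ to have zeroth column $\bc_{k+1}$. I would verify this as in the proof of Theorem~\ref{thm:projection-bijection}, and make the normalization explicit.

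The condition $0\in S(R)$ forces column $0$ to be the pivot of the last row in reverse echelon form. In reverse echelon form every pivot lies northeast of this one, so $\nep_B(k+1)=k$ and the pivot entry is $(-1)^k$. Lower-reducedness then clears the rest of column $0$, giving zeroth column $\bc_{k+1}$. The equality $\Delta_{S(R)}=\Delta_{T(R)}$ normalizes the second nonzero entry of the last row to $(-1)^{\nep_A(k)}$. This makes $\chi_0(B)$ a complete nonnegative representation of a two-step flag, as required.
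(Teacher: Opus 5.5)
Your proposal is correct and follows essentially the paper's route. It gets well-definedness of $\phi$ and $\psi$ from Lemma~\ref{lem:matrix-embedding} and obtains $\phi\circ\psi=\mathrm{id}$ through $\Psi$, the preceding corollary, and Theorem~\ref{thm:projection-bijection}; your positive rescaling of column $0$ also supplies the nonemptiness of $\widetilde\calS_R$, which the paper assumes without comment. One small correction: the other entries of column $0$ of $R(W)$ vanish because of the reverse echelon form, since rows $1,\ldots,k$ have their leading entries strictly to the right of column $0$; lower-reducedness only concerns entries below a pivot and plays no role here.
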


\subsection{Positroid quotients and flag positroid pipe dreams}\label{subsec:le-diagrams-for-quotients}
We will use Theorem~\ref{thm.positroid-quotient-bijection} to obtain a combinatorial characterization in Theorem~\ref{thm.char_Q} of nonnegatively representable positroid quotients of consecutive ranks via partial FPPs.

\begin{definition}
For $1\leq k\leq n$, a \defn{partial flag positroid pipe dream} of rank $k$ is the restriction of a flag positroid pipe dream in $\setFPP(n)$ to its first $k$ rows.
We denote the set of rank $k$ partial flag positroid pipe dreams by $\setFPP_k(n)$.
Alternatively, given a permutation $u=u_1\cdots u_n\in \fS_n$, a partial FPP of $u_1\cdots u_k$ is a $\Gamma$-free filling of a $k\times n$ array 
with pivots in the positions $(i,u_i)$ for $i=1,\ldots, k$.

Let $\setFPP_k^{\sLe}(n)$ denote the subset of rank $k$ partial FPPs with pivot columns $u_1 > \cdots > u_k$ in decreasing order. 
See Example~\ref{eg.pfpp_repns}.
\end{definition}

We now reformulate a result from Section~\ref{subsec.le_parallel}.
Theorem~\ref{thm.rpd_is_le} states that there is a map from flag positroid pipe dreams $\FPP(u,v)$ where $u$ has at most one ascent in the $k$-th position to $\Le$-pipe dreams/$\Le$-diagrams with $k$ rows and is contained in a $k \times (n-k)$ rectangle.
This map is actually a bijection; given a $\Le$-pipe dream, traversing its southwest boundary to label the steps $1,\ldots, n$, the labels on the vertical portions of the boundary are its pivot columns.
Rotating the $\Le$-pipe dream $180^\circ$ and pulling apart the columns according to the pivots columns yields an FPP because being $\Le$-free as a $\Le$-pipe dream is equivalent to being $\Gamma$-free as an FPP.

Since such FPPs are in bijection with $\setFPP_k^{\sLe}(n)$ and such $\Le$-pipe dreams are in bijection with rank $k$ positroids on $[n]$, then we have the following equivalent statement.
\begin{proposition}\label{prop.PFPP}
The set of partial flag positroid pipe dreams $\setFPP_k^{\sLe}(n)$ with pivot columns $u_1> \cdots > u_k$ in decreasing order is in bijection with the set $\calP_k([n])$ of rank $k$ positroids on $[n]$.
\qed    
\end{proposition}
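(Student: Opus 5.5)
Our approach is to factor the claimed bijection through three that are already in hand: $\setFPP_k^{\sLe}(n)$ to FPPs of a permutation with at most one ascent, then (by Theorem~\ref{thm.rpd_is_le}) to $\Le$-pipe dreams with $k$ rows in a $k\times(n-k)$ rectangle, then (by Postnikov) to rank $k$ positroids on $[n]$.

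\emph{Step 1: extension.} Take a partial FPP $D\in\setFPP_k^{\sLe}(n)$ with pivot columns $u_1>\cdots>u_k$. Complete $u_1\cdots u_k$ to a permutation $u\in\fS_n$ by listing the remaining values $[n]\setminus\{u_1,\ldots,u_k\}$ in increasing order in positions $k+1,\ldots,n$. Then $u$ is the unique permutation with this prefix and at most one ascent, located at position $k$.

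Every coinversion of $u$ involves some $u_i$ with $i\le k$, because the tail is increasing but lies after the head. Hence $\Rothe(u)$ lies in the first $k$ rows, and rows $k+1,\ldots,n$ of the tiling are forced by Definition~\ref{defn.rrpd}. So $D$ extends uniquely to a full tiling $\widetilde D$, which is a Rothe pipe dream of $u$.

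\emph{Step 2: $\widetilde D$ is an FPP.} By Proposition~\ref{prop:staircase-equivalence}, it suffices to check that $\widetilde D$ is reduced and $\Gamma$-free. Any $\Gamma$ pattern needs its cross and its two elbows in $\Rothe(u)$, or its bottom elbow at a pivot. All of these lie in rows $1,\ldots,k$ or in forced rows. The one point to check is that the forced rows $k+1,\ldots,n$ add no new $\Gamma$ pattern and no double crossing. We will argue this directly: below row $k$ the pipes only pass horizontally or turn at pivot elbows. The pivot elbows in the lower rows sit in columns not used by $u_1,\ldots,u_k$, and a pipe arriving horizontally in a lower row has already finished all its turns. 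Alternatively, we take $D$ to be the restriction of some $\FPP(u',v)$ and use the $\Gamma$-free characterization rowwise, noting that the pattern condition depends only on the first $k$ rows together with exit rows.

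Conversely, restricting any $\FPP(u,v)$ with $u$ of this form to its first $k$ rows gives an element of $\setFPP_k^{\sLe}(n)$. Therefore $D\mapsto\widetilde D$ is a bijection between $\setFPP_k^{\sLe}(n)$ and $\{\FPP(u,v)\}$ with $u$ having at most one ascent, at position $k$.

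\emph{Step 3: composition.} Writing $u=w_0w$, the permutation $w$ is Grassmannian with descent at $k$. Theorem~\ref{thm.rpd_is_le}, together with the inverse rotate-and-pull-apart construction described before the proposition, identifies $\FPP(w_0w,w_0t)$ with $\Le(t,w)$. This is a bijection because $\Le$-freeness corresponds exactly to $\Gamma$-freeness for such $u$. Finally, Postnikov's theorem \cite[Theorem 6.5]{Postnikov06} gives a bijection between $\Le$-diagrams fitting in a $k\times(n-k)$ box (over all shapes $\lambda$) and rank $k$ positroids on $[n]$. Composing the three bijections proves the proposition.

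The main obstacle is Step 2, showing that the extension to the full array is canonical and valid. The remainder is bookkeeping with the cited theorems.
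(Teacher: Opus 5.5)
Your overall route is the paper's. It too passes through FPPs whose pivot permutation has a single ascent at position $k$ (the trivial completion), then Theorem~\ref{thm.rpd_is_le} with its inverse rotate-and-pull-apart construction, then Postnikov's bijection. The paper treats the first correspondence as evident; you try to prove it.

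\textbf{Step~1 is wrong as written.} The remaining values must go in positions $k+1,\ldots,n$ in \emph{decreasing} order, not increasing. With an increasing tail:
\begin{itemize}
\item $u$ has ascents at $k+1,\ldots,n-1$, so Theorem~\ref{thm.rpd_is_le} does not apply.
\item Every pair in the tail is a coinversion, so $\Rothe(u)$ has boxes below row $k$ and the extension is not forced.
\item A valid extension need not exist at all.
\end{itemize}
For the last point, take $n=3$, $k=1$, and $D$ the single row with a pivot elbow in column $1$, a cross in column $2$ and an elbow in column $3$ (the positroid $\overline{3}\underline{21}$). Your $u$ is $123$, and the free box $(2,3)$ fails either way:
\begin{itemize}
\item a cross makes pipes $1$ and $2$ cross twice;
\item an elbow makes them cross at $(1,2)$ and then touch at $(2,3)$, i.e.\ a $\Gamma$ pattern with the pivot elbow at $(2,2)$.
\end{itemize}

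\textbf{Step~2 still needs its real argument after the fix.} With the decreasing tail ($u=w_0w$, $w$ Grassmannian), $\Rothe(u)$ lies in the first $k$ rows and the rest of Step~1 is fine. But your Step~2 sketch never uses the order of the tail, and that order is exactly what matters. Three cases need handling.
\begin{itemize}
\item Crosses occur only in rows $\le k$, so reducedness is inherited from $D$.
\item A $\Gamma$ pattern with all three tiles in rows $\le k$ is already excluded. Its exit-row condition does not depend on the completion, because a pipe leaving the bottom of $D$ exits below row $k$ in every completion.
\item The only new danger is a lower pivot elbow $(m',j)$, with $j=u_{m'}$ and $m'>k$, lying beneath a cross $(i,j)$ whose horizontal pipe $q$ leaves the bottom of $D$. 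This is precisely the configuration of Figure~\ref{fig.OKgamma}.
\end{itemize}
In the last case, $q$ leaves $D$ in some column $c>j$. It then runs straight down column $c$ on vertical pipes, since $u_{i'}>c$ for $k<i'<u^{-1}(c)$. It turns at its pivot and exits in row $u^{-1}(c)$. Because the tail is decreasing, $c>j$ forces $u^{-1}(c)<u^{-1}(j)=m'$. So $q$ exits above that pivot and no $\Gamma$ pattern arises.

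With these two corrections your argument is complete and agrees with the paper's.
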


\begin{definition}\label{defn.DLP}
Given a positroid $P\in \calP_k([n])$, let $\defn{D^{\sLe}(P)} \in \setFPP_k^{\sLe}(n)$ denote the partial FPP with pivot columns in decreasing order which corresponds to $P$ in the sense of Proposition~\ref{prop.PFPP}.
\end{definition}

\begin{example}\label{eg.pfpp_repns}
Figure~\ref{fig.PFPP} shows two representations of a rank $3$ partial flag positroid pipe dream $D^{\sLe}(P)\in \setFPP_3^{\sLe}(9)$ with pivot columns $6>4>1$ in decreasing order.
$D^{\sLe}(P)$ encodes a positroid $P$ whose $\Le$-diagram is shown on the right of the figure.
Also see Figure~\ref{fig.PFPP_decperm}.
\end{example}

\begin{figure}[ht!]
\begin{tikzpicture}
\begin{scope}[scale=.5, xshift=0]
	\vertex[color=gray!40](w1) at (5.5, 2.5) {};
	\vertex[color=gray!40](w2) at (3.5, 1.5) {};
	\vertex[color=gray!40](w3) at (0.5, 0.5) {};
\draw [black,thick,domain=180:270] plot ({6+.5*cos(\x)}, {3+.5*sin(\x)});
\draw [black,thick,domain=180:270] plot ({4+.5*cos(\x)}, {2+.5*sin(\x)});
\draw [black,thick,domain=180:270] plot ({1+.5*cos(\x)}, {1+.5*sin(\x)});
\draw [black,thick] (0.5,1) -- (0.5,3);
\draw [black,thick] (1.5,1) -- (1.5,3);
\draw [black,thick] (2.5,1) -- (2.5,3);
\draw [black,thick] (3.5,2) -- (3.5,3);
\draw [black,thick] (4.5,2) -- (4.5,3);
\draw [black,thick] (3,0.5) -- (4,0.5);
\draw [black,thick] (5,0.5) -- (6,0.5);
\draw [black,thick] (5,1.5) -- (6,1.5);
\cross[thick](3,1);
\cross[thick](7,1);
\cross[thick](9,1);
\cross[thick](7,2);
\cross[thick](8,3);
\cross[thick](9,3);
\elbow[thick](2,1); 
\elbow[thick](5,1); 
\elbow[thick](8,1);
\elbow[thick](5,2);
\elbow[thick](8,2);
\elbow[thick](9,2);
\elbow[thick](7,3);
        
\draw[very thin, color=blue!50] (0,0) grid (9,3);

	\node at (.5,3.5) {\color{black}{\tiny$1$}};
	\node at (1.5,3.5) {\color{black}{\tiny$2$}};
	\node at (2.5,3.5) {\color{black}{\tiny$3$}};
	\node at (3.5,3.5) {\color{black}{\tiny$4$}};
	\node at (4.5,3.5) {\color{black}{\tiny$5$}};
	\node at (5.5,3.5) {\color{black}{\tiny$6$}};
	\node at (6.5,3.5) {\color{black}{\tiny$7$}};
	\node at (7.5,3.5) {\color{black}{\tiny$8$}};
	\node at (8.5,3.5) {\color{black}{\tiny$9$}};

\end{scope}
\begin{scope}[scale=.5, xshift=310]
\draw[fill, color=gray!10] (6,0) rectangle (9,3);
\draw[fill, color=gray!10] (4,0) rectangle (5,2);
\draw[fill, color=gray!10] (1,0) rectangle (3,1);
\node[] at (5.5,2.5){\footnotesize$\circ$};
\node[] at (3.5,1.5){\footnotesize$\circ$};
\node[] at (0.5,0.5){\footnotesize$\circ$};

\node[] at (2.5,0.5){$0$};
\node[] at (6.5,0.5){$0$};
\node[] at (8.5,0.5){$0$};
\node[] at (6.5,1.5){$0$};
\node[] at (7.5,2.5){$0$};
\node[] at (8.5,2.5){$0$};

\node[] at (1.5,0.5){$1$};
\node[] at (4.5,0.5){$1$};
\node[] at (7.5,0.5){$1$};
\node[] at (4.5,1.5){$1$};
\node[] at (7.5,1.5){$1$};
\node[] at (8.5,1.5){$1$};
\node[] at (6.5,2.5){$1$};
            
\draw[very thin, color=blue!50] (0,0) grid (9,3);
	\node at (0.5,3.5) {\color{black}{\tiny$1$}};
	\node at (1.5,3.5) {\color{black}{\tiny$2$}};
	\node at (2.5,3.5) {\color{black}{\tiny$3$}};
	\node at (3.5,3.5) {\color{black}{\tiny$4$}};
	\node at (4.5,3.5) {\color{black}{\tiny$5$}};
	\node at (5.5,3.5) {\color{black}{\tiny$6$}};
	\node at (6.5,3.5) {\color{black}{\tiny$7$}};
	\node at (7.5,3.5) {\color{black}{\tiny$8$}};
	\node at (8.5,3.5) {\color{black}{\tiny$9$}};
\end{scope}
\begin{scope}[scale=.5, xshift=620]
\draw[blue!50, thin] (0,0)--(3,0);
\draw[blue!50, thin] (0,1)--(4,1);
\draw[blue!50, thin] (0,2)--(6,2);
\draw[blue!50, thin] (0,3)--(6,3);
\draw[blue!50, thin] (0,0)--(0,3);
\draw[blue!50, thin] (1,0)--(1,3);
\draw[blue!50, thin] (2,0)--(2,3);
\draw[blue!50, thin] (3,0)--(3,3);
\draw[blue!50, thin] (4,1)--(4,3);
\draw[blue!50, thin] (5,2)--(5,3);
\draw[blue!50, thin] (6,2)--(6,3);
\node[] at (.5,.5){$0$};
\node[] at (1.5,.5){$0$};
\node[] at (2.5,.5){$1$};
\node[] at (.5,1.5){$1$};
\node[] at (1.5,1.5){$1$};
\node[] at (2.5,1.5){$0$};
\node[] at (3.5,1.5){$1$};
\node[] at (.5,2.5){$0$};
\node[] at (1.5,2.5){$1$};
\node[] at (2.5,2.5){$0$};
\node[] at (3.5,2.5){$1$};
\node[] at (4.5,2.5){$0$};
\node[] at (5.5,2.5){$1$};
\end{scope}
\end{tikzpicture}
\caption{The figures on the left and in the center show two representations of a partial flag positroid pipe dream $D^{\sLe}(P)\in \setFPP_3(9)$; one as a filling with pipes, the other as a $01$-filling.
The figure on the right shows the corresponding $\Le$-diagram.
}
\label{fig.PFPP}
\end{figure}

A consequence of Postnikov's results~\cite[Proposition 5.2, Theorem 6.5]{Postnikov06} shows that given a rank $k$ positroid $P$, we can interpret its bases as certain $k$-tuples of non-intersecting paths on an associated directed graph constructed from its $\Le$-diagram/$D^{\sLe}(P)$.
We generalize this construction to all partial FPPs.
\begin{definition}\label{defn.GofD}
Let $D \in \setFPP_k(n)$. 
Define an acyclic directed graph $G=G(D)$ as follows, where it is convenient for us to visualize this graph on a grid.
At each pivot elbow of $D$ there is a source vertex $s_{u_i}$ for $i=1,\ldots, k$, and at the top of each column of $D$ there is a sink vertex $t_j$ for $j=1,\ldots, n$. 
Additionally, there is a vertex at each elbow of $D$. 
For each non-sink vertex $x\in G$, there is an edge directed upwards to the next vertex in its column, and if $x$ is not a source, then there is an edge directed to $x$ from the vertex that is immediately left of $x$ in its row.
\end{definition}
In the case that $D$ has decreasing pivot columns $u_1> \cdots > u_k$, then the graph $G(D)$ is completely analogous to the $\Gamma$-graph in~\cite[Definition 6.3]{Postnikov06}, but in general, $G(D)$ may not be planar (see Example~\ref{eg.crossingGD}).
Also see Figures~\ref{fig.digraph} and~\ref{fig.std_example} for examples of these acyclic directed graphs.

\begin{figure}[ht!]
\centering
\begin{tikzpicture}
\begin{scope}[scale=.75, xshift=0]
\draw[very thin, color=gray!28] (0,0) grid (9,3);
\nnode[color=black, fill](s1) at (5.5, 2.5) {};
\node() at (5.25,2.25){\scriptsize$s_{u_1}$};
\nnode[color=black, fill](s2) at (3.5, 1.5) {};
\node() at (3.25,1.25){\scriptsize$s_{u_2}$};
\nnode[color=black, fill](s3) at (0.5, 0.5) {};
\node() at (0.25,0.25){\scriptsize$s_{u_3}$};
\nnode[color=black, fill=white, label=above:{\scriptsize$t_1$}](t1) at (0.5, 3) {};
\nnode[color=black, fill=white, label=above:{\scriptsize$t_2$}](t2) at (1.5, 3) {};
\nnode[color=black, fill=white, label=above:{\scriptsize$t_3$}](t3) at (2.5, 3) {};
\nnode[color=black, fill=white, label=above:{\scriptsize$t_4$}](t4) at (3.5, 3) {};
\nnode[color=black, fill=white, label=above:{\scriptsize$t_5$}](t5) at (4.5, 3) {};
\nnode[color=black, fill=white, label=above:{\scriptsize$t_6$}](t6) at (5.5, 3) {};
\nnode[color=black, fill=white, label=above:{\scriptsize$t_7$}](t7) at (6.5, 3) {};
\nnode[color=black, fill=white, label=above:{\scriptsize$t_8$}](t8) at (7.5, 3) {};
\nnode[color=black, fill=white, label=above:{\scriptsize$t_9$}](t9) at (8.5, 3) {};

\nnode[color=mediumseagreen, fill]() at (6.5, 2.5) {};

\nnode[color=mediumseagreen, fill]() at (4.5, 1.5) {};
\nnode[color=mediumseagreen, fill]() at (7.5, 1.5) {};
\nnode[color=mediumseagreen, fill]() at (8.5, 1.5) {};
\nnode[color=mediumseagreen, fill]() at (1.5, 0.5) {};
\nnode[color=mediumseagreen, fill]() at (4.5, 0.5) {};
\nnode[color=mediumseagreen, fill]() at (7.5, 0.5) {};

\draw [color=mediumseagreen, semithick] (s1) -- (6.5,2.5){};
\draw [color=mediumseagreen, semithick] (s2) -- (8.5,1.5){};
\draw [color=mediumseagreen, semithick] (s3) -- (7.5,0.5){};
\draw [-stealth, color=mediumseagreen, semithick] (s1) -- (t6){};
\draw [-stealth, color=mediumseagreen, semithick] (s2) -- (t4){};
\draw [-stealth, color=mediumseagreen, semithick] (s3) -- (t1){};
\draw [-stealth, color=mediumseagreen, semithick] (1.5,0.5) -- (t2){};
\draw [-stealth, color=mediumseagreen, semithick] (4.5,0.5) --  (t5){};
\draw [-stealth, color=mediumseagreen, semithick] (6.5,2.5) --  (t7){};
\draw [-stealth, color=mediumseagreen, semithick] (7.5,0.5) --  (t8){};
\draw [-stealth, color=mediumseagreen, semithick] (8.5,1.5) --  (t9){};

\draw[-stealth, color=mediumseagreen, semithick] (s1) edge (6.25,2.5);
\draw[-stealth,  color=mediumseagreen, semithick] (s2) edge (4.25,1.5) edge (6.25,1.5) edge (8.25,1.5);
\draw[-stealth,  color=mediumseagreen, semithick] (s3) edge (1.25,0.5) edge (3.25,0.5) edge (6.25,0.5);
\draw[-stealth, color=mediumseagreen, semithick] (4.5,0.5) -- (4.5,1.25);
\draw[-stealth,  color=mediumseagreen, semithick] (7.5,0.5) -- (7.5,1.25);
\end{scope}
\end{tikzpicture}
    \caption{The acyclic directed graph $G(D^{\sLe}(P))$ of the partial flag positroid pipe dream $D^{\sLe}(P)$ in Figure~\ref{fig.PFPP}.
    }
    \label{fig.digraph}
\end{figure}

\begin{definition}
An \defn{admissible $k$-collection} in $G$ is a set of $k$ pairwise non-intersecting directed paths in $G$ such that each directed path begins at a source vertex and ends at a sink vertex of $G$.
Define
\[
\calB(D) = \big\{ J \in \hbox{$\binom{[n]}{k}$} \mid \exists \hbox{ an admissible $k$-collection in $G(D)$ with sink set $J$} \big\}.
\]
\end{definition}

A consequence of~\cite[Proposition 5.2 and Theorem 6.5]{Postnikov06} is the following.
\begin{proposition}\label{prop.nonintersecting}
Let $D^{\sLe}(P)\in \setFPP_k^{\sLe}(n)$ be a partial flag positroid pipe dream with pivot columns in decreasing order whose corresponding positroid is $P\in \calP_k([n])$.
Then $\calB(D^{\sLe}(P))$ is the set of bases of $P$.
\qed
\end{proposition}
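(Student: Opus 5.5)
The plan is to reduce the statement to Postnikov's theorem that the nonintersecting path families in the $\Gamma$-graph of a $\Le$-diagram compute the bases of the corresponding positroid \cite[Proposition 5.2, Theorem 6.5]{Postnikov06}. The work is to check that our graph $G(D^{\sLe}(P))$, after the same $180^\circ$ rotation used in Theorem~\ref{thm.rpd_is_le}, is exactly Postnikov's $\Gamma$-graph. We also need the labels of sources and sinks to match.

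\textbf{Step 1: matching the diagrams.} Since the pivots $u_1>\cdots>u_k$ are decreasing, every box in column $j$ and row $i$ with $j<u_i$ lies above some pivot and is a vertical pipe. Every box with $j>u_i$ lies below an earlier pivot in column $u_{i'}$ only if $i'<i$. Hence the boxes of $D^{\sLe}(P)$ that are not forced tiles are exactly the boxes $(i,j)$ with $j>u_i$ and $j\neq u_{i'}$ for $i'<i$. After deleting the pivot columns and reflecting, these boxes form the Young diagram $\lambda$ of the $\Le$-diagram of $P$. This is the correspondence of Proposition~\ref{prop.PFPP} and Theorem~\ref{thm.rpd_is_le}. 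Elbows correspond to $1$'s and crosses to $0$'s.

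\textbf{Step 2: matching the graphs.} Postnikov's $\Gamma$-graph is defined as follows.
\begin{itemize}
\item It has a boundary source at each vertical step of the boundary, labelled by the pivot set $I=\{u_1,\dots,u_k\}$.
\item It has a boundary sink at each step of the boundary.
\item It has an internal vertex at each $1$.
\item Edges go from each $1$ to the nearest $1$ in its row toward the source side, or to the boundary source. Edges also go from each $1$ to the nearest $1$ in its column, or to the boundary sink.
\end{itemize}
With the orientation reversed (our convention is rightward and upward), these are precisely the edges in Definition~\ref{defn.GofD}. The row edge from $s_{u_i}$ to the first elbow to its right, and then elbow to elbow, is Postnikov's horizontal edge. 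The upward edge from a vertex to the next vertex in its column, or to $t_j$, is his vertical edge.

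Two details in Step 2 need care. First, a source $s_{u_i}$ has an upward edge to $t_{u_i}$, since all boxes above a pivot in its column are vertical pipes. This matches Postnikov's convention that a source for $i\in I$ is also the sink $i$. Second, columns $j$ that are not pivot columns and contain no elbows yield an isolated sink $t_j$. Thus $G(D^{\sLe}(P))$ is isomorphic, as a planar directed graph with labelled boundary, to Postnikov's $\Gamma$-network. Planarity holds here because of the decreasing pivots.

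\textbf{Step 3: concluding.} Postnikov's result states that $J$ is a basis of $P$ if and only if there is a family of vertex-disjoint paths from the sources $I$ to $J$. Paths from $i\in I\cap J$ are allowed to be trivial. By Step 2, such a family is exactly an admissible $k$-collection in $G(D^{\sLe}(P))$ with sink set $J$. Therefore $\calB(D^{\sLe}(P))=\calB(P)$.

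The main obstacle is the bookkeeping in Step 2. The rotation and column insertion must send Postnikov's boundary labelling of steps $1,\dots,n$ to our column labels $t_1,\dots,t_n$. Checking this against the labelling rule stated just before Proposition~\ref{prop.PFPP} requires fixing conventions carefully.
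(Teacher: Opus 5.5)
Your proposal is correct and follows the paper's route. The paper gives no argument beyond citing Postnikov's Proposition 5.2 and Theorem 6.5 and remarking that, for decreasing pivot columns, $G(D)$ is Postnikov's $\Gamma$-graph; your Steps 1--3 make that identification explicit, including the right handling of the edge $s_{u_i}\to t_{u_i}$ as Postnikov's trivial path and of the boundary labelling. One cosmetic slip: it is the $180^\circ$ rotation (not a reflection, and not a reversal of edge directions) that turns Postnikov's leftward and downward edges into the rightward and upward edges of Definition~\ref{defn.GofD}.
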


\begin{remark}\label{rem.lexminmax_bases}
The lexicographically minimal basis of $P$ corresponds with the $k$-tuple of vertical paths $\{ p_i: s_{u_i} \rightarrow t_{u_i} \mid i=1,\ldots, k\}$ in $G(D^{\sLe}(P))$.
That is, the lexicographically minimal basis of $P$ is indexed by the pivot columns in $D^{\sLe}(P)$.
On the other hand, one can construct a $k$-admissible collection of paths in $G(D|_k)$ whose sink set is the lexicographically maximal basis of $P$ by taking paths $p_i$ beginning at $s_{u_i}$ and traveling as far as possible to the right before traveling north, for $i=k,\ldots, 1$. 
This can be shown via induction on $k$, and we leave the details to the reader.
\end{remark}

\begin{example}\label{eg.minmax_bases}
Consider the partial FPP $D^{\sLe}(P)$ from Figure~\ref{fig.PFPP} whose acyclic directed graph is shown in Figure~\ref{fig.digraph}.
Two admissible $3$-collections of paths in $G(D^{\sLe}(P))$ is shown in Figure~\ref{fig.nonint_paths}.
The one on the left shows that the lexicographically minimal basis of $P$ is $\{1,4,6\}$, while the one on the right shows that the lexicographically maximal basis of $P$ is $\{5,7,9\}$.
\end{example}

\begin{figure}[ht!]
\centering
\begin{tikzpicture}
\begin{scope}[scale=.6, xshift=0]
\draw[very thin, color=gray!28] (0,0) grid (9,3);
\nnode[color=black, fill](s1) at (5.5, 2.5) {};
\node() at (5.25,2.25){\scriptsize$s_{u_1}$};
\nnode[color=black, fill](s2) at (3.5, 1.5) {};
\node() at (3.25,1.25){\scriptsize$s_{u_2}$};
\nnode[color=black, fill](s3) at (0.5, 0.5) {};
\node() at (0.25,0.25){\scriptsize$s_{u_3}$};
\nnode[color=black, fill=white, label=above:{\scriptsize$t_1$}](t1) at (0.5, 3) {};
\nnode[color=black, fill=white, label=above:{\scriptsize$t_2$}](t2) at (1.5, 3) {};
\nnode[color=black, fill=white, label=above:{\scriptsize$t_3$}](t3) at (2.5, 3) {};
\nnode[color=black, fill=white, label=above:{\scriptsize$t_4$}](t4) at (3.5, 3) {};
\nnode[color=black, fill=white, label=above:{\scriptsize$t_5$}](t5) at (4.5, 3) {};
\nnode[color=black, fill=white, label=above:{\scriptsize$t_6$}](t6) at (5.5, 3) {};
\nnode[color=black, fill=white, label=above:{\scriptsize$t_7$}](t7) at (6.5, 3) {};
\nnode[color=black, fill=white, label=above:{\scriptsize$t_8$}](t8) at (7.5, 3) {};
\nnode[color=black, fill=white, label=above:{\scriptsize$t_9$}](t9) at (8.5, 3) {};

\nnode[color=mediumseagreen!50, fill]() at (6.5, 2.5) {};

\nnode[color=mediumseagreen!50, fill]() at (4.5, 1.5) {};
\nnode[color=mediumseagreen!50, fill]() at (7.5, 1.5) {};
\nnode[color=mediumseagreen!50, fill]() at (8.5, 1.5) {};
\nnode[color=mediumseagreen!50, fill]() at (1.5, 0.5) {};
\nnode[color=mediumseagreen!50, fill]() at (4.5, 0.5) {};
\nnode[color=mediumseagreen!50, fill]() at (7.5, 0.5) {};

\draw [color=mediumseagreen!50, semithick] (s1) -- (6.5,2.5){};
\draw [color=mediumseagreen!50, semithick] (s2) -- (8.5,1.5){};
\draw [color=mediumseagreen!50, semithick] (s3) -- (7.5,0.5){};
\draw [color=mediumseagreen!50, semithick] (s1) -- (t6){};
\draw [color=mediumseagreen!50, semithick] (s2) -- (t4){};
\draw [color=mediumseagreen!50, semithick] (s3) -- (t1){};
\draw [color=mediumseagreen!50, semithick] (1.5,0.5) -- (t2){};
\draw [color=mediumseagreen!50, semithick] (4.5,0.5) --  (t5){};
\draw [color=mediumseagreen!50, semithick] (6.5,2.5) --  (t7){};
\draw [color=mediumseagreen!50, semithick] (7.5,0.5) --  (t8){};
\draw [color=mediumseagreen!50, semithick] (8.5,1.5) --  (t9){};

\draw [color=red, ultra thick] (s1)--(t6);
\draw [color=orange, ultra thick] (s2)--(t4);
\draw [color=denim, ultra thick] (s3)--(t1);
\end{scope}

\begin{scope}[scale=.6, xshift=320]
\draw[very thin, color=gray!28] (0,0) grid (9,3);
\nnode[color=black, fill](s1) at (5.5, 2.5) {};
\node() at (5.25,2.25){\scriptsize$s_{u_1}$};
\nnode[color=black, fill](s2) at (3.5, 1.5) {};
\node() at (3.25,1.25){\scriptsize$s_{u_2}$};
\nnode[color=black, fill](s3) at (0.5, 0.5) {};
\node() at (0.25,0.25){\scriptsize$s_{u_3}$};
\nnode[color=black, fill=white, label=above:{\scriptsize$t_1$}](t1) at (0.5, 3) {};
\nnode[color=black, fill=white, label=above:{\scriptsize$t_2$}](t2) at (1.5, 3) {};
\nnode[color=black, fill=white, label=above:{\scriptsize$t_3$}](t3) at (2.5, 3) {};
\nnode[color=black, fill=white, label=above:{\scriptsize$t_4$}](t4) at (3.5, 3) {};
\nnode[color=black, fill=white, label=above:{\scriptsize$t_5$}](t5) at (4.5, 3) {};
\nnode[color=black, fill=white, label=above:{\scriptsize$t_6$}](t6) at (5.5, 3) {};
\nnode[color=black, fill=white, label=above:{\scriptsize$t_7$}](t7) at (6.5, 3) {};
\nnode[color=black, fill=white, label=above:{\scriptsize$t_8$}](t8) at (7.5, 3) {};
\nnode[color=black, fill=white, label=above:{\scriptsize$t_9$}](t9) at (8.5, 3) {};

\nnode[color=mediumseagreen!50, fill](c7) at (6.5, 2.5) {};

\nnode[color=mediumseagreen!50, fill](b5) at (4.5, 1.5) {};
\nnode[color=mediumseagreen!50, fill](b8) at (7.5, 1.5) {};
\nnode[color=mediumseagreen!50, fill](b9) at (8.5, 1.5) {};
\nnode[color=mediumseagreen!50, fill](a2) at (1.5, 0.5) {};
\nnode[color=mediumseagreen!50, fill](a5) at (4.5, 0.5) {};
\nnode[color=mediumseagreen!50, fill](a8) at (7.5, 0.5) {};

\draw [color=mediumseagreen!50, semithick] (s1) -- (6.5,2.5){};
\draw [color=mediumseagreen!50, semithick] (s2) -- (8.5,1.5){};
\draw [color=mediumseagreen!50, semithick] (s3) -- (7.5,0.5){};
\draw [color=mediumseagreen!50, semithick] (s1) -- (t6){};
\draw [color=mediumseagreen!50, semithick] (s2) -- (t4){};
\draw [color=mediumseagreen!50, semithick] (s3) -- (t1){};
\draw [color=mediumseagreen!50, semithick] (1.5,0.5) -- (t2){};
\draw [color=mediumseagreen!50, semithick] (4.5,0.5) --  (t5){};
\draw [color=mediumseagreen!50, semithick] (6.5,2.5) --  (t7){};
\draw [color=mediumseagreen!50, semithick] (7.5,0.5) --  (t8){};
\draw [color=mediumseagreen!50, semithick] (8.5,1.5) --  (t9){};

\draw [color=red, ultra thick] (s1)--(c7)--(t7);
\draw [color=orange, ultra thick] (s2)--(b5)--(t5);
\draw [color=denim, ultra thick] (s3)--(a2)--(a5)--(a8)--(b8)--(b9)--(t9);
\end{scope}
\end{tikzpicture}
    \caption{Two admissible $3$-collections of paths on $G(D^{\sLe}(P))$.  See Example~\ref{eg.minmax_bases}.
    }
    \label{fig.nonint_paths}
\end{figure}

Given a two-step flag positroid $(P,Q)\in \Fl_{(k,k+1);n}^{\geq0}$, each constituent of the flag has a corresponding partial FPP $D^{\sLe}(P)$ and $D^{\sLe}(Q)$.
It is then natural to ask, how are these related?
An intermediate solution to this problem comes from the combinatorial counterpart to Theorem~\ref{thm.positroid-quotient-bijection}, which would essentially state that for $R=\phi(P,Q)$, the restriction of the pipe dream $D^{\sLe}(R)$ to its last $n$ columns yields a partial FPP (whose pivot columns are not necessarily in decreasing order) that corresponds to the flag positroid $(P,Q)$ in the sense that the set of sinks of its admissible $k$-collections of paths is the set of bases of $P$, and the sinks of its admissible $(k+1)$-collections of paths is the set of bases of $Q$.
Therefore, the question of characterizing positroids $Q$ such that $P\unlhd_q Q$ becomes a question of what row can be appended below $D^{\sLe}(P)$ to form a partial FPP.
This leads to the next definition.

\begin{definition}\label{defn.Leblocked}
Let $D^{\sLe}\in \setFPP_k^{\sLe}(n)$ be a partial flag positroid pipe dream whose pivot columns are in decreasing order.  
A column in $D^{\sLe}$ is \defn{blocked} if it is a pivot column, or there is a cross tile in the column with an elbow tile to its right. 
Otherwise, the column is \defn{unblocked}.
See Example~\ref{eg.PFPP}.
\end{definition}
This definition will later be extended to all $\setFPP_k(n)$, see Definition~\ref{defn.blocked}.

\begin{remark}\label{rem.noncrossing_pipes_U}
We observe that if a pair of pipes in $D^{\sLe}$ both exit at the bottom of the partial FPP and they cross, then necessarily at least one of them exits in a blocked column because $D^{\sLe}$ is $\Gamma$-free.
Therefore, the collection of pipes in $D^{\sLe}$ that exit at the bottom of the diagram in the columns indexed by the unblocked columns are pairwise non-intersecting.
\end{remark}

Given $D\in \setFPP(n)$, let $D_{i,j}$ denote the tile in the $i$-th row and $j$-th column of the pipe dream. 

\begin{theorem}\label{thm.char_Q}
Let $P\in \calP_k([n])$ be a rank $k$ positroid and let $D^{\sLe}(P)\in \setFPP_k^{\sLe}(n)$ be its rank $k$ partial flag positroid pipe dream with pivot columns in decreasing order.
Let $U$ be the set of unblocked columns of $D^{\sLe}(P)$.
The positroids $Q\in \calP_{k+1}([n])$ such that $P\unlhd_q Q$ are in bijection with the nonempty subsets of $U$.
\end{theorem}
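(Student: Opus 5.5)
We transport the problem to rank $k+1$ positroids on $[0,n]$ using Theorem~\ref{thm.positroid-quotient-bijection}. The positroids $Q$ with $P\unlhd_q Q$ correspond bijectively, via $\phi$, to the positroids $R\in\calR_{k+1}([0,n])$ whose restriction $\psi(R)$ has first component $P$. Each such $R$ has a $\Le$-diagram, encoded as a partial FPP $D^{\sLe}(R)\in\setFPP_{k+1}^{\sLe}(n+1)$ with columns indexed by $[0,n]$.

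Since $0\in S(R)$ and $S(R)$ is the set of pivot columns (Remark~\ref{rem.lexminmax_bases}), the last pivot of $D^{\sLe}(R)$ is in column $0$. The plan is to show the following.
\begin{enumerate}
\item[(a)] Deleting column $0$ from $D^{\sLe}(R)$ and then deleting the last row recovers $D^{\sLe}(P)$.
\item[(b)] The last row of $D^{\sLe}(R)$, restricted to columns $1,\ldots,n$, is a row of crosses and elbows whose elbows lie exactly in a nonempty subset $E\subseteq U$.
\end{enumerate}

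For (a), I would compare admissible collections. A $k$-collection in $G(D^{\sLe}(P))$ extends to a $(k+1)$-collection in $G(D^{\sLe}(R))$ by adding the vertical path from $s_0$ to $t_0$. This realizes exactly the bases of $R$ containing $0$, which by definition of $\phi$ are $\{B\cup\{0\}\mid B\in\calB(P)\}$. Conversely, any collection that uses $t_0$ must use the vertical path from $s_0$, so the remaining $k$ paths live in the first $k$ rows. Combined with uniqueness of $\Le$-diagrams (Proposition~\ref{prop.PFPP}), this identifies the top $k$ rows, with column $0$ removed, as $D^{\sLe}(P)$.

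For (b), in the last row the pivot sits at column $0$, so every box $(k+1,j)$ with $j\geq1$ lies in the Rothe region and is tiled by a cross or an elbow. Consider a cross in column $j$ of the top $k$ rows with an elbow to its right, or a pivot in column $j$. Then the $\Le$ condition, equivalently $\Gamma$-freeness after the $180^\circ$ rotation, forces $(k+1,j)$ to be a cross; this shows $E\subseteq U$. Conversely, for any $E\subseteq U$ I would place elbows in $E$ and crosses elsewhere and check that no $\Le$ pattern is created. A new pattern would need a cross above an elbow in some column $j\in E$ together with an elbow to its right, and unblockedness forbids exactly this.

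Nonemptiness of $E$ is the condition $T(R)\neq\emptyset$. If the last row contains no elbow, the path from $s_0$ can only go up to $t_0$, so every basis contains $0$. If it contains an elbow, the path can leave column $0$ and reach some sink $t_j$ with $j\neq 0$. The remaining condition defining $\Cov$, $\Delta_{S}=\Delta_{T}$, is a normalization rather than a restriction on the positroid, so it imposes nothing combinatorially. This gives injective maps in both directions between $\{Q\}$ and the nonempty subsets of $U$.

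The main obstacle is making the identification in (a) and the $\Le$-check in (b) rigorous. In particular, one must show that the filling obtained from any nonempty $E\subseteq U$ really is the $\Le$-diagram of some $R\in\calR_{k+1}([0,n])$ whose $\psi$-image has first component exactly $P$. If collections in the extended graph behave unexpectedly, the fallback is a direct matrix argument: append a row to the complete nonnegative representation $R(V_\bullet)$ and use Postnikov's parametrization of the cell by edge weights. In this picture the new row's pipe elbows are the free parameters, and unblocked columns are exactly the columns where positive weights keep all flag minors nonnegative.
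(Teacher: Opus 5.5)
Your proposal is correct and follows essentially the same route as the paper: pass through $\phi$ to $R\in\calR_{k+1}([0,n])$, read off the elbows in row $k+1$ of $D^{\sLe}(R)$ as a nonempty subset of $U$ (nonemptiness coming from $T(R)\neq\emptyset$ via the path out of $s_0$), and invert by appending a row with elbows in a chosen nonempty $C\subseteq U$. Your path-collection argument for (a) supplies the justification, only asserted in the paper, that the top $k$ rows minus column $0$ form $D^{\sLe}(P)$ and that $\psi(R)$ has first component $P$. One small correction: the boxes $(k+1,u_i)$ under the pivots of $P$ lie outside the Rothe diagram and are forced to be horizontal pipes, not crosses; this does not affect your conclusion that elbows occur only in unblocked columns.
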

\begin{proof}
Consider the map $\phi: \calP_{(k,k+1)}([n]) \rightarrow \calR_{k+1}([0,n])$ from Theorem~\ref{thm.positroid-quotient-bijection} and suppose $\phi(P,Q)=R$.
Let $D^{\sLe}(R)$ be the partial FPP of $R$, and let $E$ denote the restriction of $D^{\sLe}(R)$ to its first $k$ rows and last $n$ columns.
We note that $E= D^{\sLe}(P)$.

Define a map 
\[\theta:
\left\{ Q\in \calP_{k+1}([n]) \mid P \unlhd_q Q \right\} \rightarrow 2^U\backslash \{\emptyset \}
\]
where 
\[\theta(Q) = \left\{ j \mid 1\leq j \leq n \hbox{ such that $D^{\sLe}(R)_{k+1, j}$ is an elbow tile}\right\}.
\]

We first discuss why $\theta(Q)$ is a nonempty subset of $2^U$.
Recall from Definition~\ref{defn.calR} that $R$ has a basis that contains $0$, and Proposition~\ref{prop.nonintersecting} states that the bases of $R$ are in bijection with the sinks of $(k+1)$-tuples of non-intersecting paths in $G(D^{\sLe}(R))$.
Since the pivot columns of $D^{\sLe}(R)$ are decreasing, then we must have $u_1 > \cdots > u_k > u_{k+1}=0$; that is, the source vertex $s_{u_{k+1}}$ is in the zeroth column of $G(D^{\sLe}(R))$ because the only directed path in $G(D^{\sLe}(R))$ that can end at the sink $t_0$ is the vertical path $s_0 \rightarrow t_0$.
Furthermore, by definition, $R$ has a basis that does not contain $0$.
This basis corresponds to a $(k+1)$-tuple of non-intersecting paths in $G(D^{\sLe}(R))$ and therefore, the path that starts at $s_{u_{k+1}}=s_0$ must have a sink that is not $t_0$.
This implies that $G(D^{\sLe}(R))$ necessarily has another (non-source) vertex in its $(k+1)$-st row, and hence $D^{\sLe}(R)$ has at least one elbow tile in its $(k+1)$-st row.
Lastly, $D^{\sLe}(R)$ is a partial FPP, so it is $\Gamma$-free.  
In particular, this means that the elbow tiles in the $(k+1)$-st row of $D^{\sLe}(R)$ cannot occur in the blocked columns of $E$.
Therefore, $\theta(Q) \in 2^U \backslash \{ \emptyset\}$.

The inverse map is defined as follows.
Given a nonempty subset $C \subseteq U$, let $F$ be the $(k+1) \times (n+1)$ partial FPP whose restriction to its first $k$ rows and last $n$ columns is the partial FPP $D^{\sLe}(P)$, and its $(k+1)$-st row consists of a pivot elbow in the zeroth column, elbow tiles in the columns specified by $C$, and all remaining tiles in its Rothe diagram are cross tiles.
Since the pivot columns of $F$ are decreasing and $C \subseteq U$, then $F$ is $\Gamma$-free and hence $F=D^{\sLe}(R)$ for some rank $k+1$ positroid $R$ on $[0,n]$. 
Note that the lexicographically minimal basis of $R$ is the indices of its pivot columns $\{u_1,\ldots, u_k, u_{k+1}=0\}$.
Moreover, $C$ is nonempty implies $F$ has at least one other elbow tile in its $(k+1)$-st row, so let $x$ denote the vertex in $G(F)$ that corresponds to the elbow tile $F_{k+1,c}$ for the minimum $c\in C$.
Then the $k+1$ directed paths $\{p_i: s_{u_i} \rightarrow t_{u_i} \mid i=1,\ldots, k\} \cup \{ p_{k+1}: s_{u_{k+1}} =s_0 \rightarrow x \rightarrow t_c\}$ in $G(F)$ is non-intersecting and its sinks do not contain $t_0$, and therefore $R \in \calR_{k+1}([0,n])$.
Let $Q\in \calP_{k+1}([n])$ be defined by $\psi(R) = (P,Q)$.
By Theorem~\ref{thm.positroid-quotient-bijection}, $P \unlhd_q Q$.
Therefore, $\theta$ is a bijection.
\end{proof}
Figure~\ref{fig.Q_PFPP} gives an illustration of the inverse map $\theta^{-1}$.

\begin{corollary}\label{cor.char_Q}
Let $P\in\calP_k([n])$ be a positroid. 
The number of positroids $Q$ such that $(P,Q)$ is a nonnegatively representable elementary quotient is $2^{|U|}-1$, where $U$ is the set of unblocked columns in the partial FPP $D^{\sLe}(P)$.
\end{corollary}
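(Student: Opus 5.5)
This is an immediate counting consequence of Theorem~\ref{thm.char_Q}, so the plan is to read off the cardinality from the bijection established there. Fix the rank $k$ positroid $P$ and let $U$ be the set of unblocked columns of $D^{\sLe}(P)$. By Theorem~\ref{thm.char_Q}, the map $\theta$ is a bijection from $\{Q\in\calP_{k+1}([n]) \mid P\unlhd_q Q\}$ onto $2^U\setminus\{\emptyset\}$. The latter set has exactly $2^{|U|}-1$ elements.

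The only point needing a brief check is that ``$(P,Q)$ is a nonnegatively representable elementary quotient'' means precisely the same thing as ``$Q\in\calP_{k+1}([n])$ and $P\unlhd_q Q$.'' Elementary means the ranks are consecutive, so $\rank Q=k+1$, since $P$ is the quotient. Nonnegatively representable is exactly the relation $\unlhd_q$ by definition, and it forces $Q$ to be a positroid. So the set being counted coincides with the domain of $\theta$.

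There is no genuine obstacle here. The only degenerate case is $U=\emptyset$, for instance when $k=n$ and every column is a pivot column. In that case the count $2^0-1=0$ agrees with Theorem~\ref{thm.char_Q}, which already covers it, since then there is no rank $k+1$ positroid $Q$ with $P\unlhd_q Q$.
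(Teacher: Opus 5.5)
Your proposal is correct and follows the paper's route exactly. The paper gives no separate proof of this corollary: it is the immediate count obtained from the bijection $\theta$ of Theorem~\ref{thm.char_Q} onto the $2^{|U|}-1$ nonempty subsets of $U$.
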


Consider the partial FPP $F=D^{\sLe}(R)$ in the proof of Theorem~\ref{thm.char_Q}.
Let $D$ denote the partial FPP that is induced by deleting the zeroth column of $F$.
We conclude this section by discussing the role of $D$ in terms of the flag positroid $(P,Q)=\psi(R)$.

\begin{corollary}\label{cor.PFPP_Q}
Given a flag positroid $(P,Q)\in \calP_{(k,k+1)}([n])$, let $F=D^{\sLe}(R)$ be the partial FPP of $R = \phi(P,Q)$ with pivot columns in decreasing order.
Let $D$ be the partial FPP induced by deleting the zeroth column of $F$.
Then $\calB(D)$ is the set of bases of the positroid $Q$, and if $D|_k$ is the restriction of $D$ to its first $k$ rows, then $\calB(D|_k)$ is the set of bases of the positroid $P$.
\end{corollary}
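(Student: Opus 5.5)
We plan to deduce both claims from Proposition~\ref{prop.nonintersecting} applied to $F=D^{\sLe}(R)$, together with Lemma~\ref{lem:matrix-embedding}, by comparing admissible collections in $G(F)$ with admissible collections in $G(D)$ and $G(D|_k)$. Since $F$ has pivot columns $u_1>\cdots>u_k>u_{k+1}=0$, Proposition~\ref{prop.nonintersecting} says $\calB(F)=\calB(R)$. By the definition of $\psi$, $\calB(Q)=\{B\in\calB(R)\mid 0\notin B\}$ and $\calB(P)=\{B\setminus\{0\}\mid B\in\calB(R),\ 0\in B\}$. So it suffices to describe the admissible $(k+1)$-collections in $G(F)$ whose sink set avoids $0$, and those whose sink set contains $0$.

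\textbf{Bases of $P$.} Suppose an admissible $(k+1)$-collection has sink set containing $t_0$. The only path to $t_0$ is the vertical path $s_0\to t_0$, since column $0$ contains only the pivot elbow in row $k+1$. This path occupies only column $0$. The other $k$ paths start at $s_{u_1},\dots,s_{u_k}$, which lie in rows $1,\dots,k$, and paths only move up or right. Hence these $k$ paths live in $G(F|_k)$ restricted to columns $1,\dots,n$, which is exactly $G(D|_k)$. The construction also reverses: any admissible $k$-collection in $G(D|_k)$, together with $s_0\to t_0$, is admissible in $G(F)$, because the vertical path meets no other vertex. Thus $\calB(D|_k)=\calB(P)$. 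One could also simply note $D|_k=E=D^{\sLe}(P)$ and cite Proposition~\ref{prop.nonintersecting}.

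\textbf{Bases of $Q$.} Consider admissible $(k+1)$-collections in $G(F)$ whose sinks avoid $t_0$. The path from $s_0$ must first travel right along row $k+1$ to some elbow vertex in a column $c\ge1$, and from there the rest of the path lies in $G(D)$. We want to identify $G(D)$ with $G(F)$ after deleting column $0$, in the following way.
\begin{itemize}
\item In $G(D)$ the row-$(k+1)$ pivot sits at column $u_{k+1}$ of $D$, which corresponds to the first elbow of row $k+1$ in $F$.
\item Equivalently, deleting column $0$ turns the leftmost elbow $F_{k+1,c_{\min}}$ into the pivot elbow of row $k+1$.
\end{itemize}
This is the convention needed for $D$ to be a partial FPP with $u_{k+1}=\min\theta(Q)$. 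Under this identification, paths from $s_0$ avoiding $t_0$ correspond bijectively to paths from the new source $s_{c_{\min}}$ in $G(D)$: the segment from $s_0$ to $x_{c_{\min}}$ is forced, since there are no vertices strictly between them in row $k+1$. All other paths are unchanged, and non-intersection is preserved in both directions because the forced initial segment meets no other vertices. Hence the sink sets of admissible $(k+1)$-collections in $G(D)$ are exactly the bases of $R$ avoiding $0$, which gives $\calB(D)=\calB(Q)$.

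\textbf{Main obstacle.} The delicate point is making precise what "deleting the zeroth column" does to the tile at $F_{k+1,c_{\min}}$ and to the horizontal pipe segments left of it. One must check three things:
\begin{itemize}
\item that $D$ is a legitimate partial FPP, which is $\Gamma$-free by Lemma~\ref{lem:gamma-free-is-subarea-condition} since $C\subseteq U$;
\item that $G(D)$ as defined in Definition~\ref{defn.GofD} has no extra edges into the new source;
\item that the vertex $x_{c_{\min}}$ still has its upward edges intact.
\end{itemize}
We would verify these directly from Definition~\ref{defn.GofD}, row by row.
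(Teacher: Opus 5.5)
Your proposal is correct and follows essentially the same route as the paper. The paper applies Proposition~\ref{prop.nonintersecting} to $F$ and takes the first elbow vertex $x$ in row $k+1$ of $G(F)$ as the source of row $k+1$ in $G(D)$. It then adds or deletes the forced edge $(s_0,x)$ to match admissible collections avoiding $t_0$ with those in $G(D)$. For $P$, it just observes $D|_k=D^{\sLe}(P)$, which is the shortcut you mention; your $t_0$-path argument is a valid alternative but is not needed.
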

\begin{proof}
Since $R=\phi(P,Q)$, then by construction, the bases of $Q$ are the bases of $R$ which do not contain $0$.
As discussed in the proof of Theorem~\ref{thm.char_Q}, let $x$ denote the vertex in $G(F)$ that corresponds to the elbow tile $F_{k+1,c}$ for the minimum $c\in C \subseteq U$.
Then the source vertex in the $(k+1)$-st row of $G(D)$ is $x$.

Let $\calN = \{p_i: s_{u_i} \rightarrow t_{j_i} \mid i=1,\ldots, k+1 \}$ be a $(k+1)$-tuple of non-intersecting paths in $G(D)$ whose set of sinks is $B$. 
Let $y$ denote the source vertex in the $(k+1)$-st row of $G(F)$.
Then adding the edge $(y, x)$ to the beginning of the path $p_{k+1}$ yields a $(k+1)$-tuple of non-intersecting paths in $G(F)$ whose set of sinks is $B$.
By Proposition~\ref{prop.nonintersecting}, $B$ is a basis of $R$, and by construction $B$ does not contain $0$, so it is also a basis of $Q$.

Conversely, every basis $B$ of $Q$ is a basis of $R$, so there is a $(k+1)$-tuple of non-intersecting paths in $G(F)$ whose set of sink vertices is $B$.  
Deleting the edge $(y,x)$ from the path $p_{k+1}$ yields a $(k+1)$-tuple of non-intersecting paths in $G(D)$ whose set of sinks is $B$. 

Lastly, we note that the restriction of $D$ to its first $k$ rows is $D^{\sLe}(P)$, so $\calB(D|_k)$ is the set of bases of $P$.
\end{proof}

\begin{example}\label{eg.PFPP}
Consider the rank $3$ partial FPP $D^{\sLe}(P)$ from Figure~\ref{fig.PFPP}. 
Its set of unblocked columns is $U=\{2,5,8,9\}$.
Figure~\ref{fig.Q_PFPP} illustrates the construction in Theorem~\ref{thm.char_Q} that characterizes the rank $4$ positroids $Q$ such that $P \unlhd_q Q$ is a nonnegatively representable quotient.
Any filling of the four unblocked boxes in the fourth row that is not all crosses yields a partial FPP $F$, and deleting the zeroth column of $F$ induces a partial FPP $D(P,Q)$ that corresponds with a flag positroid $(P,Q)$.
Corollary~\ref{cor.char_Q} states that there are $15$ rank $4$ positroids $Q$ such that $P \unlhd_q Q$ is a nonnegatively representable elementary quotient.

For example if we choose $C=\{5,9\} \subseteq U$, then $D(P,Q)$ is constructed from $D^{\sLe}(P)$ by placing a pivot elbow in the fifth column, an elbow in the ninth column, and crosses in the rest of the fourth row of the Rothe diagram.
We denote this operation by $D(P,Q)=D^{\sLe}(P)\cdot C$, and this is shown on the right side of Figure~\ref{fig.Q_PFPP}.
This pipe dream corresponds with the flag positroid $(P,Q)$ in the sense of Corollary~\ref{cor.PFPP_Q}.
\end{example}

\begin{figure}[ht!]
\begin{tikzpicture}
\begin{scope}[scale=0.5, xshift=0]
\vertex[color=gray!40]() at (5.5, 2.5) {};
\vertex[color=gray!40]() at (3.5, 1.5) {};
\vertex[color=gray!40]() at (0.5, 0.5) {};
\vertex[color=gray!40]() at (-.5, -.5) {};

\draw[fill, color=mediumseagreen!20] (1,-1) rectangle (2,0);
\draw[fill, color=mediumseagreen!20] (4,-1) rectangle (5,0);
\draw[fill, color=mediumseagreen!20] (7,-1) rectangle (9,0);

\draw [black,thick,domain=180:270] plot ({6+.5*cos(\x)}, {3+.5*sin(\x)});
\draw [black,thick,domain=180:270] plot ({4+.5*cos(\x)}, {2+.5*sin(\x)});
\draw [black,thick,domain=180:270] plot ({1+.5*cos(\x)}, {1+.5*sin(\x)});
\draw [black,thick,domain=180:270] plot ({0+.5*cos(\x)}, {0+.5*sin(\x)});
\draw [black,thick] (-.5,0) -- (-.5,3);
\draw [black,thick] (0.5,1) -- (0.5,3);
\draw [black,thick] (1.5,1) -- (1.5,3);
\draw [black,thick] (2.5,1) -- (2.5,3);
\draw [black,thick] (3.5,2) -- (3.5,3);
\draw [black,thick] (4.5,2) -- (4.5,3);
\draw [black,thick] (0,-.5) -- (1,-.5);
\draw [black,thick] (3,-.5) -- (4,-.5);
\draw [black,thick] (3,0.5) -- (4,0.5);
\draw [black,thick] (5,-.5) -- (6,-.5);
\draw [black,thick] (5,0.5) -- (6,0.5);
\draw [black,thick] (5,1.5) -- (6,1.5);
\draw [black, thick] (2,-.5)--(3,-.5);
\draw [black, thick] (2.5,-1)--(2.5,0);
\draw [black, thick] (6,-.5)--(7,-.5);
\draw [black, thick] (6.5,-1)--(6.5,0);
\cross[thick](3,1);
\cross[thick](7,1);
\cross[thick](9,1);
\cross[thick](7,2);
\cross[thick](8,3);
\cross[thick](9,3);
\elbow[thick](2,1); 
\elbow[thick](5,1); 
\elbow[thick](8,1);
\elbow[thick](5,2);
\elbow[thick](8,2);
\elbow[thick](9,2);
\elbow[thick](7,3);

\draw[very thin, color=blue!50] (-1,-1) grid (9,3);
\draw[very thick, color=blue!50] (0,0)--(9,0)--(9,3)--(0,3)--(0,0);
\node at (-1.8, -.5) {\footnotesize$F$};
\node[] at (10.25, 2.5) {\footnotesize\textcolor{blue!70}{$D^{\sLe}(P)$}};
	\node at (-.5,3.5) {\color{black}{\tiny$0$}};
	\node at (0.5,3.5) {\color{black}{\tiny$1$}};
	\node at (1.5,3.5) {\color{black}{\tiny$2$}};
	\node at (2.5,3.5) {\color{black}{\tiny$3$}};
	\node at (3.5,3.5) {\color{black}{\tiny$4$}};
	\node at (4.5,3.5) {\color{black}{\tiny$5$}};
	\node at (5.5,3.5) {\color{black}{\tiny$6$}};
	\node at (6.5,3.5) {\color{black}{\tiny$7$}};
	\node at (7.5,3.5) {\color{black}{\tiny$8$}};
	\node at (8.5,3.5) {\color{black}{\tiny$9$}};
\end{scope}

\begin{scope}[scale=0.5, xshift=400]
\vertex[color=gray!40]() at (5.5, 2.5) {};
\vertex[color=gray!40]() at (3.5, 1.5) {};
\vertex[color=gray!40]() at (0.5, 0.5) {};
\vertex[color=gray!40]() at (4.5, -.5) {};

\draw [black,thick,domain=180:270] plot ({6+.5*cos(\x)}, {3+.5*sin(\x)});
\draw [black,thick,domain=180:270] plot ({4+.5*cos(\x)}, {2+.5*sin(\x)});
\draw [black,thick,domain=180:270] plot ({1+.5*cos(\x)}, {1+.5*sin(\x)});
\draw [black,thick,domain=180:270] plot ({5+.5*cos(\x)}, {0+.5*sin(\x)});
\draw [black,thick] (0.5,1) -- (0.5,3);
\draw [black,thick] (1.5,-1) -- (1.5,0);
\draw [black,thick] (1.5,1) -- (1.5,3);
\draw [black,thick] (2.5,-1) -- (2.5,0);
\draw [black,thick] (2.5,1) -- (2.5,3);
\draw [black,thick] (3.5,2) -- (3.5,3);
\draw [black,thick] (4.5,2) -- (4.5,3);
\draw [black,thick] (3,0.5) -- (4,0.5);
\draw [black,thick] (5,-.5) -- (6,-.5);
\draw [black,thick] (5,0.5) -- (6,0.5);
\draw [black,thick] (5,1.5) -- (6,1.5);

\cross[thick](3,1);
\cross[thick](7,1);
\cross[thick](9,1);
\cross[thick](7,0); 
\cross[thick](7,2);
\cross[thick](8,0); 
\cross[thick](8,3);
\cross[thick](9,3);
\elbow[thick](2,1); 
\elbow[thick](9,0); 
\elbow[thick](5,1); 
\elbow[thick](8,1);
\elbow[thick](5,2);
\elbow[thick](8,2);
\elbow[thick](9,2);
\elbow[thick](7,3);

\draw[very thin, color=blue!50] (0,-1) grid (9,3);
\node at (-1.6, -.5) {\footnotesize$D(P,Q)$};

	\node at (0.5,3.5) {\color{black}{\tiny$1$}};
	\node at (1.5,3.5) {\color{black}{\tiny$2$}};
	\node at (2.5,3.5) {\color{black}{\tiny$3$}};
	\node at (3.5,3.5) {\color{black}{\tiny$4$}};
	\node at (4.5,3.5) {\color{black}{\tiny$5$}};
	\node at (5.5,3.5) {\color{black}{\tiny$6$}};
	\node at (6.5,3.5) {\color{black}{\tiny$7$}};
	\node at (7.5,3.5) {\color{black}{\tiny$8$}};
	\node at (8.5,3.5) {\color{black}{\tiny$9$}};
\end{scope}
\end{tikzpicture}
\caption{The rank $3$ partial FPP $D^{\sLe}(P)$ from Figure~\ref{fig.PFPP} has unblocked columns $U=\{2,5,8,9\}$. 
Choosing the nonempty subset $C=\{5,9\} \subseteq U$ to fill with elbow tiles in the fourth row induces a partial FPP $D(P,Q)$ that corresponds with a flag positroid $(P,Q)$.
}
\label{fig.Q_PFPP}
\end{figure}

\subsection{Standardizing the partial flag positroid pipe dream}
\label{subsec.stdizing}
Given a rank $k$ positroid $P\in \calP_k([n])$, Theorem~\ref{thm.char_Q} and Corollary~\ref{cor.PFPP_Q} give a combinatorial characterization of all rank $k+1$ positroids $Q$ such that $P\unlhd_q Q$ in terms of how one can add a row below $D^{\sLe}(P)$ to obtain a partial FPP $D(P,Q)$ such that $\calB(D(P,Q))$ is the set of bases of $Q$.
However, we do not yet have a direct description of $D^{\sLe}(Q)$.

In this section, we show how $D(P,Q)$ can be converted to $D^{\sLe}(Q)$ via a standardization process, and hence from this we can go on to recover the $\Le$-diagram and other combinatorial objects associated to $Q$.

Given $D\in \setFPP_k(n)$, let $D_{i,j}$ denote the tile in the $i$-th row and $j$-th column of the pipe dream. 

\begin{definition}\label{defn.rowop}
Fix $i\in [n-1]$ and $k\in [n]$.
The \defn{standardization operation $\st_i$} on $\setFPP_k(n)$ is defined as follows.
Let $D\in \setFPP_k(n)$.
\begin{enumerate}
\item[(I)] If its pivot columns $u_i > u_{i+1}$ are in decreasing order, then $\st_i(D) = D$.
\item[(II)] Otherwise its pivot columns $u_i < u_{i+1}$ are in increasing order.

	\begin{enumerate}
	\item[(a)] If $D_{i,u_{i+1}}$ is a cross, let $D':=\st_i(D)$ be obtained by switching the $i$-th and $(i+1)$-st rows of $D$ and then replacing $D'_{i,u_i}$ with a vertical pipe and $D'_{i+1,u_{i+1}}$ with a horizontal pipe  (these two replacements are forced by virtue of the fact that two pivot elbows have switched rows).
	\item[(b)] Otherwise, $D_{i,u_{i+1}}$ is an elbow.
Let $j^*> u_{i+1}$ be the first column in which $D_{i,j^*}$ is a cross and $D_{i+1,j^*}$ is an elbow or pivot elbow (if such a column does not exist, then set $j^*=\infty$).
Let $D':=\st_i(D)$ be the pipe dream obtained as follows:
		\begin{enumerate}
		\item[(i)] In the columns $u_i$ to $u_{i+1}$ inclusive, switch the $i$-th and $(i+1)$-st rows of $D$ and then replace $D'_{i,u_i}$ with a vertical pipe and $D'_{i+1,u_{i+1}}$ with a horizontal pipe, and
		\item[(ii)] in the columns $j^*$ to $n$ inclusive, switch the $i$-th and $(i+1)$-st rows of $D$ and then replace $D'_{i+1,j^*}$ with an elbow.
		\end{enumerate}
	This case is illustrated in Figure~\ref{fig.new_move}.
	\end{enumerate}
\end{enumerate}
\end{definition}

\begin{figure}[ht!]
\begin{tikzpicture}
\begin{scope}[scale=.65, xshift=0]
    \vertex[color=gray!40] at (5.5, 0.5) {};
	\vertex[color=gray!40] at (1.5, 1.5) {};	
    \draw [black,thick,domain=180:270] 
        plot ({6+.5*cos(\x)}, {1+.5*sin(\x)});
    \draw [black,thick,domain=180:270] 
        plot ({2+.5*cos(\x)}, {2+.5*sin(\x)});
        
    \draw [black,thick] (2.5,0)--(2.5,1){};
    \draw [black,thick] (4.5,0)--(4.5,1){};
    \elbow[thick](7,1);
    \draw [black,thick] (7,.5)--(8,.5){};
    \cross[thick](9,1);
    \elbow[thick](10,1);
    \elbow[thick](11,1);
    \cross[thick](12,1);

    \elbow[thick](3,2);
    \draw [black,thick] (3,1.5)--(4,1.5){};
    \cross[thick](5,2);
    \elbow[thick](6,2);
    \elbow[thick](7,2);
    \draw [black,thick] (7,1.5)--(8,1.5){};
    \elbow[thick](9,2);
    \cross[thick](10,2);
    \cross[thick](11,2);
    \cross[thick](12,2);
    
    \node[label={\textcolor{blue}{\tiny{$j^*$}}}] at (9.5,1.7){};
    \node[label={\textcolor{blue}{\tiny{$u_{i+1}$}}}] at (5.5,1.7){};
    \node[label={\textcolor{blue}{\tiny{$u_i$}}}] at (1.5,1.7){};

	\node[label=left:{\textcolor{blue}{\tiny$i$}}] at (1.2,1.5) {};
	\node[label=left:{\textcolor{blue}{\tiny$i+1$}}] at (1.2,0.5) {};

    \node[label={\footnotesize$D$}] at (6.5,-2.5){};
    
    \node[] at (13,1.7){\scriptsize$\st_i$};
    \node[] at (13,1){$\longrightarrow$};
                
	\draw[very thin, color=blue!50] (1,0) grid (12,2);
	
	\draw[very thick, color=lavender(floral)] (6,-1.1)--(6,2.7);
	\draw[very thick, color=lavender(floral)] (9,-1.1)--(9,2.7);
	\draw[ultra thick, color=lavender(floral)] (1,0)--(2,0)--(2,1)--(1,1)--(1,0);
	\draw[ultra thick, color=lavender(floral)] (5,1)--(6,1)--(6,2)--(5,2)--(5,1);
	\draw[ultra thick, color=lavender(floral)] (9,1)--(10,1)--(10,2)--(9,2)--(9,1);
	\node[] at (3.5,-.5){\tiny\textcolor{lavender(floral)!125}{switch rows}};
	\node[] at (3.5,-1){\tiny\textcolor{lavender(floral)!125}{and replace tiles}};
	\node[] at (7.5,-.5){\tiny\textcolor{lavender(floral)!125}{freeze}};
	\node[] at (10.7,-.5){\tiny\textcolor{lavender(floral)!125}{switch rows}};
	\node[] at (10.7,-1){\tiny\textcolor{lavender(floral)!125}{and replace tile}};
\end{scope}

\begin{scope}[scale=.65, xshift=370]
    \vertex[color=gray!40] at (5.5, 1.5) {};
	\vertex[color=gray!40] at (1.5, 0.5) {};	
    \draw [black,thick,domain=180:270] 
        plot ({6+.5*cos(\x)}, {2+.5*sin(\x)});
    \draw [black,thick,domain=180:270] 
        plot ({2+.5*cos(\x)}, {1+.5*sin(\x)});
        
    \draw [black,thick] (1.5,1)--(1.5,2){};
    \draw [black,thick] (2.5,1)--(2.5,2){};
    \draw [black,thick] (4.5,1)--(4.5,2){};
    \elbow[thick](7,2);
    \draw [black,thick] (7,1.5)--(8,1.5){};
    \elbow[thick](9,2);
    \elbow[thick](10,2);
    \elbow[thick](11,2);
    \cross[thick](12,2);

    \elbow[thick](3,1);
    \draw [black,thick] (3,.5)--(4,.5){};
    \cross[thick](5,1);
    \draw [black,thick] (5,.5)--(6,.5){};
    \elbow[thick](7,1);
    \draw [black,thick] (7,.5)--(8,.5){};
    \cross[thick](9,1);
    \elbow[thick](10,1);
    \cross[thick](11,1);
    \cross[thick](12,1);
    
    \node[label={\textcolor{blue}{\tiny{$j^*$}}}] at (9.5,1.7){};
    \node[label={\textcolor{blue}{\tiny{$u_{i+1}$}}}] at (5.5,1.7){};
    \node[label={\textcolor{blue}{\tiny{$u_i$}}}] at (1.5,1.7){};
    \node[label={\footnotesize$D'=\st_i(D)$}] at (6.5,-2.5){};

	\draw[very thin, color=blue!50] (1,0) grid (12,2);	
\end{scope}
\end{tikzpicture} 
\caption{An illustration of case (IIb) in the standardization operation $\st_i$ in Definition~\ref{defn.rowop}.}
    \label{fig.new_move}
\end{figure}

We show that applying $\st_i$ results in another partial FPP.
\begin{proposition}\label{prop.sti_preserves_FPP}
Let $D \in \setFPP_k(n)$ be a partial flag positroid pipe dream with pivot columns $u_i < u_{i+1}$ in increasing order for some $1\leq i \leq k-1$.
Then $\st_i(D) \in \setFPP_k(n)$.
\end{proposition}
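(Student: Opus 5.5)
To prove Proposition~\ref{prop.sti_preserves_FPP}, I will show that $D'=\st_i(D)$ is a valid Rothe-type filling of the $k\times n$ array with pivots $u_1,\dots,u_{i-1},u_{i+1},u_i,u_{i+2},\dots,u_k$, and that it is $\Gamma$-free. By the alternative definition of a partial FPP, together with Proposition~\ref{prop:staircase-equivalence} and Lemma~\ref{lem:gamma-free-is-subarea-condition}, $\Gamma$-freeness is what is needed. The rows other than $i,i+1$ are unchanged, and the pipes entering row $i$ from above and leaving row $i+1$ from below are unaffected in each case. So the whole question is local to the two-row strip. I will verify it by tracking which pipe occupies each edge of the strip.

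First I check the shape. After swapping, row $i$ must have pivot $u_{i+1}$ and row $i+1$ must have pivot $u_i$.
\begin{itemize}
\item In column $u_i$, the new row $i$ lies above the new pivot in row $i+1$ and left of pivot $u_{i+1}$, so it must be a vertical pipe. This is the replacement the construction prescribes.
\item In column $u_{i+1}$, the new row $i+1$ lies right of pivot $u_i$ and below a pivot, so it must be a horizontal pipe.
\item Columns strictly between $u_i$ and $u_{i+1}$ are swapped, which matches the swapped Rothe pattern.
\item Columns left of $u_i$ and right of $u_{i+1}$ lie in the Rothe diagram of both rows, or in neither, so any placement of crosses and elbows there is allowed.
\end{itemize}

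Next I check the pipe connectivity, i.e.\ that the pipes still enter and exit the strip consistently. In case (IIa), the cross at $D_{i,u_{i+1}}$ means that the pipe coming down column $u_{i+1}$ into the pivot of row $i+1$ passes straight through row $i$. Swapping the entire rows then just reorders two horizontal strands that do not interact. The pipe exiting at each row boundary is permuted along with its row, and the crossing pattern restricted to the other pipes is unchanged.

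In case (IIb), the elbow at $(i,u_{i+1})$ creates a touching. The construction exchanges rows only on columns $[u_i,u_{i+1}]$ and $[j^*,n]$, and freezes the block $(u_{i+1},j^*)$. So I will check the boundaries at $u_{i+1}$ and $j^*$ tile by tile.
\begin{itemize}
\item At column $u_{i+1}$, the horizontal strands leaving the swapped block must match the frozen block.
\item At column $j^*$, the cross/elbow pair over (elbow or pivot elbow), with the new row $i+1$ tile forced to be an elbow, is exactly what reroutes the two strands back to the swapped order.
\item Because $j^*$ is the first column with that configuration, inside the frozen block there is no cross in row $i$ lying over an elbow in row $i+1$. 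Hence the strands can be traced consistently.
\end{itemize}
When $j^*=\infty$, I argue that the two strands exit in rows that make the freeze consistent.

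The main obstacle is $\Gamma$-freeness. I will argue via subareas: suppose an elbow lies in the subarea of some pipe in $D'$, and derive an elbow in a subarea in $D$, contradicting that $D$ is a partial FPP. I split by where the offending cross and elbow lie, as follows.
\begin{itemize}
\item \emph{Both outside rows $i,i+1$.} Then the configuration is inherited from $D$, except that the relevant horizontal pipe's exit row may have changed between $i$ and $i+1$. I will show that in each case the exit row changes only in a way that shrinks, or preserves, the subarea below rows $i+1$.
\item \emph{Cross in row $i$ or $i+1$ of $D'$.} I compare against the corresponding tile of $D$ one row up or down. I use the pivot order $u_i<u_{i+1}$ and, in case (IIb), the minimality of $j^*$ to exclude new violations within the frozen and swapped blocks.
\end{itemize}
I expect the delicate subcase to be a cross in new row $i$ on columns $\ge j^*$ whose horizontal pipe now descends to row $i+1$. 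There I will use the elbow placed at $D'_{i+1,j^*}$, together with the fact that $D$ had a cross at $(i,j^*)$ over an elbow, to locate a $\Gamma$ pattern already present in $D$.
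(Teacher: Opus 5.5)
Your strategy is the paper's: reduce to $\Gamma$-freeness of $D'=\st_i(D)$, locate the corner of a putative $\Gamma$ pattern relative to rows $i,i+1$ and to the columns $u_{i+1},j^*$, and pull it back to $D$ using $u_i<u_{i+1}$ and the minimality of $j^*$. Your shape bookkeeping is correct. The connectivity discussion is unnecessary, since reducedness follows from $\Gamma$-freeness as in the proof of Proposition~\ref{prop:staircase-equivalence}.

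However, the plan misses the fact that actually controls the columns $\ge j^*$. Take $j^*=u_{i+1}$ in case (IIa). Since $D_{i,j^*}$ is a cross over an elbow or pivot elbow, $\Gamma$-freeness of $D$ forces row $i$ of $D$ to have \emph{no elbow to the right of $j^*$}. In case (IIb) this makes $D'_{i+1,j^*}$ the last elbow in row $i+1$ of $D'$, which is exactly what the paper's Case (i) invokes. The same fact disposes of a corner in new row $i$ at a column $j>j^*$. Its horizontal pipe falls straight through row $i+1$ of $D'$, which consists of crosses there, so the pattern is verbatim a pattern in row $i+1$ of $D$. The elbow $D'_{i+1,j^*}$ you plan to use sits in column $j^*<j$, so it cannot be the foot of a $\Gamma$ cornered in column $j$. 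It matters instead when the horizontal pipe through a corner in new row $i$ with $u_{i+1}<j<j^*$ reaches column $j^*$. That pipe then exits at row $i+1$, while minimality of $j^*$ prevents $D'_{i+1,j}$ from being an elbow, so no $\Gamma$ arises.

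Second, your claim that exit rows change only so as to shrink or preserve subareas is false. When $j^*<\infty$ (including case (IIa)), Proposition~\ref{prop.pipe_exchange} swaps the exit rows of $v_i$ and $v_{i+1}$. So the pipe that left $D$ in row $i$ leaves $D'$ in row $i+1$, and its subarea genuinely gains boxes of row $i+1$.

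Moreover, this swap only matters when the corner lies above row $i$ and the foot lies in new row $i$ or $i+1$. That configuration falls in neither of your two cases; when the foot is also outside the strip, the swap is irrelevant. It needs its own column-by-column check:
\begin{enumerate}
\item[(a)] $D$ has an elbow or pivot elbow in the same column, in row $i$ or $i+1$. This comes from the row swap on $[u_i,u_{i+1}]$, from minimality of $j^*$ on the frozen block, and from row $i+1$ of $D$ when $j\ge j^*$.
\item[(b)] The horizontal pipe exits $D$ at a row $\ge i$.
\item[(c)] When that elbow of $D$ is in row $i+1$, you need exit row $\ge i+1$. This follows from the missing fact: after crossing a column $j\ge j^*$ above row $i$, the pipe stays to the right of $j^*$ and can never turn right in row $i$ of $D$.
\end{enumerate}
The paper's proof passes over this configuration silently, but the mechanism you propose for it would not work.
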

\begin{proof}
Suppose for contradiction that $D'=\st_i(D)$ contains a $\Gamma$ pattern.  
Then the cross tile $X$ in the northwest corner of this $\Gamma$ pattern must be one of the following:
\begin{enumerate}
\item[(i)] $D'_{i+1,j}$ for some $1\leq j<u_{i+1}$,
\item[(ii)] $D'_{i,j}$ or $D'_{i+1,j}$ for some $u_{i+1}< j< j^*$, or
\item[(iii)] $D'_{i,j}$ for some $j^* < j \leq n$.
\end{enumerate}

In Case (i), let $D'_{i+1, l}$ be the elbow tile to the right of $X$ in the $\Gamma$ pattern.
We must have $l> u_{i+1}$ because $D$ is $\Gamma$-free.
By definition, if $D'_{i+1, l}$ is an elbow for some $u_{i+1}<l < j^*$, then so is $D_{i,l}$.
So $D$ is $\Gamma$-free implies $l>j^*$.
But then no such elbow tile can exist because $D'_{i+1, j^*}$ is the last elbow in its row, so Case (i) is not possible.

In Case (ii), an analysis similar to that in Case (i) rules out the possibility that $X=D'_{i+1,j}$.
So suppose $X=D'_{i,j}$ and let $D'_{i, l}$ be the elbow tile to the right of $X$ in the $\Gamma$ pattern.
We must have $l > j^*$ because $D$ is $\Gamma$-free.
By definition, $D'_{i+1,j}=D_{i+1,j}$ is also a cross tile, so $D$ is $\Gamma$-free implies no such elbow tile $D'_{i,l}=D_{i+1,l}$ exists, and Case (ii) is not possible.

Similarly, we see that Case (iii) also is not possible, because for $j > j^*$, the $i$-th and $(i+1)$-st rows of $D$ have simply been switched, and $D$ is $\Gamma$-free implies no new $\Gamma$ patterns in $D'$ can be introduced for these values of $j$.

Therefore, we may conclude that $\st_i(D)$ is $\Gamma$-free, and hence is a partial FPP.
\end{proof}

Our next goal is to show that applying $\st_i$ to $D$ yields a partial FPP that encodes the same bases for a positroid as $D$.
To this end, we must describe the action of $\st_i$ on the associated directed graph $G(D)$.
In the technical lemma that follows, it is convenient for us to visualize $G(D)$ on a $k\times n$ grid. 
Given a vertex $w\in G(D)$, let $N(w)$, $S(w)$, $E(w)$, and $W(w)$ respectively denote the vertex in $G(D)$ that is immediately north, south, east, and west of $w$.

\begin{lemma}\label{lem.ggprime}
Let $D \in \setFPP_k(n)$ be a partial flag positroid pipe dream whose pivot columns $u_i< u_{i+1}$ are in increasing order for a fixed $i$.
Let $D'=\st_i(D)$, and let $G=G(D)$ and $G'=G(D')$ be their associated directed graphs.
Let $j^*$ be the first column in which $D_{i,j^*}$ is a cross and $D_{i+1,j^*}$ is an elbow or pivot elbow.
We have the following cases.
\begin{enumerate}
\item[(i)] Case $j^*= u_{i+1}$. Then $G=G'$.
\item[(ii)] Case $u_{i+1}< j^*\leq n$.  
Let $w=N(s_{u_{i+1}})$, $v=W(w)$, $x$ be the vertex in the $(i+1)$-st row and $j^*$-th column of $G$, and $y$ be the rightmost vertex in the $i$-th row of $G$.
Then $G'$ is obtained from $G$ by
	\begin{itemize}
	\item deleting $w$ and all edges incident to it, and adding the edge $(s_{u_{i+1}}, N(w))$,
	\item adding a vertex $z$ and the edge $(z,x)$, 
	\item 
	if $E(w)$ exists, then delete $(s_{u_{i+1}}, E(s_{u_{i+1}}))$ and add $(s_{u_{i+1}}, E(w))$ and $(y,x)$, 
	\item if $x=E(s_{u_{i+1}})$ then add $(v,z)$, 
	otherwise delete $(W(x), x)$ and add $(W(x), z)$ and $(v, E(s_{u_{i+1}}))$,
	\end{itemize}

\item[(iii)] Case $j^*=\infty$. 
Let $w=N(s_{u_{i+1}})$ and $v=W(w)$.
Then $G'$ is obtained from $G$ by
	\begin{itemize}
	\item deleting $w$ and all edges incident to it, and adding $(s_{u_{i+1}}, N(w))$,	

	\item if $E(s_{u_{i+1}})$ exists, then delete $(s_{u_{i+1}}, E(s_{u_{i+1}}))$ and add $(v, E(s_{u_{i+1}}))$,
	\item if $E(w)$ exists, then add $(s_{u_{i+1}}, E(w))$.
	\end{itemize}
\end{enumerate}
\end{lemma}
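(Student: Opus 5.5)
The plan is a direct local comparison of $G=G(D)$ and $G'=G(D')$, column by column. By Definition~\ref{defn.GofD}, $G(D)$ is determined by the positions of the pivot elbows and elbows, together with the rule that each vertex has an edge to the next vertex north in its column and an edge from the vertex immediately west in its row (unless it is a source). The operation $\st_i$ only changes rows $i$ and $i+1$. So every vertex outside these two rows, and every vertical edge that does not begin or end in rows $i,i+1$, is unchanged. The work is to record, for each column $j$, which of the two boxes $D_{i,j}$, $D_{i+1,j}$ carry vertices before and after $\st_i$, and then read off the changed horizontal edges in rows $i,i+1$ and the changed vertical edges leaving those rows.

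First I would split the columns into the four ranges of Definition~\ref{defn.rowop}, case (IIb): $[1,u_i)$, $[u_i,u_{i+1}]$, $(u_{i+1},j^*)$ (the frozen block), and $[j^*,n]$.
\begin{itemize}
\item In $[1,u_i)$ both boxes are vertical pipes, so there are no vertices.
\item In $[u_i,u_{i+1}]$ row $i+1$ is empty or horizontal left of $u_{i+1}$. Swapping the rows moves the source $s_{u_i}$ down and $s_{u_{i+1}}$ up, and the elbows of row $i$ in $(u_i,u_{i+1})$ into row $i+1$.
\item In the frozen block nothing moves.
\item In $[j^*,n]$ the rows are swapped, and $D'_{i+1,j^*}$ is made an elbow.
\end{itemize}

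Case (i), $j^*=u_{i+1}$, corresponds to (IIa), since $D_{i,u_{i+1}}$ is a cross. Here I would check that swapping the rows relabels the two sources and their row-segments without changing any edge in the vertex/edge incidence sense. The point is that in $[u_i,u_{i+1}]$ row $i+1$ has no vertices except the pivot, and there are no rightward edges beyond column $u_{i+1}$ in row $i$ that change. I expect this to reduce to the observation that the vertex sets coincide and the west/north neighbour relations are preserved.

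For cases (ii) and (iii) the plan is as follows.
\begin{itemize}
\item Identify $w=N(s_{u_{i+1}})$. This is the elbow $D_{i,u_{i+1}}$, and it becomes part of the pivot after the swap, so it disappears. That explains the edge $(s_{u_{i+1}},N(w))$, and explains why $s_{u_{i+1}}$ inherits $w$'s east neighbour $E(w)$.
\item The former row-$(i+1)$ east neighbour $E(s_{u_{i+1}})$ now lies in row $i+1$ west of nothing but $v=W(w)$, which has been moved down; this gives $(v,E(s_{u_{i+1}}))$.
\item In case (ii) the new elbow at $D'_{i+1,j^*}$ creates the vertex $z$ with edge $(z,x)$, where $x$ moved up to row $i$. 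Its west neighbour is $W(x)$ or $v$, according as $x=E(s_{u_{i+1}})$ or not.
\item The edge $(y,x)$ comes from row $i$'s last vertex now preceding $x$ in the swapped tail.
\end{itemize}

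The main obstacle is the bookkeeping of the frozen block $(u_{i+1},j^*)$. I must use the defining property of $j^*$ (no column there has a cross in row $i$ over an elbow in row $i+1$), together with $\Gamma$-freeness of $D$, to show that the horizontal chains in rows $i,i+1$ through this block reconnect exactly as listed and that no other edges change. I would handle this by proving that within the block every elbow of row $i+1$ sits under an elbow of row $i$. Then the west neighbours in the block are unaffected, except at its two boundary columns.
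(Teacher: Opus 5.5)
Your route is the paper's own: read $G(D')$ off $D'$ column by column. The paper's proof is only a sketch of this. However, your plan leaves out the one place where the FPP hypothesis is genuinely needed.

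The needed fact is this. If $D_{i,j}$ is a cross with an elbow or pivot elbow directly below it, then $\Gamma$-freeness forbids any elbow to the right of $(i,j)$ in row $i$, because the horizontal pipe through $(i,j)$ would turn down there and exit in a row $\geq i+1$. You need it twice:
\begin{enumerate}
\item[(a)] At $j=u_{i+1}$ it is what makes case (i) work: no column carries vertices in both rows $i$ and $i+1$, so the full swap preserves every north-neighbour relation. A column with two vertices would have its vertical edge reversed, whereas horizontal chains survive a full swap regardless. So ``no rightward edges that change'' is not the relevant criterion.
\item[(b)] At $j=j^*$ it shows that $y$ is $w$ or lies in the frozen block, so $y$ really precedes $x$ in the new row $i$, and that $z$ is the last vertex of the new row $i+1$.
\end{enumerate}
By contrast, your frozen-block claim needs no $\Gamma$-freeness. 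For $j>u_{i+1}$ the boxes $(i,j)$ and $(i+1,j)$ are either both horizontal pipes or both in the Rothe diagram, and minimality of $j^*$ does the rest. That claim is also what justifies the proviso ``if $E(w)$ exists'': if $E(w)$ is absent, the block has no row-$(i+1)$ elbows, so $E(s_{u_{i+1}})=x$ and the edge $(s_{u_{i+1}},x)$ survives unchanged.

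Two steps would fail as written.

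First, your rule for the west neighbour of $z$ is reversed. It is $v$ when $x=E(s_{u_{i+1}})$, since then $W(x)=s_{u_{i+1}}$ has moved up to row $i$, and it is $W(x)$ otherwise. Correspondingly, $(v,E(s_{u_{i+1}}))$ is an edge of $G'$ only in case (iii) or when $x\neq E(s_{u_{i+1}})$.

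Second, ``no other edges change'' is false in column $j^*$. If $G$ has a vertex $S(x)$ below $x$, then in $G'$ the new vertex $z$ sits between them, so $(S(x),x)$ is replaced by $(S(x),z)$. For instance, take the first three rows of $\FPP(1234,2431)$ with $i=1$. Then $j^*=3$ and $x$ is at $(2,3)$; $G$ contains $(s_3,x)$, while $G'$ contains $(s_3,z)$ instead. The bulleted list in the statement also omits this change, although Figure~\ref{fig.nnprime} does draw $z$ on that column. So your bookkeeping must track vertical edges entering rows $i,i+1$ from below, not only those leaving them. The second bullet of case (ii) should be read as $z$ subdividing the vertical edge into $x$.

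Minor points: to the left of $u_{i+1}$ the boxes of row $i+1$ are empty or vertical pipes, not horizontal. In $[1,u_i)$ the boxes may be empty as well as vertical.
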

\begin{proof}
In all cases, to obtain $G'$ from $G$, the horizontal edges in the $i$-th and $(i+1)$-st rows are deleted and reattached according to how $D'$ is obtained from $D$.
We note that in Case (iii), $G'$ contains one less vertex than $G$.

In the case $j^*=u_{i+1}$, $D_{i,u_{i+1}}$ is a cross.
Then $D_{i,j}$ is a cross for all $j\geq u_{i+1}$ because $D$ does not contain a $\Gamma$ pattern, and so $G$ does not have a vertex in the $i$-th row and $j$-th column for $j\geq u_{i+1}$.
$G$ also has not have a vertex in the $(i+1)$-st row and $j$-th column for $1\leq j < u_{i+1}$ by construction.
Applying $\st_i$ to $G$ then simply transposes its $i$-th and $(i+1)$-st rows, thus $G'= G$ (although $G$ and $G'$ have a different embedding on the grid as induced from $D$ and $D'$). 

In the case $u_{i+1}< j^* \leq n$, $D_{i,u_{i+1}}$ is an elbow.
The existence of the vertices $v, w, x, y$ are by assumption.
Applying $\st_i$ to $G$ deletes the vertex $w$, adds the vertex $z$, and all new edges are induced from the action of $\st_i$ on $D$.

The case $j^*=\infty$ can be considered as a special case of $(ii)$ where $x$ does not exist and hence $z$ is not added.
\end{proof}

\begin{example}
Figure~\ref{fig.nnprime} illustrates the most generic construction from Lemma~\ref{lem.ggprime}, when $u_{i+1} < j^* \leq n$ with $x \neq E(s_{u_{i+1}})$ and $y\neq w$.
Figure~\ref{fig.std_example} gives specific instances of each of the cases in Lemma~\ref{lem.ggprime}.
Case (i) is illustrated by $\st_3$, case (ii) is illustrated by $\st_1$, and case (iii) is illustrated by $\st_2$.
\end{example}

\begin{figure}[ht!]
\begin{tikzpicture}

\begin{scope}[scale=.7, xshift=0]
\draw[very thin, color=gray!28] (0,0) grid (10,2);
\nnode[color=black, fill](s1) at (0.5, 1.5) {};
	\node() at (0.25,1.25){\tiny$s_{u_i}$};
\nnode[color=black, fill](s2) at (3.5, 0.5) {};
	\node() at (3.25,0.25){\tiny$s_{u_{i+1}}$};
\node[](t1) at (0.5, 2.5) {};
\node[](t2) at (1.5, 2.5) {};
\node[](t3) at (2.5, 2.5) {};
\node[](t4) at (3.5, 2.5) {};
\node[](t5) at (4.5, 2.5) {};
\node[](t6) at (5.5, 2.5) {};
\node[](t7) at (6.5, 2.5) {};
\node[](t8) at (7.5, 2.5) {};
\node[](t9) at (8.5, 2.5) {};
\node[](tX) at (9.5, 2.5) {};
\nnode[color=mediumseagreen, fill]() at (5.5, 0.5){};
\nnode[color=mediumseagreen, fill]() at (6.5, 0.5){};
\nnode[color=mediumseagreen, fill]() at (8.5, 0.5){};
	\node() at (8.75,0.75){\tiny$x$};
\nnode[color=mediumseagreen, fill]() at (9.5, 0.5){};
\nnode[color=mediumseagreen, fill]() at (6.5, 1.5){};
\nnode[color=mediumseagreen, fill]() at (1.5, 1.5){};
	\node() at (1.75,1.75){\tiny$v$};
\nnode[color=mediumseagreen, fill]() at (3.5, 1.5){};
	\node() at (3.25,1.75){\tiny$w$};
\nnode[color=mediumseagreen, fill]() at (4.5, 1.5){};
\nnode[color=mediumseagreen, fill]() at (5.5, 1.5){};
\nnode[color=mediumseagreen, fill]() at (6.5, 1.5){};
\nnode[color=mediumseagreen, fill]() at (7.5, 1.5){};
	\node() at (7.75,1.75){\tiny$y$};
\draw [color=mediumseagreen, thick] (s1) -- (t1){};
\draw [color=mediumseagreen, thick] (s2) -- (t4){};
\draw [color=mediumseagreen, thick] (s1) -- (7.5,1.5){};
\draw [color=mediumseagreen, thick] (s2) -- (9.5,0.5){};
\draw [color=mediumseagreen, thick] (1.5,1)--(t2){};
\draw [color=mediumseagreen, thick] (4.5,1)--(t5){};
\draw [color=mediumseagreen, thick] (5.5,0)--(t6){};
\draw [color=mediumseagreen, thick] (6.5,0)--(t7){};
\draw [color=mediumseagreen, thick] (7.5,1)--(t8){};
\draw [color=mediumseagreen, thick] (8.5,0)--(t9){};
\draw [color=mediumseagreen, thick] (9.5,0)--(tX){};
    \node[] at (11.2,1.7){\scriptsize$\st_i$};
    \node[] at (11.2,1){$\longrightarrow$};
    \node at (-.5, 0) {\footnotesize$G$};
	\draw[very thick, color=lavender(floral)] (4,-1.1)--(4,2.7);
	\draw[very thick, color=lavender(floral)] (8,-1.1)--(8,2.7);
	\node[] at (2,-.5){\tiny\textcolor{lavender(floral)!125}{delete w}};
    \node[] at (2,-1){\tiny\textcolor{lavender(floral)!125}{and reconnect}};
	\node[] at (6,-.5){\tiny\textcolor{lavender(floral)!125}{freeze}};
	\node[] at (9.4,-.5){\tiny\textcolor{lavender(floral)!125}{add z}};
	\node[] at (9.4,-1){\tiny\textcolor{lavender(floral)!125}{and reconnect}};
\end{scope}

\begin{scope}[scale=.7, xshift=350]
\draw[very thin, color=gray!28] (0,0) grid (10,2);
\nnode[color=black, fill](s1) at (0.5, 1.5) {};
	\node() at (0.25,1.25){\tiny$s_{u_i}$};
\nnode[color=black, fill](s2) at (3.5, 0.5) {};
	\node() at (3.25,0.25){\tiny$s_{u_{i+1}}$};
\node[](t1) at (0.5, 2.5) {};
\node[](t2) at (1.5, 2.5) {};
\node[](t3) at (2.5, 2.5) {};
\node[](t4) at (3.5, 2.5) {};
\node[](t5) at (4.5, 2.5) {};
\node[](t6) at (5.5, 2.5) {};
\node[](t7) at (6.5, 2.5) {};
\node[](t8) at (7.5, 2.5) {};
\node[](t9) at (8.5, 2.5) {};
\node[](tX) at (9.5, 2.5) {};
\nnode[color=mediumseagreen, fill](Es2) at (5.5, 0.5){};
\nnode[color=mediumseagreen, fill](Wx) at (6.5, 0.5){};
\nnode[color=mediumseagreen, fill](x) at (8.5, 0.5){};
	\node() at (8.75,0.75){\tiny$x$};
\nnode[color=mediumseagreen, fill]() at (9.5, 0.5){};
\nnode[color=mediumseagreen, fill]() at (6.5, 1.5){};
\nnode[color=mediumseagreen, fill](v) at (1.5, 1.5){};
	\node() at (1.75,1.75){\tiny$v$};
\nnode[color=gray!30, fill](w) at (3.5, 1.5){};
	\node() at (3.25,1.75){\tiny\textcolor{gray!30}{$w$}};
\nnode[color=mediumseagreen, fill](Ew) at (4.5, 1.5){};
\nnode[color=mediumseagreen, fill]() at (5.5, 1.5){};
\nnode[color=mediumseagreen, fill]() at (6.5, 1.5){};
\nnode[color=mediumseagreen, fill](y) at (7.5, 1.5){};
	\node() at (7.75,1.75){\tiny$y$};
\nnode[color=mediumseagreen, fill](z) at (8.5, -.5){};
	\node() at (8.75,-.75){\tiny$z$};	
\draw [color=mediumseagreen, thick] (s1) -- (t1){};
\draw [color=mediumseagreen, thick] (s1) -- (1.5,1.5){};
\draw [color=gray!30, thick] (v)--(Ew){};
\draw [color=mediumseagreen, thick] (4.5,1.5) -- (7.5,1.5){};
\draw [color=gray!30, thick] (s2) -- (Es2){};
\draw [color=mediumseagreen, thick] (5.5,0.5) -- (6.5,0.5){};
\draw [color=gray!30, thick] (Wx) -- (x){};
\draw [color=mediumseagreen, thick] (8.5,0.5) -- (9.5,0.5){};
\draw [color=mediumseagreen, thick] (1.5,1)--(t2){};
\draw [color=mediumseagreen, thick] (4.5,1)--(t5){};
\draw [color=mediumseagreen, thick] (5.5,0)--(t6){};
\draw [color=mediumseagreen, thick] (6.5,0)--(t7){};
\draw [color=mediumseagreen, thick] (7.5,1)--(t8){};
\draw [color=mediumseagreen, thick] (8.5,0)--(t9){};
\draw [color=mediumseagreen, thick] (9.5,0)--(tX){};
\draw [color=mediumseagreen, thick] (s2) -- (t4){};

\draw [color=mediumseagreen, thick] (x)--(8.5,-1);
\draw [color=mediumseagreen, thick] (y)--(x);
\draw [color=mediumseagreen, thick] (Wx)--(z);
\draw [color=mediumseagreen, thick] (v)--(Es2);
\draw [color=mediumseagreen, thick] (s2)--(Ew);
\node at (-.5, 0) {\footnotesize$G'$};
\end{scope}
\end{tikzpicture}    
\caption{The effect of $\st_i$ induced on the directed graphs associated to partial flag positroid pipe dreams. 
Case (ii) of Lemma~\ref{lem.ggprime} is illustrated here. 
Note that $G'$ is not depicted in grid form.}
    \label{fig.nnprime}
\end{figure}

Recall from Proposition~\ref{prop.nonintersecting} that the bases of a positroid $P$ is $\calB(D^{\sLe}(P))$, which is the sink sets of non-intersecting paths in $G(D^{\sLe}(P))$.

\begin{theorem}\label{thm.sti_preserves_bases}
Let $D \in \setFPP_k(n)$ be a partial flag positroid pipe dream with increasing pivot columns $u_i < u_{i+1}$ for some $1\leq i \leq k-1$.
For $D' =\st_i(D)$, we have $\calB(D) = \calB(D')$, so $D'$ encodes the same positroid as $D$.
\end{theorem}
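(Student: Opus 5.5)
Our plan is to prove $\calB(D)=\calB(D')$ directly on the directed graphs, using Lemma~\ref{lem.ggprime} to describe $G'=G(D')$ as a local modification of $G=G(D)$. The modification touches only rows $i$ and $i+1$, so the paths $p_1,\dots,p_{i-1},p_{i+2},\dots,p_k$ of an admissible $k$-collection can be left unchanged. The work is to give, for each admissible collection in $G$, a rerouting of its two paths starting at $s_{u_i}$ and $s_{u_{i+1}}$ into two paths in $G'$ with the same pair of sinks that avoid the other paths, and conversely. We split into the three cases of Lemma~\ref{lem.ggprime}. Case (i) is immediate, since $G=G'$ as abstract directed graphs with the same sources and sinks.

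For Case (iii), $j^*=\infty$, the only changes are these:
\begin{itemize}
\item $w=N(s_{u_{i+1}})$ is deleted and $s_{u_{i+1}}$ is joined to $N(w)$;
\item the edge out of $s_{u_{i+1}}$ going east is redirected to start at $v=W(w)$;
\item an edge $s_{u_{i+1}}\to E(w)$ is added.
\end{itemize}
Take a collection in $G$ and look at how the path from $s_{u_{i+1}}$ leaves its source: either north through $w$, or east along row $i+1$.

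If it goes north, then the path $p_i$ from $s_{u_i}$ must leave row $i$ at or before $v$, since it cannot pass through $w$. In $G'$ we send $p_{i+1}$ directly to $N(w)$, or, if it turned east at $w$, along the new edge to $E(w)$.

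If $p_{i+1}$ goes east along row $i+1$, then $p_i$ either goes up before $v$, in which case nothing changes, or it passes through $w$. In the latter case we swap the tails: in $G'$ the path from $s_{u_i}$ runs to $v$ and then uses $(v,E(s_{u_{i+1}}))$ to follow the old tail of $p_{i+1}$, while $s_{u_{i+1}}$ uses $N(w)$ or $E(w)$ to follow the old tail of $p_i$.

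The sink set is preserved because only the labeling of which source reaches which sink changes. The $\Gamma$-freeness of $D$ (no elbows in row $i+1$ below the crosses of row $i$ between $u_{i+1}$ and $j^*$) will guarantee that the swapped paths stay vertex-disjoint from each other and from the rows above. The converse map is the same construction read backwards, so it is a bijection on sink sets. Strictly speaking we only need equality of sink sets, not a bijection of collections.

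Case (ii) is handled the same way, with two extra ingredients: the new vertex $z$ with edge $(z,x)$, and the rerouting of $W(x)$ and $y$ into $x$. Here the paths are decomposed into three column ranges, namely columns up to $u_{i+1}$, the frozen range $(u_{i+1},j^*)$, and columns $\ge j^*$, matching the three regions of Figure~\ref{fig.nnprime}. In the frozen range rows $i,i+1$ are unchanged, so each path enters that range at some vertex and leaves it at some vertex. The left range uses the Case (iii) swap, and the right range uses a swap through $y\to x$ versus $W(x)\to z\to x$. Again a tail exchange converts a collection of $G$ into one of $G'$ with the same sinks, and conversely.

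The main obstacle is checking non-intersection after the tail swaps in Case (ii), especially in the subcase $x\neq E(s_{u_{i+1}})$. There a path in $G'$ through $z$ lies in a new row below, and we must rule out the possibility that the rerouted path through $(y,x)$ collides with the path entering $x$ from $z$. We would argue this via the defining property of $j^*$: row $i$ has crosses and row $i+1$ has only crosses/horizontal pipes, hence no vertices, in columns $(u_{i+1},j^*)$ where needed, together with Remark~\ref{rem.noncrossing_pipes_U}-style $\Gamma$-freeness. The fallback if the direct case analysis becomes unwieldy is an inductive argument on the number of vertices in rows $i,i+1$ to the right of $j^*$.
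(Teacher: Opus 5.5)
Your overall strategy (describe $G(D')$ via Lemma~\ref{lem.ggprime} and convert admissible collections by exchanging tails) is the paper's. However, the reduction you build it on is false, so this is a genuine gap.

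You assert that $p_1,\dots,p_{i-1},p_{i+2},\dots,p_k$ can be left unchanged, so only the two paths from $s_{u_i}$ and $s_{u_{i+1}}$ need rerouting, with the same pair of sinks. This holds for $p_1,\dots,p_{i-1}$, which start above row $i$ and only move north and east. It fails for paths from sources below row $i+1$. A vertical edge of $G(D)$ goes to the \emph{next} vertex in its column, and row $i+1$ has no vertices to the left of $u_{i+1}$. So such a path can jump straight into row $i$ at $v$ and continue through the deleted vertex $w$. Likewise, in Case (ii) a lower path can enter $x$ along the deleted edge $(W(x),x)$.

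Concretely, take $n=5$, $k=3$, pivots $(u_1,u_2,u_3)=(1,4,2)$, $i=1$, and let every box of the Rothe diagram, namely $(1,2),(1,3),(1,4),(1,5),(2,5),(3,3),(3,5)$, be an elbow. There are no crosses, so $D$ is $\Gamma$-free, $j^*=\infty$, and $w$, $v$ are the vertices in boxes $(1,4)$, $(1,3)$. The collection
$s_1\to t_1$, $s_4\to(2,5)\to(1,5)\to t_5$, $s_2\to(3,3)\to(1,3)\to(1,4)\to t_4$
is admissible in $G(D)$, and its third path runs through $w$. In $G(D')$ the only vertex in column $4$ is the source $s_4$ (now in row $1$), and the only vertex in column $1$ is $s_1$. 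Hence any admissible collection with sinks $\{1,4,5\}$ must send $s_4\to t_4$, $s_1\to t_1$, and the path from $s_2$ to $t_5$, e.g.\ $s_2\to(3,3)\to(3,5)\to(2,5)\to(1,5)\to t_5$. The sink set survives only by changing the sink of the lower path, which your scheme does not allow.

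The same example refutes your subcase ``$p_{i+1}$ goes east and $p_i$ goes up before $v$, in which case nothing changes''. There $p_{i+1}$ uses $(s_{u_{i+1}},E(s_{u_{i+1}}))$, which is deleted: in $G(D')$ the source $s_{u_{i+1}}$ has out-edges only to $N(w)$ and $E(w)$.

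What is missing is the paper's bookkeeping. Take the set $\calF$ of edges of $G(D)$ absent from $G(D')$, and reroute \emph{every} path of the collection that uses an edge of $\calF$, whatever its source. The rerouting moves are:
\begin{enumerate}
\item[(a)] If some $p_h$ with $h\neq i+1$ (possibly $h>i+1$) uses $(v,w)$, then $p_{i+1}$ must leave its source eastward, and you need a three-way exchange. Let $p_h'$ follow $p_h$ up to $v$, take the new edge $(v,E(s_{u_{i+1}}))$, then follow the tail of $p_{i+1}$. The new row-$i$ source $s_{u_{i+1}}$ takes over the tail of $p_h$ after $w$, via $N(w)$ or $E(w)$. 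If $h\neq i$, keep $p_i$ as the path from $s_{u_i}$, now the row-$(i+1)$ source.
\item[(b)] If $p_{i+1}$ goes east and no path uses $(v,w)$, then $s_{u_{i+1}}$ must run east along row $i$ through $E(w)$ to the first row-$i$ vertex of $p_{i+1}$ and follow $p_{i+1}$ from there, while $s_{u_i}$ takes $p_i$. If $p_{i+1}$ has no row-$i$ vertex, it passes through $x$, and $s_{u_{i+1}}$ instead goes along row $i$ to $y$ and then to $x$.
\item[(c)] Any path using $(W(x),x)$ is detoured through $z$.
\end{enumerate}
Your Case (ii) sketch, which again swaps only the two distinguished paths, has the same defect. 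The converse direction also needs the analogous analysis of the edges added in $G(D')$.
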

\begin{proof}
By Proposition~\ref{prop.sti_preserves_FPP}, we know that $D'$ is a partial FPP.
We need to show that the bases $\calB(D)$ and $\calB(D')$ are equal.
Let $j^*$ be the first column in which $D_{i,j^*}$ is a cross and $D_{i+1,j^*}$ is an elbow or a pivot elbow.

In the case $j^* =u_{i+1}$, the graphs $G(D)$ and $G(D')$ are equal by Lemma~\ref{lem.ggprime}, and therefore $\calB(D) = \calB(D')$ in this case.

It remains to show that $\calB(D) = \calB(D')$ when $j^*>u_{i+1}$.
As noted in Lemma~\ref{lem.ggprime}, $G'=G(D')$ is obtained from $G(D)$ by deleting a vertex $w$, adding a vertex $z$ (when $j^*\neq \infty$), then exchanging the horizontal edges in the $i$-th and $(i+1)$-st rows between columns $u_{i+1}-1$ and $u_{i+1}+1$ and between columns $j^*-1$ and $j^*$, where applicable.  
Given a $k$-tuple of non-intersecting paths $\calN=\{p_i: s_{u_i} \rightarrow t_{j_i} \mid i=1,\ldots, k \}$ in $G$, we will construct a $k$-tuple of non-intersecting paths $\calN'=\{p_i': s_{u_i'} \rightarrow t_{j_i'} \mid i=1,\ldots, k \}$ in $G'$ having the same set of sink vertices $T = \{t_{j_1}, \ldots, t_{j_k}\} = \{t_{j_1'}, \ldots, t_{j_k'} \}$, and vice versa.
We note that $s_{u_i} = s_{u_{i+1}'}$ and $s_{u_{i+1}} = s_{u_i'}$.

Following the notation from Lemma~\ref{lem.ggprime}, we explain the most generic case where the vertex $x$ exists in $G$ and is not equal to $E(s_{u_{i+1}})$, and the vertex $E(w)$ exists so that $y\neq w$. 
All other cases are degenerate versions of this one.
Please refer to Figure~\ref{fig.nnprime} for a visual aid.

Note that the set of edges which are present in $G$ but not in $G'$ is 
\[\calF = \left\{ (v,w), (w,E(w)), (w, N(w)), (s_{u_{i+1}}, w), (s_{u_i+1}, E(s_{u_{i+1}})), (W(x), x) \right\},
\]
so if $\calN$ does not contain any of the edges in $\calF$, then $\calN'=\calN$ is a $k$-tuple of non-intersecting paths in $G'$.

We proceed to check the cases when $\calN$ contains some of the edges in $\calF$, listing them in order of precedence.
(i) Suppose $\calN$ has a path $p_h$ that contains the edge $(v,w)$.
Note that $h\neq i+1$.
Since $\calN$ is non-intersecting, then the path $p_{i+1}$ necessarily contains the edge $(s_{u_{i+1}}, E(s_{u_{i+1}}))$.
Let $p_h'$ be the path in $G'$ consisting of the portion of $p_h$ from its source $s_{u_h}$ to $v$, concatenated with $(v, E(s_{u_{i+1}}))$, and followed by the portion of $p_{i+1}$ from $E(s_{u_{i+1}})$ to its sink $t_{j_{i+1}}$.
Let $p_i'$ be the path in $G'$ consisting of the edge from $s_{u_i'}=s_{u_{i+1}}$ to $E(w)$, concatenated with the portion of $p_h$ from $w$ to its sink $t_{j_h}$. 
Let $p_{i+1}'= p_i$ if $h\neq i$.

(ii) Suppose the path $p_h\in \calN$ contains the edge $(w, E(w))$ but not $(v,w)$; then $h=i+1$.
$\calN$ is non-intersecting so it does not contain any path that contains the edge $(v,w)$.
Let $p_i'$ be the path in $G'$ consisting of the edge from $s_{u_i'} = s_{u_{i+1}}$ to $E(w)$, concatenated with the portion of $p_{i+1}$ from $E(w)$ to its sink $t_{j_{i+1}}$.
Let $p_{i+1}'= p_i$.

(iii) Suppose the path $p_{i+1}\in \calN$ contains the edge $(s_{u_{i+1}}, w)$ but not $(w,E(w))$; then it must contain $(w, N(w))$.
Let $p_i'$ be the path in $G'$ consisting of the edge from $s_{u_i'}=s_{u_{i+1}}$ to $N(w)$, concatenated with the portion of $p_{i+1}$ from $N(w)$ to its sink $t_{j_{i+1}}$.
Let $p_{i+1}' = p_i$.

(iv) Suppose the path $p_{i+1}\in \calN$ contains the edge $(s,E(s))$ but not $(v,w)$.
If $p_{i+1}$ does not contain a vertex from the $i$-th row of $G$, then by assumption it must contain the vertex $x$.
Let $p_i'$ be the path in $G'$ going through the vertices $s_{u_i'}= s_{u_{i+1}}, E(w), y, x$, concatenated with the portion of $p_{i+1}$ from $x$ to its sink $t_{j_{i+1}}$.
Otherwise let $a$ be the leftmost vertex of $p_{i+1}$ that is in the $i$-th row of $G$.
Let $p_i'$ be the path in $G'$ going through the vertices $s_{u_i'}= s_{u_{i+1}}, E(w), S(a), a$, concatenated with the portion of $p_{i+1}$ from $a$ to its sink $t_{j_{i+1}}$.
Let $p_{i+1}' = p_i$.

(v) Suppose the path $p_h\in \calN$ contains the edge $(W(x), x)$ but not $(s,E(s))$.
Let $p_h'$ be the path consisting of the portion of $p_h$ from its source $s_{u_h}$ to $W(x)$, concatenated with the edges $(W(x), z)$ and $(z,x)$, and followed by the portion of $p_h$ from $x$ to its source $t_{j_h}$.

Finally, we note that if the path $p_h\in \calN$ contains the edge $(w,N(w))$, then it also contains the edge $(v,w)$ or $(s_{u_{i+1}}, w)$.  
These cases have already been considered.

Combining some of the cases (i) -- (v) if necessary, following the order of precedence, the set $\calN'$ of paths obtained by replacing $p_h$, $p_i$, and $p_{i+1}$ in $\calN$ with their primed counterparts is a set of $k$ non-intersecting paths in $G'$ with the same sinks as $\calN$.
Therefore $\calB(D) \subseteq \calB(D')$.

The reverse construction is similar; the set of edges which are present in $G'$ but not in $G$ is (identifying $v,x,y,z$ with their counterparts in $G$)
\[\calF' = \{ (v,E(v)), (s_{u_i'}, E(s_{u_i'})), (s_{u_i'}, N(s_{u_i'})), (W(z), z), (z,x), (W(x), x)\}.
\]
A careful case analysis similar to the one above shows that $\calB(D') \subseteq \calB(D)$, and the result follows.
\end{proof}

Following Theorem~\ref{thm.sti_preserves_bases}, we can make the next definition.
\begin{definition}
For any partial FPP $D \in \setFPP_k(n)$, we define its \defn{standardization}, denoted $\st(D)\in \setFPP_k^{\sLe}(n)$, to be the partial FPP obtained by performing a sequence of standardization operations $\st_i$ on $D$ until its pivot columns are in decreasing order.
\end{definition}

\begin{example}
Figure~\ref{fig.std_example} shows an example of the standardization of a rank $4$ partial FPP.
If $D$ encodes the positroid $Q$, then the standardization $\st(D)=D^{\sLe}(Q)$ is the $\Le$-pipe dream of $Q$, after rotating $180^\circ$ and pushing the columns of the Rothe diagram together.
By Theorem~\ref{thm.sti_preserves_bases}, the sink sets of the admissible collections of paths in the directed graphs $G(D)$ and $G(\st(D))$ are equal, so $D$ and $\st(D)$ encode the same rank $4$ positroid.
\end{example}

\begin{figure}[ht!]
\begin{tikzpicture}
\begin{scope}[scale=.48, yshift=0]
    \vertex[color=gray!40] at (4.5, 3.5) {};
	\vertex[color=gray!40] at (2.5, 2.5) {};
	\vertex[color=gray!40] at (0.5, 1.5) {};
	\vertex[color=gray!40] at (5.5, 0.5) {};	
	
	\draw [black,thick,domain=180:270] 
        plot ({5+.5*cos(\x)}, {4+.5*sin(\x)});
    \draw [black,thick,domain=180:270] 
        plot ({3+.5*cos(\x)}, {3+.5*sin(\x)});
    \draw [black,thick,domain=180:270] 
        plot ({1+.5*cos(\x)}, {2+.5*sin(\x)});
    \draw [black,thick,domain=180:270] 
        plot ({6+.5*cos(\x)}, {1+.5*sin(\x)});    
        
    \elbow[thick](6,4);
    \cross[thick](7,4);
    \elbow[thick](4,3);
    \elbow[thick](6,3);
    \elbow[thick](7,3);
    \elbow[thick](2,2);
    \elbow[thick](4,2);
    \cross[thick](6,2);
    \cross[thick](7,2);
    \elbow[thick](7,1);
    \draw [black,thick] (2,1.5)--(3,1.5){};
    \draw [black,thick] (4,1.5)--(5,1.5){};
    \draw [black,thick] (4,2.5)--(5,2.5){};
    \draw [black,thick] (0.5,2)--(0.5,4){};
    \draw [black,thick] (1.5,0)--(1.5,1){};
    \draw [black,thick] (1.5,2)--(1.5,4){};
    \draw [black,thick] (2.5,3)--(2.5,4){};
    \draw [black,thick] (3.5,0)--(3.5,1){};
    \draw [black,thick] (3.5,3)--(3.5,4){};

    \node[label={\textcolor{blue}{\tiny{$j^*$}}}] at (5.5,3.7){};
    \node[label={\footnotesize$D$}] at (3.5,-1.5){};
    
    \node[] at (8,1.7){\scriptsize$\st_3$};
    \node[] at (8,1){$\longrightarrow$};
                
	\draw[very thin, color=blue!50] (0,0) grid (7,4);
\end{scope}

\begin{scope}[scale=.48, xshift=255]
    \vertex[color=gray!40] at (4.5, 3.5) {};
	\vertex[color=gray!40] at (2.5, 2.5) {};
	\vertex[color=gray!40] at (5.5, 1.5) {};
	\vertex[color=gray!40] at (0.5, 0.5) {};
	
	\draw [black,thick,domain=180:270] 
        plot ({5+.5*cos(\x)}, {4+.5*sin(\x)});
    \draw [black,thick,domain=180:270] 
        plot ({3+.5*cos(\x)}, {3+.5*sin(\x)});
    \draw [black,thick,domain=180:270] 
        plot ({6+.5*cos(\x)}, {2+.5*sin(\x)});
    \draw [black,thick,domain=180:270] 
        plot ({1+.5*cos(\x)}, {1+.5*sin(\x)});    
        
    \elbow[thick](6,4);
    \cross[thick](7,4);
    \elbow[thick](4,3);
    \elbow[thick](6,3);
    \elbow[thick](7,3);
    \elbow[thick](7,2);
    \elbow[thick](2,1);
    \elbow[thick](4,1);
    \cross[thick](7,1);
    \draw [black,thick] (2,0.5)--(3,0.5){};
    \draw [black,thick] (4,0.5)--(6,0.5){};
    \draw [black,thick] (4,2.5)--(5,2.5){};
    \draw [black,thick] (0.5,1)--(0.5,4){};
    \draw [black,thick] (1.5,1)--(1.5,2){};
    \draw [black,thick] (1.5,2)--(1.5,4){};
    \draw [black,thick] (2.5,3)--(2.5,4){};
    \draw [black,thick] (3.5,1)--(3.5,2){};
    \draw [black,thick] (3.5,3)--(3.5,4){};

    
    \node[] at (8,2.7){\scriptsize$\st_2$};
    \node[] at (8,2){$\longrightarrow$};
                
	\draw[very thin, color=blue!50] (0,0) grid (7,4);
\end{scope}

\begin{scope}[scale=.48, xshift=510]
    \vertex[color=gray!40] at (4.5, 3.5) {};
	\vertex[color=gray!40] at (5.5, 2.5) {};
	\vertex[color=gray!40] at (2.5, 1.5) {};
	\vertex[color=gray!40] at (0.5, 0.5) {};
	
	\draw [black,thick,domain=180:270] 
        plot ({5+.5*cos(\x)}, {4+.5*sin(\x)});
    \draw [black,thick,domain=180:270] 
        plot ({6+.5*cos(\x)}, {3+.5*sin(\x)});
    \draw [black,thick,domain=180:270] 
        plot ({3+.5*cos(\x)}, {2+.5*sin(\x)});
    \draw [black,thick,domain=180:270] 
        plot ({1+.5*cos(\x)}, {1+.5*sin(\x)});    
        
    \elbow[thick](6,4);
    \cross[thick](7,4);
    \elbow[thick](7,3);
    \elbow[thick](4,2);
    \elbow[thick](7,2);
    \elbow[thick](2,1);
    \elbow[thick](4,1);
    \cross[thick](7,1);
    \draw [black,thick] (2,0.5)--(3,0.5){};
    \draw [black,thick] (4,0.5)--(6,0.5){};
    \draw [black,thick] (4,1.5)--(6,1.5){};
    \draw [black,thick] (0.5,1)--(0.5,4){};
    \draw [black,thick] (1.5,1)--(1.5,4){};
    \draw [black,thick] (2.5,2)--(2.5,4){};
    \draw [black,thick] (3.5,2)--(3.5,4){};

    \node[label={\textcolor{blue}{\tiny{$j^*$}}}] at (6.5,3.7){};
    
    \node[] at (8,3.7){\scriptsize$\st_1$};
    \node[] at (8,3){$\longrightarrow$};
                
	\draw[very thin, color=blue!50] (0,0) grid (7,4);
\end{scope}

\begin{scope}[scale=.48, xshift=765]
    \vertex[color=gray!40] at (5.5, 3.5) {};
	\vertex[color=gray!40] at (4.5, 2.5) {};
	\vertex[color=gray!40] at (2.5, 1.5) {};
	\vertex[color=gray!40] at (0.5, 0.5) {};
	
	\draw [black,thick,domain=180:270] 
        plot ({6+.5*cos(\x)}, {4+.5*sin(\x)});
    \draw [black,thick,domain=180:270] 
        plot ({5+.5*cos(\x)}, {3+.5*sin(\x)});
    \draw [black,thick,domain=180:270] 
        plot ({3+.5*cos(\x)}, {2+.5*sin(\x)});
    \draw [black,thick,domain=180:270] 
        plot ({1+.5*cos(\x)}, {1+.5*sin(\x)});    
        
    \elbow[thick](7,3);
    \elbow[thick](7,4);
    \elbow[thick](4,2);
    \elbow[thick](7,2);
    \elbow[thick](2,1);
    \elbow[thick](4,1);
    \cross[thick](7,1);
    \draw [black,thick] (2,0.5)--(3,0.5){};
    \draw [black,thick] (4,0.5)--(6,0.5){};
    \draw [black,thick] (4,1.5)--(6,1.5){};
    \draw [black,thick] (5,2.5)--(6,2.5){};
    \draw [black,thick] (0.5,1)--(0.5,4){};
    \draw [black,thick] (1.5,1)--(1.5,4){};
    \draw [black,thick] (2.5,2)--(2.5,4){};
    \draw [black,thick] (3.5,2)--(3.5,4){};
    \draw [black,thick] (4.5,3)--(4.5,4){};

    \node[label={\footnotesize$\st(D)$}] at (3.5,-1.6){};
                  
	\draw[very thin, color=blue!50] (0,0) grid (7,4);
\end{scope}

\begin{scope}[scale=.48, yshift=-180]
\node[label={\footnotesize$G(D)$}] at (3.5,-1.6){};
\draw[very thin, color=gray!28] (0,0) grid (7,4);
\nnode[color=black, fill](s1) at (4.5, 3.5) {};
\nnode[color=black, fill](s2) at (2.5, 2.5) {};
\nnode[color=black, fill](s3) at (0.5, 1.5) {};
\nnode[color=black, fill](s4) at (5.5, 0.5) {};
\nnode[color=black, fill=white](t1) at (0.5, 4) {};
\nnode[color=black, fill=white](t2) at (1.5, 4) {};
\nnode[color=black, fill=white](t3) at (2.5, 4) {};
\nnode[color=black, fill=white](t4) at (3.5, 4) {};
\nnode[color=black, fill=white](t5) at (4.5, 4) {};
\nnode[color=black, fill=white](t6) at (5.5, 4) {};
\nnode[color=black, fill=white](t7) at (6.5, 4) {};
\nnode[color=mediumseagreen, fill]() at (6.5, 0.5){};
\nnode[color=mediumseagreen, fill]() at (1.5, 1.5){};
\nnode[color=mediumseagreen, fill]() at (3.5, 1.5){};
\nnode[color=mediumseagreen, fill]() at (3.5, 2.5){};
\nnode[color=mediumseagreen, fill]() at (5.5, 2.5){};
\nnode[color=mediumseagreen, fill]() at (6.5, 2.5){};
\nnode[color=mediumseagreen, fill]() at (5.5, 3.5){};
\draw [color=mediumseagreen, thick] (s1) -- (t5){};
\draw [color=mediumseagreen, thick] (s2) -- (t3){};
\draw [color=mediumseagreen, thick] (s3) -- (t1){};
\draw [color=mediumseagreen, thick] (s4) -- (t6){};
\draw [color=mediumseagreen, thick] (s1) -- (5.5,3.5){};
\draw [color=mediumseagreen, thick] (s2) -- (6.5,2.5){};
\draw [color=mediumseagreen, thick] (s3) -- (3.5,1.5){};
\draw [color=mediumseagreen, thick] (s4) -- (6.5,0.5){};
\draw [color=mediumseagreen, thick] (1.5,1.5) -- (t2){};
\draw [color=mediumseagreen, thick] (3.5,1.5) -- (t4){};
\draw [color=mediumseagreen, thick] (6.5,0.5) -- (t7){};	
    \node[] at (8,1.7){\scriptsize$\st_3$};
    \node[] at (8,1){$\longrightarrow$};	
\end{scope}

\begin{scope}[scale=.48, xshift=255, yshift=-180]
\draw[very thin, color=gray!28] (0,0) grid (7,4);
\nnode[color=black, fill](s1) at (4.5, 3.5) {};
\nnode[color=black, fill](s2) at (2.5, 2.5) {};
\nnode[color=black, fill](s3) at (5.5, 1.5) {};
\nnode[color=black, fill](s4) at (0.5, 0.5) {};
\nnode[color=black, fill=white](t1) at (0.5, 4) {};
\nnode[color=black, fill=white](t2) at (1.5, 4) {};
\nnode[color=black, fill=white](t3) at (2.5, 4) {};
\nnode[color=black, fill=white](t4) at (3.5, 4) {};
\nnode[color=black, fill=white](t5) at (4.5, 4) {};
\nnode[color=black, fill=white](t6) at (5.5, 4) {};
\nnode[color=black, fill=white](t7) at (6.5, 4) {};
\nnode[color=mediumseagreen, fill]() at (1.5, 0.5){};
\nnode[color=mediumseagreen, fill]() at (3.5, 0.5){};
\nnode[color=mediumseagreen, fill]() at (3.5, 2.5){};
\nnode[color=mediumseagreen, fill]() at (5.5, 2.5){};
\nnode[color=mediumseagreen, fill]() at (6.5, 1.5){};
\nnode[color=mediumseagreen, fill]() at (6.5, 2.5){};
\nnode[color=mediumseagreen, fill]() at (5.5, 3.5){};
\draw [color=mediumseagreen, thick] (s1) -- (t5){};
\draw [color=mediumseagreen, thick] (s2) -- (t3){};
\draw [color=mediumseagreen, thick] (s3) -- (t6){};
\draw [color=mediumseagreen, thick] (s4) -- (t1){};
\draw [color=mediumseagreen, thick] (s1) -- (5.5,3.5){};
\draw [color=mediumseagreen, thick] (s2) -- (6.5,2.5){};
\draw [color=mediumseagreen, thick] (s3) -- (6.5,1.5){};
\draw [color=mediumseagreen, thick] (s4) -- (3.5,0.5){};
\draw [color=mediumseagreen, thick] (1.5,0.5) -- (t2){};
\draw [color=mediumseagreen, thick] (3.5,0.5) -- (t4){};
\draw [color=mediumseagreen, thick] (6.5,1.5) -- (t7){};
    \node[] at (8,2.7){\scriptsize$\st_2$};
    \node[] at (8,2){$\longrightarrow$};
\end{scope}

\begin{scope}[scale=.48, xshift=510, yshift=-180]
\draw[very thin, color=gray!28] (0,0) grid (7,4);
\nnode[color=black, fill](s1) at (4.5, 3.5) {};
\nnode[color=black, fill](s2) at (5.5, 2.5) {};
\nnode[color=black, fill](s3) at (2.5, 1.5) {};
\nnode[color=black, fill](s4) at (0.5, 0.5) {};
\nnode[color=black, fill=white](t1) at (0.5, 4) {};
\nnode[color=black, fill=white](t2) at (1.5, 4) {};
\nnode[color=black, fill=white](t3) at (2.5, 4) {};
\nnode[color=black, fill=white](t4) at (3.5, 4) {};
\nnode[color=black, fill=white](t5) at (4.5, 4) {};
\nnode[color=black, fill=white](t6) at (5.5, 4) {};
\nnode[color=black, fill=white](t7) at (6.5, 4) {};
\nnode[color=mediumseagreen, fill]() at (1.5, 0.5){};
\nnode[color=mediumseagreen, fill]() at (3.5, 0.5){};
\nnode[color=mediumseagreen, fill]() at (3.5, 1.5){};
\nnode[color=mediumseagreen, fill]() at (6.5, 1.5){};
\nnode[color=mediumseagreen, fill]() at (6.5, 2.5){};
\nnode[color=mediumseagreen, fill]() at (5.5, 3.5){};
\draw [color=mediumseagreen, thick] (s1) -- (t5){};
\draw [color=mediumseagreen, thick] (s2) -- (t6){};
\draw [color=mediumseagreen, thick] (s3) -- (t3){};
\draw [color=mediumseagreen, thick] (s4) -- (t1){};
\draw [color=mediumseagreen, thick] (s1) -- (5.5,3.5){};
\draw [color=mediumseagreen, thick] (s2) -- (6.5,2.5){};
\draw [color=mediumseagreen, thick] (s3) -- (6.5,1.5){};
\draw [color=mediumseagreen, thick] (s4) -- (3.5,0.5){};
\draw [color=mediumseagreen, thick] (1.5,0.5) -- (t2){};
\draw [color=mediumseagreen, thick] (3.5,0.5) -- (t4){};
\draw [color=mediumseagreen, thick] (6.5,1.5) -- (t7){};
    \node[] at (8,3.7){\scriptsize$\st_1$};
    \node[] at (8,3){$\longrightarrow$};
\end{scope}

\begin{scope}[scale=.48, xshift=765, yshift=-180]
\node[label={\footnotesize$G(\st(D))$}] at (3.5,-1.6){};
\draw[very thin, color=gray!28] (0,0) grid (7,4);
\nnode[color=black, fill](s1) at (5.5, 3.5) {};
\nnode[color=black, fill](s2) at (4.5, 2.5) {};
\nnode[color=black, fill](s3) at (2.5, 1.5) {};
\nnode[color=black, fill](s4) at (0.5, 0.5) {};
\nnode[color=black, fill=white](t1) at (0.5, 4) {};
\nnode[color=black, fill=white](t2) at (1.5, 4) {};
\nnode[color=black, fill=white](t3) at (2.5, 4) {};
\nnode[color=black, fill=white](t4) at (3.5, 4) {};
\nnode[color=black, fill=white](t5) at (4.5, 4) {};
\nnode[color=black, fill=white](t6) at (5.5, 4) {};
\nnode[color=black, fill=white](t7) at (6.5, 4) {};
\nnode[color=mediumseagreen, fill]() at (1.5, 0.5){};
\nnode[color=mediumseagreen, fill]() at (3.5, 0.5){};
\nnode[color=mediumseagreen, fill]() at (3.5, 1.5){};
\nnode[color=mediumseagreen, fill]() at (6.5, 1.5){};
\nnode[color=mediumseagreen, fill]() at (6.5, 2.5){};
\nnode[color=mediumseagreen, fill]() at (6.5, 3.5){};
\draw [color=mediumseagreen, thick] (s1) -- (t6){};
\draw [color=mediumseagreen, thick] (s2) -- (t5){};
\draw [color=mediumseagreen, thick] (s3) -- (t3){};
\draw [color=mediumseagreen, thick] (s4) -- (t1){};
\draw [color=mediumseagreen, thick] (s1) -- (6.5,3.5){};
\draw [color=mediumseagreen, thick] (s2) -- (6.5,2.5){};
\draw [color=mediumseagreen, thick] (s3) -- (6.5,1.5){};
\draw [color=mediumseagreen, thick] (s4) -- (3.5,0.5){};
\draw [color=mediumseagreen, thick] (1.5,0.5) -- (t2){};
\draw [color=mediumseagreen, thick] (3.5,0.5) -- (t4){};
\draw [color=mediumseagreen, thick] (6.5,1.5) -- (t7){};\end{scope}

\end{tikzpicture}    
\caption{The top line is an example of the standardization of a partial flag positroid pipe dream.
The bottom line shows the induced action of the standardization operators on the corresponding directed graphs.}
    \label{fig.std_example}
\end{figure}

\subsection{Flag positroids and flag positroid pipe dreams}\label{subsec.fp_fpp}
In this section so far, we have shown how a standardized partial FPP $D^{\sLe}(P)$ representing a positroid $P$ can be extended by a single row to obtain a pipe dream representation $D(P,Q) = D^{\sLe}(P)\cdot C$ of a nonnegatively representable elementary positroid quotient $P\unlhd_q Q$.
Applying the standardization operation to $D(P,Q)$ allows us to then identify the positroid $Q$ via its $\Le$-pipe dream.
But as we have seen in Theorem~\ref{thm.sti_preserves_bases}, the standardization operation on partial FPPs preserves the associated bases of the positroids they encode.
In the final part of this section, we show that we can extend the concept of unblocked columns to all partial FPPs, which eliminates the need to work strictly with standardized partial FPPs, and subsequently allows us to construct a complete flag positroid pipe dream by successively appending rows according to the unblocked columns at each rank.
We conclude by showing in Theorem~\ref{thm.fpp_richardson} that the complete flag positroid pipe dream $\FPP(u,v)$ encodes the flag bases of the flag positroid $P_\bullet(u,v)$ that is associated to any point in the Richardson cell $\calR_{u,v}^{>0}$.

Thus we begin by extending the definition of an unblocked column (Definition~\ref{defn.Leblocked}) to all partial FPPs.
This definition allows us to build partial FPPs row by row so that the pipe dream is $\Gamma$-free in the sense of Definition~\ref{defn.gammapatt}.

\begin{definition}\label{defn.blocked}
Let $D$ be a partial flag positroid pipe dream. 
A column of $D$ is \defn{blocked} if it contains a cross tile whose horizontal pipe exits at the bottom boundary of $D$. 
Otherwise the column is \defn{unblocked}.
\end{definition}
For example, referring to Figure~\ref{fig.OKgamma}, let $D|_2$ be the restriction of $\FPP(2413,4231)$ to its first two rows.  
Then the set of unblocked columns of $D|_2$ is $U =\{1,3\}$.
We note that Definition~\ref{defn.blocked} specializes to Definition~\ref{defn.Leblocked} when $D|_k$ has pivot columns in decreasing order.

Recall from Definition~\ref{defn.FPP} that for permutations $u\leq v$ in Bruhat order, $D=\FPP(u,v)$ is the flag positroid pipe dream where the pivot elbows are at $(i,u_i)$ for all $i=1,\ldots, n$ and $v$ is the exit permutation.
Also recall from Proposition~\ref{prop.sti_preserves_FPP} that $\st_i(D)$ is an FPP.

\begin{proposition}\label{prop.pipe_exchange}
Given $u,v\in\fS_n$ with $u\leq v$ in Bruhar order, suppose $D=\FPP(u,v)$ has $u_i < u_{i+1}$ for some $1\leq i \leq n$.
Let $D'= \FPP(u',v') = \st_i(D)$. 
Then $u'=us_i$ and 
\[v' = \begin{cases}
vs_i, & \hbox{if $\exists\, j^*$ such that $D_{i,j^*}$ is a cross and $D_{i+1,j^*}$ is an elbow or pivot elbow},\\
v, & \hbox{otherwise}.
\end{cases}
\]
That is, if the pipes that exit the $i$-th and $(i+1)$-st rows of $D$ cross, then $\st_i$ switches their exit rows. 
Otherwise $\st_i$ fixes the exit row or column of each pipe.
\end{proposition}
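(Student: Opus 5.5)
We argue directly from the local description of $\st_i$ in Definition~\ref{defn.rowop}. By Proposition~\ref{prop.sti_preserves_FPP}, applied to the full $n\times n$ array, $D'=\st_i(D)$ is $\Gamma$-free. By Proposition~\ref{prop:staircase-equivalence} it is therefore $\FPP(u',v')$ for some $u'\le v'$, provided we check that $D'$ is still a Rothe pipe dream. Rows $i$ and $i+1$ only exchange tiles in some column ranges, and the pivot elbows in columns $u_i,u_{i+1}$ trade rows. So the pivots of $D'$ sit at $(i,u_{i+1})$ and $(i+1,u_i)$, with all other pivots unchanged, which gives $u'=us_i$. The tiling outside rows $i,i+1$ is untouched, and the forced tiles (vertical pipe at $D'_{i,u_i}$, horizontal pipe at $D'_{i+1,u_{i+1}}$) are exactly those demanded by conditions (iii)--(v) of Definition~\ref{defn.rrpd} for $us_i$. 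It remains to determine the exit permutation, that is, to follow the pipes.

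The key observation is that rows $1,\dots,i-1$ and $i+2,\dots,n$ agree in $D$ and $D'$. Hence the multiset of pipes entering the band of rows $i,i+1$ from above and from below, column by column, is identical in $D$ and $D'$. We need to compare how the two-row band routes these inputs to the right boundary and to the vertical edges between the band and row $i+2$. We split into the cases of Definition~\ref{defn.rowop}.

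In case (IIa), $D_{i,u_{i+1}}$ is a cross, so by $\Gamma$-freeness all of row $i$ to the right of $u_{i+1}$ in the Rothe diagram consists of crosses or horizontal pipes. Swapping the rows then only reorders which horizontal strand lies on top; since no vertical interaction occurs between them, each vertical pipe's column and each horizontal pipe's exit row are preserved. This is the case $j^*=u_{i+1}$ and gives $v'=v$.

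In case (IIb) we track the band column by column. In columns $<u_i$ both rows carry vertical pipes, so nothing changes. In columns $u_i$ through $u_{i+1}$ the rows are exchanged together with the pivots, so the strand emanating horizontally from row $i$ in $D$ now emanates from row $i+1$ in $D'$. In the frozen range $u_{i+1}<j<j^*$ both rows are kept; here I claim that every column with an elbow in row $i+1$ also has an elbow in row $i$, by the choice of $j^*$ together with $\Gamma$-freeness. Consequently, the vertical outputs into row $i+2$ and the inputs from above are routed identically, with only the labels of the two horizontal strands swapped.

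At column $j^*$, $D$ has a cross over an elbow. This is where the two strands cross in $D$, and $D'$ instead places an elbow there after the row swap, which undoes the label swap for vertical purposes. From $j^*$ to $n$ the rows are simply exchanged, so the horizontal strands exit in swapped rows. The net effect is as follows. If $j^*\le n$, the pipes exiting rows $i$ and $i+1$ (which cross at $(i,j^*)$ in $D$) have their exit rows interchanged, giving $v'=vs_i$. If $j^*=\infty$, the strand swap introduced at columns $u_i..u_{i+1}$ is never undone, but in that case the two strands never cross. Here I expect the bookkeeping to show that each pipe keeps its exit row, giving $v'=v$; this is consistent with Lemma~\ref{lem.ggprime}(iii), where no vertex $z$ is added.

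The main obstacle is this strand tracking in the frozen middle region and at $j^*$. I would carry it out by induction on columns, maintaining the invariant that, after column $j$, the pair of labels on the two horizontal strands in $D'$ is either equal to that in $D$ or swapped, and that the vertical outputs agree. A cleaner alternative I would try first is to compare the associated reduced words. By Lemma~\ref{lem.vy}, $v'$ is determined by $u'=us_i$ and the cross positions. Since $\st_i$ preserves the number of elbows exactly when $j^*<\infty$ (in Case (IIb), $w$ is deleted and $z$ added, by Lemma~\ref{lem.ggprime}), Theorem~\ref{thm.main_FPP} gives $\ell(v')-\ell(us_i)=\ell(v)-\ell(u)$. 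Therefore $\ell(v')=\ell(v)+1$ when $j^*<\infty$ and $\ell(v')=\ell(v)$ when $j^*=\infty$. Combined with the local fact that $v'$ differs from $v$ at most by exchanging the values in positions $i,i+1$, this pins down $v'$ in both cases.
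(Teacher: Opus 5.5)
Your case (IIa) reaches the wrong conclusion, and it contradicts the statement you are proving. When $D_{i,u_{i+1}}$ is a cross, the tile below it is the pivot elbow, so $j^*=u_{i+1}$ exists and the proposition asserts $v'=vs_i$. You conclude $v'=v$. The faulty step is ``each horizontal pipe's exit row [is] preserved''. In fact ``reordering which horizontal strand lies on top'' is precisely an interchange of exit rows. In $D$, the strand $p$ crossing $(i,u_{i+1})$ horizontally stays in row $i$, since by $\Gamma$-freeness there are no elbows to its right. The pipe $q$ descending column $u_{i+1}$ turns into row $i+1$ at the pivot. In $D'$, $q$ turns into row $i$ at the pivot now at $(i,u_{i+1})$, and $p$ runs through the horizontal pipe at $(i+1,u_{i+1})$. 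The two rows are then simply exchanged to the right, so the downward outputs agree but the right exits of rows $i$ and $i+1$ are swapped. The smallest example already shows this: for $n=2$, $u=v=12$, $i=1$ one gets $\st_1(\FPP(12,12))=\FPP(21,21)$, so $v'=21=vs_1\neq v$.

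Your (IIb) bookkeeping is also internally inconsistent, even though its conclusions are right. There is no label swap in the frozen range. In $D$, the elbow at $(i,u_{i+1})$ sends $p$ down into the pivot elbow and turns $q$ right. That is exactly what the pivot elbow over a horizontal pipe does in $D'$, so after column $u_{i+1}$ both strands carry the same labels in $D$ and $D'$. The swap is created at $j^*$ (cross over elbow in $D$, elbow over elbow in $D'$) and persists because columns $j^*,\dots,n$ are exchanged. That is why $j^*<\infty$ gives $vs_i$ and $j^*=\infty$ gives $v$. As you wrote it, a swap ``never undone'' when $j^*=\infty$ would yield $vs_i$, not $v$.

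Your length-based alternative is sound and would have caught the (IIa) error. In that case only the cross at $(i,u_{i+1})$ disappears, so the elbow count is preserved (cf.\ Lemma~\ref{lem.ggprime}(i)), and $\ell(v')=\ell(v)+1$ forces $v'=vs_i$. However, it rests on the unproved ``local fact'' that the band of rows $i,i+1$ sends the same labels down into row $i+2$ in $D$ and $D'$. That is exactly the strand tracking above. Once the tracking is done correctly it determines $v'$ directly, as in the paper's proof.
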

\begin{proof}
From the definition of $\st_i$, the pivot elbows in the $i$-th and $(i+1)$-st rows of $D$ are switched, therefore $u'= us_i$.

Next, suppose there exists $j^*\geq u_{i+1}$ such that $D_{i,j^*}$ is a cross and $D_{i+1,j^*}$ is an elbow or pivot elbow.
Since $D$ is $\Gamma$-free, then $D_{i,j^*}$ is the last elbow tile in its row.
If $j^*=u_{i+1}$, then the effect of $\st_i$ is to simply switch the $i$-th and $(i+1)$-st rows of $D$, necessarily removing a cross tile and replacing $D'_{i+1, u_{i+1}}$ with a horizontal pipe.
Thus the exit rows of pipes $v_i$ and $v_{i+1}$ are switched in $D'$.
Moreover, the order of the pipes which exit at the bottom boundary of $D$ are preserved by $\st_i$, and therefore $v'=vs_i$.
Otherwise $j^* > u_{i+1}$, and the effect of $\st_i$ is to switch the $i$-th and $(i+1)$-st rows of $D$ from the $j^*$-th column until the end, removing a cross tile and replacing $D'_{i+1, j^*}$ with an elbow tile.
Thus the exit rows of pipes $v_i$ and $v_{i+1}$ are switched in $D'$ while the order of the pipes which exit at the bottom boundary of $D$ are preserved by $\st_i$, and therefore $v'=vs_i$ as well.

Lastly, if $j^*$ does not exist (i.e. $j^*=\infty$), then the effect of $\st_i$ is to switch the $i$-th and $(i+1)$-st rows of $D$ up to and including the $u_{i+1}$-st column only (and again, necessarily replace $D'_{i+1, u_{i+1}}$ with a horizontal pipe). 
This does not alter the exit rows or columns of any pipe, therefore $v'=v$ in this case.
\end{proof}
See Figure~\ref{fig:exchange_pipes} for an illustration of these cases.

\begin{figure}[ht!]
\begin{tikzpicture}

\def\vpipe[#1](#2,#3){
    \draw [black, #1] (#2-0.5, #3-1) -- (#2-0.5, #3);
}
\def\hpipe[#1](#2,#3){
    \draw [black, #1] (#2-1, #3-0.5) -- (#2, #3-0.5);
}

\begin{scope}[scale=.6]
\vertex[color=gray!40] at (0.5, 1.5) {};
\vertex[color=gray!40] at (4.5, 0.5) {};

\node[label=right:{\scriptsize$v_i$}] at (6.7, 1.5){};
\node[label=right:{\scriptsize$v_{i+1}$}] at (6.7, 0.5){};

\node[] at (3.5, 2.3) {\scriptsize$v_i$};
\node[] at (6.5, 2.3) {\scriptsize$v_{i+1}$};

\pivotelbow[thick](1,2);
\pivotelbow[thick](5,1);
\elbow[thick](2,2);
\elbow[thick](4,2);
\elbow[thick](7,1);

\cross[thick](3,2);
\cross[thick](5,2);
\cross[thick](6,2);
\cross[thick](7,2);
\cross[thick](6,1);

\vpipe[thick](2,1);
\vpipe[thick](3,1);
\vpipe[thick](4,1);

\draw[very thin, color=blue!50] (0,0) grid (7,2);

\node[] at (9.5,1.7){\scriptsize$\st_i$};
\node[] at (9.5,1){$\longrightarrow$};
\end{scope}
\begin{scope}[scale=.6, xshift=330]
\vertex[color=gray!40] at (0.5, 0.5) {};
\vertex[color=gray!40] at (4.5, 1.5) {};

\node[label=right:{\scriptsize$v_i$}] at (6.7, 0.5){};
\node[label=right:{\scriptsize$v_{i+1}$}] at (6.7, 1.5){};

\node[] at (3.5, 2.3) {\scriptsize$v_i$};
\node[] at (6.5, 2.3) {\scriptsize$v_{i+1}$};

\pivotelbow[thick](1,1);
\pivotelbow[thick](5,2);
\elbow[thick](2,1);
\elbow[thick](4,1);
\elbow[thick](7,2);

\cross[thick](3,1);
\cross[thick](6,1);
\cross[thick](7,1);
\cross[thick](6,2);

\hpipe[thick](5,1);

\vpipe[thick](1,2);
\vpipe[thick](2,2);
\vpipe[thick](3,2);
\vpipe[thick](4,2);

\draw[very thin, color=blue!50] (0,0) grid (7,2);
\end{scope}

\begin{scope}[scale=.6, yshift=-100]
\vertex[color=gray!40] at (0.5, 1.5) {};
\vertex[color=gray!40] at (2.5, 0.5) {};

\node[label=right:{\scriptsize$v_i$}] at (6.7, 1.5){};
\node[label=right:{\scriptsize$v_{i+1}$}] at (6.7, 0.5){};

\node[] at (4.5, 2.3) {\scriptsize$v_i$};
\node[] at (6.5, 2.3) {\scriptsize$v_{i+1}$};

\pivotelbow[thick](1,2);
\pivotelbow[thick](3,1);

\elbow[thick](2,2);
\elbow[thick](4,2);
\elbow[thick](7,1);
\elbow[thick](3,2);
\elbow[thick](5,2);
\elbow[thick](6,1);
\elbow[thick](5,1)

\cross[thick](4,1);
\cross[thick](6,2);
\cross[thick](7,2);

\vpipe[thick](2,1);

\draw[very thin, color=blue!50] (0,0) grid (7,2);

\node[] at (9.5,1.7){\scriptsize$\st_i$};
\node[] at (9.5,1){$\longrightarrow$};
\end{scope}
\begin{scope}[scale=.6, xshift=330, yshift=-100]
\vertex[color=gray!40] at (0.5, 0.5) {};
\vertex[color=gray!40] at (2.5, 1.5) {};

\node[label=right:{\scriptsize$v_i$}] at (6.7, 0.5){};
\node[label=right:{\scriptsize$v_{i+1}$}] at (6.7, 1.5){};

\node[] at (4.5, 2.3) {\scriptsize$v_i$};
\node[] at (6.5, 2.3) {\scriptsize$v_{i+1}$};

\pivotelbow[thick](1,1);
\pivotelbow[thick](3,2);

\elbow[thick](4,2);
\elbow[thick](7,2);
\elbow[thick](5,2);
\elbow[thick](6,1);
\elbow[thick](6,2);
\elbow[thick](5,1)
\elbow[thick](2,1);

\cross[thick](4,1);
\cross[thick](7,1);

\vpipe[thick](1,2);
\vpipe[thick](2,2);

\hpipe[thick](3,1);

\draw[very thin, color=blue!50] (0,0) grid (7,2);
\end{scope}

\begin{scope}[scale=.6, yshift=-200]
\vertex[color=gray!40] at (0.5, 1.5) {};
\vertex[color=gray!40] at (2.5, 0.5) {};

\node[label=right:{\scriptsize$v_i$}] at (6.7, 1.5){};
\node[label=right:{\scriptsize$v_{i+1}$}] at (6.7, 0.5){};

\node[] at (6.5, 2.3) {\scriptsize$v_i$};
\node[] at (5.5, 2.3) {\scriptsize$v_{i+1}$};

\pivotelbow[thick](1,2);
\pivotelbow[thick](3,1);

\elbow[thick](2,2);
\elbow[thick](4,2);
\elbow[thick](7,1);
\elbow[thick](3,2);
\cross[thick](5,2);
\cross[thick](6,1);
\cross[thick](5,1)

\elbow[thick](4,1);
\elbow[thick](6,2);
\elbow[thick](7,2);

\vpipe[thick](2,1);

\draw[very thin, color=blue!50] (0,0) grid (7,2);

\node[] at (9.5,1.7){\scriptsize$\st_i$};
\node[] at (9.5,1){$\longrightarrow$};
\end{scope}
\begin{scope}[scale=.6, xshift=330, yshift=-200]
\vertex[color=gray!40] at (0.5, 0.5) {};
\vertex[color=gray!40] at (2.5, 1.5) {};

\node[label=right:{\scriptsize$v_i$}] at (6.7, 1.5){};
\node[label=right:{\scriptsize$v_{i+1}$}] at (6.7, 0.5){};
\node[] at (6.5, 2.3) {\scriptsize$v_i$};
\node[] at (5.5, 2.3) {\scriptsize$v_{i+1}$};

\pivotelbow[thick](1,1);
\pivotelbow[thick](3,2);
\elbow[thick](4,2);
\elbow[thick](7,1);
\cross[thick](5,2);
\cross[thick](6,1);
\cross[thick](5,1)
\elbow[thick](4,1);
\elbow[thick](6,2);
\elbow[thick](7,2);
\elbow[thick](2,1);
\vpipe[thick](1,2);
\vpipe[thick](2,2);
\hpipe[thick](3,1);

\draw[very thin, color=blue!50] (0,0) grid (7,2);
\end{scope}
\end{tikzpicture}
    \caption{An illustration of the standardization operation $\st_i$ on the exit permutation $v$ of a flag positroid pipe dream.
The first two rows are the cases $j^*= u_{i+1}$ and $j^*> u_{i+1}$, and the last row is the case when $j^*$ does not exist.
}
    \label{fig:exchange_pipes}
\end{figure}

An immediate consequence of Proposition~\ref{prop.pipe_exchange} is that the standardization operation does not change the boundary where a pipe exits the pipe dream.
\begin{corollary} \label{cor.right}
Let $D\in \setFPP_k(n)$. 
A pipe exits at the right boundary of $D$ if and only if it exits at the right boundary of $\st_i(D)$.
\qed
\end{corollary}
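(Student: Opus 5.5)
The statement (Corollary~\ref{cor.right}) should follow by tracking how $\st_i$ acts on the pipes exiting through the right boundary of the rows involved. It suffices to consider the case where $\st_i$ actually changes $D$, i.e.\ case (II) of Definition~\ref{defn.rowop} with $u_i<u_{i+1}$. In case (I), $\st_i(D)=D$ and there is nothing to prove.

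First I will observe that $\st_i$ only modifies rows $i$ and $i+1$. All tiles in other rows are unchanged, and the pipes enter row $i$ from above in the same columns, namely the columns that are not pivots of rows $1,\dots,i-1$. Since $\st_i$ preserves $\Gamma$-freeness (Proposition~\ref{prop.sti_preserves_FPP}), the result is again a partial FPP. Its rows $i$ and $i+1$ still contain exactly one pivot elbow each, in columns $u_i$ and $u_{i+1}$ (swapped). Consequently, exactly one pipe exits at the right boundary in each of rows $i$ and $i+1$, and the pipes entering row $i+2$ from above do so in the same columns before and after.

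The main step is to check that the set of two pipes exiting on the right in rows $i,i+1$ is unchanged, and that the assignment of pipes to bottom columns is unchanged. I will do this by running the local analysis of Proposition~\ref{prop.pipe_exchange} for a partial FPP rather than a full FPP. That argument only inspects rows $i$ and $i+1$, so it applies verbatim. There are three cases.
\begin{itemize}
\item If $j^*=u_{i+1}$, the two rows are swapped outright. Only the crossing of the two right-exiting pipes is removed, so they exchange exit rows.
\item If $u_{i+1}<j^*<\infty$, the rows are swapped from column $j^*$ onward and a cross is converted to an elbow. Again the two right-exiting pipes exchange exit rows, while the pipes passing vertically through the block are unchanged.
\item If $j^*=\infty$, only the segment of columns $u_i$ through $u_{i+1}$ is swapped, and every pipe keeps its exit.
\end{itemize}
In each case the pipes leaving the bottom of row $i+1$ occupy the same columns with the same labels. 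Hence the remainder of the diagram, rows $i+2,\dots,k$, routes them identically, and no pipe switches between the right and bottom boundaries.

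The hardest step will be the middle case $u_{i+1}<j^*<\infty$, in the portion between $u_{i+1}$ and $j^*$ where the rows are frozen. There I must confirm that a pipe traveling along row $i$ or row $i+1$ through the frozen region still reaches the same boundary after the row swaps at both ends. I would handle this by following the two pipes passing through the pivot elbows and the elbow at $j^*$ explicitly, as in Figure~\ref{fig:exchange_pipes}, using $\Gamma$-freeness to guarantee that $D_{i,j}$ is a cross for every $j>j^*$.
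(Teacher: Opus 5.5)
Your proposal is correct and is essentially the paper's argument. The paper obtains this corollary immediately from Proposition~\ref{prop.pipe_exchange}: either $v'=vs_i$ or $v'=v$, and since $i+1\le k$ the set $\{v_1,\dots,v_k\}$ of right-exiting pipes is unchanged. Your plan is precisely to rerun that local two-row analysis for partial FPPs.

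One small correction concerns your first case. The cross removed at $(i,u_{i+1})$ is between $v_i$ and the pipe entering the pivot of row $i+1$, and that pipe need not be $v_{i+1}$. The real reason the exits swap is that, by $\Gamma$-freeness, row $i$ of $D$ has no elbows from column $u_{i+1}$ on, so after the swap each of the two rows receives exactly the inputs the other row had in $D$. In the middle case, the frozen block is harmless because both rows enter it carrying the same horizontal pipes as in $D$.
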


\begin{corollary}\label{cor.unblocked-unchanged}
The standardization operation on a partial flag positroid pipe dream preserves its set of unblocked columns.
\end{corollary}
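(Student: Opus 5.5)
To prove Corollary~\ref{cor.unblocked-unchanged}, I would work directly from Definition~\ref{defn.blocked}. A column $j$ of $D$ is blocked exactly when it contains a cross tile whose horizontal pipe exits at the bottom boundary of $D$. Since $\st(D)$ is a composition of operations $\st_i$, it suffices to prove that a single $\st_i$ preserves the set of blocked columns, in the case $u_i<u_{i+1}$ (in case (I) nothing changes).

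The key reformulation uses reducedness. A cross in column $j$ whose horizontal pipe $q$ exits at the bottom records a crossing between $q$ and the vertical pipe $p$ through that box. After the crossing $q$ lies to the right of $p$, and $q$ exits at the bottom in some column strictly to the right of $j$. So column $j$ is blocked iff some pipe starting in column $j$ at the top is crossed, from left to right, by a pipe that ends up exiting at the bottom. Phrased this way, blocking depends only on which pairs of pipes cross, their top labels, and which pipes exit at the bottom, rather than on the exact location of tiles.

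Next I would invoke Corollary~\ref{cor.right} together with Proposition~\ref{prop.pipe_exchange} (which is proved for full FPPs but is really a statement about rows $i,i+1$ and applies equally to partial FPPs). These show that $\st_i$ fixes the set of pipes exiting at the bottom and fixes their bottom columns. In case $j^*$ exists, the only change is that the exit rows of the two pipes leaving rows $i$ and $i+1$ on the right are swapped. For a partial FPP the vertical pipes enter at the top in columns $1,\dots,n$ labeled as in the full diagram, and the labels at the top are unchanged. I would then check, case by case following Definition~\ref{defn.rowop}, that the set of crossing pairs $\{p,q\}$ with $q$ exiting at the bottom is unchanged. In case (IIa) and in part (ii) of case (IIb), the rows are simply swapped. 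The only crossing removed is the cross at $D_{i,j^*}$ (respectively at $D_{i,u_{i+1}}$), and it is between the two pipes exiting on the right in rows $i$ and $i+1$. Neither of these exits at the bottom, so no blocking cross is lost. In part (i) of (IIb), the tiles in columns $u_i,\dots,u_{i+1}$ are swapped between the two rows. I would argue that each cross in this region still has the same pair of pipes passing through it, just shifted one row. The horizontal pipe of any such cross is the one emanating from the pivot in row $i$ or $i+1$, or a pipe that passes through. Here I would use $\Gamma$-freeness of $D$, via Lemma~\ref{lem:gamma-free-is-subarea-condition}, to control which horizontal pipes occupy these two rows.

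The main obstacle is this last bookkeeping step in case (IIb)(i): confirming that swapping the two row segments neither creates nor destroys a cross whose horizontal pipe exits at the bottom in a given column. If the pipe-pair argument becomes messy, my fallback is more direct. I would first show that in rows $i$ and $i+1$ between $u_i$ and $u_{i+1}$, the horizontal strands belong to the pivot pipes. Then I would note that the vertical pipe entering each column from above is the same before and after the swap. This gives a column-by-column bijection between blocking crosses in $D$ and in $\st_i(D)$. Iterating over the sequence of $\st_i$'s then yields the corollary.
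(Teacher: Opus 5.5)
Your key reformulation is false, and the rest of the argument depends on it. The vertical strand through a blocking cross in column $j$ need not be pipe $j$. Pipe $j$ leaves column $j$ at the first elbow or pivot elbow it meets, and below that point the vertical strand in column $j$ is whichever pipe turned down there.

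For a counterexample, take $D^{\sLe}(P)$ in Figure~\ref{fig.PFPP}. Column $7$ is blocked by the cross at $(3,7)$, whose horizontal pipe $4$ exits at the bottom of column $8$. The vertical strand of that cross is pipe $6$, which turned down at the elbow $(1,7)$. Pipe $7$ turns right at $(1,7)$ and is never the vertical strand of any cross, so your criterion says column $7$ is unblocked.

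What is true requires $\Gamma$-freeness.
\begin{itemize}
\item By Lemma~\ref{lem:gamma-free-is-subarea-condition}, there is no elbow or pivot elbow below a cross whose horizontal pipe exits at the bottom. So the vertical strand of such a cross is the pipe leaving the bottom of column $j$.
\item Conversely, suppose that bottom pipe is crossed, as the vertical strand, by a bottom-exiting pipe. That cross must lie in column $j$. Otherwise the elbow where the vertical strand later turns right would sit in the subarea of the horizontal pipe.
\end{itemize}
Hence a non-pivot column $j$ is blocked if and only if the pipe leaving the bottom of column $j$ crosses a bottom-exiting pipe that leaves further to the right. This is the pipe-dream form of Lemma~\ref{lem:unblocked-in-decperm}.

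Once this is corrected, the step you flag as the main obstacle becomes unnecessary. Pipes cross at most once and elbows never swap two pipes, so two bottom-exiting pipes cross if and only if their order at the top is opposite to their order at the bottom. The remaining facts are already available:
\begin{itemize}
\item Proposition~\ref{prop.pipe_exchange}, applied to the trivial completion, shows that $\st_i$ preserves which pipes exit at the bottom and in which columns.
\item $\st_i(D)$ is again a partial FPP by Proposition~\ref{prop.sti_preserves_FPP}.
\item The set of pivot columns is unchanged by $\st_i$.
\end{itemize}
Together these show the blocked set is preserved, with no tile bookkeeping.

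Your tile-level claims in case (IIb) are also inaccurate. The cross at $D_{i,j^*}$ is in general not between $v_i$ and $v_{i+1}$. In Figure~\ref{fig.new_move}, its vertical strand turns right at $D_{i+1,j^*}$, turns down again at $D_{i+1,j^*+1}$, and exits at the bottom. Your fallback claim also fails in that figure: row $i$ has an elbow immediately right of $u_i$, so the horizontal strands between $u_i$ and $u_{i+1}$ do not belong to the pivot pipes.

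For comparison, the paper argues directly at the tile level. Each pipe leaving the bottom of row $i+1$ enters row $i$ in the same column before and after $\st_i$. The columns of the crosses it passes through horizontally in rows $i$ and $i+1$ are likewise unchanged.
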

\begin{proof}
Let $D\in\setFPP_k(n)$ and $i=1,\ldots, k-1$. 
From the definition of $\st_i$ (and with the help of Figure~\ref{fig:exchange_pipes}), one can see that for each pipe $p$ that exits at the bottom boundary of the $(i+1)$-st row of $D$, that pipe enters the top of the $i$-th row in the same column before and after $\st_i$ is applied, and the columns in the $i$-th and $(i+1)$-st rows containing a cross that pipe $p$ passes through horizontally do not change.
This implies that for any pipe exiting the bottom boundary of $D$, the columns containing a cross tile through which this pipe passes horizontally are unchanged by $\st_i$.
Therefore, the set of blocked columns of $D$, and hence the set of unblocked columns, is invariant under $\st_i$.
This holds for all $i=1,\ldots,k-1$, and therefore, $\st$ preserves the set of unblocked columns of $D$.
\end{proof}

\begin{definition}
Let $D \in \setFPP(n)$. 
For $k=1,\ldots, n-1$, let $D|_k$ denote the restriction of $D$ to its first $k$ rows.
Define the sequence of positroids
\[P_\bullet(D) = (P_1, \ldots, P_{n-1})
\]  
where $P_k$ is the positroid corresponding to the partial FPP $\st(D|_k)=D^{\sLe}(P_k)$.
(We may let $P_0$ be the unique rank $0$ matroid and $P_n$ be the unique rank $n$ matroid on $[n]$.)
\end{definition}

Since $D\in \setFPP(n)$ is $\Gamma$-free, then each restriction $D|_k$ is $\Gamma$-free and hence is a partial FPP that encodes a rank $k$ positroid.
By Theorem~\ref{thm.sti_preserves_bases} and Corollary~\ref{cor.unblocked-unchanged}, $D|_k$ and $\st(D|_k)$ encode the same positroid and have the same set of unblocked columns, and the unblocked columns of $D_k$ are precisely the columns in which adding an elbow or a pivot elbow in the next row would maintain the $\Gamma$-free property of the pipe dream.
Therefore, it follows from Theorem~\ref{thm.char_Q} that 
\[P_1 \unlhd_q P_2 \unlhd_q \cdots \unlhd_q P_{n-1},\]
and so $P_\bullet(D)$ is a positroid flag matroid.
But in fact, $P_\bullet(D)$ is a flag positroid; 
each nonnegatively representable quotient $P_k \unlhd_q P_{k+1}$ 
forms an oriented matroid quotient, so by Proposition 7.10 of Boretsky, Eur, and Williams~\cite{BEW24}, $P_\bullet(D)$ is a complete flag positroid.

This brings us to the final point in this section, which is to show that the flag positroid $P_\bullet(D)$ associated to $D=\FPP(u,v)$ is indeed the flag positroid associated to any point in the positive Richardson cell $\calR_{u,v}^{>0}$, for $u\leq v$ in Bruhat order.

Given a permutation $z=z_1\cdots z_n \in \fS_n$, denote the restriction of $z$ to its first $k$ values by $z[k]=z_1\cdots z_k$.
It was shown by various authors (see Kodama and Williams~\cite[Lemma 3.11]{KW15}, Billey and Weaver~\cite[Theorem 1.4]{BW25}, and Boretsky, Eur, and Williams~\cite[Lemma 7.7]{BEW24}) that the bases of the $k$-th constituent of the flag positroid associated to any point in $\calR_{u,v}^{>0}$ are obtained by restricting the permutations in the Bruhat interval $[u,v]$ to their first $k$ values, hence it suffices to show that for each $k$, the lexicographically minimal and maximal bases of the $k$-th constituent of $P_\bullet(D)$ are $u[k]$ and $v[k]$.

\begin{lemma}\label{lem:lexminmax-from-diagram}
Let $P$ be a rank $k$ positroid with partial FPP $D^{\sLe}(P)$.
The lexicographically minimal basis of $P$ is the column set of the pivot elbows of $D^{\sLe}(P)$, and the lexicographically maximal basis of $P$ is the set of pipes exiting at the right boundary of $D^{\sLe}(P)$.
\end{lemma}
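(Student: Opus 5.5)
The minimal-basis half is Remark~\ref{rem.lexminmax_bases}. The maximal-basis half I would prove by induction on $k$, adding one row of $D=D^{\sLe}(P)$ at a time.

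For the minimal basis, Proposition~\ref{prop.nonintersecting} identifies $\calB(D)$ with the sink sets of admissible $k$-collections in $G(D)$. The vertical paths $s_{u_i}\to t_{u_i}$ are pairwise non-intersecting, so $\{u_1,\ldots,u_k\}$ is a basis. Every edge of $G(D)$ points north or east, so a path from $s_{u_i}$ ends at a sink $t_j$ with $j\ge u_i$. Since $u_1>\cdots>u_k$, sorting any sink set $J$ gives $j_{(m)}\ge u_{k+1-m}$ for each $m$. Hence the pivot set is entrywise, and in particular lexicographically, minimal.

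For the maximal basis, I would use the greedy collection of Remark~\ref{rem.lexminmax_bases}: for $i=k,\ldots,1$, the path from $s_{u_i}$ runs as far east as possible before turning north. I first need to check that this collection is admissible and that its sink set is lexicographically (indeed entrywise) maximal among all sink sets. The argument is that each greedy path is the pointwise ``south-easternmost'' path from its source that avoids the paths already chosen. The main work is then to show that this sink set equals the set of labels of pipes exiting through the right boundary of $D$.

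The induction goes as follows. Pass from $D|_{k-1}$ to $D|_k$, whose new bottom row has its pivot in column $u_k$, the leftmost column. Read that row of the pipe dream from left to right. The pipe entering from below at $u_k$ turns east at the pivot elbow. Each time it meets an elbow, it swaps with the vertical pipe there and the new pipe continues east. It finally exits the right boundary with the pipe that arrived at the last elbow of the row. In the graph $G(D|_k)$, the row's vertices are exactly these elbows. The greedy path from $s_{u_k}$ walks along the whole row to its last vertex and then goes north. I would show that the vertical pipe absorbed at that last elbow, followed upward, is the pipe that $D|_{k-1}$ exits through the bottom in that column. I would also show that the greedy paths of the rows above, rerouted to avoid the new path, still end at the same right-exiting pipes of $D|_{k-1}$.

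The obstacle is this rerouting. The claim needed is that when the greedy paths from the upper sources are forced away from columns now used, they end at exactly the previous right-boundary labels. I would use $\Gamma$-freeness through Lemma~\ref{lem:gamma-free-is-subarea-condition}: no elbow lies in the subarea of any pipe. Because $D$ is standardized, the graph is planar and the $\Le$-property holds. Together these should force the greedy path for row $i$ to trace the complementary route of the pipe that exits row $i$. If this bookkeeping becomes messy, the fallback is Theorem~\ref{thm.rpd_is_le} together with Postnikov's result. Rotating $D^{\sLe}(P)$ gives the $\Le$-diagram $\Le(t,w)$. The lexicographically maximal basis of the positroid is $t[k]$, up to the $w_0$ twist, and the exit permutation $w_0t$ restricted to the right boundary gives exactly this set.
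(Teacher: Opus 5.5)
Your argument for the lexicographically minimal basis is complete, and it is more direct than the paper's. From Proposition~\ref{prop.nonintersecting} and the north/east orientation of $G(D^{\sLe}(P))$ you even get Gale-minimality. The paper instead takes both halves of the lemma from \cite[Lemma 7.7]{BEW24} and mentions your graph argument only in a remark after the proof.

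\textbf{The maximal-basis half.} Here your primary route has a genuine gap, and the inductive bookkeeping you propose is false path by path. You claim two things:
\begin{itemize}
\item after adding row $k$, the rerouted greedy paths from the upper sources ``still end at the same right-exiting pipes of $D|_{k-1}$'';
\item the greedy path of row $i$ traces the pipe that exits row $i$.
\end{itemize}
Test this on the paper's running example (Figures~\ref{fig.PFPP} and~\ref{fig.nonint_paths}):
\begin{itemize}
\item In $G(D|_2)$ the greedy paths are $s_4\to t_9$ and $s_6\to t_7$. These match the right exits $7,9$ of rows $1,2$.
\item In $G(D|_3)$ the new greedy path from $s_1$ runs along row $3$ to column $8$, climbs to row $2$, moves east to column $9$, and ends at $t_9$.
\item The path from $s_4$ is then stuck at column $5$ and ends at $t_5$.
\item The pipes leaving the right boundary in rows $1,2,3$ are $7,9,5$.
\end{itemize}
So the sink set $\{5,7,9\}$ is right, but the individual correspondences fail. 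The new bottom path takes the old sink $9$, and the rerouted path from $s_4$ changes its sink. The row-$3$ greedy path does not end at the row-$3$ exit pipe $5$, which is the pipe entering the last elbow of row $3$.

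A correct induction would have to show that the greedy sink \emph{set} grows by exactly that one label. That is the whole content of the lemma, and your sketch (``$\Gamma$-freeness and planarity should force\ldots'') does not address it. The admissibility and Gale-maximality of the greedy family are also only asserted. Incidentally, pipes enter a pivot elbow from above, not from below.

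\textbf{The fallback.} Your fallback is what actually proves the statement, and it is the paper's proof. However, the input you need is not Theorem~\ref{thm:postnikov19}, which gives only the bijection and the dimension count and identifies no basis. Two facts are needed:
\begin{itemize}
\item $P$ is the $k$-th constituent of the flag positroid of $\calR^{>0}_{w_0w,w_0t}$. The paper gets this from the agreement of Postnikov's and Rietsch's cell decompositions \cite[Theorem 3.8]{Postnikov06}.
\item The bases of that constituent are $\{z[k]\mid w_0w\le z\le w_0t\}$ \cite[Lemma 7.7]{BEW24}, so the Gale-maximal (hence lexicographically maximal) basis is $w_0t[k]$.
\end{itemize}
Theorem~\ref{thm.rpd_is_le} then identifies $w_0t[k]$ with the labels of the pipes leaving the right boundary of $D^{\sLe}(P)$. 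With these two citations made explicit, your fallback is a complete proof.
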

\begin{proof}
Recall from Section~\ref{subsec.le_parallel} that $P$ is associated with an interval $[w_0w,w_0t]$ in the Bruhat order, where $w_0w$ is a permutation with a unique ascent in position $k$.
By~\cite[Theorem 3.8]{Postnikov06}, since the cell decompositions of $\Gr_{k,n}^{\geq0}$ given by Postnikov and Rietsch coincide, and the Richardson cell $\calR_{w_0w,w_0t}^{>0}$ of the complete flag variety $\Fl_n^{\geq0}$ projects to the positroid cell of $\Gr_{k,n}^{\geq0}$ indexed by $[w_0w,w_0t]$, then $P$ is the $k$-th constituent of the flag positroid associated to any point in $\calR_{w_0w,w_0t}^{>0}$.

By~\cite[Lemma 7.7]{BEW24} of Boretsky et al., then $w_0w[k]$ is the lexicographically minimal basis of $P$, and $w_0t[k]$ is the lexicographically maximal basis of $P$.
In terms of flag positroid pipe dreams, it follows from Theorem~\ref{thm.rpd_is_le} that the trivial completion of $D^{\sLe}(P)$ is $\FPP(w_0w, w_0t)$, and so $w_0w[k]$ represents the column set of the pivot elbows of $D^{\sLe}(P)$, and $w_0t[k]$ represents the set of pipes exiting at the right boundary of $D^{\sLe}(P)$.
\end{proof}
We remark that one can also give a direct proof of the previous lemma.
If the trivial completion of $D^{\sLe}(P)$ is $D=\FPP(u,v)$, then by considering the associated directed graph $G(D)$ it follows directly that the lexicographically minimal basis of $P$ is the column set of pivot elbows because they constitute the source vertices of $G(D|_k)$, and following Remark~\ref{rem.lexminmax_bases}, one can show that the lexicographically maximal basis of $P$ is the sink set of paths which exit at the right boundary of $D|_k$.

\begin{theorem}\label{thm.fpp_richardson}
For $u\leq v$ in Bruhat order, let $D=\FPP(u,v)$. 
Then $P_\bullet(D)$ is the flag positroid $P_\bullet(u,v)$ that is associated to any point in the positive Richardson cell $\calR_{u,v}^{>0}$.
\end{theorem}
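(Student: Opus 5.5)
The paper has already reduced the theorem to a concrete check. Let $P_\bullet(u,v)=(P_1(u,v),\ldots,P_{n-1}(u,v))$ be the flag positroid of any point of $\calR_{u,v}^{>0}$. By the results of Kodama--Williams, Billey--Weaver and Boretsky--Eur--Williams, $P_k(u,v)$ is the positroid whose bases are the sets $\{z_1,\ldots,z_k\}$ for $z\in[u,v]$. A positroid is determined by its cell, and that cell is determined by the projected Richardson interval, i.e.\ by the pair (lex-minimal basis, lex-maximal basis) read through the identification $P\leftrightarrow [w_0w,w_0t]$ of Section~\ref{subsec.le_parallel}.

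So for each $k$ I would show that the $k$-th constituent $P_k$ of $P_\bullet(D)$ has lexicographically minimal basis $u[k]$ and lexicographically maximal basis $v[k]$, as sets. I would also justify, or cite via~\cite[Lemma 7.7]{BEW24}, that this pair determines the positroid.

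This last point is subtle. For a fixed $k$, $P_k(u,v)$ is the projection of $\calR_{u,v}^{>0}$ to $\Gr_{k,n}^{\geq0}$. That projection is the positroid cell indexed by the Richardson pair $[\tilde u,\tilde v]$ obtained by suitable ``Grassmannian'' projections: take $\tilde u$ to be the minimal coset representative having a unique ascent at $k$, and $\tilde v$ correspondingly. Both are read off from $u[k]$, $v[k]$ together with the Bruhat data. Rather than reprove this, I would lean on the sentence in the paper preceding Lemma~\ref{lem:lexminmax-from-diagram}, which asserts that it suffices to match the lex-min and lex-max bases.

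The computation itself goes as follows.

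\textbf{Step 1.} $P_k$ is encoded by $\st(D|_k)=D^{\sLe}(P_k)$. By Lemma~\ref{lem:lexminmax-from-diagram}, its lex-min basis is the set of pivot columns of $\st(D|_k)$, and its lex-max basis is the set of pipes exiting the right boundary of $\st(D|_k)$.

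\textbf{Step 2 (pivot columns).} Each $\st_i$ swaps the pivot elbows of rows $i$ and $i+1$ (Proposition~\ref{prop.pipe_exchange}; its proof applies verbatim to partial FPPs). So the set of pivot columns of $D|_k$ is preserved, and it equals $\{u_1,\ldots,u_k\}=u[k]$.

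\textbf{Step 3 (right-exiting pipes).} By Corollary~\ref{cor.right}, $\st_i$ preserves which pipes exit at the right boundary. So I only need the set of pipes exiting the right boundary of $D|_k$. Restricting $D=\FPP(u,v)$ to its first $k$ rows leaves the first $k$ rows unchanged, and the labels of the pipes exiting rows $1,\ldots,k$ on the right are $v_1,\ldots,v_k$. Pipes are labelled by their top column, and that labelling is the same in $D$ and $D|_k$. Hence the set is $v[k]$.

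\textbf{Step 4.} Combining Steps 1--3, $P_k$ and $P_k(u,v)$ have the same lex-min basis $u[k]$ and the same lex-max basis $v[k]$. Since both are positroids of rank $k$ arising as projections of Richardson cells (for $P_k$, via the trivial completion $\FPP(w_0w,w_0t)$ of $\st(D|_k)$ as in the proof of Lemma~\ref{lem:lexminmax-from-diagram}), the reduction above gives $P_k=P_k(u,v)$ for all $k$.

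I expect the main obstacle to be that final identification: that equal lex-min and lex-max bases force equal positroids within this class. Lex-min and lex-max bases alone do not determine a general positroid, so the argument must use that both positroids are Grassmannian projections of Richardson cells. My first attempt would show directly that the projected interval of $[u,v]$ to $\Gr_{k,n}$ equals $[w_0w,w_0t]$, by comparing the unique ascent-at-$k$ representatives.

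If that fails, the fallback is a direct basis comparison. I would show that $\calB(D|_k)$, the sink sets of non-intersecting paths in $G(D|_k)$, equals $\{z[k] : z\in[u,v]\}$. This would go by induction on the number of elbows: toggling the lowest-rightmost elbow corresponds to one Bruhat cover step. That route is considerably longer.
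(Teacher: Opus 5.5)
\textbf{Gap in Step 4.} Your Steps 1--3 match the paper's computation exactly: pivot columns are preserved by $\st_i$, right-exiting pipes are preserved by Corollary~\ref{cor.right}, and then you apply Lemma~\ref{lem:lexminmax-from-diagram}. The problem is Step 4.

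There you conclude $P_k=P_k(u,v)$ one rank at a time, from equal lex-min and lex-max bases plus the fact that both are Grassmannian projections of Richardson cells. That extra hypothesis says nothing, because every positroid is such a projection. The conclusion is also false. Take $n=3$, $k=1$: the cells $\calR_{123,312}^{>0}$ and $\calR_{132,312}^{>0}$ both have $u[1]=\{1\}$ and $v[1]=\{3\}$. However, $\{z[1] : z\in[123,312]\}=\{1,2,3\}$ gives $U_{1,3}$, while $[132,312]=\{132,312\}$ gives the positroid with bases $\{1\},\{3\}$. Put in terms of the identification $P\leftrightarrow[w_0w,w_0t]$: $w_0w$ has a unique ascent at $k$, so it is determined by its first $k$ entries, but $w_0t$ is not. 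Here $w_0w=132$ in both cases, while $w_0t$ is $321$ versus $312$. So neither your ``first attempt'' via ascent-at-$k$ representatives nor a per-constituent reading of the sentence before Lemma~\ref{lem:lexminmax-from-diagram} can close the argument.

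\textbf{The missing idea.} You need to use the whole flag at once. The paper has already shown, in the paragraph after Corollary~\ref{cor.unblocked-unchanged} (via Theorem~\ref{thm.char_Q} and \cite[Proposition 7.10]{BEW24}), that $P_\bullet(D)$ is a complete flag positroid. So it equals $P_\bullet(u',v')$ for a \emph{single} pair $u'\le v'$.

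\begin{enumerate}
\item By \cite[Lemma 7.7]{BEW24}, its rank-$k$ constituent has bases $\{z[k] : z\in[u',v']\}$. By the key criterion, $u'\le z\le v'$ forces $u'[k]\le z[k]\le v'[k]$ in Gale order, so the lex-min and lex-max bases are $u'[k]$ and $v'[k]$.
\item Your Steps 1--3 then give $u'[k]=u[k]$ and $v'[k]=v[k]$ for every $k$.
\item A permutation is determined by its chain $z[1]\subset\cdots\subset z[n-1]$, so $u'=u$ and $v'=v$.
\end{enumerate}

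The per-rank trivial completions $\FPP(w_0w,w_0t)$ you invoke produce a different Richardson pair for each $k$, and that is exactly why they cannot do this job. Your fallback (matching full basis sets by induction on elbows) would be a genuinely different and much longer route, but it is not carried out in the proposal.
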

\begin{proof}
Consider the $k$-th constituent $P_k$ of $P_\bullet(D)$. Applying the standardization operation to the partial FPP $D|_k$ does not change the column set of the first $k$ pivot elbows, and it does not change the set of the first $k$ pipes which exit at the right boundary of $D$ (Corollary~\ref{cor.right}).
Therefore, since $\st(D|_k) = D^{\sLe}(P_k)$, it follows from Lemma~\ref{lem:lexminmax-from-diagram} that the lexicographically minimal basis of the $k$-th constituent of $P_\bullet(D)$ is $u[k]$ and the lexicographically maximal basis is $v[k]$, for $k=1,\ldots, n-1$.
Since $P_\bullet(D)$ is a flag positroid, then Lemma 7.7 of Boretsky, Eur, and Williams~\cite{BEW24} identifies $P_\bullet(D)$ as the flag positroid associated to any point in the positive Richardson cell $\calR_{u,v}^{>0}$.
\end{proof}

\begin{example}\label{eg.crossingGD}
The pipe dream $D=\FPP(2413,4231)$ in Figure~\ref{fig.OKgamma} encodes the complete flag positroid $P_\bullet=(P_1,P_2,P_3)$ whose respective bases are 
\[\calB(P_1) = \{2,4\}, \quad 
\calB(P_2)=\{24\}, \quad 
\calB(P_3)= \{124, 234\}.
\]
These bases can be computed by considering admissible $k$-collections of paths in the associated directed graphs $G(D|_k)$ for $k=1,2,3$.

\begin{center}
\begin{tikzpicture}
\begin{scope}[scale=.6, yshift=0]
\node[label={\footnotesize$G(D)$}] at (-2,1.5){};
\draw[very thin, color=gray!28] (0,0) grid (4,4);
\nnode[color=black, fill](s1) at (1.5, 3.5) {};
	\node() at (1.25,3.25){\tiny$s_2$};
\nnode[color=black, fill](s2) at (3.5, 2.5) {};
	\node() at (3.25,2.25){\tiny$s_4$};
\nnode[color=black, fill](s3) at (0.5, 1.5) {};
	\node() at (0.25,1.25){\tiny$s_1$};
\nnode[color=black, fill](s4) at (2.5, 0.5) {};
	\node() at (2.25,0.25){\tiny$s_3$};
\nnode[color=black, fill=white, label=above:{\tiny{$t_1$}}](t1) at (0.5, 4) {};
\nnode[color=black, fill=white, label=above:{\tiny{$t_2$}}](t2) at (1.5, 4) {};
\nnode[color=black, fill=white, label=above:{\tiny{$t_3$}}](t3) at (2.5, 4) {};
\nnode[color=black, fill=white, label=above:{\tiny{$t_4$}}](t4) at (3.5, 4) {};
\nnode[color=mediumseagreen, fill]() at (2.5, 1.5){};
\nnode[color=mediumseagreen, fill]() at (3.5, 3.5){};
\draw [color=mediumseagreen, thick] (s1) -- (t2){};
\draw [color=mediumseagreen, thick] (s2) -- (t4){};
\draw [color=mediumseagreen, thick] (s3) -- (t1){};
\draw [color=mediumseagreen, thick] (s4) -- (t3){};
\draw [color=mediumseagreen, thick] (s1) -- (3.5,3.5){};
\draw [color=mediumseagreen, thick] (s3) -- (2.5,1.5){};
\end{scope}
\end{tikzpicture}
\end{center}
\end{example}

\section{Elementary quotients via decorated permutations}\label{sec.decperm}

Using flag positroid pipe dreams, we give a new proof in Theorem~\ref{thm.covering_decperms} of the theorem of Chen et al.~\cite[Theorem 1.5]{CFGSZ26} in the case of nonnegatively representable elementary quotients.
A strength of our approach is that we can explicitly characterize the ``frozen'' subsets $A$ which appear in the theorem. 
This partially addresses~\cite[Remark 3.6]{CFGSZ26}, which asked for ``a concise characterization - based solely on $\pi$ - of the sets $A$''.

We conclude this article by introducing the poset $(\Pi_n,\unlhd_q)$ of nonnegatively representable elementary positroid quotients and showing some of the properties it possesses.

\subsection{Decorated permutations}\label{subsec.decperms}

The following definition of a decorated permutation is equivalent to the standard ones in the literature (see~\cite[Definition 4.5]{ARW16} or~\cite[Definition 13.3]{Postnikov06} for example), where for our purposes it is convenient to extend the colouring/decorating notation to all of $[n]$ and not just defined on the fixed points of $[n]$.
\begin{definition}\label{defn.decperm}
A \defn{decorated permutation} on $[n]$ is a bijection $\pi:[n] \rightarrow [n]$ together with a colouring function $c:[n]\rightarrow\{1,2\}$ satisfying
\begin{itemize}
\item[(i)] $c(j)=1$ if $\pi(j) < j$, and
\item[(ii)] $c(j)=2$ if $\pi(j) > j$.
\end{itemize}
In other words, if we regard fixed points $j$ such that $c(j)=2$ as weak excedances of $\pi$, and fixed points $j$ such that $c(j)=1$ as non-weak excedances of $\pi$, then the colouring function is given by  $c(j)=2$ if and only if $j$ is a weak excedance of $\pi$.
If $c(j)=i$, we say that $j$ is an \defn{$i$-coloured position} and $\pi(j)$ is an \defn{$i$-coloured value} of $\pi$.
In one-line notation, we notate the colouring function by decorating $\pi(j)$ with an underline if $j$ is a $1$-coloured position, and $\pi(j)$ with an overline if $j$ is a $2$-coloured position.
Let $\fD_n$ denote the set of all decorated permutations on $[n]$.
\end{definition}

See Example~\ref{eg.Ddecperm} where the decorated permutation $\pi$ has (weak) excedances $1,4,6$.  
The only fixed point of $\pi$ is $3$ and it is $1$-coloured.

The bijection between decorated permutations and $\Le$-diagrams is due to Postnikov~\cite[Theorem 20.3]{Postnikov06}.
We will rephrase it in terms of the partial flag positroid pipe dreams $D^{\sLe}$.
Given $D^{\sLe}\in \setFPP_k^{\sLe}(n)$, write the pivot columns in decreasing order down the right side of the diagram and the remaining elements in $[n]$ in increasing order along the bottom of the diagram, and decorate the column indices along the top of the diagram with an underline if the pipe originating in that column ends at the bottom edge, and with an overline if the pipe ends at the right boundary.
Following the pipes starting from the southeast boundary of the diagram to the top yields the decorated permutation $\pi$.

Recall from Section~\ref{subsec.le_parallel} that each $D^{\sLe}
$/$\Le$-pipe dream corresponds with a pair of permutations $t\leq w$ where $w$ is a Grassmannian permutation.
Postnikov showed~\cite[Theorem 20.3]{Postnikov06} that the decorated permutation is given by $\pi =w_0tw^{-1}w_0$.
This is rephrased in terms of FPPs as follows.
Given $D^{\sLe}\in \setFPP_k(n)$, define its \defn{trivial completion} to be $\FPP(u,v)$ whose first $k$ rows is $D^{\sLe}$ and whose pivot elbows in the last $n-k$ rows are in decreasing order.
Following the notation from Section~\ref{subsec.le_parallel}, then $u=w_0w$ and $v=w_0t$, so that the decorated permutation associated to $D^{\sLe}$ is given by $\pi = w_0tw^{-1} w_0 = vu^{-1}$.
The decoration is given by $\overline{\pi(j)}$ if $j\in \{u_1,\ldots, u_k\}$ and $\underline{\pi(j)}$ otherwise.
Thus, the pipe $\pi(j)$ exits at the right boundary of $D^{\sLe}$ if it is decorated with an overline, and exits at the bottom boundary if it is decorated with an underline.

\begin{example}\label{eg.Ddecperm}
Consider the partial FPP $D^{\sLe}$ in Figure~\ref{fig.PFPP_decperm}, whose decorated permutation $\pi =\overline{5}\underline{13}\overline{9}\underline{2}\overline{7}\underline{648}$ can be read by following the pipes from the southeast boundary to the top.
Following the notation from Section~\ref{subsec.le_parallel}, we have $w_0w=\textcolor{red}{641}\textcolor{blue}{987532}$, $w_0t=795846231$ (obtained by following the pipes up in the order of $w_0w$), and one can verify that $\pi=vu^{-1}$ with the appropriate decorations.
In other words, pipes $\pi(1)=5, \pi(4)=9$ and $\pi(6)=7$ exit at the right boundary of $D^{\sLe}$ while the rest exit at the bottom boundary.
\end{example}

\begin{figure}[ht!]
\begin{tikzpicture}
\begin{scope}[scale=.5, xshift=0]
	\vertex[color=gray!40](w1) at (5.5, 2.5) {};
	\vertex[color=gray!40](w2) at (3.5, 1.5) {};
	\vertex[color=gray!40](w3) at (0.5, 0.5) {};
\draw [black,thick,domain=180:270] plot ({6+.5*cos(\x)}, {3+.5*sin(\x)});
\draw [black,thick,domain=180:270] plot ({4+.5*cos(\x)}, {2+.5*sin(\x)});
\draw [black,thick,domain=180:270] plot ({1+.5*cos(\x)}, {1+.5*sin(\x)});
\draw [black,thick] (0.5,1) -- (0.5,3);
\draw [black,thick] (1.5,1) -- (1.5,3);
\draw [black,thick] (2.5,1) -- (2.5,3);
\draw [black,thick] (3.5,2) -- (3.5,3);
\draw [black,thick] (4.5,2) -- (4.5,3);
\draw [black,thick] (3,0.5) -- (4,0.5);
\draw [black,thick] (5,0.5) -- (6,0.5);
\draw [black,thick] (5,1.5) -- (6,1.5);
\cross[thick](3,1);
\cross[thick](7,1);
\cross[thick](9,1);
\cross[thick](7,2);
\cross[thick](8,3);
\cross[thick](9,3);
\elbow[thick](2,1); 
\elbow[thick](5,1); 
\elbow[thick](8,1);
\elbow[thick](5,2);
\elbow[thick](8,2);
\elbow[thick](9,2);
\elbow[thick](7,3);
        
\draw[very thin, color=blue!50] (0,0) grid (9,3);

\node at (0.5,3.5) {\color{black}{\footnotesize$\underline{1}$}};
\node at (1.5,3.5) {\color{black}{\footnotesize$\underline{2}$}};
\node at (2.5,3.5) {\color{black}{\footnotesize$\underline{3}$}};
\node at (3.5,3.5) {\color{black}{\footnotesize$\underline{4}$}};
\node at (4.5,3.5) {\color{black}{\footnotesize$\overline{5}$}};
\node at (5.5,3.5) {\color{black}{\footnotesize$\underline{6}$}};
\node at (6.5,3.5) {\color{black}{\footnotesize$\overline{7}$}};
\node at (7.5,3.5) {\color{black}{\footnotesize$\underline{8}$}};
\node at (8.5,3.5) {\color{black}{\footnotesize$\overline{9}$}};

\node at (1.5,-.5) {\color{blue}{\footnotesize$2$}};
\node at (2.5,-.5) {\color{blue}{\footnotesize$3$}};
\node at (4.5,-.5) {\color{blue}{\footnotesize$5$}};
\node at (6.5,-.5) {\color{blue}{\footnotesize$7$}};
\node at (7.5,-.5) {\color{blue}{\footnotesize$8$}};
\node at (8.5,-.5) {\color{blue}{\footnotesize$9$}};	
\node at (9.5, 2.5) {\color{red}{\footnotesize$6$}};
\node at (9.5, 1.5) {\color{red}{\footnotesize$4$}};
\node at (9.5, 0.5) {\color{red}{\footnotesize$1$}};

\end{scope}
\end{tikzpicture}
\caption{Following the pipes in $D^{\sLe}$ from the southeast boundary yields the corresponding decorated permutation $\pi = \overline{5}\underline{13}\overline{9}\underline{2}\overline{7}\underline{648}$.
}
\label{fig.PFPP_decperm}
\end{figure}

\subsection{Elementary positroid quotients via decorated permutations}

In this section we translate the characterization of nonnegatively representable elementary positroid quotients of Theorem~\ref{thm.char_Q} from unblocked columns of a partial flag positroid pipe dream to cyclic shifts of its corresponding decorated permutation (Theorem~\ref{thm.covering_decperms}).
In the context of our work, it is more natural to describe the cyclic shift set, which is the complement of the freeze set, see Definitions~\ref{defn.rcshift} and~\ref{defn.TC}.

\begin{definition}\label{defn.unblockedposn}
Let $\pi$ be a decorated permutation on $[n]$. 
Then $\pi(j)$ is an \defn{unblocked value} of $\pi$ if $\pi(j)$ is underlined and for all $j'>j$ such that $\pi(j')$ is underlined, we have $\pi(j') > \pi(j)$.
We also say that such a $j$ is an \defn{unblocked position} of $\pi$.
\end{definition}

Note that unless $\pi=\overline{12\cdots n}$, then the largest $1$-coloured position is always unblocked.
Also observe that by definition, if the set $U=\{j_1,\ldots, j_r\}$ of unblocked positions of $\pi$ is ordered such that $j_1< \cdots < j_r$, then by definition the unblocked values of $\pi$ also satisfy $\pi(j_1) < \cdots < \pi(j_r)$.
See Example~\ref{eg.blocked_decperm}.

\begin{lemma}\label{lem:unblocked-in-decperm}
Let $D^{\sLe}\in \setFPP_k(n)$ be a partial flag positroid pipe dream with pivot columns in decreasing order, and let $\pi$ be its corresponding decorated permutation.
Then $j$ is an unblocked of column of $D^{\sLe}$ if and only if
$j$ is an unblocked position of $\pi$.
\end{lemma}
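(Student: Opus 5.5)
We translate both conditions into statements about the pipes that exit at the bottom boundary of $D^{\sLe}$, using the labelling described before Example~\ref{eg.Ddecperm}. Column $j$ carries underlined label $\pi(j)$ exactly when the pipe originating at the top of column $j$ exits at the bottom boundary. The bottom-exiting pipes occupy the non-pivot columns, and we claim that if the pipe ending at the bottom of column $c$ starts at the top of column $j$, then $\pi(j)=c$. Indeed, the pipe read upward from bottom position $c$ is $\pi(j)$ with $\pi=vu^{-1}$ and $v=w_0t$ recording exit rows. Hence an underlined $\pi(j)=c$ means that the pipe entering at top column $j$ leaves through the bottom of column $c$. Also, pivot columns are exactly the overlined positions, i.e. the weak excedances, and they are blocked by Definition~\ref{defn.Leblocked}. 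So it suffices to treat a non-pivot column $j$, whose pipe $p_j$ exits at the bottom in column $c=\pi(j)$.

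By Definition~\ref{defn.blocked}, which the paper notes agrees with Definition~\ref{defn.Leblocked} in the decreasing case, column $c$ is blocked iff it contains a cross whose horizontal pipe exits at the bottom. We will work with column $c$ rather than column $j$. A pipe $q$ passing horizontally through a cross in column $c$ crosses the pipe passing vertically there. In a $\Le$-type diagram with decreasing pivots, a bottom-exiting pipe starting in column $j'$ only moves right and down. So if both pipes exit at the bottom and cross, the one that was horizontal at the crossing started to the right of the other, $j'>j$, and exits further left, $\pi(j')<\pi(j)$. This is exactly the inversion condition in Definition~\ref{defn.unblockedposn}. Conversely, if $j'>j$ are both underlined with $\pi(j')<\pi(j)$, then pipes $p_j$ and $p_{j'}$ must cross, by planarity and monotonicity inside the partial FPP: $p_{j'}$ starts right of $p_j$ and ends left of it. At the crossing the horizontal pipe is $p_{j'}$ and the vertical one is $p_j$.

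Thus the heart of the proof is an equivalence: some cross in column $c$ has a bottom-exiting horizontal pipe iff $p_j$ is crossed by some bottom-exiting pipe from its right. One direction needs care, because the cross where $p_{j'}$ meets $p_j$ need not lie in column $c$. It lies in a column where $p_j$ is vertical, and that column may be to the left of $c$. We resolve this with the $\Gamma$-free property, in the form of Lemma~\ref{lem:gamma-free-is-subarea-condition}: the subarea of the horizontal pipe $p_{j'}$ contains no elbows. Suppose $p_{j'}$ crosses $p_j$ in some column $c'<c$ of row $r$. Then $p_j$ still has to turn right at an elbow below row $r$ in column $c'$, in order to travel to column $c$. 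That elbow lies in the subarea of $p_{j'}$, since $p_{j'}$ exits at the bottom, which gives a contradiction. Hence crossings of $p_j$ by bottom-exiting pipes occur in column $c$, i.e. on the final vertical segment of $p_j$.

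The remaining detail is the converse: a cross in column $c$ whose horizontal pipe $q$ exits at the bottom should have $q$ crossing $p_j$ itself. Two pipes cannot occupy the same column at the same height. Below the last elbow of $p_j$, column $c$ is occupied by $p_j$. Above that elbow, any cross in column $c$ has as its vertical strand some other pipe $p'$ that turns right at an elbow of column $c$. That would put an elbow in the subarea of the bottom-exiting pipe $q$, contradicting $\Gamma$-freeness again. So the vertical strand is $p_j$, and $q=p_{j'}$ with $j'>j$ and $\pi(j')<\pi(j)$. Carrying out this planar pipe bookkeeping cleanly is the step I expect to be the main obstacle. Once it is in place, the index swap between $j$ and $c=\pi(j)$ gives the statement as written.
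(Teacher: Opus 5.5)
Your core mechanism is right. The two subarea arguments are correct: if a bottom-exiting pipe crosses a pipe $p$ horizontally, the cross lies in the column where $p$ exits; and the vertical strand of a cross in column $c$ whose horizontal pipe exits at the bottom is the pipe exiting the bottom of $c$. They are exactly the localization the paper uses (``pipe $v$ must exit in the $j$-th column'').

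The dictionary you build around them, however, is wrong in two places, and as written the argument proves a false intermediate claim.

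First, $\pi$ is read from the southeast boundary up to the top. So if the pipe exiting the bottom of column $c$ starts at the top of column $j$, then $\pi(c)=\underline{j}$, not $\pi(j)=c$. In Figure~\ref{fig.PFPP_decperm}, pipe $2$ exits the bottom of column $5$ and $\pi(5)=\underline{2}$, while $\pi(2)=\underline{1}$. Hence your reduction to ``a non-pivot column $j$, whose pipe $p_j$ exits at the bottom'' fails: pipe $5$ starts in the non-pivot column $5$ and exits on the right, with $\pi(1)=\overline{5}$. Under your reading the overlined positions would also be $5,7,9$ rather than the pivot columns $1,4,6$.

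Second, the orientation at a crossing is reversed. The horizontal pipe enters from the left, so it is southwest of the vertical pipe before the cross and northeast of it afterwards. It therefore starts in the \emph{smaller} top column and, when both exit at the bottom, exits in the \emph{larger} column. In the same figure the cross at $(3,3)$ has horizontal pipe $2$ (exiting in column $5$) and vertical pipe $3$ (exiting in column $3$).

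Taken literally, your criterion ``column $c$ is blocked iff $p_j$ is crossed by a bottom-exiting $p_{j'}$ with $j'>j$ and exit column $<c$'' declares column $3$ unblocked. The bottom-exiting pipes are $1\to2$, $2\to5$, $3\to3$, $4\to8$, $6\to7$, $8\to9$, and none starts right of $3$ and exits left of $3$. Yet column $3$ is blocked. The closing ``index swap'' cannot fix this. Rewritten in terms of the actual $\pi$, your condition asks for an underlined $c'<c$ with $\pi(c')>\pi(c)$, whereas the lemma needs an underlined $c'>c$ with $\pi(c')<\pi(c)$.

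The repair is to index by exit columns throughout. For a non-pivot column $c$, $\pi(c)$ is the top label of the pipe exiting the bottom of $c$. A crossing of two bottom-exiting pipes, the vertical one exiting at $c$ and the horizontal one at $c'$, is then exactly a pair $c'>c$ with $\pi(c')<\pi(c)$. That is the inversion in Definition~\ref{defn.unblockedposn} at the same index $c$, so no swap is needed, and your subarea arguments give both directions.

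That is the paper's proof, with one difference you should close. You pass from ``a cross with an elbow to its right'' (Definition~\ref{defn.Leblocked}) to ``a cross whose horizontal pipe exits at the bottom'' by citing the remark after Definition~\ref{defn.blocked}, which the paper states without proof. That passage is the substantive half of the paper's first direction, because the horizontal pipe $h$ through the given cross in row $i$ may exit on the right. The paper handles it as follows:
\begin{enumerate}
\item Every box below the cross is a cross, by $\Gamma$-freeness.
\item So $h$ and the $k-i$ horizontal pipes below it give $k-i+1$ pipes, all smaller than $v$, that exit to the right of $v$.
\item Only the $k-i$ right-boundary rows below row $i$ are available to them, so one of them exits at the bottom in a column $j'>j$.
\end{enumerate}
Include this pigeonhole step rather than relying on the remark.
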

\begin{proof}
First, suppose the $j$-th column of $D^{\sLe}$ is blocked and $D^{\sLe}_{i,j}$ is a cross with an elbow to its right. 
Suppose the horizontal and vertical pipes through $D^{\sLe}_{i,j}$ are respectively numbered $h$ and $v$, so that $h < v$ necessarily.
Since the column is blocked at $D_{i,j}^{\sLe}$ then every box below $D_{i,j}^{\sLe}$ is also a cross, therefore $\pi(j) =\underline{v}$ and additionally each horizontal pipe passing below $D_{i,j}^{\sLe}$ is numbered smaller than $v$.
Together with pipe $h$, there are at least $k-i+1$ pipes smaller than $v$ which exit $D^{\sLe}$ to the right of pipe $v$.
But since there are only $k-i$ rows below the $i$-th row, then at least one of these pipes must exit at the bottom of $D^{\sLe}$, say, pipe $h'$ in the $j'$-th column.
Hence $\pi(j') = \underline{h}'$ with $h'< v$, so $j'$ is a blocked position of $\pi$.

Now suppose that $\pi(j)=\underline{v}$ and there is some $j'>j$ such that $\pi(j')=\underline{h}'$ and $h' < v$. 
Since $(j,j')$ is an inversion of $\pi$, the pipes $h'$ and $v$ must cross within $D^{\sLe}$, with $h'$ passing through horizontally.
Since pipe $h'$ exits at the bottom of the $D^{\sLe}$ it must go through an elbow somewhere to the right of this cross, so the column which contains this cross is blocked, and pipe $v$ must exit in the $j$-th column.
Hence the $j$-th column of $D^{\sLe}$ is blocked.
\end{proof}

The following definition uses the convention as set forth by Chen et al. in~\cite[Definition 3.1]{CFGSZ26}, where the cyclic shift is described on the positions of the permutation.
The original definition by Benedetti, Chavez, and Tamayo in~\cite[Definition 22]{BCT22} describes the cyclic shift on the values of the permutation. 

\begin{definition}\label{defn.rcshift}
For $A\subseteq[n]$, let $b_1 < \cdots < b_\ell$ be the elements of $[n]\backslash A$ and define the permutation $\sigma_{[n]\backslash A}= (b_\ell \,\cdots \,b_1)$ in cycle notation.
The \defn{right cyclic shift operator} $\overrightarrow{\rho_A}:\fD_n\rightarrow \fD_n$ is defined as follows.
Given a decorated permutation $\pi$ of $[n]$, let $\overrightarrow{\rho_A}(\pi) = \pi \sigma_{[n]\backslash A}$ be the decorated permutation such that if $j\in [n]\backslash A$ is a fixed point of $\overrightarrow{\rho_A}(\pi)$ then $c(j) = 2$ (otherwise the colouring function is completely determined).

Simply put, $\overrightarrow{\rho_A}(\pi)$ is obtained from $\pi$ by freezing the entry $\pi(j)$ if and only if $j\in A$, cyclically shifting all other entries of $\pi$ one position to the right, and any {\em shifted value} that is a fixed point of $\overrightarrow{\rho_{A}}(\pi)$ is decorated with an overline~\footnote{Consider $\pi=\underline{12}$ for example, and choose $A=\{1\}$.  Then $\overrightarrow{\rho_{A}}(\pi)= \underline{1}\overline{2}$.}.
\end{definition}

\begin{definition}\label{defn.TC}
Let $\pi$ be a decorated permutation on $[n]$ and let $U$ be the set of unblocked positions of $\pi$. 
For nonempty $C\subseteq U$, construct the set $T(C)$ as follows.
\begin{enumerate}
\item[(i)] Set $z=0$, $m=\max(C)$, and $T(C) = \emptyset$.
\item[(ii)] Let $t$ be the smallest $2$-coloured position of $\pi$ such that $z < t < \min(C)$ and $\pi(m)< \pi(t)$. 
If there is no such $t$, then stop.
\item[(iii)] Otherwise, add $t$ to $T(C)$, set $z:= t$, $m:=\pi(t)$ and return to (ii).
\end{enumerate}
In other words, $T(C)=\{t_1,\ldots, t_s\}$ is the maximal subset of $2$-coloured positions of $\pi$ such that $1\leq t_1 < \cdots < t_s < \min(C)$ and $\pi(\max(C)) < \pi(t_1) < \cdots < \pi(t_s)$, greedily chosen.
Note that $C$ is a subset of $1$-coloured positions of $\pi$ and $T(C)$ is a subset of $2$-coloured positions of $\pi$ so $C$ and $T(C)$ are disjoint.
We call $C\sqcup T(C)$ a \defn{right cyclic shift set}.
Define the complement $A(C)= [n]\backslash (C\sqcup T(C))$ to be a \defn{freeze set}.
\end{definition}

\begin{example}\label{eg.blocked_decperm}
Consider the decorated permutation 
\[\setlength\arraycolsep{1.6pt}
\pi=
\begin{matrix}
\textcolor{red}{1}&\textcolor{blue}{2}&\textcolor{blue}{3}&\textcolor{red}{4}&\textcolor{blue}{5}&\textcolor{red}{6}&\textcolor{blue}{7}&\textcolor{blue}{8}&\textcolor{blue}{9}\\
\overline{5}&\underline{1}&\underline{3}&\overline{9}&\underline{2}&\overline{7}&\underline{6}&\underline{4}&\underline{8}
\end{matrix}
\]
from Example~\ref{eg.Ddecperm}, where we write it in two-line notation to emphasize the relationship between unblocked columns of the partial FPP and the unblocked values of $\pi$.
The $1$-coloured values with no smaller $1$-coloured values to their right are $\underline{1}$, $\underline{2}$, $\underline{4}$, and $\underline{8}$ in positions $2,5,8,9$, which are the unblocked columns of the corresponding partial FPP $D^{\sLe}(P)$ in Example~\ref{eg.PFPP}. 

For the decorated permutation $\pi$, we choose some sample nonempty subsets $C \subseteq U$ and compute the decorated permutation resulting from applying the right cyclic shift specified by $C$.
\[{\renewcommand{\arraystretch}{1.25} 
\begin{array}{|c|c|c|c|}
\hline
C & T(C) & A(C) & \overrightarrow{\rho_{A(C)}}(\pi)\\
    \hline
    \{2,5,8,9\} & \emptyset & \{1,3,4,6,7 \} &\overline{5}\overline{8}\underline{3}\overline{9}\underline{1}\overline{7}\underline{6}\underline{2}\underline{4}\\
    \hline 
    \{2,5,8\} & \{1\} & \{3,4,6,7,9 \}& \overline{4}\overline{5}\underline{3}\overline{9}\underline{1}\overline{7}\underline{6}\underline{2}\underline{8}\\
    \hline
    \{5,9\} & \{4\} & \{1,2,3,6,7,8 \} &
\overline{5}\underline{1}\underline{3}\overline{8}\overline{9}\overline{7}\underline{6}\underline{4}\underline{2}\\
	\hline        
	\{8\} & \{1,4\} & \{2,3,5,6,7,9 \} &
\overline{4}\underline{1}\underline{3}\overline{5}\underline{2}\overline{7}\underline{6}\overline{9}\underline{8}\\
	\hline	
\end{array}}
\]
\end{example}

The remainder of this section is devoted to showing that given a positroid $P$ with corresponding decorated permutation $\pi$ whose unblocked positions are $U$, then $Q$ is a positroid such that $P \unlhd_q Q$  is a nonnegatively representable elementary quotient if and only if $Q$ has corresponding decorated permutation $\overrightarrow{\rho_{A(C)}}(\pi)$ for $\emptyset \neq C \subseteq U$.
The approach is summarized in the following diagram:
\[
\xymatrix{
\textrm{(flag) positroid}
	&P \ar[r] \ar[d] 
	& (P,Q) \ar[r] \ar[d]
	& Q \ar[d]\\
\textrm{pipe dream}	
	& D^{\sLe}(P) \ar[r]^<<<<{\cdot\,C} \ar[d]^{\delta} \ar[u]
	& D^{\sLe}(P) \cdot C \ar[r]^>>>>>{\st} \ar[d]^{\delta} \ar[u]
	& D^{\sLe}(Q)\ar[d]^{\delta} \ar[u]\\
\textrm{decorated permutation}
	& \pi \ar[r]^<<<<<<<<{\cdot \sigma_C} \ar[u]
	& \pi \sigma_C \ar[r]^<<<<<<{\cdot \gamma_{T(C)}} \ar[u]
	& \overrightarrow{\rho_{A(C)}}(\pi) \ar[u]
}
\]
The top of the diagram was established in Sections~\ref{subsec:le-diagrams-for-quotients} and~\ref{subsec.stdizing}.
Here, $C$ is a nonempty subset of the unblocked columns of $D^{\sLe}(P)$, and $\cdot C$ denotes appending the row below $D^{\sLe}(P)$ such that the pivot elbow and elbows in the row appear only in the columns indexed by $C$.
We now explain the bottom row of the diagram.

We begin by extending the notion of trivial completion to all partial FPPs, so that we can define a permutation for partial FPPs that is similar to a decorated permutation for partial FPPs with pivot columns in decreasing order.
\begin{definition}
Given a partial flag positroid pipe dream $D\in \setFPP_k(n)$, define its \defn{trivial completion} to be the flag positroid pipe dream $\FPP(u,v)$ whose first $k$ rows is $D$, and whose pivot elbows in the last $n-k$ rows are in decreasing order.
Define the permutation $\defn{\delta(D)} = vu^{-1}$.
Note that there is no colouring function or decoration associated with $\delta(D)$.
\end{definition}
In the special case when $D^{\sLe}\in\setFPP_k^{\sLe}(n)$ has pivot columns in decreasing order, it was shown in Section~\ref{subsec.decperms} that the underlying permutation of its decorated permutation is precisely $\delta(D^{\sLe})$, and the colouring function is completely determined by $u$. 
So in a sense, $\delta(D)$ strongly mimics the idea of a decorated permutation for $D^{\sLe}$, even though $\delta(D)$ does not explicitly carry a colouring.

Note that $\delta(D)$ can be obtained from the partial FPP $D$ by following the pipes starting from the southeast boundary of $D$ to the top in exactly the same way that the decorated permutation is read off of $D^{\sLe}$.

\begin{proposition}\label{prop.sigma_C}
Let $P$ be a positroid with corresponding decorated permutation $\pi$, whose set of unblocked positions is $U$.
Given $\emptyset \neq C =\{c_1,\ldots, c_r\} \subseteq U$ such that $c_1 < \cdots < c_r$, define $\sigma_C = (c_r\,\cdots\, c_1)$.
Then $\delta(D^{\sLe}(P) \cdot C) = \pi \sigma_C$.
\end{proposition}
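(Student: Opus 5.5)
Write $D=D^{\sLe}(P)$ and $D_C=D^{\sLe}(P)\cdot C$. The plan is to read both permutations off the pipes, as in Section~\ref{subsec.decperms}, and compare them column by column. Both $\pi=\delta(D)$ and $\delta(D_C)$ are obtained by following, for each column $j$, the pipe that exits the bottom boundary of the partial pipe dream in column $j$ (or the pipe exiting at the right boundary in the row of the pivot in column $j$). We then trace it upward to the top, where it records its starting label. Since the trivial completion only adds rows whose pivots are in decreasing order, the completion below the last row acts as the identity on this bookkeeping.

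So $\delta(D_C)(j)$ is the top label of the pipe that, in $D_C$, enters the new $(k+1)$-st row at column $j$ from above, or reaches the right boundary of row $k+1$. Equivalently, it is the label $\pi(j')$, where $j'$ is the bottom column of $D$ through which that pipe passed.

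\textbf{Step 1.} Compute the routing in row $k+1$. The pivot elbow sits at column $c_1=\min C$, elbows sit at $c_2,\dots,c_r$, and every other box to the right of $c_1$ is a cross. To the left of $c_1$ there are only vertical pipes.

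\textbf{Step 2.} Follow the pipes through this row. The pipe coming down column $c_1$ turns right at the pivot elbow. It travels through crosses until the elbow at $c_2$, where it turns down and exits the bottom in column $c_2$. The pipe coming down column $c_2$ turns right at the same elbow and continues to $c_3$, and so on. The pipe coming down $c_r$ exits at the right boundary of row $k+1$. All other pipes pass straight down through crosses or vertical pipes.

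\textbf{Step 3.} Translate this into permutations. Reading $\delta(D_C)$ by positions, the bottom position $c_{m+1}$ now carries $\pi(c_m)$, and every other column is unchanged. The new row's right-boundary exit belongs to the new pivot column $c_1$, and it carries $\pi(c_r)$. This is exactly $\delta(D_C)=\pi\circ\sigma_C$ with $\sigma_C(c_{m+1})=c_m$ and $\sigma_C(c_1)=c_r$, which is the cycle $(c_r\,\cdots\,c_1)$.

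The main obstacle is checking that the reading convention for $\delta$ is compatible with the trivial completion, especially that the position label of the right exit in row $k+1$ is the pivot column $c_1$. Concretely, we need that in $vu^{-1}$ the entry at position $u_{k+1}=c_1$ is $v_{k+1}$. This holds because $(vu^{-1})(u_i)=v_i$. For the bottom-exiting pipes we need that the completion's rows route the pipe in bottom column $j$ to a row $i$ with $u_i=j$. This is the same fact used in Section~\ref{subsec.decperms}: in the completion the Rothe diagram below row $k+1$ consists of boxes whose tiles do not reorder these pipes. I would verify this by noting that those rows are identical in the completions of $D$ and of $D_C$, except for the removal of column $c_1$. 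Finally, Definition~\ref{defn.Leblocked} and $C\subseteq U$ guarantee, via Theorem~\ref{thm.char_Q}, that $D_C$ is a valid partial FPP, so $\delta(D_C)$ is defined.
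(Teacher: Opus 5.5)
Your proposal is correct and follows the same route as the paper: tracing the pipes through the appended row (pivot elbow at $c_1$, elbows at $c_2,\dots,c_r$) gives $\delta(D_C)(c_{m+1})=\pi(c_m)$, $\delta(D_C)(c_1)=\pi(c_r)$ and $\delta(D_C)(j)=\pi(j)$ otherwise, which is $\pi\sigma_C$. One wording slip: $\delta(D_C)(j)$ is the label of the pipe \emph{leaving} row $k+1$ through the bottom of column $j$, not the one entering it from above, although your Step~3 already uses the correct convention. Also, the completion step is cleaner if you note that the Rothe diagram below the partial FPP is empty when the remaining pivots decrease.
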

\begin{proof}
By Lemma~\ref{lem:unblocked-in-decperm}, the unblocked positions of $\pi$ correspond with the unblocked columns of $D^{\sLe}(P)$, so by Remark~\ref{rem.noncrossing_pipes_U}, the collection of pipes in $D^{\sLe}(P)$ that exit in the columns indexed by $C\subseteq U$ are non-intersecting.

Let $\pi'=\delta(D^{\sLe}(P) \cdot C)$.
Since $D^{\sLe}(P) \cdot C$ is obtained from $D^{\sLe}(P)$ by appending a row below it with elbow tiles in the columns indexed by $C$ only, then the non-intersecting property of the pipes exiting in the columns $C$ imply that 
\[\pi'(j) =
\begin{cases}
\pi(j), & \hbox{if } j\notin C,\\
\pi(c_{i-1}), & \hbox{if } j=c_i \hbox{ for some } i=2,\ldots, r\\
\pi(c_r), & \hbox{if } j=c_1.
\end{cases}
\]
Therefore, $\pi' = \pi \sigma_C$.
\end{proof}

\begin{proposition}\label{prop.gamma_T}
Let $D\in \setFPP_k(n)$ be a partial flag positroid pipe dream with pivot columns $u_1,\ldots, u_k$, and let $\varpi=\delta(D)$.
For $1\leq i \leq k$, 
\[\delta(\st_i(D))= 
\begin{cases}
\delta(D),
	 &\hbox{if } \varpi(u_i) < \varpi(u_{i+1}),\\
\delta(D)\cdot(u_i\,u_{i+1}),
	 &\hbox{if } \varpi(u_i) > \varpi(u_{i+1}).
\end{cases}
\]
\end{proposition}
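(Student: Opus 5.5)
The plan is to lift the statement to complete flag positroid pipe dreams via trivial completions and then read it off from Proposition~\ref{prop.pipe_exchange}. Throughout I assume we are in the nontrivial case $u_i<u_{i+1}$; otherwise $\st_i(D)=D$ and there is nothing to prove.

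Let $\FPP(u,v)$ be the trivial completion of $D$, so that $\varpi=\delta(D)=vu^{-1}$. In particular $\varpi(u_i)=v_i$ and $\varpi(u_{i+1})=v_{i+1}$, which are the labels of the pipes exiting the right boundary in rows $i$ and $i+1$.

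\textbf{Step 1: $\st_i$ commutes with trivial completion.} The operation $\st_i$ only alters rows $i$ and $i+1$. Apply $\st_i$ to the full pipe dream $\FPP(u,v)$. Its first $k$ rows are $\st_i(D)$, and its last $n-k$ rows, together with their decreasing pivot columns, are untouched. By Proposition~\ref{prop.sti_preserves_FPP} (applied with $k=n$), the result is a flag positroid pipe dream, namely $\FPP(us_i,v')$. It is therefore the trivial completion of $\st_i(D)$, and so
\[\delta(\st_i(D)) = v'(us_i)^{-1} = v's_iu^{-1}.\]
This step relies on the lower rows remaining consistent after $\st_i$. That holds because the pipes leaving the bottom of row $i+1$ keep their columns and their relative order, as observed in the proofs of Proposition~\ref{prop.pipe_exchange} and Corollary~\ref{cor.unblocked-unchanged}.

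\textbf{Step 2: compute the two cases.} Proposition~\ref{prop.pipe_exchange} says $v'=vs_i$ exactly when the pipes exiting rows $i$ and $i+1$ cross, that is, when $j^*$ exists; otherwise $v'=v$.
\begin{itemize}
\item[(a)] If $v'=vs_i$, then $\delta(\st_i(D))=vs_is_iu^{-1}=vu^{-1}=\delta(D)$.
\item[(b)] If $v'=v$, then $\delta(\st_i(D))=vu^{-1}\cdot us_iu^{-1}=\delta(D)\,(u_i\,u_{i+1})$, since $us_iu^{-1}=(u_i\,u_{i+1})$.
\end{itemize}

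\textbf{Step 3: identify the crossing condition with comparing $\varpi(u_i)$ and $\varpi(u_{i+1})$.} In a reduced pipe dream each pair of pipes crosses at most once, and pipes enter along the top in increasing order from left to right. Two pipes $a<b$ that never cross exit with $b$ above $a$; this is consistent with the all-elbow filling having exit permutation $w_0$. Two pipes that cross once exit with $a$ above $b$. Hence the pipes $v_i$ and $v_{i+1}$ cross exactly when $v_i<v_{i+1}$, i.e. when $\varpi(u_i)<\varpi(u_{i+1})$, and this is case (a). Otherwise $\varpi(u_i)>\varpi(u_{i+1})$, which is case (b). This gives exactly the two cases in the statement.

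\textbf{Expected obstacle.} I expect Step 1 to be the main difficulty. One must make precise that the interpretation in Proposition~\ref{prop.pipe_exchange}, ``$j^*$ exists iff the pipes exiting rows $i$ and $i+1$ cross,'' applies inside the completion. I would argue this from $\Gamma$-freeness: a crossing of these two pipes must occur in row $i$ at a box directly above an elbow or pivot elbow in row $i+1$, and that box is precisely $j^*$.
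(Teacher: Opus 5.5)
Your proposal is correct and follows the paper's proof in outline (trivial completion, Proposition~\ref{prop.pipe_exchange}, the same computation of $v'(us_i)^{-1}$, then matching the existence of $j^*$ with $v_i<v_{i+1}$), and your last step goes further than the paper: crossing $\Leftrightarrow$ $v_i<v_{i+1}$ by exit order, together with the observation that any crossing of $v_i$ and $v_{i+1}$ lies in row $i$ directly above an elbow or pivot elbow of row $i+1$, supplies the implication ``$v_i<v_{i+1}\Rightarrow j^*$ exists'', which the paper's written proof leaves out because its second paragraph is only the contrapositive of its first (that elbow is the rightmost one in row $i+1$, so the crossing is not necessarily at $j^*$ itself as you claim, but existence is all you need). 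One correction: the case $u_i>u_{i+1}$ is not vacuous, because there $\st_i(D)=D$ and the second branch would fail whenever $\varpi(u_i)>\varpi(u_{i+1})$, so the statement has to be read with the hypothesis $u_i<u_{i+1}$, which the paper's proof inherits from Proposition~\ref{prop.pipe_exchange}.
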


\begin{proof}
Suppose the trivial completions of $D$ and $\st_i(D)$ are $\FPP(u,v)$ and $\FPP(u',v')$ respectively, so that $\varpi =\delta(D) = vu^{-1}$, and $\varpi'= \delta(\st_i(D)) = v'(u')^{-1}$.

It follows from Proposition~\ref{prop.pipe_exchange} that if there exists $j^*$ such that $D_{i,j^*}$ is a cross and $D_{i+1,j^*}$ is an elbow or pivot elbow, then
\[\varpi'=v'(u')^{-1} = vs_i(us_i)^{-1} = vu^{-1}=\varpi,\]
and otherwise,
\[\varpi'=v'(u')^{-1} = v(us_i)^{-1} = vs_i u^{-1} = \varpi\cdot (u_i\,u_{i+1}).
\]

Note that $\varpi u = v$ if and only if $v_i = \varpi(u_i)$ for all $i$.
So it remains to show that the situation $j^*$ exists if and only if $v_i < v_{i+1}$.

Suppose $D_{i,j^*}$ is a cross and $D_{i+1,j^*}$ is an elbow or pivot elbow.
Since $D$ is $\Gamma$-free, then there are no elbows to the right of $D_{i,j^*}$, so the horizontal pipe through $D_{i,j^*}$ is pipe $v_i$.  
Because $D_{i+1,j^*}$ is an elbow, then the pipe $v_{i+1}$ crosses pipe $v_i$ in $D_{i,h}$ for some $h>j^*$.  
Since pipes cross at most once, then $v_i < v_{i+1}$.

Now suppose $v_i > v_{i+1}$ and suppose for contradiction that there exists $j^*$ such that $D_{i,j^*}$ is a cross and $D_{i+1,j^*}$ is an elbow or pivot elbow.
Let $D_{i,l}$ be the leftmost tile in which pipe $v_i$ occupies the $i$-th row of $D$.
Then $j^* \ngeq l$, for we would have $v_i < v_{i+1}$ as discussed before.
But clearly $j^* \not<l$ because there are no horizontal pipes to the left of $D_{i,l}$.
Therefore, we conclude that such a $j^*$ cannot exist.
\end{proof}
\begin{remark}
Visually, there exists $j^*$ such that $D_{i,j^*}$ is a cross and $D_{i+1,j^*}$ is an elbow or pivot elbow if and only if the pipes $v_i$ and $v_{i+1}$ cross in $D$.
\end{remark}

The main result of this section is the following characterization of nonnegatively representable elementary positroid quotients via cyclic shifts of decorated permutations, with an explicit description of the frozen set (the complement of the cyclic shift set).

\begin{theorem}\label{thm.covering_decperms}
Let $P$ be a rank $k$ positroid with corresponding decorated permutation $\pi$, whose set of unblocked positions is $U$. 
The rank $k+1$ positroids $Q$ such that $P\unlhd_q Q$ are those whose decorated permutations are $\overrightarrow{\rho_{A(C)}}(\pi)$ for nonempty $C\subseteq U$.
\end{theorem}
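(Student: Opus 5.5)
Following the commutative diagram above, I will route everything through partial FPPs. By Theorem~\ref{thm.char_Q}, Corollary~\ref{cor.PFPP_Q} and Lemma~\ref{lem:unblocked-in-decperm}, the positroids $Q$ with $P\unlhd_q Q$ are exactly those encoded by $D^{\sLe}(P)\cdot C$ for nonempty $C\subseteq U$, where $U$ is the set of unblocked positions of $\pi$; this map is a bijection. By Theorem~\ref{thm.sti_preserves_bases}, $D^{\sLe}(Q)=\st(D^{\sLe}(P)\cdot C)$. It therefore suffices to show that the decorated permutation read from $\st(D^{\sLe}(P)\cdot C)$ is $\overrightarrow{\rho_{A(C)}}(\pi)$, both as an underlying permutation and in its decoration.

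For the underlying permutation, Proposition~\ref{prop.sigma_C} gives $\delta(D^{\sLe}(P)\cdot C)=\pi\sigma_C$. The appended row has its pivot in column $c_1=\min(C)$, and the pivots $u_1>\cdots>u_k$ of $D^{\sLe}(P)$ lie above it. Standardization bubbles this new pivot upward by $\st_k,\st_{k-1},\ldots$ until it passes all $u_i<c_1$. By Proposition~\ref{prop.gamma_T}, each such step either fixes $\delta$ or multiplies it on the right by the transposition $(u_i\,c_1)$. A transposition occurs exactly when the current value at $c_1$ is smaller than $\varpi(u_i)$, and afterwards the value at $c_1$ becomes $\varpi(u_i)$. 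The value at $c_1$ starts as $\pi(c_r)=\pi(\max C)$, and the $u_i<c_1$ are the $2$-coloured positions below $\min(C)$, scanned from right to left as the pivot moves up. I want the set of $u_i$ that trigger a swap to be exactly the greedy chain $T(C)$ of Definition~\ref{defn.TC}.

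There is a mismatch to resolve here: Definition~\ref{defn.TC} scans from the smallest position upward, while bubbling scans the $u_i$ from largest to smallest. I expect this ordering mismatch to be the main obstacle, and I will verify carefully which greedy order the bubbling actually realizes. If the two differ, I plan to use the $\Le$/$\Gamma$-freeness of $D^{\sLe}(P)$ to show that the $2$-coloured values $\pi(u_i)$ for $u_i<c_1$ exceeding $\pi(c_r)$ are forced into a monotone configuration in which both greedy procedures select the same set. The monotonicity of unblocked values from the remark after Definition~\ref{defn.unblockedposn} should help with this. Once the two sets agree, the composite of the transpositions applied in order, $(u_{i_1}\,c_1)(u_{i_2}\,c_1)\cdots$, together with $\sigma_C$, is the cycle $\sigma_{C\sqcup T(C)}$ on the ordered set $C\sqcup T(C)$. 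This gives $\delta(D^{\sLe}(Q))=\pi\sigma_{C\sqcup T(C)}=\overrightarrow{\rho_{A(C)}}(\pi)$ as permutations. The identification rests on the facts that the elements of $T(C)$ all lie below $\min(C)$ and increase in value along the chain.

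Finally I check the decorations. The $2$-coloured positions of $Q$ are the pivot columns of $D^{\sLe}(Q)$, namely $\{u_1,\ldots,u_k,c_1\}$. Positions in $A(C)$ keep their values and colours. Shifted positions are then verified via the excedance conventions of Definition~\ref{defn.decperm}: a shifted value that is a fixed point sits at a pivot and so receives an overline, which matches Definition~\ref{defn.rcshift}. Since the map $C\mapsto Q$ is a bijection onto the set of $Q$ with $P\unlhd_q Q$, the theorem follows.
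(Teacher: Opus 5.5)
Your overall route matches the paper's. You reduce via Theorem~\ref{thm.char_Q} and Theorem~\ref{thm.sti_preserves_bases} to computing $\delta(\st(D^{\sLe}(P)\cdot C))$, start from $\pi\sigma_C$ (Proposition~\ref{prop.sigma_C}), and track the upward bubbling of the new pivot with Proposition~\ref{prop.gamma_T}. The gap is in the one step with real content, identifying the swap set with $T(C)$, and it comes from misreading the bubbling order.

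Since $u_1>\cdots>u_k$, the first operator $\st_k$ compares the new pivot with $u_k$, the \emph{leftmost} pivot column. Then $\st_{k-1}$ compares it with $u_{k-1}$, and so on up to $u_a$. So the $2$-coloured positions below $c_1=\min(C)$ are visited in \emph{increasing} column order, exactly as in Definition~\ref{defn.TC}, and there is no mismatch. The value at $c_1$ starts at $\pi(\max C)$. A swap at $u_i$ happens iff $\pi(u_i)$ exceeds the current value, and afterwards the value at $c_1$ becomes $\pi(u_i)$. That is the greedy rule verbatim, so the swaps occur precisely at $t_1,\ldots,t_s$ in that order, and $\pi\sigma_C(t_1\,c_1)\cdots(t_s\,c_1)=\pi\sigma_{C\sqcup T(C)}$.

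Your fallback cannot be carried out. $\Gamma$-freeness does not force these values into a monotone configuration, and the two greedy orders genuinely disagree. In the paper's running example $\pi=\overline{5}\underline{13}\overline{9}\underline{2}\overline{7}\underline{648}$ with $C=\{8\}$, the $2$-coloured positions $1,4,6$ below $8$ carry values $5,9,7$, all exceeding $\pi(8)=4$. The left-to-right greedy gives $T(C)=\{1,4\}$, matching the paper's table, while a right-to-left greedy gives $\{4,6\}$. If bubbling really scanned right to left, the theorem with this $T(C)$ would be false. So the step must be settled by getting the order right, not by reconciling two orders.

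Your decoration argument is fine and arguably cleaner than the paper's: it reads off that the $2$-coloured positions of $Q$ are the pivot columns $\{u_1,\ldots,u_k,c_1\}$, which standardization only permutes among rows. However, your claim that every shifted fixed point sits at a pivot needs a line you omit. The shifted non-pivot positions are $c_2,\ldots,c_r$, and none of them can be a fixed point, because $\overrightarrow{\rho_{A(C)}}(\pi)(c_i)=\pi(c_{i-1})<\pi(c_i)\le c_i$. Here the strict inequality holds because unblocked values increase with position, and the last because $c_i$ is $1$-coloured. Without this, such a $c_i$ would be overlined by the cyclic shift but underlined in $D^{\sLe}(Q)$.
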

\begin{proof}
By Theorems~\ref{thm.char_Q} and~\ref{thm.sti_preserves_bases} there is a correspondence between rank $k+1$ positroids $Q$ such that $P \unlhd_q Q$, and nonempty subsets $C\subseteq U$, where $D^{\sLe}(Q) = \st( D^{\sLe}(P) \cdot C)$.
Therefore, it remains to show that the decorated permutation of $Q$ is given by
\[\delta(D^{\sLe}(Q))=\delta(\st( D^{\sLe}(P) \cdot C))) = \overrightarrow{\rho_{A(C)}} (\pi).
\]

Suppose $C=\{c_1,\ldots, c_r\} \subseteq U$ and let $T(C)=\{t_1,\ldots, t_s\}$ be as defined in Definition~\ref{defn.TC} so that 
\[t_1< \cdots < t_s < c_1 < \cdots < c_r
\quad\hbox{and}\quad
\pi(c_1) < \cdots < \pi(c_r) < \pi(t_1) < \cdots < \pi(t_s).\]
Denote $D' = D^{\sLe}(P) \cdot C$ and $\pi' = \delta(D')$. 
By Proposition~\ref{prop.sigma_C},
\[\pi' = \pi \sigma_C = \pi \cdot (c_r \,\cdots\, c_1),\]
so that in particular, $\pi'(c_1) =\pi(c_r) = \pi(\max(C))$, and $\pi'(j) = \pi(j)$ if $j\notin C$.

Suppose $D^{\sLe}(P)$ has pivot columns $u_1>\cdots > u_k$ so that $D'$ has the same pivot columns together with $u_{k+1}=c_1$.
We note that $\pi'(u_i)$ is a $2$-coloured value for $i=1,\ldots, k$.
To standardize $D'$, we apply the operations $\st_a \circ \cdots \circ \st_{k-1} \circ \st_k$ where $a$ is the index such that
\[u_1>\cdots > u_{a-1} > c_1 > u_a > \cdots > u_k. \]
Beginning with $\st_k$, Proposition~\ref{prop.gamma_T} states that
\[
\delta(\st_k(D')) = 
\begin{cases}
\pi', 
	&\hbox{if } \pi'(u_k) < \pi'(c_1),\\
\pi' \cdot (u_k \ c_1),
	&\hbox{if } \pi'(u_k) > \pi'(c_1).\\
\end{cases}
\]
Since $u_1,\ldots, u_k\notin C$, then $\pi'(u_k)= \pi(u_k)$, and so $\delta(\st_k(D'))= \pi'\cdot (u_k\, c_1)$ if and only if  $\pi(u_k) > \pi(\max(C))$.
That is to say if and only if $u_k = t_1 \in T(C)$.
Otherwise, $\delta(\st_k(D'))=\pi'$.

Successively applying the standardization operators until $D'$ is standardized, we conclude that 
\[\delta(\st(D')) = \pi'(t_1\, c_1)(t_2\, c_1) \cdots (t_s \, c_1),\]
where $t_s < u_a < c_1=\min(C)$.

Defining $\gamma_{T(C)}=(t_1\, c_1)(t_2\, c_1) \cdots (t_s \, c_1) = (c_1\, t_s\, \cdots\, t_1)$, observe that 
\[\sigma_C \gamma_{T(C)} 
= (c_r \,\cdots\,c_1)(c_1\, t_s\, \cdots\, t_1) 
= (c_r \,\cdots\,c_1\, t_s\, \cdots\, t_1)
=\sigma_{C\sqcup T(C)},
\]
and therefore  
\[\delta(D^{\sLe}(Q))
=\delta(\st(D'))
=\pi' \gamma_{T(C)}
=\pi \sigma_C \gamma_{T(C)}
=\pi \sigma_{C\sqcup T(C)}
=\overrightarrow{\rho_{A(C)}}(\pi)
\]
as permutations.

Lastly, we verify that the colouring functions $c^\delta$ on $\delta(D^{\sLe}(Q))$ and $c^\rho$ on $\overrightarrow{\rho_{A(C)}}(\pi)$ are equal.
Since the underlying permutations coincide, we only need to check this on the fixed points of the permutation, which for convenience we notate as $\widehat\pi =\overrightarrow{\rho_{A(C)}}(\pi)=\delta(D^{\sLe}(Q))$.
Let $c$ denote the colouring function on $\pi$.

First suppose $j$ is a fixed point of $\widehat\pi$ and $j\notin C\sqcup T(C)$.
In terms of $\overrightarrow{\rho_{A(C)}}(\pi)$, this means $j$ was not cyclically shifted and is a fixed point of $\pi$, so $c^\rho(j)= c(j)$ trivially. 
In terms of $\delta(D^{\sLe}(Q))$, if $c^\delta(j)=1$, then pipe $j$ was simply extended by a cross tile in the $(k+1)$-st row of $D^{\sLe}(Q)$, so $c(j) = c^\delta(j) = 1$. 
Otherwise $c^\delta(j)=2$.
If $j>\min(C)$, then no standardization operation on $D^{\sLe}(P)\cdot C$ interacted with the pipe $\pi(j)$, so this pipe exits at the right boundary of $D^{\sLe}(P)$ and $c(j)=c^\delta(j)=2$.
If $j<\min(C)$, then $j$ is not in the right cyclic shift set of $\pi$ means that the pipe $\pi'(\min(C))$ intersects pipe $\pi'(j)$ (where $\pi'=\pi \sigma_C$) in $D^{\sLe}(P)\cdot C$.
After the standardization operation is applied to the row containing a pivot elbow in the $j$-th column, a cross tile will be replaced with an elbow tile in that row, so in fact such a $j$ cannot be a fixed point of $\delta(D^{\sLe}(Q))$.
Altogether, we have $c^\delta(j) = c(j) = c^\rho(j)$ in this case.

Now suppose $j$ is a fixed point of $\widehat\pi$ and $j\in C\sqcup T(C)$.
We first observe that $j\neq c_i$ for $i=2,\ldots, r$, for if it was, then by the right cyclic shift operator,
\[c_i= \widehat\pi(c_i)=\pi(c_{i-1}) < \pi(c_i) \leq c_i,\]
which is impossible.
This leaves the possibility of $j=\min(C)$ or $j\in T(C)$. 
In this case, the pipe $\widehat\pi(j)$ exits at the right boundary of $D^{\sLe}(Q)$ so $c^\delta(j)=2$.
On the other hand, since $j$ is in the right cyclic shift set of $\pi$ and is a fixed point of $\overrightarrow{\rho_{A(C)}}(\pi)$, then $c^\pi(j)=2$ by the definition of the right cyclic shift operator.
Therefore, $c^\delta=c^\rho$.
\end{proof}

\begin{corollary}
Given $\pi\in \fD_n$, let $U$ be the set of unblocked positions of $\pi$ and choose $\emptyset \neq C \subseteq U$.
If the $2$-coloured positions of $\pi$ are $\{i_1,\ldots, i_k\}$, then the $2$-coloured positions of $\overrightarrow{\rho_{A(C)}}(\pi)$ are $\{\min(C), i_1,\ldots, i_k\}$.
\qed
\end{corollary}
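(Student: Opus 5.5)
The plan is to read off the 2-coloured positions of $\widehat\pi=\overrightarrow{\rho_{A(C)}}(\pi)$ from the pipe dream side of the diagram preceding Proposition~\ref{prop.sigma_C}. There is nothing to do beyond Theorem~\ref{thm.covering_decperms} and the description of the colouring for standardized partial FPPs. By that theorem, $\widehat\pi$ is the decorated permutation of $Q$, where $D^{\sLe}(Q)=\st(D^{\sLe}(P)\cdot C)$. By Section~\ref{subsec.decperms}, the colouring of a standardized partial FPP is completely determined by its pivot columns: position $j$ is $2$-coloured exactly when $j$ is a pivot column. So the task reduces to identifying the pivot columns of $D^{\sLe}(Q)$.

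Those pivot columns come from two sources. The partial FPP $D^{\sLe}(P)$ has pivot columns $u_1>\cdots>u_k$, and these are precisely the $2$-coloured positions $\{i_1,\ldots,i_k\}$ of $\pi$, by the same description of the colouring. Appending the row $\cdot\,C$ adds one pivot elbow, in column $\min(C)$. Moreover $\min(C)$ is not among the $u_i$, since $C\subseteq U$ and pivot columns are blocked (Definition~\ref{defn.Leblocked}).

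It remains to see that standardization does not change the set of pivot columns. Each $\st_i$ only swaps which row carries which pivot elbow: Proposition~\ref{prop.pipe_exchange} gives $u'=us_i$, which permutes the rows of the pivots but leaves the column set unchanged. Hence the pivot column set of $D^{\sLe}(Q)$ is $\{\min(C),i_1,\ldots,i_k\}$, and these are exactly the $2$-coloured positions of $\widehat\pi$.

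As a consistency check, this agrees with the colouring comparison carried out at the end of the proof of Theorem~\ref{thm.covering_decperms}, where the fixed points in $\{\min(C)\}\cup T(C)$ come out $2$-coloured and the remaining fixed points keep their colour. The only point needing care is to use the pivot-column criterion for the decoration rather than the excedance criterion. Then fixed points need no separate treatment, because the colouring of a standardized diagram is defined through its pivots. I do not expect any real obstacle.
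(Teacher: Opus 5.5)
Your proof is correct and is essentially the argument the paper leaves implicit behind the \qed. By (the proof of) Theorem~\ref{thm.covering_decperms}, $\overrightarrow{\rho_{A(C)}}(\pi)$ is the decorated permutation of $\st(D^{\sLe}(P)\cdot C)$, and its $2$-coloured positions are that diagram's pivot columns $u_1,\ldots,u_k,\min(C)$: appending the row $\cdot\,C$ adds one pivot in column $\min(C)$, and each $\st_i$ only permutes which rows carry the pivots.
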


\begin{example}\label{eg.grand_finale}
We finish our running example for the partial FPP $D^{\sLe}(P)$ from Example~\ref{eg.PFPP}.
We computed in Example~\ref{eg.Ddecperm} that the corresponding decorated permutation is $\pi =\overline{5}\underline{13}\overline{9}\underline{2}\overline{7}\underline{648}$.
Recall that the set of unblocked columns of $D^{\sLe}(P)$/ unblocked positions of $\pi$ is $U=\{2,5,8,9\}$.

Selecting $C=\{5,9\}\subseteq U$, we computed in Example~\ref{eg.blocked_decperm} that $T(C) =\{4\}$, so $\sigma_{C \sqcup T(C)} = (9 \, 5 \, 4)$ and hence the decorated permutation of the partial FPP $D^{\sLe}(Q)$ is
\[\overrightarrow{\rho_{A(C)}}(\pi) 
= \pi\sigma_{C \sqcup T(C)}
= \overline{5}\underline{13}\overline{897}\underline{642}.
\]
See Figure~\ref{fig.grand_finale}.
\end{example}

\begin{figure}[ht!]
\begin{center}
\begin{tikzpicture}
\begin{scope}[scale=.5, xshift=0]
\vertex[color=gray!40]() at (5.5, 2.5) {};
\vertex[color=gray!40]() at (3.5, 1.5) {};
\vertex[color=gray!40]() at (0.5, 0.5) {};

\draw[fill, color=mediumseagreen!20] (1,-1) rectangle (2,0);
\draw[fill, color=mediumseagreen!20] (4,-1) rectangle (5,0);
\draw[fill, color=mediumseagreen!20] (7,-1) rectangle (9,0);

\draw [black,thick,domain=180:270] plot ({6+.5*cos(\x)}, {3+.5*sin(\x)});
\draw [black,thick,domain=180:270] plot ({4+.5*cos(\x)}, {2+.5*sin(\x)});
\draw [black,thick,domain=180:270] plot ({1+.5*cos(\x)}, {1+.5*sin(\x)});
\draw [black,thick] (0.5,1) -- (0.5,3);
\draw [black,thick] (1.5,1) -- (1.5,3);
\draw [black,thick] (2.5,1) -- (2.5,3);
\draw [black,thick] (3.5,2) -- (3.5,3);
\draw [black,thick] (4.5,2) -- (4.5,3);
\draw [black,thick] (3,0.5) -- (4,0.5);
\draw [black,thick] (5,0.5) -- (6,0.5);
\draw [black,thick] (5,1.5) -- (6,1.5);
\cross[thick](3,1);
\cross[thick](7,1);
\cross[thick](9,1);
\cross[thick](7,2);
\cross[thick](8,3);
\cross[thick](9,3);
\elbow[thick](2,1); 
\elbow[thick](5,1); 
\elbow[thick](8,1);
\elbow[thick](5,2);
\elbow[thick](8,2);
\elbow[thick](9,2);
\elbow[thick](7,3);

\draw[very thin, color=blue!50] (0,-1) grid (9,3);
\draw[very thick, color=blue!50] (0,0)--(9,0)--(9,3)--(0,3)--(0,0);
\node[] at (4.5, 4.5) {\footnotesize\textcolor{blue!70}{$D^{\sLe}(P)$}};
	\node at (0.5,3.5) {\color{black}{\tiny$1$}};
	\node at (1.5,3.5) {\color{black}{\tiny$2$}};
	\node at (2.5,3.5) {\color{black}{\tiny$3$}};
	\node at (3.5,3.5) {\color{black}{\tiny$4$}};
	\node at (4.5,3.5) {\color{black}{\tiny$5$}};
	\node at (5.5,3.5) {\color{black}{\tiny$6$}};
	\node at (6.5,3.5) {\color{black}{\tiny$7$}};
	\node at (7.5,3.5) {\color{black}{\tiny$8$}};
	\node at (8.5,3.5) {\color{black}{\tiny$9$}};
	
    \node[] at (10.1,1.7){\footnotesize$\cdot C$};
    \node[] at (10.1,1){$\longrightarrow$};

\node at (4.5,-5){$\pi =
\overline{5}
\underline{1}
\underline{3}
\overline{9}
\underline{2}
\overline{7}
\underline{6}
\underline{4}
\underline{8}
$};

\node[] at (10.1,-4.3){\scriptsize$\cdot (9\,5)$};
\node[] at (10.1,-5){$\longrightarrow$};
\draw[-stealth, thick] (4.5,-1.3)--(4.5,-4.3);
\node[label={\footnotesize$\delta$}] at (4,-3.6){};
\end{scope}

\begin{scope}[scale=.5, xshift=320]
\vertex[color=gray!40]() at (5.5, 2.5) {};
\vertex[color=gray!40]() at (3.5, 1.5) {};
\vertex[color=gray!40]() at (0.5, 0.5) {};
\vertex[color=gray!40]() at (4.5, -.5) {};

\draw [black,thick,domain=180:270] plot ({6+.5*cos(\x)}, {3+.5*sin(\x)});
\draw [black,thick,domain=180:270] plot ({4+.5*cos(\x)}, {2+.5*sin(\x)});
\draw [black,thick,domain=180:270] plot ({1+.5*cos(\x)}, {1+.5*sin(\x)});
\draw [black,thick,domain=180:270] plot ({5+.5*cos(\x)}, {0+.5*sin(\x)});
\draw [black,thick] (0.5,1) -- (0.5,3);
\draw [black,thick] (1.5,-1) -- (1.5,0);
\draw [black,thick] (1.5,1) -- (1.5,3);
\draw [black,thick] (2.5,-1) -- (2.5,0);
\draw [black,thick] (2.5,1) -- (2.5,3);
\draw [black,thick] (3.5,2) -- (3.5,3);
\draw [black,thick] (4.5,2) -- (4.5,3);
\draw [black,thick] (3,0.5) -- (4,0.5);
\draw [black,thick] (5,-.5) -- (6,-.5);
\draw [black,thick] (5,0.5) -- (6,0.5);
\draw [black,thick] (5,1.5) -- (6,1.5);

\cross[thick](3,1);
\cross[thick](7,1);
\cross[thick](9,1);
\cross[thick](7,0); 
\cross[thick](7,2);
\cross[thick](8,0); 
\cross[thick](8,3);
\cross[thick](9,3);
\elbow[thick](2,1); 
\elbow[thick](9,0); 
\elbow[thick](5,1); 
\elbow[thick](8,1);
\elbow[thick](5,2);
\elbow[thick](8,2);
\elbow[thick](9,2);
\elbow[thick](7,3);

\draw[very thin, color=blue!50] (0,-1) grid (9,3);
\node at (4.5, 4.5) {\footnotesize$D^{\sLe}(P)\cdot C$};

	\node at (0.5,3.5) {\color{black}{\tiny$1$}};
	\node at (1.5,3.5) {\color{black}{\tiny$2$}};
	\node at (2.5,3.5) {\color{black}{\tiny$3$}};
	\node at (3.5,3.5) {\color{black}{\tiny$4$}};
	\node at (4.5,3.5) {\color{black}{\tiny$5$}};
	\node at (5.5,3.5) {\color{black}{\tiny$6$}};
	\node at (6.5,3.5) {\color{black}{\tiny$7$}};
	\node at (7.5,3.5) {\color{black}{\tiny$8$}};
	\node at (8.5,3.5) {\color{black}{\tiny$9$}};
	
    \node[] at (10.1,1.7){\footnotesize$\st$};
    \node[] at (10.1,1){$\longrightarrow$};

\node at (4.5,-5){$
\overline{5}
\underline{1}
\underline{3}
\overline{9}
\overline{8}
\overline{7}
\underline{6}
\underline{4}
\underline{2}
$};

\node[] at (10.1,-4.3){\scriptsize$\cdot (5\,4)$};
\node[] at (10.1,-5){$\longrightarrow$};
\draw[-stealth, thick] (4.5,-1.3)--(4.5,-4.3);
\node[label={\footnotesize$\delta$}] at (4,-3.6){};
\end{scope}

\begin{scope}[scale=.5, xshift=640]
\vertex[color=gray!40]() at (5.5, 2.5) {};
\vertex[color=gray!40]() at (4.5, 1.5) {};
\vertex[color=gray!40]() at (3.5, 0.5) {};
\vertex[color=gray!40]() at (0.5, -.5) {};

\draw [black,thick,domain=180:270] plot ({6+.5*cos(\x)}, {3+.5*sin(\x)});
\draw [black,thick,domain=180:270] plot ({5+.5*cos(\x)}, {2+.5*sin(\x)});
\draw [black,thick,domain=180:270] plot ({4+.5*cos(\x)}, {1+.5*sin(\x)});
\draw [black,thick,domain=180:270] plot ({1+.5*cos(\x)}, {0+.5*sin(\x)});
\draw [black,thick] (0.5,0) -- (0.5,3);
\draw [black,thick] (1.5,0) -- (1.5,3);
\draw [black,thick] (2.5,0) -- (2.5,3);
\draw [black,thick] (3.5,1) -- (3.5,3);
\draw [black,thick] (4.5,2) -- (4.5,3);
\draw [black,thick] (3,-.5) -- (6,-.5);
\draw [black,thick] (4,0.5) -- (6,0.5);
\draw [black,thick] (5,1.5) -- (6,1.5);

\cross[thick](3,0);
\cross[thick](7,0);
\cross[thick](7,1); 
\cross[thick](7,2);
\cross[thick](8,0); 
\cross[thick](8,3);
\cross[thick](9,3);
\elbow[thick](2,0); 
\elbow[thick](9,0);
\elbow[thick](9,1); 
\elbow[thick](8,1);
\elbow[thick](8,2);
\elbow[thick](9,2);
\elbow[thick](7,3);

\draw[thin, color=blue!50] (0,-1) grid (9,3);
\node at (4.5, 4.5) {\footnotesize$D^{\sLe}(Q)$};

	\node at (0.5,3.5) {\color{black}{\tiny$1$}};
	\node at (1.5,3.5) {\color{black}{\tiny$2$}};
	\node at (2.5,3.5) {\color{black}{\tiny$3$}};
	\node at (3.5,3.5) {\color{black}{\tiny$4$}};
	\node at (4.5,3.5) {\color{black}{\tiny$5$}};
	\node at (5.5,3.5) {\color{black}{\tiny$6$}};
	\node at (6.5,3.5) {\color{black}{\tiny$7$}};
	\node at (7.5,3.5) {\color{black}{\tiny$8$}};
	\node at (8.5,3.5) {\color{black}{\tiny$9$}};

\node at (4.5,-5){$ 
\overline{5}
\underline{1}
\underline{3}
\overline{8}
\overline{9}
\overline{7}
\underline{6}
\underline{4}
\underline{2}
=\overrightarrow{\rho_{A(C)}}(\pi)$};

\draw[-stealth, thick] (4.5,-1.3)--(4.5,-4.3);
\node[label={\footnotesize$\delta$}] at (4,-3.6){};
\end{scope}
\end{tikzpicture}
\end{center}
\caption{Two combinatorial characterizations of nonnegatively representable elementary positroid quotients, from partial flag positroid pipe dreams to decorated permutations.
See Example~\ref{eg.grand_finale}.}
\label{fig.grand_finale}
\end{figure}

Given a rank $k$ positroid $P$ with corresponding decorated permutation $\pi$, we can now compute the decorated permutation of any rank $k+1$ positroid $Q$ such that $P \unlhd_q Q$ via right cyclic shifts.
For completeness, we can also describe the decorated permutation of any rank $k-1$ positroid $N$ such that $P \unlhd_q N$, via left cyclic shifts.

\begin{definition}\label{defn.lcshift}
For $A \subseteq [n]$, let $b_1< \cdots < b_\ell$ be the elements of $[n]\backslash A$ and define the permutation $\tau_{[n]\backslash A} = (b_1 \,\cdots\, b_\ell)$ in cycle notation.
The \defn{left cyclic shift operator} $\overleftarrow{\rho_A}: \fD_n \rightarrow \fD_n$ is defined as follows.
Given a decorated permutation $\pi$ on $[n]$, let $\overleftarrow{\rho_A}(\pi) = \pi \tau_{[n]
\backslash A}$ be the decorated permutation such that if $j\in [n]\backslash A$ is a fixed point of $\overleftarrow{\rho_A}(\pi)$ then $c(j) =1$.
\end{definition}
This is completely analogous to the right cyclic shift operator; values of $\pi$ in $A$ are frozen, all other values of $\pi$ are cyclically shifted one position to the left, and any shifted element that is a fixed point of $\overleftarrow{\rho_A}(\pi)$ is decorated with an underline.

\begin{definition}
Let $\pi$ be a decorated permutation on $[n]$.  
Then $\pi(j)$ is a \defn{left unblocked value} of $\pi$ if $\pi(j)$ is $2$-coloured and for all $j'<j$ such that $\pi(j')$ is $2$-coloured we have $\pi(j') < \pi(j)$.
We also say that $j$ is a \defn{left unblocked position} of $\pi$. 

Letting $S$ be the set of left unblocked positions of $\pi$, then for nonempty $R \subseteq S$, let $O(R)=\{i_1,\ldots, i_m\}$ be the maximal subset of $1$-coloured positions of $\pi$ such that $n\geq i_1 > \cdots > i_m > \max(R)$ and $\pi(\min R) > \pi(i_1) > \cdots > \pi(i_m)$, greedily chosen.
Define $A(R) = [n]\backslash (R \sqcup O(R))$.
\end{definition}

\begin{theorem}
Let $P$ be a rank $k$ positroid with corresponding decorated permutation $\pi$, whose set of left unblocked positions is $S$.
The rank $k-1$ positroids $N$ such that $P \unrhd_q N$ are those whose decorated permutations are $\overleftarrow{\rho_{A(R)}} (\pi)$ for nonempty $R\subseteq S$.
\qed
\end{theorem}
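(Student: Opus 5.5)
The plan is to deduce this from Theorem~\ref{thm.covering_decperms} by matroid duality, rather than redo the pipe dream analysis. First, if $(N,P)$ is a two-step flag positroid with $\rank N = k-1$, then $(P^*, N^*)$ is a two-step flag positroid with $\rank P^* = n-k$ and $\rank N^* = n-k+1$. Quotients dualize by the standard matroid fact $M\leq_q M'$ iff $M'^*\leq_q M^*$. For nonnegative representability, I would take the orthogonal complement of a nonnegative flag $V_{k-1}\subset V_k$ and apply the alternating sign twist $x_j\mapsto (-1)^j x_j$. This sends $\Gr^{\geq0}_{k,n}$ to $\Gr^{\geq0}_{n-k,n}$ with complementary Pl\"ucker coordinates, and it reverses inclusions. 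So $P\unrhd_q N$ if and only if $P^*\unlhd_q N^*$, and the map $N\mapsto N^*$ is a bijection between the two sets of quotients in question.

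Second, I would use Postnikov's description of duality on decorated permutations. If $\pi$ is the decorated permutation of $P$, then $P^*$ has decorated permutation $\pi^{-1}$, with each fixed point's colour swapped. I would then check that this dictionary carries the right cyclic shift data of $\pi^{-1}$ to the left cyclic shift data of $\pi$, in the following order.

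\begin{itemize}
\item[(a)] Inversion swaps colours: $j$ is $1$-coloured in $\pi$ exactly when $\pi(j)$ is $2$-coloured in $\pi^{-1}$.
\item[(b)] The unblocked positions of $\pi^{-1}$ correspond to the left unblocked positions of $\pi$. Explicitly, position $\pi(j)$ is unblocked in $\pi^{-1}$ exactly when $j\in S$, because the condition ``no smaller underlined value to the right'' becomes ``no larger overlined value to the left'' after inverting.
\item[(c)] For $C=\pi(R)$, the greedy set $T(C)$ for $\pi^{-1}$ equals $\pi(O(R))$.
\end{itemize}

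It may be cleaner to conjugate by $w_0$ as well, that is, to use $j\mapsto n+1-j$ together with inversion, so that ``right'' becomes ``left'' and $\min$ and $\max$ trade places in the definitions. I would choose whichever composite of inversion and reversal makes the identities in (b) and (c) literal.

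Third, I would verify the operator identity. If $\pi^{-1}$ is sent to $\pi^{-1}\sigma_{C\sqcup T(C)}$, its inverse is $\sigma^{-1}_{C\sqcup T(C)}\pi$. Rewriting this as $\pi\tau_{R\sqcup O(R)}$ requires $\pi^{-1}\sigma^{-1}_{C\sqcup T(C)}\pi=\tau_{R\sqcup O(R)}$, which is a cycle-relabelling computation using (c). Under inversion and colour swap, the decoration rule ``new fixed points get an overline'' then becomes ``new fixed points get an underline.''

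The main obstacle is the bookkeeping in (b) and (c): the greedy orders and inequalities must match exactly, and the cycle $\sigma^{-1}$ conjugated by $\pi$ must come out as the increasing cycle $(b_1\,\cdots\,b_\ell)$ on the complement of $A(R)$. I would check this on Example~\ref{eg.blocked_decperm} first to fix the conventions, and only if duality fails to line up would I fall back on the alternative. That alternative reruns the proof of Theorem~\ref{thm.covering_decperms} with $\phi$ applied to the pair $(N,P)$, deleting a row of $D^{\sLe}(P)$ rather than appending one.
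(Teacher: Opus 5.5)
Your argument is correct, but it runs in the opposite logical direction from the paper. The paper gives no separate proof of this statement. It presents it as the ``completely analogous'' left-hand counterpart of Theorem~\ref{thm.covering_decperms}, so one is meant to redo the pipe-dream and standardization analysis. The paper then \emph{uses} the statement to prove self-duality of $(\Pi_n,\unlhd_q)$ in Proposition~\ref{prop.selfdual}, together with essentially your computations (b), (c) and the conjugation identity $\pi^{-1}\sigma^{-1}_{C\sqcup T(C)}\pi=\tau_{R\sqcup O(R)}$.

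You instead prove the order-reversing duality of $\unlhd_q$ directly, via orthogonal complements and the alternating sign twist. This makes self-duality immediate and turns the left-shift theorem into a formal corollary of Theorem~\ref{thm.covering_decperms} applied to $P^*$. What your route buys is a genuine proof that needs no unwritten ``left'' standardization argument and has no circular dependence on Proposition~\ref{prop.selfdual}. The cost is importing the standard fact that $\Delta_I(\mathrm{alt}(V^\perp))$ is a fixed nonzero multiple of $\Delta_{[n]\setminus I}(V)$, together with the fact (which the paper also uses, citing Oh) that $P^*$ has decorated permutation $\pi^{-1}$ with fixed-point colours swapped. You also lose the explicit pipe-dream description of each $N$ that a direct argument would provide.

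Three small tidy-ups:
\begin{enumerate}
\item After twisting, each constituent is nonnegative only up to a global sign. You should say that flipping the sign of the last row, if needed, gives a single matrix with nonnegative flag minors.
\item Drop the hedge about also conjugating by $w_0$. Reversal merely relabels the ground set (it preserves rank and $\unlhd_q$) and would add an extra translation step. Pure inversion is exactly what makes (b) literal, as you already stated it.
\item The literal translation of the greedy rule for $T(C)$ in $\pi^{-1}$ picks its chain among the $1$-coloured positions $o>\max R$ of $\pi$ with $\pi(o)<\pi(\min R)$, taking the smallest $\pi$-value first. By contrast, $O(R)$ is built from the right. These chains coincide: the smallest-value chain ends at the rightmost such $o$, and each of its members is the rightmost candidate lying southwest of its successor. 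You also need $\min C=\pi(\min R)$ and $\max C=\pi(\max R)$; these hold because $\pi$ is increasing on left unblocked positions.
\end{enumerate}
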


\subsection{The poset of nonnegatively representable positroid quotients}
We summarize some properties of the poset $(\Pi_n, \unlhd_q)$ of nonnegatively representable positroid quotients that we have developed in this article.

\begin{definition}
The elements of the poset $(\Pi_n, \unlhd_q)$ are the positroids on $[n]$.  
Given two positroids $P$ and $Q$, we have the covering relation $P \lessdot_q Q$ if and only if $P \unlhd_q Q$ is a nonnegatively representable elementary quotient. 
\end{definition} 
By definition, $(\Pi_n, \unlhd_q)$ is graded by rank, and is a subposet of the poset $(\Pi_n, \leq_q)$ of elementary positroid quotients.  
See Figure~\ref{fig.3poset}.

Since positroids on $[n]$ are in bijection with partial FPPs $\setFPP^{\sLe}(n)$ and with decorated permutations $\fD_n$, then the poset $(\Pi_n, \unlhd_q)$ can be described in terms of these combinatorial objects.
Theorem~\ref{thm.char_Q} gives a combinatorial characterization of the covering relations in the poset in terms of the nonempty subsets of unblocked columns of the partial FPP, while Theorem~\ref{thm.covering_decperms} gives a characterization in terms of the right cyclic shift sets $C\sqcup T(C)$ of the decorated permutation.
Corollary~\ref{cor.char_Q} states that every element in the poset is covered by $2^{|U|}-1$ elements, where $|U|$ is the number of unblocked columns of $D^{\sLe}$/unblocked positions of $\pi$.

Theorem~\ref{thm.fpp_richardson} shows that maximal chains in $(\Pi_n, \unlhd_q)$ are in bijection with the set $\setFPP(n)$ of flag positroid pipe dreams.
In other words, maximal chains in this poset are in bijection with positive Richardson cells in the decomposition of the nonnegative flag variety $\Fl_n^{\geq0}$.

A rank $k$ \defn{lattice path matroid} (LPM) is a positroid whose bases are of the form $\{B \mid I \leq_G B \leq_G J\}$, for some $k$-subsets $I, J$, where $\leq_G$ denotes the Gale order on $\binom{[n]}{k}$.
Benedetti and Knauer~\cite[Theorem 19]{BK24} gave a complete combinatorial characterization of LPM quotients in terms of ``good pairs'', and studied the poset of LPM quotients.
Since quotients of LPMs are nonnegatively representable~\cite[Corollary 37]{BK24}, then the poset $(\Pi_n, \unlhd_q)$ contains the poset of quotients of LPMs~\cite[Section 3.1]{BK24} as an induced subposet.

LPMs are easily identifiable visually via their partial FPPs $D^{\sLe}$/$\Le$-pipe dreams; a $\Le$-pipe dream corresponds to an LPM if and only if its elbow tiles form a skew partition shape where the lower boundary of the skew partition is the lower boundary of the $\Le$-pipe dream.
For example in Figure~\ref{fig.3NNposet}, the two positroids on $[3]$ that are not LPMs are the ones corresponding to the decorated permutations $\overline{3}\underline{21}$ and $\overline{32}\underline{1}$.

\begin{figure}[ht!]
\begin{tikzpicture}[xscale=1.4, yscale=1.4]

\begin{scope}[xshift=0, yshift=130, scale=0.3, local bounding box=p1bar2bar3bar]
\node[anchor=south]() at (1.5,-0.3) {};
\node[] at (1.5, -0.4){\tiny$\overline{123}$};
\draw [black,thick,domain=180:270] plot ({1+.5*cos(\x)}, {1+.5*sin(\x)});
\draw [black,thick,domain=180:270] plot ({2+.5*cos(\x)}, {2+.5*sin(\x)});
\draw [black,thick,domain=180:270] plot ({3+.5*cos(\x)}, {3+.5*sin(\x)});
\draw [black,thick] (.5,1)--(.5,3){};
\draw [black,thick] (1.5,2)--(1.5,3){};
\draw [black,thick] (2,1.5)--(3,1.5){};
\draw [black,thick] (1,.5)--(3,.5){};
\draw[very thin, color=blue!50] (0,0) grid (3,3);
\end{scope}

\begin{scope}[xshift=120, yshift=80, scale=0.3, local bounding box=p231]
\node[anchor=south]() at (1.5,-0.3) {};
\node[anchor=north]() at (1.5,2.3) {};
\node[] at (1.5, 2.4){\tiny$\overline{23}\underline{1}$};
\draw [black,thick,domain=180:270] plot ({2+.5*cos(\x)}, {2+.5*sin(\x)});
\draw [black,thick,domain=180:270] plot ({1+.5*cos(\x)}, {1+.5*sin(\x)});
\elbow[thick](3,1);
\elbow[thick](3,2);
\draw [black,thick] (1,.5)--(2,.5){};
\draw [black,thick] (.5,1)--(.5,2){};
\draw[very thin, color=blue!50] (0,0) grid (3,2);
\end{scope}

\begin{scope}[xshift=80, yshift=80, scale=0.3, local bounding box=p1bar32]
\node[anchor=south]() at (1.5,-0.3) {};
\node[anchor=north]() at (1.5,2.3) {};
\node[] at (1.5, 2.4){\tiny$\overline{1}\overline{3}\underline{2}$};
\draw [black,thick,domain=180:270] plot ({2+.5*cos(\x)}, {2+.5*sin(\x)});
\draw [black,thick,domain=180:270] plot ({1+.5*cos(\x)}, {1+.5*sin(\x)});
\elbow[thick](3,2);
\cross[thick](3,1);
\draw [black,thick] (1,.5)--(2,.5){};
\draw [black,thick] (.5,1)--(.5,2){};
\draw[very thin, color=blue!50] (0,0) grid (3,2);
\end{scope}

\begin{scope}[xshift=40, yshift=80, scale=0.3, local bounding box=p32bar1]
\node[anchor=south]() at (1.5,-0.3) {};
\node[anchor=north]() at (1.5,2.3) {};
\node[] at (1.5, 2.4){\tiny$\overline{3}\overline{2}\underline{1}$};
\draw [black,thick,domain=180:270] plot ({2+.5*cos(\x)}, {2+.5*sin(\x)});
\draw [black,thick,domain=180:270] plot ({1+.5*cos(\x)}, {1+.5*sin(\x)});
\cross[thick](3,2);
\elbow[thick](3,1);
\draw [black,thick] (1,.5)--(2,.5){};
\draw [black,thick] (.5,1)--(.5,2){};
\draw[very thin, color=blue!50] (0,0) grid (3,2);
\end{scope}

\begin{scope}[xshift=0, yshift=80, scale=0.3, local bounding box=p1bar2bar3]
\node[anchor=south]() at (1.5,-0.3) {};
\node[anchor=north]() at (1.5,2.3) {};
\node[] at (1.5, 2.4){\tiny$\overline{1}\overline{2}\underline{3}$};
\draw [black,thick,domain=180:270] plot ({2+.5*cos(\x)}, {2+.5*sin(\x)});
\draw [black,thick,domain=180:270] plot ({1+.5*cos(\x)}, {1+.5*sin(\x)});
\cross[thick](3,2);
\cross[thick](3,1);
\draw [black,thick] (1,.5)--(2,.5){};
\draw [black,thick] (.5,1)--(.5,2){};
\draw[very thin, color=blue!50] (0,0) grid (3,2);
\end{scope}

\begin{scope}[xshift=-40, yshift=80, scale=0.3, local bounding box=p213bar]
\node[anchor=south]() at (1.5,-0.3) {};
\node[anchor=north]() at (1.5,2.3) {};
\node[] at (1.5, 2.4){\tiny$\overline{2}\underline{1}\overline{3}$};
\draw [black,thick,domain=180:270] plot ({3+.5*cos(\x)}, {2+.5*sin(\x)});
\draw [black,thick,domain=180:270] plot ({1+.5*cos(\x)}, {1+.5*sin(\x)});
\elbow[thick](2,1);
\draw [black,thick] (2,.5)--(3,.5){};
\draw [black,thick] (.5,1)--(.5,2){};
\draw [black,thick] (1.5,1)--(1.5,2){};
\draw[very thin, color=blue!50] (0,0) grid (3,2);
\end{scope}

\begin{scope}[xshift=-80, yshift=80, scale=0.3, local bounding box=p1bar23bar]
\node[anchor=south]() at (1.5,-0.3) {};
\node[anchor=north]() at (1.5,2.3) {};
\node[] at (1.5, 2.4){\tiny$\overline{1}\underline{2}\overline{3}$};
\draw [black,thick,domain=180:270] plot ({3+.5*cos(\x)}, {2+.5*sin(\x)});
\draw [black,thick,domain=180:270] plot ({1+.5*cos(\x)}, {1+.5*sin(\x)});
\cross[thick](2,1);
\draw [black,thick] (2,.5)--(3,.5){};
\draw [black,thick] (.5,1)--(.5,2){};
\draw [black,thick] (1.5,1)--(1.5,2){};
\draw[very thin, color=blue!50] (0,0) grid (3,2);
\end{scope}

\begin{scope}[xshift=-120, yshift=80, scale=0.3, local bounding box=p12bar3bar]
\node[anchor=south]() at (1.5,-0.3) {};
\node[anchor=north]() at (1.5,2.3) {};
\node[] at (1.5, 2.4){\tiny$\underline{1}\overline{2}\overline{3}$};
\draw [black,thick,domain=180:270] plot ({3+.5*cos(\x)}, {2+.5*sin(\x)});
\draw [black,thick,domain=180:270] plot ({2+.5*cos(\x)}, {1+.5*sin(\x)});
\draw [black,thick] (2,.5)--(3,.5){};
\draw [black,thick] (1.5,1)--(1.5,2){};
\draw[very thin, color=blue!50] (0,0) grid (3,2);
\end{scope}

\begin{scope}[xshift=120, yshift=31, scale=0.3, local bounding box=p312]
\node[anchor=south]() at (1.5,-0.3) {};
\node[anchor=north]() at (1.5,1.3) {};
\node[] at (1.5, -0.4){\tiny$\overline{3}\underline{12}$};
\draw [black,thick,domain=180:270] plot ({1+.5*cos(\x)}, {1+.5*sin(\x)});
\elbow[thick](2,1);
\elbow[thick](3,1);
\draw[very thin, color=blue!50] (0,0) grid (3,1);
\end{scope}

\begin{scope}[xshift=80, yshift=31, scale=0.3, local bounding box=p213]
\node[anchor=south]() at (1.5,-0.3) {};
\node[anchor=north]() at (1.5,1.3) {};
\node[] at (1.5, -0.4){\tiny$\overline{2}\underline{13}$};
\draw [black,thick,domain=180:270] plot ({1+.5*cos(\x)}, {1+.5*sin(\x)});
\elbow[thick](2,1);
\cross[thick](3,1);
\draw[very thin, color=blue!50] (0,0) grid (3,1);
\end{scope}

\begin{scope}[xshift=40, yshift=31, scale=0.3, local bounding box=p321]
\node[anchor=south]() at (1.5,-0.3) {};
\node[anchor=north]() at (1.5,1.3) {};
\node[] at (1.5, -0.4){\tiny$\overline{3}\underline{21}$};
\draw [black,thick,domain=180:270] plot ({1+.5*cos(\x)}, {1+.5*sin(\x)});
\cross[thick](2,1);
\elbow[thick](3,1);
\draw[very thin, color=blue!50] (0,0) grid (3,1);
\end{scope}

\begin{scope}[xshift=0, yshift=31, scale=0.3, local bounding box=p1bar23]
\node[anchor=south]() at (1.5,-0.3) {};
\node[anchor=north]() at (1.5,1.3) {};
\node[] at (1.5, -0.4){\tiny$\overline{1}\underline{2}\underline{3}$};
\draw [black,thick,domain=180:270] plot ({1+.5*cos(\x)}, {1+.5*sin(\x)});
\cross[thick](2,1);
\cross[thick](3,1);
\draw[very thin, color=blue!50] (0,0) grid (3,1);
\end{scope}

\begin{scope}[xshift=-40, yshift=31, scale=0.3, local bounding box=p132]
\node[anchor=south]() at (1.5,-0.3) {};
\node[anchor=north]() at (1.5,1.3) {};
\node[] at (1.5, -0.4){\tiny$\underline{1}\overline{3}\underline{2}$};
\draw [black,thick,domain=180:270] plot ({2+.5*cos(\x)}, {1+.5*sin(\x)});
\elbow[thick](3,1);
\draw[very thin, color=blue!50] (0,0) grid (3,1);
\end{scope}

\begin{scope}[xshift=-80, yshift=31, scale=0.3, local bounding box=p12bar3]
\node[anchor=south]() at (1.5,-0.3) {};
\node[anchor=north]() at (1.5,1.3) {};
\node[] at (1.5, -0.4){\tiny$\underline{1}\overline{2}\underline{3}$};
\draw [black,thick,domain=180:270] plot ({2+.5*cos(\x)}, {1+.5*sin(\x)});
\cross[thick](3,1);
\draw[very thin, color=blue!50] (0,0) grid (3,1);
\end{scope}

\begin{scope}[xshift=-120, yshift=31, scale=0.3, local bounding box=p123bar]
\node[anchor=south]() at (1.5,-0.3) {};
\node[anchor=north]() at (1.5,1.3) {};
\node[] at (1.5, -0.4){\tiny$\underline{1}\underline{2}\overline{3}$};
\draw [black,thick,domain=180:270] plot ({3+.5*cos(\x)}, {1+.5*sin(\x)});
\draw[very thin, color=blue!50] (0,0) grid (3,1);
\end{scope}

\begin{scope}[xshift=0, yshift=0, scale=0.3, local bounding box=p123]
\node[anchor=north]() at (1.5,0.3) {};
\node[]() at (1.5,-0.5) {\tiny$\underline{123}$};
\draw[very thin, color=blue!50] (0,0) grid (3,0);
\end{scope}

\draw[thick] (p231.north)--(p1bar2bar3bar.south);
\draw[thick] (p32bar1.north)--(p1bar2bar3bar.south);
\draw[thick] (p1bar32.north)--(p1bar2bar3bar.south);
\draw[thick] (p1bar2bar3.north)--(p1bar2bar3bar.south);
\draw[thick] (p213bar.north)--(p1bar2bar3bar.south);
\draw[thick] (p1bar23bar.north)--(p1bar2bar3bar.south);
\draw[thick] (p12bar3bar.north)--(p1bar2bar3bar.south);

\draw[thick] (p312.north)--(p231.south);
\draw[thick] (p312.north)--(p1bar32.south);
\draw[thick] (p312.north)--(p213bar.south);

\draw[thick] (p213.north)--(p231.south);
\draw[thick] (p213.north)--(p1bar2bar3.south);
\draw[thick] (p213.north)--(p213bar.south);

\draw[thick] (p321.north)--(p1bar23bar.south);

\draw[thick] (p1bar23.north)--(p1bar32.south);
\draw[thick] (p1bar23.north)--(p1bar2bar3.south);
\draw[thick] (p1bar23.north)--(p1bar23bar.south);

\draw[thick] (p132.north)--(p231.south);
\draw[thick] (p132.north)--(p1bar32.south);
\draw[thick] (p132.north)--(p12bar3bar.south);

\draw[thick] (p12bar3.north)--(p32bar1.south);
\draw[thick] (p12bar3.north)--(p1bar2bar3.south);
\draw[thick] (p12bar3.north)--(p12bar3bar.south);

\draw[thick] (p123bar.north)--(p213bar.south);
\draw[thick] (p123bar.north)--(p1bar23bar.south);
\draw[thick] (p123bar.north)--(p12bar3bar.south);

\draw[thick] (p123.north)--(p312.south);
\draw[thick] (p123.north)--(p213.south);
\draw[thick] (p123.north)--(p321.south);
\draw[thick] (p123.north)--(p1bar23.south);
\draw[thick] (p123.north)--(p132.south);
\draw[thick] (p123.north)--(p12bar3.south);
\draw[thick] (p123.north)--(p123bar.south);

\end{tikzpicture}
\caption{The poset of nonnegatively representable elementary positroid quotients $(\Pi_3,\unlhd_q)$, indexed by their partial flag positroid pipe dreams $D^{\sLe}$ and decorated permutations.
This is a subposet of $(\Pi_3, \leq_q)$ shown in Figure~\ref{fig.3poset}.
}
\label{fig.3NNposet}
\end{figure}

To conclude, we give one last result on the poset of nonnegatively representable elementary positroid quotients.
\begin{proposition}\label{prop.selfdual}
The poset $(\Pi_n, \unlhd_q)$ is self-dual via taking positroid duals (or equivalently, via taking inverses of decorated permutations).
\end{proposition}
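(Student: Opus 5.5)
The plan is to prove that the map $P\mapsto P^*$ (matroid duality) is an order-reversing bijection $(\Pi_n,\unlhd_q)\to(\Pi_n,\unlhd_q)$. It is an involution on positroids on $[n]$, since the dual of a positroid is a positroid. It sends rank $k$ to rank $n-k$. Because the poset is graded and defined by its covering relations, it suffices to show the following: $P\unlhd_q Q$ is a nonnegatively representable elementary quotient if and only if $Q^*\unlhd_q P^*$ is one.

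The cleanest route is geometric. Take a two-step nonnegative flag $V_k\subset V_{k+1}$ in $\Fl_{(k,k+1);n}^{\geq0}$ realizing $(P,Q)$. Consider orthogonal complements twisted by the alternating sign map $\mathrm{alt}:(x_1,\dots,x_n)\mapsto(x_1,-x_2,x_3,\dots,(-1)^{n-1}x_n)$, and set $V'_{n-k-1}=\mathrm{alt}(V_{k+1})^\perp \subset \mathrm{alt}(V_k)^\perp = V'_{n-k}$. It is standard (Postnikov, or Karp) that $V\mapsto \mathrm{alt}(V)^\perp$ maps $\Gr_{k,n}^{\geq0}$ to $\Gr_{n-k,n}^{\geq0}$. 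It also satisfies $\Delta_{[n]\setminus S}(\mathrm{alt}(V)^\perp)=\Delta_S(V)$ up to a global positive scalar, so the positroid of the image is the dual matroid. Containment is reversed, so we obtain a nonnegative two-step flag whose flag positroid is $(Q^*,P^*)$. Hence $Q^*\unlhd_q P^*$, and applying the same construction again gives the converse.

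One point needs care. The definition of $\unlhd_q$ requires a single matrix whose flag minors are nonnegative for both ranks simultaneously. This amounts to membership in $\Fl^{\geq0}_{(k,k+1);n}$, which is defined via Plücker coordinates up to a positive scalar for each rank separately. So it suffices that each of the two subspaces lies in the nonnegative Grassmannian. The remaining obstacle is then the sign bookkeeping showing the twisted complement preserves nonnegativity; I would cite it rather than recompute. If a self-contained argument is preferred, I would verify it by choosing a reduced representation $[I\mid M]$ after a cyclic relabeling and writing the complement as $[-M^T\mid I]$ with alternating signs. This step is also routine but fiddly.

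Finally, for the parenthetical claim I would invoke Postnikov's description: the decorated permutation of $P^*$ is $\pi^{-1}$, with decorations on fixed points swapped. Alternatively, one could try to match the combinatorics directly, showing that the unblocked-column/cyclic-shift description of covers in Theorem~\ref{thm.covering_decperms} is carried, under $\pi\mapsto\pi^{-1}$, to the left-cyclic-shift description of down-covers. I expect that to be the harder route, because it requires checking that $T(C)$ corresponds to $O(R)$ under inversion. I would therefore present the geometric argument as the proof and mention the decorated-permutation version only as a consequence.
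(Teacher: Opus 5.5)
Your argument is correct, but it takes a different route from the paper's. The paper works entirely with decorated permutations. It cites Oh for the fact that $P^*$ has decorated permutation $\omega=\pi^{-1}$, and writes an up-cover as $\pi'=\pi\sigma_{C\sqcup T(C)}$ via Theorem~\ref{thm.covering_decperms}. It then observes that the unblocked positions $U$ of $\pi$ become the left unblocked positions $\pi(U)$ of $\omega$, with $R^*=\pi(C)$ and $O(R^*)=\pi(T(C))$. Finally it computes $(\pi')^{-1}=\omega\,\tau_{R^*\sqcup O(R^*)}=\overleftarrow{\rho_{A(R^*)}}(\omega)$, which the left-cyclic-shift theorem identifies as a down-cover of $\omega$.

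You instead argue at the level of subspaces. The map $V\mapsto\mathrm{alt}(V)^\perp$ reverses containment and satisfies $\Delta_{[n]\setminus S}(\mathrm{alt}(V)^\perp)=c\,\Delta_S(V)$ with $c$ independent of $S$. This identity holds on all of $\Gr_{k,n}$, so no cyclic relabeling is needed to verify it. Your reduction from ``one matrix with nonnegative flag minors'' to ``each subspace is Pl\"ucker-nonnegative'' is also sound. Take a nonnegative representation of the smaller space, append any vector of the larger one, and negate that vector if the top-rank minors come out nonpositive.

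What your route buys:
\begin{itemize}
\item It is shorter and uses none of the FPP machinery.
\item It does not depend on the left-cyclic-shift theorem, which the paper states without proof.
\item It applies verbatim to arbitrary flag positroids, $(P_1,\ldots,P_t)\mapsto(P_t^*,\ldots,P_1^*)$.
\item Combined with the paper's computation, it would actually prove that left-shift theorem from Theorem~\ref{thm.covering_decperms}.
\end{itemize}

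What the paper's route buys is an explicit dictionary between the up-covers of $\pi$ and the down-covers of $\pi^{-1}$, namely $C\mapsto\pi(C)$ and $T(C)\mapsto O(\pi(C))$. The matching you feared would be the hard step is essentially immediate. Inversion swaps positions and values, and both greedy rules select the same chain of records.
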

\begin{proof}
As observed by Oh~\cite{Oh13}, the result~\cite[Theorem 19]{Oh11} implies that the dual of a positroid $P$ is a positroid $P^*$, and the decorated permutation of $P^*$ is the inverse of the decorated permutation of $P$.
We point out the fact that if the value $\pi(j)$ is $1$-coloured (respectively $2$-coloured) in $\pi$, then it is a $2$-coloured (respectively $1$-coloured) position in $\pi^{-1}$.

We will use the decorated permutation perspective to show that $(\Pi_n,\unlhd_q)$ is self-dual, by showing that $\pi \lessdot_q \pi'$ if and only if $\omega \gtrdot_q \omega'$, where $\omega = \pi^{-1}$ and $\omega'=(\pi')^{-1}$.

Suppose $\pi \lessdot_q \pi'$.
Note that $U$ is the set of (right) unblocked positions of $\pi$ if and only if $S=\pi(U)$ is the set of left unblocked positions of $\omega = \pi^{-1}$.

By Theorem~\ref{thm.covering_decperms} 
there exists a nonempty subset $C\subseteq U$ such that $\pi' = \pi \sigma_{C\sqcup T(C)}$.
Suppose $C\sqcup T(C) = \{c_1,\ldots, c_r\}\sqcup\{t_1,\ldots, t_s\}$ so that 
\[t_1< \cdots<t_s<c_1< \cdots<c_r
\quad\hbox{and}\quad
\pi(c_1) < \cdots<\pi(c_r)<\pi(t_1)<\cdots <\pi(t_s).
\]
Taking inverses means that $R^* = \pi(C) = \{\pi(c_1), \ldots, \pi(c_r) \} \subseteq S^*$ is a nonempty subset of left unblocked positions of $\omega$.
By construction, it follows that $O(R^*) = \pi(T(C)) = \{\pi(t_1),\ldots, \pi(t_s) \}$, and $\tau_{R^* \sqcup O(R^*)} = (\pi(t_1) \, \cdots \, \pi(t_s) \, \pi(c_1)\, \cdots \, \pi(c_r))$.
Then
\begin{align*}
\omega'
&= (\pi')^{-1} = (\pi \sigma_{C\sqcup T(C)})^{-1}
= (t_1\, \cdots\, t_s \, c_1 \, \cdots \, c_r) \cdot \pi^{-1}\\
&= \pi^{-1}\cdot (\pi(t_1)\, \cdots\, \pi(t_s) \, \pi(c_1) \, \cdots \, \pi(c_r))\\
&= \omega \tau_{R^*\sqcup O(R^*)}\\
&=\overleftarrow{\rho_{A(R^*)}}(\omega),
\end{align*}
where the equality in the second line follows from $(i\ j)\cdot \sigma = \sigma \cdot (\sigma^{-1}(i) \ \sigma^{-1}(j))$.
\end{proof}

\begin{corollary}\label{cor.ukncovers}
Let $U_{k,n}$ denote the rank $k$ uniform matroid on $[n]$.
There are $2^k-1$ nonnegatively representable elementary positroid quotients of the form $P\unlhd_q U_{k,n}$.
\end{corollary}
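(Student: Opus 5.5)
The plan is to pass to the dual and count covers from below with Corollary~\ref{cor.char_Q}, instead of counting rank $k-1$ quotients of $U_{k,n}$ directly. By Proposition~\ref{prop.selfdual}, the map $P\mapsto P^*$ is an anti-automorphism of $(\Pi_n,\unlhd_q)$. Hence $P\unlhd_q U_{k,n}$ is an elementary quotient if and only if $U_{k,n}^*\unlhd_q P^*$ is one. Since $U_{k,n}^*=U_{n-k,n}$ and duality is a bijection on positroids, the quantity to compute is the number of rank $n-k+1$ positroids $Q$ with $U_{n-k,n}\unlhd_q Q$. By Corollary~\ref{cor.char_Q} this number is $2^{|U|}-1$, where $U$ is the set of unblocked columns of $D^{\sLe}(U_{n-k,n})$. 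So it suffices to show $|U|=k$.

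Next I would determine $D^{\sLe}(U_{n-k,n})$ explicitly, using Lemma~\ref{lem:lexminmax-from-diagram} and Proposition~\ref{prop.nonintersecting}. The lexicographically minimal basis of $U_{n-k,n}$ is $\{1,\ldots,n-k\}$. Its pivot columns are therefore $n-k>n-k-1>\cdots>1$ in rows $1,\ldots,n-k$.

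\begin{itemize}
\item The Rothe-diagram boxes in row $i$ are the columns to the right of the pivot $n-k+1-i$ that lie below no pivot. These are exactly the columns $n-k+1,\ldots,n$, so the Rothe diagram is an $(n-k)\times k$ rectangle.
\item I claim every tile in this rectangle is an elbow. The all-elbow filling is $\Gamma$-free, so it is a valid element of $\setFPP^{\sLe}_{n-k}(n)$. In the associated $\Le$-diagram language it is the all-ones rectangle.
\item Its graph $G(D)$ is then the full grid network, which admits non-intersecting paths to every $(n-k)$-subset of sinks. So $\calB(D)$ is all of $\binom{[n]}{n-k}$. By the bijection of Proposition~\ref{prop.PFPP}, this identifies the diagram as $D^{\sLe}(U_{n-k,n})$.
\end{itemize}

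Finally I would read off the blocked columns from Definition~\ref{defn.Leblocked}. The pivot columns $1,\ldots,n-k$ are blocked. The diagram contains no cross tiles at all, so no other column is blocked. Hence $U=\{n-k+1,\ldots,n\}$, which gives $|U|=k$ and the count $2^k-1$.

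As a sanity check, for $n=3$ and $k=1$ this gives one quotient, and for $k=2$ it gives three; both agree with Figure~\ref{fig.3NNposet}. The only step needing care is the identification of $D^{\sLe}(U_{n-k,n})$ in the second paragraph, namely checking that the all-elbow filling realizes every $(n-k)$-subset as a sink set. I would handle it by the standard argument for a full rectangular $\Le$-network, or equivalently by observing that the all-ones rectangle corresponds to the top positroid cell of $\Gr_{n-k,n}^{\geq0}$.
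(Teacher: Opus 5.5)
Your proposal is correct and follows the paper's proof: pass to the dual $U_{n-k,n}$ via Proposition~\ref{prop.selfdual}, observe that $D^{\sLe}(U_{n-k,n})$ has pivots in columns $1,\ldots,n-k$ and only elbow tiles in the last $k$ columns, so it has exactly $k$ unblocked columns, and apply Corollary~\ref{cor.char_Q}. Your extra check that the all-elbow rectangle really is $D^{\sLe}(U_{n-k,n})$ (the full grid network realizes every $(n-k)$-subset, i.e.\ it is the top cell) fills in a step the paper simply asserts.
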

\begin{proof}
The standardized partial flag positroid pipe dream $D^{\sLe}(U_{n-k,n})$ has pivot elbows in the first $n-k$ columns and elbow tiles in its last $k$ columns. 
Therefore, $D^{\sLe}(U_{n-k,n})$ has $k$ unblocked columns and is covered by $2^k-1$ positroids in $(\Pi_n, \unlhd_q)$, by Corollary~\ref{cor.char_Q}.
Since $(\Pi_n, \unlhd_q)$ is self-dual and $U_{n-k,n}^* = U_{k,n}$, it follows that $U_{k,n}$ covers $2^k-1$ positroids in $(\Pi_n, \unlhd_q)$.
\end{proof}
\begin{remark}
Contrast Corollary~\ref{cor.ukncovers} with~\cite[Theorem 28]{BCT22}, which states that $U_{k,n}$ has at least $\sum_{i=0}^{k-1}\binom{n}{i}$ elementary positroid quotients of the form $P\leq_q U_{k,n}$.
To get a rough sense of what proportion of these positroid quotients are nonnegatively representable, note that in the case when $n$ is odd and $k=(n-1)/2$, we have $\sum_{i=0}^{k-1}\binom{n}{i}=2^{n-1}$.
\end{remark}

\begin{corollary}
The subposet of $(\Pi_n, \unlhd_q)$ induced by the lattice path matroids on $[n]$ is self-dual. \qed
\end{corollary}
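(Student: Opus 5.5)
The corollary should follow from Proposition~\ref{prop.selfdual} together with the fact that the class of lattice path matroids is closed under matroid duality. The plan is to show that the self-duality map $P\mapsto P^*$ of $(\Pi_n,\unlhd_q)$ restricts to a bijection of the set of LPMs on $[n]$ onto itself. Once that is done, the restricted map is an order-reversing involution of the induced subposet: for LPMs $P,Q$ we have $P\unlhd_q Q$ if and only if $Q^*\unlhd_q P^*$ in $(\Pi_n,\unlhd_q)$, and all four positroids are LPMs. Here I read the induced subposet as carrying the order of $(\Pi_n,\unlhd_q)$ restricted to LPMs. Because the order relation is inherited, the dual of the induced subposet is the induced subposet on the image set, which is the same set.

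The key step is closure under duality. Suppose $P$ is a rank $k$ LPM with bases $\{B\mid I\leq_G B\leq_G J\}$. I will show that $P^*$ has bases $\{B'\mid [n]\setminus J\leq_G B'\leq_G [n]\setminus I\}$. The argument uses that complementation $B\mapsto [n]\setminus B$ reverses the Gale order between $\binom{[n]}{k}$ and $\binom{[n]}{n-k}$. To check this, I will write the Gale order as $B\leq_G B'$ if and only if $|B\cap[m,n]|\leq |B'\cap[m,n]|$ for all $m$. Since $|([n]\setminus B)\cap[m,n]| = (n-m+1)-|B\cap[m,n]|$, this reformulation turns complementation into an order reversal immediately.

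Alternatively, I can argue via the diagrammatic description given just before Figure~\ref{fig.3NNposet}, where LPMs are $\Le$-pipe dreams whose elbows form a skew shape. Duality acts on $\Le$-diagrams by transposition, and transposition preserves that property. I prefer the Gale-order argument because it avoids setting up the transpose correspondence.

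The only potential obstacle is making sure the dual positroid in Proposition~\ref{prop.selfdual} is the ordinary matroid dual on the same ground set $[n]$ with the same ordering. The proof of that proposition states that it is (via Oh's result), so no reversal of the ground set enters. With that confirmed, the restriction argument is immediate.

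Finally, I will note that, since the LPM quotients are exactly the nonnegatively representable ones among LPMs by~\cite[Corollary 37]{BK24}, this induced subposet coincides with the LPM quotient poset of~\cite{BK24}. The self-duality statement therefore also applies to that poset.
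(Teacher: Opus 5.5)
Your proof is correct and is the argument the paper leaves implicit behind its bare \qed: the order-reversing involution $P\mapsto P^*$ of Proposition~\ref{prop.selfdual} restricts to the induced subposet because lattice path matroids are closed under duality, and your observation that complementation reverses the Gale order establishes this closure (the dual of the LPM with bases $I\leq_G B\leq_G J$ has bases $[n]\setminus J\leq_G B'\leq_G [n]\setminus I$). One caution about the aside you set aside: duality does not act on $\Le$-diagrams by transposition; for example $\overline{2}\underline{13}$ and its dual $\overline{2}\underline{1}\overline{3}$ in Figure~\ref{fig.3NNposet} have $\Le$-diagrams with two boxes and one box respectively, and transposition actually realizes duality combined with reversing the order of $[n]$, so you are right to rely on the Gale-order argument.
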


\begin{example}\label{eg.pi_omega}
The decorated permutations $\pi$ and $\pi'$ from Example~\ref{eg.grand_finale} and their inverses $\omega =\pi^{-1}$ and $\omega'=(\pi')^{-1}$ are shown in Figure~\ref{fig.pi_omega}.
Recall that $\pi\lessdot_q \pi'$.
We verify that $\omega\gtrdot_q \omega'$ by showing that $\omega'$ is a left cyclic shift of $\omega$.

The right cyclic shift set for $\pi$ is $C\sqcup T(C) = \{5,9\}\sqcup \{4\}$ and we have $\pi' = \pi\cdot (9\,5\,4)$.
Taking inverses means that the corresponding left cyclic shift set for $\omega$ is $R^* \sqcup O(R^*) =\{\pi_5,\pi_9\}\sqcup\{\pi_4\} = \{2,8,9\}$, and $\omega' = \omega\cdot (2\,8\,9)$.
\end{example}

\begin{figure}[ht!]
\begin{center}
\begin{tikzpicture}
\begin{scope}[scale=1, xshift=0, yshift=50, local bounding box=piprime]
\node[] at (0,0){
$\pi' = {\setlength\arraycolsep{1.6pt}
\begin{matrix}
\textcolor{red}{1}&\textcolor{blue}{2}&\textcolor{blue}{3}&\textcolor{red}{4}&\textcolor{red}{5}&\textcolor{red}{6}&\textcolor{blue}{7}&\textcolor{blue}{8}&\textcolor{blue}{9}\\
\overline{5}&\underline{1}&\underline{3}&\overline{8}&\overline{9}&\overline{7}&\underline{6}&\underline{4}&\underline{2}
\end{matrix}}$};
\end{scope}

\begin{scope}[scale=1, xshift=0, yshift=0, local bounding box=pi]
\node[] at (0,0){
$\pi = {\setlength\arraycolsep{1.6pt}\begin{matrix}
\textcolor{red}{1}&\textcolor{blue}{2}&\textcolor{blue}{3}&\textcolor{red}{4}&\textcolor{blue}{5}&\textcolor{red}{6}&\textcolor{blue}{7}&\textcolor{blue}{8}&\textcolor{blue}{9}\\
\overline{5}&\underline{1}&\underline{3}&\overline{9}&\underline{2}&\overline{7}&\underline{6}&\underline{4}&\underline{8}
\end{matrix}}$};
\end{scope}

\begin{scope}[scale=1, xshift=170, yshift=50, local bounding box=omega]
\node[] at (0,0){
$\omega = {\setlength\arraycolsep{1.6pt}\begin{matrix}
\textcolor{red}{1}&\textcolor{red}{2}&\textcolor{red}{3}&\textcolor{red}{4}&\textcolor{blue}{5}&\textcolor{red}{6}&\textcolor{blue}{7}&\textcolor{red}{8}&\textcolor{blue}{9}\\
\overline{2}&\overline{5}&\overline{3}&\overline{8}&\underline{1}&\overline{7}&\underline{6}&\overline{9}&\underline{4}
\end{matrix}}$};
\end{scope}

\begin{scope}[scale=1, xshift=170, yshift=0, local bounding box=omegaprime]
\node[] at (0,0){
$\omega' = {\setlength\arraycolsep{1.6pt}\begin{matrix}
\textcolor{red}{1}&\textcolor{red}{2}&\textcolor{red}{3}&\textcolor{red}{4}&\textcolor{blue}{5}&\textcolor{red}{6}&\textcolor{blue}{7}&\textcolor{blue}{8}&\textcolor{blue}{9}\\
\overline{2}&\overline{9}&\overline{3}&\overline{8}&\underline{1}&\overline{7}&\underline{6}&\underline{4}&\underline{5}
\end{matrix}}$};
\end{scope}

\draw[-stealth, thick] (pi.west) .. controls +(left:.5cm) and +(left:.5cm) .. node[left]{$\overrightarrow{\rho_{A(C)}}$} (piprime.west);

\draw[-stealth, thick] (omega.east) .. controls +(right:.5cm) and +(right:.5cm) .. node[right]{$\overleftarrow{\rho_{A(R^*)}}$} (omegaprime.east);

\draw[-stealth, thick] (pi.east) .. controls +(right:.5cm) and +(right:0cm) .. node[above]{$\,^{-1}$} (omega.west);

\draw[-stealth, thick] (piprime.east) .. controls +(right:.5cm) and +(right:0cm) .. (omegaprime.west);

\end{tikzpicture}
\end{center}
\caption{Illustrating that $(\Pi_n,\unlhd_q)$ is self-dual via taking the inverse of the decorated permutations. 
See Example~\ref{eg.pi_omega}.}
\label{fig.pi_omega}
\end{figure}

\printbibliography
\end{document}